\newcommand{\N}{\mathbb{N}}
\newcommand{\Z}{\mathbb{Z}}
\newcommand{\R}{\mathbb{R}}
\newcommand{\C}{\mathbb{C}}
\newcommand{\D}{\mathcal D}
\newcommand{\OO}{\mathcal{O}}
\newcommand{\OC}{\OO_{C}}
\newcommand{\csn}{\operatorname{csn}}
\newcommand{\ev}[2]{\langle#1,#2\rangle}
\newtheorem{theorem}{Theorem}[section]
\newtheorem{proposition}[theorem]{Proposition}
\newtheorem{lemma}[theorem]{Lemma}
\newtheorem{corollary}[theorem]{Corollary}
\theoremstyle{definition}
\newtheorem{definition}[theorem]{Definition}
\theoremstyle{remark}
\newtheorem{remark}[theorem]{Remark}
\numberwithin{equation}{section}
\newcommand\norm[1]{\left\lVert#1\right\rVert}
\DeclareMathOperator{\id}{id}
\DeclareMathOperator{\im}{Im}
\DeclareMathOperator{\supp}{supp}
\begin{document}
\title[Convolutors of translation-modulation invariant Banach spaces]{Convolutors of translation-modulation invariant Banach spaces of ultradistributions}

\author[L. Neyt]{Lenny Neyt}
\thanks{L. Neyt was supported by  FWO-Vlaanderen through the postdoctoral grant 12ZG921N}

\address{Department of Mathematics: Analysis, Logic and Discrete Mathematics\\ Ghent University\\ Krijgslaan 281\\ 9000 Gent\\ Belgium}
\email{lenny.neyt@UGent.be}

\subjclass[2020]{Primary 46F05. Secondary 44A35, 42B10, 46H25, 81S30} 
\keywords{Ultradistributions; Gelfand-Shilov spaces; Convolution; Translation-modulation invariant Banach spaces of ultradistributions; the first structure theorem.}

\begin{abstract}
We study the space of tempered ultradistributions whose convolutions with test functions are all contained in a given translation-modulation invariant Banach space of ultradistributions. Our main result will be the first structural theorem for the aforementioned space. As an application we consider several extensions of convolution. 
\end{abstract}

\maketitle

\section{Introduction}

Convolution constitutes as one of the most important tools in mathematical analysis. In the theory of generalized functions it has been an extensively studied subject going back to Schwartz's work \cite{S-ThDistr}, where new results are still found until this day \cite{B-O-ConvVVDist, D-P-V-NewDistSpTIB, O-ConvCondDist, O-W-AppWeighDLPSpConvDist, O-W-DistVFunc, W-ConvWeighDLPSp}. When looking at the theory of ultradistributions, a considerable amount of literature can be found on the existence of convolution in both the non-quasianalytic case \cite{D-P-P-V-ConvUltraDistTIB, K-K-P-EquivDefConvUltraDist, P-ConvBeurlingUltraDist, P-P-ConvRoumieuUltraDist} and the quasianalytic case \cite{D-P-V-ClassTISpQuasiAnalUltraDist, P-P-V-OnQuasiAnalClasGS}. The scope of this article is situated in the theory of the Gelfand-Shilov spaces \cite{G-S-GenFunc2} and their duals, i.e. the tempered ultradistributions. More specifically our main goal is to characterize when the convolution of a tempered ultradistribution with a test function is contained in a so-called translation-modulation invariant Banach space of ultradistributions (in short: TMIB) in the sense of \cite{D-P-P-V-TMIBUltraDist}. These spaces find their origin in harmonic analysis, offering a natural extension for several classes appearing there. Well-known examples of TMIB are the weighted $L^{p}$-spaces, modulation spaces \cite{F-ModSpLocCompactAbelianGroup, F-G-BanachSpIntGroupRepAtomicDecomp1, F-G-BanachSpIntGroupRepAtomicDecomp2, F-H-GaborFrameTFAnalDist} and their generalizations \cite{D-P-GaborFrameCharGenModSp, D-P-P-V-TMIBUltraDist}, and Wiener amalgam spaces \cite{F-BanachConvAlgWienerType, F-L-WienerAmalgamSpFundIdGaborAnal}.

We now give an overview of the content of this text. In the preliminary Sections \ref{sec:Preliminaries} and \ref{sec:WeighVVContFunc} we define and study weight sequences, the Pettis integral, and weighted vector-valued continuous functions. Then, in Section \ref{sec:GS}, we formally introduce the Gelfand-Shilov spaces of Beurling and Roumieu type, here commonly denoted by $\mathcal{S}^{[M]}_{[A]} = \mathcal{S}^{[M]}_{[A]}(\R^{d})$. In particular we consider convolution in their context and specifically show that the convolutions $* : \mathcal{S}^{[M]}_{[A]} \times \mathcal{S}^{[M]}_{[A]} \rightarrow \mathcal{S}^{[M]}_{[A]}$ and $* : \mathcal{S}^{\prime [M]}_{[A]} \times \mathcal{S}^{[M]}_{[A]} \rightarrow \mathcal{S}^{\prime [M]}_{[A]}$ are well-defined and (hypo-)continuous, but moreover may be seen as Pettis integrals, allowing us to employ a uniform definition of convolution for spaces in between $\mathcal{S}^{[M]}_{[A]}$ and $\mathcal{S}^{\prime [M]}_{[A]}$.

In Section \ref{sec:TMIB} we consider the TMIB. A Banach space $E$ is a TMIB if it is closed under translation and modulation, where the norms of these operations have certain bounds (see \eqref{eq:TMIBnormcond}), and such that the dense inclusions $\mathcal{S}^{[M]}_{[A]} \hookrightarrow E \hookrightarrow \mathcal{S}^{\prime [M]}_{[A]}$ hold. We will also consider the duals of TMIB (in short: DTMIB). Then to any TMIB or DTMIB $E$ we can associate the space $\D^{[M]}_{E}$ of those tempered ultradistributions whose derivatives are all elements of $E$ and such that the norms of those derivatives are bounded by the weight sequence $M$. Section \ref{sec:TMIB} mainly deals with the study of the structural and topological properties of $\D^{[M]}_{E}$.

The primary focus of this text are the convolutor spaces $\OC^{\prime}(\mathcal{S}^{[M]}_{[A]}, E)$ of those $f \in \mathcal{S}^{\prime [M]}_{[A]}$ such that $f * \varphi \in E$ for every $\varphi \in \mathcal{S}^{[M]}_{[A]}$ and $E$ is a given TMIB. In Section \ref{sec:ConvSp} we consider the topological and structural properties of $\OC^{\prime}(\mathcal{S}^{[M]}_{[A]}, E)$. In particular, our main goal is to prove the ensuing first structural theorem; all unexplained notions are defined in Sections \ref{sec:Preliminaries},  \ref{sec:GS}, and \ref{sec:TMIB}. 

	\begin{theorem}
		\label{t:StructThm}
		Let $M$ and $A$ be two weight sequences satisfying $(M.1)$ and $(M.2)'$, and let $E$ be a TMIB of class $[M] - [A]$ satisfying \eqref{eq:GrowthCondModulation}. Then, for a given $f \in \mathcal{S}^{\prime [M]}_{[A]}(\R^{d})$,
			\begin{equation}
				\label{eq:StructThmConv} 
				f * \varphi \in E , \qquad \forall \varphi \in \mathcal{S}^{[M]}_{[A]}(\R^{d}) , 
			\end{equation}
		if and only if there exist $f_{\alpha} \in E$, $\alpha \in \N^{d}$, such that
			\begin{equation}
				\label{eq:StructThmSum} 
				f = \sum_{\alpha \in \N^{d}} f^{(\alpha)}_{\alpha} , 
			\end{equation}
		and for some $\ell > 0$ (for every $\ell > 0$) we have that
			\begin{equation}
				\label{eq:StructThmBound} 
				\sup_{\alpha \in \N^{d}} \ell^{|\alpha|} M_{\alpha} \norm{f_{\alpha}}_{E} < \infty . 
			\end{equation}
	\end{theorem}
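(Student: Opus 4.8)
The proof splits, as is standard for such \emph{first structure theorems}, into an elementary sufficiency part and a parametrix-based necessity part; the structural conditions on $M$, $A$ and condition \eqref{eq:GrowthCondModulation} enter essentially in the latter and, for parameter matching, already in the former.

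\emph{Sufficiency.} Suppose $f=\sum_{\alpha}f_{\alpha}^{(\alpha)}$ with $f_{\alpha}\in E$ and $\sup_{\alpha}\ell^{|\alpha|}M_{\alpha}\norm{f_{\alpha}}_{E}<\infty$. For $\varphi\in\mathcal{S}^{[M]}_{[A]}$ I would invoke the Pettis-integral description of convolution from Section~\ref{sec:GS} to write
\[
	f*\varphi=\sum_{\alpha}f_{\alpha}*\varphi^{(\alpha)},\qquad
	f_{\alpha}*\varphi^{(\alpha)}=\int_{\R^{d}}\varphi^{(\alpha)}(y)\,T_{y}f_{\alpha}\,dy ,
\]
and bound each term by the translation estimate in \eqref{eq:TMIBnormcond}, obtaining $\norm{f_{\alpha}*\varphi^{(\alpha)}}_{E}\le C\norm{f_{\alpha}}_{E}\norm{w\,\varphi^{(\alpha)}}_{L^{1}}$ with $w$ the weight governing translations on $E$. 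Since $\varphi\in\mathcal{S}^{[M]}_{[A]}$, the weighted $L^{1}$-norms obey $\norm{w\,\varphi^{(\alpha)}}_{L^{1}}\le Ch^{|\alpha|}M_{\alpha}$ for an admissible $h$, and the class $[M]-[A]$ hypotheses together with \eqref{eq:GrowthCondModulation} are exactly what let one take $h<\ell$, so that $\sum_{\alpha}\norm{f_{\alpha}*\varphi^{(\alpha)}}_{E}$ is a convergent geometric-type series; hence $f*\varphi\in E$. The same estimate, pairing against test functions through the duality of $E$ with $E'$ and the continuous inclusion $\mathcal{S}^{[M]}_{[A]}\hookrightarrow E'$, shows that \eqref{eq:StructThmSum} converges in $\mathcal{S}^{\prime [M]}_{[A]}$, so its right-hand side really is an element of $\mathcal{S}^{\prime [M]}_{[A]}$.

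\emph{Necessity.} The decisive input is a \emph{parametrix}: an ultradifferential operator $P(D)=\sum_{\alpha}c_{\alpha}\partial^{\alpha}$ of class $[M]$, with coefficient bound $|c_{\alpha}|\le CL^{|\alpha|}/M_{\alpha}$, together with $\chi,\psi\in\mathcal{S}^{[M]}_{[A]}$ satisfying $P(D)\chi=\delta+\psi$. Granting its existence, apply \eqref{eq:StructThmConv} to $\varphi=\chi$ and to $\varphi=\psi$ to get $g:=f*\chi\in E$ and $f*\psi\in E$. The coefficient bound forces $\sum_{\alpha}c_{\alpha}g^{(\alpha)}$ to converge in $\mathcal{S}^{\prime [M]}_{[A]}$, defining $P(D)g$, and since convolution commutes with $P(D)$ and $P(D)(f*\chi)=f*(P(D)\chi)$,
\[
	f=f*\delta=f*\bigl(P(D)\chi-\psi\bigr)=P(D)(f*\chi)-f*\psi=\sum_{\alpha}c_{\alpha}\,g^{(\alpha)}-f*\psi .
\]
Putting $f_{\alpha}:=c_{\alpha}g\in E$ for $\alpha\neq 0$ and $f_{0}:=c_{0}g-f*\psi\in E$ gives \eqref{eq:StructThmSum}, while the coefficient bound yields $\ell^{|\alpha|}M_{\alpha}\norm{f_{\alpha}}_{E}\le C(\ell L)^{|\alpha|}\norm{g}_{E}$. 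In the Roumieu case $L$ is fixed by $P(D)$, so this is bounded in $\alpha$ as soon as $\ell<1/L$, i.e. \eqref{eq:StructThmBound} holds for some $\ell$; in the Beurling case $P(D)$ can and must be chosen so that $|c_{\alpha}|\le C_{L}L^{|\alpha|}/M_{\alpha}$ for every $L>0$, whence the single decomposition above satisfies \eqref{eq:StructThmBound} for every $\ell$, matching the two readings of the statement.

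\emph{Main obstacle.} The hard part is the parametrix, and specifically placing $\chi$ and $\psi$ inside the Gelfand-Shilov space $\mathcal{S}^{[M]}_{[A]}$ rather than merely in $\mathcal{E}^{[M]}$ or $\mathcal{D}^{[M]}$: the classical construction (choose $P$ of product form built from $M$ using $(M.1)$ and $(M.2)'$, and $\chi$ essentially an inverse Fourier transform of $1/P$) has to be run with simultaneous control of the $[M]$-smoothness and the $[A]$-decay, which is precisely where the compatibility between $M$, $A$ and condition \eqref{eq:GrowthCondModulation} is needed; one should either carry this out directly or cite a Gelfand-Shilov version from the literature. Everything after the parametrix---the convergence of $\sum_{\alpha}c_{\alpha}g^{(\alpha)}$, its identification with $f*(P(D)\chi)$, and the geometric-series estimates---is routine given the machinery of Sections~\ref{sec:WeighVVContFunc}--\ref{sec:TMIB}.
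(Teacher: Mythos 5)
Your sufficiency argument is essentially the paper's (it is the content of Lemma \ref{l:SWelldefined}: one writes $f^{(\alpha)}_{\alpha}*\varphi=f_{\alpha}*\varphi^{(\alpha)}$, applies \eqref{eq:convolutionineqE} and the $L^{1}$-description \eqref{eq:GSasL1Sp} of $\mathcal{S}^{[M]}_{[A]}$, and sums a geometric series; note that \eqref{eq:GrowthCondModulation} is not actually needed for this half). The necessity half, however, has a genuine gap, and it is exactly the one the paper flags in its introduction: under the standing hypotheses $M$ satisfies only $(M.1)$ and $(M.2)'$, and the parametrix method is not available. The existence of an ultradifferential operator $P(D)=\sum_{\alpha}c_{\alpha}\partial^{\alpha}$ of class $[M]$ with $P(D)\chi=\delta+\psi$, $\chi,\psi\in\mathcal{S}^{[M]}_{[A]}$, and even the weaker statement that $\sum_{\alpha}c_{\alpha}g^{(\alpha)}$ converges in $\mathcal{S}^{\prime[M]}_{[A]}$ for $g\in E$, both rest on estimates of the form $M_{\alpha+\beta}\leq C H^{|\alpha|+|\beta|}M_{\alpha}M_{\beta}$, i.e. on $(M.2)$. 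Condition $(M.2)'$ alone only controls one extra derivative at a time and does not make ultradifferential operators of class $[M]$ act on $\mathcal{S}^{[M]}_{[A]}$ or its dual; the Gelfand--Shilov parametrix constructions in the literature (e.g. Komatsu, Pilipovi\'c--Prangoski--Vindas) all assume $(M.2)$ and more. Condition \eqref{eq:GrowthCondModulation} cannot substitute for this: it constrains the modulation norms of $E$, not the weight sequence. So the ``main obstacle'' you defer is not a technicality to be carried out or cited; it is an obstruction that forces a different proof.

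What the paper does instead is purely functional-analytic. It packages the desired decomposition as the surjectivity of the map $S:\Lambda_{[M]}(E)\rightarrow\OC^{\prime}(\mathcal{S}^{[M]}_{[A]},E)$, $(f_{\alpha})_{\alpha}\mapsto\sum_{\alpha}f_{\alpha}^{(\alpha)}$, and proves surjectivity through the transpose via the abstract criterion of Lemma \ref{l:AbstractSurjCrit}: one needs $S^{t}$ to be injective with weakly closed range. The key input is Proposition \ref{p:dualConvSp}, which identifies $(\OC^{\prime}(\mathcal{S}^{[M]}_{[A]},E))^{\prime}_{b}$ with $\D^{[M]}_{E^{\prime}}$ by means of the STFT desingularization formula \eqref{eq:STFTDesingularizationTempUltra}; under this identification $S^{t}$ becomes $g\mapsto((-1)^{|\alpha|}g^{(\alpha)})_{\alpha}$, whose injectivity and weak closedness of range are checked directly. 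The topological bookkeeping (Mackey property of $\im S$, completeness of the quotient) is handled with the STFT mapping properties from Propositions \ref{p:STFTDE} and \ref{p:STFTConvSp}. If you want to salvage your argument, you would have to add $(M.2)$ to the hypotheses, which proves a strictly weaker theorem than the one stated.
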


Note that due to the relatively weak conditions on the weight sequences one cannot use the parametrix method \cite{K-Ultradistributions1}, as it is commonly done in the literature. As a result, a novel approach is needed. In the special case where $E = C_{0}$, the space of continuous functions vanishing at infinity, Theorem \ref{t:StructThm} was already shown in \cite{D-N-V-SpUltraDistVanishInf} (as well as weighted variants) by Debrouwere, the author, and Vindas. In their proof an abstract criterion for the surjectivity of a continuous linear mapping in terms of its transpose (cfr. \cite[Lemma 1]{D-N-V-SpUltraDistVanishInf}, see also Lemma \ref{l:AbstractSurjCrit}) was combined together with the computation of the dual of $\dot{\mathcal{B}}^{\prime [M]}$, the space of ultradistributions vanishing at infinity. The latter was done by exploiting the mapping properties of the short-time Fourier transform (STFT) \cite{G-FoundationsTimeFreqAnal}. Here we extend the technique to its full potential in order to prove Theorem \ref{t:StructThm}. In particular, we will show that $\D^{[M]}_{E^{\prime}}$ can be seen as the dual of $\OC^{\prime}(\mathcal{S}^{[M]}_{[A]}, E)$ by use of the so-called desingularization formula of the STFT. It is interesting to note that the STFT has recently shown itself to be a powerful tool in studying the structure and topological invariants of (generalized) function spaces, and it will take up a central role in this text as well; for other examples we refer the reader to \cite{B-O-CharSConvMulSpSTFT, D-N-WeightPLBSpUltraDiffFuncMultSp, D-N-V-SpUltraDistVanishInf, D-V-WeightedIndUltra, D-V-TopPropConvSpSTFT, G-Z-SpacesTestFuncSTFT}.

In Section \ref{sec:DTMIB} we consider the case of a DTMIB $E^{\prime}$. For instance, we show that $\OC^{\prime}(\mathcal{S}^{[M]}_{[A]}, E^{\prime})$ is the strong dual of $\D^{[M]}_{E}$. This allows one, by more classical means via the Hahn-Banach theorem, to establish an analog of Theorem \ref{t:StructThm} in the case of a DTMIB in the form of Theorem \ref{t:StructThmDualDE}. Moreover, as $\mathcal{S}^{[M]}_{[A]}$ is not necessarily dense in $E^{\prime}$, we may also consider its closure in $E^{\prime}$, which we denote by $\dot{E}^{\prime}$. Then $\dot{E}^{\prime}$ is also a TMIB. In Theorem \ref{t:closureGSindualDE} we show that $\OC^{\prime}(\mathcal{S}^{[M]}_{[A]}, \dot{E}^{\prime})$ is exactly the closure of $\mathcal{S}^{[M]}_{[A]}$ in $\OC^{\prime}(\mathcal{S}^{[M]}_{[A]}, E^{\prime})$. This generalizes the special case where $E = L^{1}$ and we find $E^{\prime} = L^{\infty}$ and $\dot{E}^{\prime} = C_{0}$, showing the equivalent definition of $\dot{\mathcal{B}}^{\prime [M]}$ as the convolutor space of $C_{0}$ and as the closure of $\mathcal{S}^{[M]}_{[A]}$ in $\mathcal{B}^{\prime [M]}$, the space of bounded ultradistributions.

Finally, in Section \ref{sec:ExtensionConv}, we use the structure established in Theorem \ref{t:StructThm} in order to define three extensions of convolution. In Theorem \ref{t:ExtensionConvSmoothCase} we show that the convolutions of elements of $\OC^{\prime}(\mathcal{S}^{[M]}_{[A]}, E)$ with test functions is actually contained in $\D^{[M]}_{E}$ and moreover the space of test functions may be enlarged. For example, we see that the convolution $* : \dot{\mathcal{B}}^{\prime [M]} \times \D^{[M]}_{L^{1}} \rightarrow \dot{\mathcal{B}}^{[M]}$ holds, with $\dot{\mathcal{B}}^{[M]} = \D^{[M]}_{C_{0}}$ the space of ultradifferentiable functions vanishing at infinity. Theorem \ref{t:ExtensionConvSmoothCaseTransp} then looks at the transpose of the previous result, defining the convolution between $\OC^{\prime}(\mathcal{S}^{[M]}_{[A]}, E^{\prime})$ and $\OC^{\prime}(\mathcal{S}^{[M]}_{[A]}, E)$. In the last extension, Theorem \ref{t:ExtendConvUltraDistCase}, we consider a direct definition of convolution between two types of convolutor spaces. A particular example is the convolution $* : \dot{\mathcal{B}}^{\prime [M]} \times \D^{\prime [M]}_{L^{1}} \rightarrow \dot{\mathcal{B}}^{\prime [M]}$, where $\D^{\prime [M]}_{L^{1}} = \OC^{\prime}(\mathcal{S}^{[M]}_{[A]}, L^{1})$ is the convolutor space of $L^{1}$.

\section{Preliminaries}
\label{sec:Preliminaries}

We review two preliminary subjects which will be used throughout this text. First, we consider weight sequences through which we define our spaces of ultradifferentiable functions and ultradistributions in the sense of Komatsu \cite{K-Ultradistributions1}. Second, we discuss vector-valued integration in the form of Pettis integrals specific to our context, which will allow us to uniformly extend operations such as convolution to several spaces. 

\subsection{Weight sequences}

A sequence $M = (M_{p})_{p \in \N}$ of positive real numbers is called a \emph{weight sequence} if $\lim_{p \rightarrow \infty} (M_{p} / M_{0})^{1/p} = \infty$. We will often assume one or more of the following conditions on a weight sequence $M$:
	\begin{description}
		\item[$(M.1)$] $M_{p}^{2} \leq M_{p - 1} M_{p + 1}$, $p \geq 1$;
		\item[$(M.2)'$] $M_{p + 1} \leq C_{0} H^{p} M_{p}$ for some $C_{0}, H \geq 1$;
		\item[$(M.2)$] $M_{p + q} \leq C_{0} H^{p + q} M_{p} M_{q}$ for some $C_{0}, H \geq 1$;
		\item[$(M.3)'$] $\sum_{p = 1}^{\infty} M_{p - 1} / M_{p} < \infty$.
	\end{description}
We refer the reader to \cite{K-Ultradistributions1} for the meaning of these conditions. If $M$ satisfies $(M.1)$, then we have
	\begin{equation}
		\label{eq:M1product}
		M_{p} M_{q} \leq M_{0} M_{p + q} , \qquad \forall p, q \in \N .
	\end{equation} 
Throughout this text, the constants $C_{0}$ and $H$ are always subject to the conditions $(M.2)'$ and $(M.2)$. For any two weight sequences $M$ and $N$ we write $M \subset N$ if there are $C, L > 0$ such that $M_{p} \leq C L^{p} N_{p}$ for all $p \in \N$. Also, for a multi-index $\alpha \in \N^{d}$, we simply write $M_{\alpha} = M_{|\alpha|}$. Well-known examples are the Gevrey sequences $M_{G, s} = (p!^{s})_{p \in \N}$, $s > 0$, which always satisfy $(M.1)$ and $(M.2)$, while $(M.3)'$ is valid if and only if $s > 1$.

Let $M$ be a weight sequence. For any compact $K \Subset \R^{d}$ and $\ell > 0$ we write $\D^{M, \ell}_{K}$ for the Banach space of all $\varphi \in C^{\infty}(\R^{d})$ such that $\supp \varphi \subseteq K$ and
	\[ \|\varphi\|_{\D^{M, \ell}_{K}} = \sup_{(\alpha, x) \in \N^{d} \times K} \frac{|\varphi^{(\alpha)}(x)|}{\ell^{|\alpha|} M_{\alpha}} < \infty . \]
Then, we define the \emph{space of compactly supported ultradifferentiable functions (of Beurling and Roumieu type)}
	\[ \D^{(M)} = \varinjlim_{K \Subset \R^{d}} \varprojlim_{\ell \rightarrow 0^{+}} \D^{M, \ell}_{K} , \qquad \D^{\{M\}} = \varinjlim_{K \Subset \R^{d}} \varinjlim_{\ell \rightarrow \infty} \D^{M, \ell}_{K} . \]
We will use $\D^{[M]}$ as a common notation for $\D^{(M)}$ and $\D^{\{M\}}$. Moreover, we will often first state assertions for $\D^{(M)}$ followed in parenthesis by the corresponding statement for $\D^{\{M\}}$. Similar conventions will be used in the sequel for other spaces and notations. The strong dual of $\D^{[M]}$, denoted by $\D^{\prime [M]}$, is called the \emph{space of ultradistributions}. Now, if $M$ satisfies $(M.1)$, then, $\D^{[M]}$ is non-trivial if and only if $M$ satisfies $(M.3)'$ by \cite[Theorem 4.2]{K-Ultradistributions1}. For this reason, we call a weight sequence $M$ \emph{non-quasianalytic} if it satisfies $(M.3)'$. 

The \emph{associated function} of a weight sequence $M$ is defined as
	\[ \omega_{M}(t) = \sup_{p \in \N} \log \frac{t^{p} M_{0}}{M_{p}} , \qquad t > 0 , \]
and $\omega_{M}(0) = 0$. We define $\omega_{M}$ on $\R^{d}$ as the radial function $\omega_{M}(x) = \omega_{M}(|x|)$, $x \in \R^{d}$. Then, $\omega_{M}$ is a non-negative continuous function on $\R^{d}$ which vanishes in a neigborhood of the origin.

We will always assume that a weight sequence $M$ satisfies $(M.1)$ and $(M.2)'$. Consequently, the associated function enjoys the following properties:
	\begin{itemize}
		\item[$\bullet$] For any $x, y \in \R^{d}$ we have \cite[Lemma 2.1.3]{C-K-P-BoundValConvUltaDist}
			\begin{equation}
				\label{eq:M1}
				\omega_{M}(x + y) \leq \omega_{M}(2x) + \omega_{M}(2y)  .
			\end{equation}
		\item[$\bullet$] For any $k > 0$ we have the following bound \cite[Proposition 3.4]{K-Ultradistributions1},
			\begin{equation}
				\label{eq:M2'}
				\omega_{M}(t) - \omega_{M}(kt) \leq - \frac{\log(t / C_{0}) \log k}{\log H} , \qquad t > 0 . 
			\end{equation}
		In particular, we have that, for any $q > 0$, $e^{\omega_{M}(q \cdot)} / e^{\omega_{M}(qH^{d + 1} \cdot)} \in L^{1}$, a fact we will often use throughout this text. 
	\end{itemize}
	
\subsection{Pettis integrals}

Let $E$ be a lcHs (= locally convex Hausdorff space) and denote by $\csn(E)$ the set of all continuous seminorms on $E$. A function $f : \R^{d} \rightarrow E$ is called \emph{weakly integrable} if $\phi \circ f \in L^{1}$ for any $\phi \in E^{\prime}$. If there exists an $e \in E$ such that
	\[ \ev{\phi}{e} = \int_{\R^{d}} \ev{\phi}{f(x)} dx , \qquad \forall \phi \in E^{\prime} , \]
then we call $e$ the \emph{Pettis integral} of $f$ and we write $e = \int_{\R^{d}} f d\mu$, where $\mu$ denotes the Lebesgue measure. Note that the Pettis integral is unique and linear if it exists. If $F$ is another lcHs and $A : E \rightarrow F$ is a continuous linear map, then $A(f)$ is also weakly integrable, and if $f$ has a Pettis integral then so does $A(f)$ with
	\begin{equation}
		\label{eq:LinearMapPettisIntegral}
		A\left(\int_{\R^{d}} f d\mu\right) = \int_{\R^{d}} A(f) d\mu .
	\end{equation}
In particular, if $E$ is continuously embedded in $F$, and $f$ is weakly integrable with Pettis integral in $E$, then it is also weakly integrable in $F$ with the same Pettis integral. 

For any function $f : \R^{d} \rightarrow E$, it suffices that $p \circ f \in L^{1}$ for any $p \in \csn(E)$ in order for $f$ to be weakly integrable. Moreover, if $E$ is complete and $f$ is continuous, this also guarantees the existence of the Pettis integral.

	\begin{lemma}
		\label{l:SuffCondExistencePettisIntegral}
		Let $E$ be a complete lcHs and $f : \R^{d} \rightarrow E$ be continuous. If $p \circ f \in L^{1}$ for any $p \in \csn(E)$, then the Pettis integral of $f$ exists. Moreover, we have
			\begin{equation}
				\label{eq:UppdarBoundSeminormPettisIntegral}
				p\left(\int_{\R^{d}} f d\mu\right) \leq \int_{\R^{d}} p(f) d\mu , \qquad \forall p \in \csn(E) . 
			\end{equation}
	\end{lemma}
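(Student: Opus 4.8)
The plan is to construct the Pettis integral as a limit: first build an integral over compact cubes through Riemann sums, then exhaust $\R^{d}$, and read off the bound \eqref{eq:UppdarBoundSeminormPettisIntegral} from the analogous bound for the Riemann sums.

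\emph{Integration over a compact cube.} First I would fix a compact cube $Q \Subset \R^{d}$ and, for each tagged partition $\Pi = (Q_{j}, \xi_{j})_{j}$ of $Q$ into finitely many subcubes $Q_{j}$ with tags $\xi_{j} \in Q_{j}$, form the Riemann sum $S_{\Pi} = \sum_{j} |Q_{j}|\, f(\xi_{j}) \in E$. Since $Q$ is compact and $f$ is continuous, $f|_{Q}$ is uniformly continuous with respect to each $p \in \csn(E)$, so a standard argument via common refinements shows that for every $p \in \csn(E)$ and $\varepsilon > 0$ there is $\delta > 0$ with $p(S_{\Pi} - S_{\Pi'}) < \varepsilon$ whenever $\Pi$ and $\Pi'$ have mesh smaller than $\delta$. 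Hence the net of Riemann sums is Cauchy, and by completeness of $E$ it converges to an element I will denote $\int_{Q} f\, d\mu$. Two facts pass to this limit. For $\phi \in E^{\prime}$, continuity of $\phi$ gives $\ev{\phi}{\int_{Q} f\, d\mu} = \lim_{\Pi} \sum_{j} |Q_{j}|\, \ev{\phi}{f(\xi_{j})} = \int_{Q} \ev{\phi}{f(x)}\, dx$, the last step because $\ev{\phi}{f(\cdot)}$ is continuous on $Q$. And $p(S_{\Pi}) \leq \sum_{j} |Q_{j}|\, p(f(\xi_{j}))$ is dominated by a Riemann sum of the continuous function $p \circ f$, so letting the mesh tend to $0$ and using continuity of $p$ yields $p\big(\int_{Q} f\, d\mu\big) \leq \int_{Q} p(f)\, d\mu$.

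\emph{Exhaustion and identification.} Next I would set $Q_{n} = [-n, n]^{d}$ and $e_{n} = \int_{Q_{n}} f\, d\mu$. By linearity $e_{m} - e_{n} = \int_{Q_{m} \setminus Q_{n}} f\, d\mu$ for $m \geq n$, so $p(e_{m} - e_{n}) \leq \int_{\R^{d} \setminus Q_{n}} p(f)\, d\mu \to 0$ as $n \to \infty$ because $p \circ f \in L^{1}$; hence $(e_{n})_{n}$ is Cauchy in $E$ and converges to some $e \in E$. To identify $e$, note that any $\phi \in E^{\prime}$ satisfies $|\ev{\phi}{\cdot}| \leq p$ for some $p \in \csn(E)$, so $\phi \circ f$ is dominated by $p \circ f \in L^{1}$ (in particular $f$ is weakly integrable), and by continuity of $\phi$ and dominated convergence $\ev{\phi}{e} = \lim_{n} \ev{\phi}{e_{n}} = \lim_{n} \int_{Q_{n}} \ev{\phi}{f(x)}\, dx = \int_{\R^{d}} \ev{\phi}{f(x)}\, dx$; thus $e = \int_{\R^{d}} f\, d\mu$. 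Finally, continuity of $p$ and the monotone convergence theorem give $p(e) = \lim_{n} p(e_{n}) \leq \lim_{n} \int_{Q_{n}} p(f)\, d\mu = \int_{\R^{d}} p(f)\, d\mu$, which is \eqref{eq:UppdarBoundSeminormPettisIntegral}.

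I expect the only genuinely delicate point to be the Cauchy estimate for the Riemann-sum net in the first step; this is precisely the classical construction of the Riemann integral of a continuous function with values in a complete locally convex space, and once it is in place everything else is a soft limiting argument using dominated convergence, monotone convergence, and the continuity of seminorms.
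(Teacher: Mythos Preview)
Your proof is correct and follows the same overall architecture as the paper's: build the integral on compacta, show the truncated integrals form a Cauchy family in $E$ using the $L^{1}$-hypothesis, and identify the limit via dominated convergence. The implementation differs in two places. First, where you construct $\int_{Q} f\,d\mu$ from scratch via Riemann sums and uniform continuity, the paper simply cites the existence of the Pettis integral of a compactly supported continuous function from standard references (Rudin and Tr\`{e}ves); your route is more self-contained, the paper's is shorter. Second, you obtain the seminorm bound \eqref{eq:UppdarBoundSeminormPettisIntegral} by passing Riemann-sum estimates to the limit, whereas the paper derives it in one line from the bipolar theorem (the polar of $\{p\leq 1\}$ consists of functionals with $|\phi|\leq p$, and the Pettis integral commutes with functionals) and then uses this bound to run the Cauchy argument. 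A minor cosmetic difference is that the paper truncates with smooth cutoffs $\chi_{R}$ rather than sharp cubes $Q_{n}$, but this plays no real role. One small point to tidy: when you write $e_{m}-e_{n}=\int_{Q_{m}\setminus Q_{n}} f\,d\mu$, you are implicitly using additivity over a region that is not itself a cube; it decomposes into finitely many boxes, so this is harmless, but worth a word.
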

	
	\begin{proof}
		We first note that should the Pettis integral of $f$ exist, then \eqref{eq:UppdarBoundSeminormPettisIntegral} follows immediately from the bipolar theorem and \eqref{eq:LinearMapPettisIntegral}. Let $\chi$ be a continuous function on $\R^{d}$ so that $0 \leq \chi \leq 1$, $\chi(x) = 1$ for $|x| \leq 1$, and $\chi(x) = 0$ for $|x| > 2$. Now, for any $R > 0$ put $\chi_{R} = \chi(\cdot / R)$ and write $f_{R} = \chi_{R} \cdot f$. Then $f_{R}$ is a continuous function $\R^{d} \rightarrow E$ with compact support, whence its Pettis integral exists (by \cite[Theorem 3.27]{R-FuncAnal} combined with the corollary after \cite[Proposition 7.11]{T-TVSDistKern}). By \eqref{eq:UppdarBoundSeminormPettisIntegral} we see that $p(\int_{\R^{d}} f_{S} - f_{R} d\mu) \leq \int_{|x| > R} p(f) d\mu$ for any $0 < R < S$ and $p \in \csn(E)$, from which it follows that $(\int_{\R^{d}} f_{R} d\mu)_{R > 0}$ is a Cauchy net and thus converges to some $e \in E$. As $|\ev{\phi}{f_{R}(x)}| \leq |\ev{\phi}{f(x)}|$ for any $x \in \R^{d}$, $R > 0$, and $\phi \in E^{\prime}$, it follows from \eqref{eq:LinearMapPettisIntegral} and the dominated convergence theorem that
			\[ \ev{\phi}{e} = \lim_{R \rightarrow \infty} \ev{\phi}{\int_{\R^{d}} f_{R} d\mu} = \lim_{R \rightarrow \infty} \int_{\R^{d}} \ev{\phi}{f_{R}} d\mu = \int_{\R^{d}} \ev{\phi}{f} d\mu , \]
		for all $\phi \in E^{\prime}$. We may thus conclude that $e$ is the Pettis integral of $f$.
	\end{proof}

As a corollary of Lemma \ref{l:SuffCondExistencePettisIntegral}, we may now state the dominated convergence theorem for Pettis integrals.

	\begin{lemma}
		\label{l:PettisIntegralsDominatedConvergence}
		Let $E$ be a complete lcHs and $f_{n} : \R^{d} \rightarrow E$ be a sequence of continuous weakly integrable functions that converge pointwisely almost everywhere to a continuous function $f : \R^{d} \rightarrow E$. Suppose that for every $p \in \csn(E)$ there is a function $g_{p} \in L^{1}$ such that $p(f_{n}) \leq g_{p}$ for every $n \in \N$ almost everywhere. Then, $f$ is weakly integrable with Pettis integral and
			\[ \int_{\R^{d}} f d\mu = \lim_{n \rightarrow \infty} \int_{\R^{d}} f_{n} d\mu . \]
	\end{lemma}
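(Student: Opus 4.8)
The plan is to deduce this from Lemma~\ref{l:SuffCondExistencePettisIntegral}. First I would observe that the hypotheses already guarantee weak integrability of each $f_n$ and pointwise a.e.\ convergence to the continuous function $f$; the domination hypothesis, applied through a single $p \in \csn(E)$, shows $p(f) \leq g_p$ a.e.\ by taking the limit, so $p \circ f \in L^1$ for every $p$, and hence by Lemma~\ref{l:SuffCondExistencePettisIntegral} the Pettis integral $\int_{\R^d} f\, d\mu$ exists. It remains to identify it as the limit of the $\int_{\R^d} f_n\, d\mu$.

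Next I would test against an arbitrary $\phi \in E'$. For each such $\phi$ there is, by continuity, a $p \in \csn(E)$ with $|\langle \phi, e\rangle| \leq p(e)$ for all $e$; then $|\langle \phi, f_n(x)\rangle| \leq p(f_n(x)) \leq g_p(x)$ a.e., uniformly in $n$, and $\langle \phi, f_n(x)\rangle \to \langle \phi, f(x)\rangle$ pointwise a.e.\ by continuity of $\phi$. The scalar dominated convergence theorem applies, giving
\[
\lim_{n \to \infty} \int_{\R^d} \langle \phi, f_n\rangle\, d\mu = \int_{\R^d} \langle \phi, f\rangle\, d\mu = \langle \phi, \textstyle\int_{\R^d} f\, d\mu\rangle .
\]
By the definition of the Pettis integral, $\int_{\R^d} \langle \phi, f_n\rangle\, d\mu = \langle \phi, \int_{\R^d} f_n\, d\mu\rangle$, so the sequence $\langle \phi, \int_{\R^d} f_n\, d\mu\rangle$ converges to $\langle \phi, \int_{\R^d} f\, d\mu\rangle$ for every $\phi \in E'$; that is, $\int_{\R^d} f_n\, d\mu \to \int_{\R^d} f\, d\mu$ weakly.

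Finally I would upgrade weak convergence to convergence in $E$. Here I would use the same Cauchy-net argument as in the proof of Lemma~\ref{l:SuffCondExistencePettisIntegral}, but applied to tails: for $\varepsilon > 0$ and $p \in \csn(E)$, choose $R$ with $\int_{|x|>R} g_p\, d\mu < \varepsilon$; then $p\big(\int_{\R^d} f_n\, d\mu - \int_{|x|\leq R} f_n\, d\mu\big) \leq \int_{|x|>R} g_p\, d\mu < \varepsilon$ by \eqref{eq:UppdarBoundSeminormPettisIntegral}, uniformly in $n$, and likewise for $f$; meanwhile $\int_{|x|\leq R} f_n\, d\mu \to \int_{|x|\leq R} f\, d\mu$ in $E$ since on the compact ball one has genuine (dominated) convergence of continuous $E$-valued integrands and $E$ is complete. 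Combining the three estimates via the triangle inequality for $p$ bounds $p\big(\int f_n\, d\mu - \int f\, d\mu\big)$ by $3\varepsilon$ for $n$ large, which gives convergence in $E$. The main obstacle is precisely this last step: weak convergence of a net of Pettis integrals need not imply convergence in the topology of $E$ without exploiting the uniform tail domination, so the argument must be carried out with the truncations $\chi_R \cdot f_n$ rather than appealing to any abstract compactness or to completeness alone.
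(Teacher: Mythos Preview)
Your argument is correct, but it takes a longer path than necessary. The paper's proof is direct: once the Pettis integral of $f$ exists (your first paragraph, done the same way), the paper simply observes that $p(f - f_n) \leq p(f) + p(f_n) \leq 2g_p$ a.e.\ and $p(f - f_n) \to 0$ pointwise a.e.\ by continuity of $p$, so the \emph{scalar} dominated convergence theorem gives $\int_{\R^d} p(f - f_n)\, d\mu \to 0$. Then \eqref{eq:UppdarBoundSeminormPettisIntegral} yields
\[
p\Big(\int_{\R^d} f\, d\mu - \int_{\R^d} f_n\, d\mu\Big) \leq \int_{\R^d} p(f - f_n)\, d\mu \to 0,
\]
which is convergence in $E$. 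No weak-convergence step and no truncation are needed.

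Your second paragraph (weak convergence via $\phi \in E'$) is thus correct but superfluous. In your third paragraph, the crucial claim that $\int_{|x|\leq R} f_n\, d\mu \to \int_{|x|\leq R} f\, d\mu$ in $E$ is, if made precise, justified exactly by the seminorm bound \eqref{eq:UppdarBoundSeminormPettisIntegral} plus scalar dominated convergence applied to $p(f_n - f)$ on the ball; but nothing about the compact ball is used there, and the same reasoning works on all of $\R^d$, which collapses your whole tail/truncation scheme into the paper's one-line argument. If instead you intended ``genuine (dominated) convergence on the compact ball'' to mean something like uniform convergence or a compactness argument for the integrands, be aware that pointwise a.e.\ convergence gives neither, so that alternative route would not close. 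One small omission: you use $\int f_n\, d\mu$ in step two before noting that the Pettis integrals of the $f_n$ actually exist; this follows from $p(f_n) \leq g_p$ and Lemma~\ref{l:SuffCondExistencePettisIntegral}, and the paper makes this explicit.
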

	
	\begin{proof}
		If $f_{n} \rightarrow f$ pointwise almost everywhere with respect to the Lebesgue measure, then specifically $p(f) \rightarrow p(f_{n})$ pointwise almost everywhere for any $p \in \csn(E)$. Hence, by applying the scalar dominated convergence theorem we see that $p(f) \in L^{1}$ for any $p \in \csn(E)$. Lemma \ref{l:SuffCondExistencePettisIntegral} then shows that $f$ is weakly integrable with Pettis integral. Additionally, as $p(f - f_{n}) \leq 2g_{p}$ almost everywhere for any $p \in \csn(E)$, we see by another application of the scalar dominated convergence theorem that $\int_{\R^{d}} p(f - f_{n}) d\mu \rightarrow 0$. Also note that every $f_{n}$ has a Pettis integral by Lemma \ref{l:SuffCondExistencePettisIntegral}. Hence, by \eqref{eq:UppdarBoundSeminormPettisIntegral}, we find that $\int_{\R^{d}} f d\mu = \lim_{n} \int_{\R^{d}} f_{n} d\mu$. 
	\end{proof}
	
\section{Weighted vector-valued continuous functions}
\label{sec:WeighVVContFunc}

We now introduce spaces of vector-valued continuous functions weighted with respect to the associated function of some weight sequence. These may be thought of as frequency spaces, which will characterize the spaces we are interested in via the STFT. As a result, many properties of the frequency spaces are reflected in the associated spaces.

 Let $E$ be a lcHs and let $w : \R^{d} \rightarrow (0, \infty)$ be a continuous function. We define $C_{w}(E)$ as the lcHs of all continuous functions $\Phi : \R^{d} \rightarrow E$ such that
	\[ \sup_{\xi \in \R^{d}} w(\xi) p(\Phi(\xi)) < \infty , \qquad \forall p \in \csn(E) . \]
With respect to a weight sequence $M$, we then define the spaces
	\[ C_{(M)}(E) = \varprojlim_{q \rightarrow \infty} C_{e^{\omega_{M}(q \cdot)}}(E), \quad C_{\{M\}}(E) = \varinjlim_{q \rightarrow 0^{+}} C_{e^{\omega_{M}(q \cdot)}}(E) , \]
and
	\[ C_{(M), d}(E) = \varinjlim_{q \rightarrow \infty} C_{e^{- \omega_{M}(q \cdot)}}(E), \quad C_{\{M\}, d}(E) = \varprojlim_{q \rightarrow 0^{+}} C_{e^{- \omega_{M}(q \cdot)}}(E) .  \]	
In the special case where $E = \R^{d}$, we will simply write $C_{[M]} = C_{[M]}(\R^{d})$ and $C_{[M], d} = C_{[M], d}(\R^{d})$.

Topologically, the spaces we consider have the ensuing structure.

	\begin{lemma}
		\label{l:TopCMSp}
		\begin{itemize}
			\item[$(i)$] If $E$ is a Fr\'{e}chet space, then, $C_{(M)}(E)$ and $C_{\{M\}, d}(E)$ are Fr\'{e}chet spaces. Moreover, if $E$ is a Banach space, then the spaces are quasinormable.
			\item[$(ii)$] If $E = \varinjlim_{n} E_{n}$ is a compactly regular $(LB)$-space, then, $C_{\{M\}}(E)$ and $C_{(M), d}(E)$ are complete and thus regular $(LB)$-spaces, and moreover we have
				\[ C_{\{M\}}(E) = \varinjlim_{n \rightarrow \infty} C_{e^{\omega_{M}(\cdot / n)}}(E_{n}) \quad \text{and} \quad C_{(M), d}(E) = \varinjlim_{n \rightarrow \infty} C_{e^{-\omega_{M}(n \cdot)}}(E_{n}) . \]
		\end{itemize}
	\end{lemma}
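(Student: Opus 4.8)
The strategy is to reduce each assertion to known stability properties of projective and inductive limits, handling the four spaces via the obvious pairing: $C_{(M)}(E)$ and $C_{\{M\},d}(E)$ are countable reduced projective limits of the weighted spaces $C_{e^{\pm\omega_M(q\cdot)}}(E)$, whereas $C_{\{M\}}(E)$ and $C_{(M),d}(E)$ are countable inductive limits of such spaces. I would begin with a preparatory observation: for a fixed continuous weight $w$, the space $C_w(E)$ is, as a locally convex space, isomorphic to the space $C_0$-type weighted space $C_b(E)$ via multiplication by $w$, and in particular inherits completeness from $E$; moreover its canonical seminorms are $\Phi \mapsto \sup_\xi w(\xi)\, p(\Phi(\xi))$ indexed by $p \in \csn(E)$. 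Since for $q' \geq q$ (Beurling side) we have $e^{\omega_M(q\cdot)} \leq e^{\omega_M(q'\cdot)}$ pointwise, the linking maps in the projective spectrum for $C_{(M)}(E)$ are the natural inclusions, and symmetrically for the inductive spectra; this identifies all four objects as the stated (projective or inductive) limits over a cofinal sequence $q = n \to \infty$ or $q = 1/n \to 0^+$.

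For part $(i)$: if $E$ is Fr\'echet, each $C_{e^{\omega_M(q\cdot)}}(E)$ is Fr\'echet (countably many seminorms from $\csn(E)$, completeness as above using that the uniform limit of continuous $E$-valued functions with the weight bound is again continuous and weighted-bounded since $E$ is complete), so a countable reduced projective limit of Fr\'echet spaces is Fr\'echet — this gives $C_{(M)}(E)$ and $C_{\{M\},d}(E)$. For quasinormability when $E$ is Banach, I would invoke the standard criterion: a projective limit $\varprojlim_n X_n$ with linking maps $\iota_{n+1}^n$ is quasinormable provided that for every $n$ there is $m \geq n$ such that the bounded sets of $X_m$ are "small" relative to $X_n$ in the sense that $\forall \varepsilon>0\ \exists$ bounded $B \subseteq X_m$ with $U_m^n \subseteq \varepsilon U_n + B$ where $U_k$ denotes the unit ball. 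The crucial input is the integrability fact recorded after \eqref{eq:M2'}: $e^{\omega_M(q\cdot)}/e^{\omega_M(qH^{d+1}\cdot)} \in L^1$, and more to the point $e^{\omega_M(q\cdot)}/e^{\omega_M(q'\cdot)} \to 0$ at infinity for $q' > q$ by \eqref{eq:M2'}. This says the inclusion $C_{e^{\omega_M(q'\cdot)}}(E) \hookrightarrow C_{e^{\omega_M(q\cdot)}}(E)$ is "almost compact" in the $C_0$-sense: outside a large ball the $q$-weighted norm of a $q'$-unit-ball element is $\leq \varepsilon$, and inside the ball the functions form a bounded equicontinuous family. Splitting $\Phi$ via a cutoff $\chi_R$ as in the proof of Lemma \ref{l:SuffCondExistencePettisIntegral} yields exactly the decomposition $U_{q'} \subseteq \varepsilon U_q + B_R$ required by the quasinormability criterion.

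For part $(ii)$: here $E = \varinjlim_n E_n$ is assumed compactly regular, and the goal is twofold — identify $C_{\{M\}}(E)$ with $\varinjlim_n C_{e^{\omega_M(\cdot/n)}}(E_n)$ (and the dual-side analog), and deduce completeness/regularity. For the identification I would argue that a continuous $\Phi : \R^d \to E$ with $\sup_\xi e^{\omega_M(\xi/n_0)} p(\Phi(\xi)) < \infty$ for some $n_0$ has relatively compact (hence bounded in some $E_m$) range after multiplication by the weight, using compact regularity to push $\Phi$ into a single step $E_m$; continuity of $\Phi$ as an $E_m$-valued map then follows from compact regularity (a convergent-in-$E$ net with range in a fixed compact subset of some $E_m$ converges in $E_m$). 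Conversely the maps $C_{e^{\omega_M(\cdot/n)}}(E_n) \to C_{\{M\}}(E)$ are continuous, giving the topological identity of the inductive limits. Completeness and regularity of the resulting $(LB)$-space then follow because each step $C_{e^{\omega_M(\cdot/n)}}(E_n)$ is a Banach (or Fr\'echet) space and the spectrum is again compactly regular — the relative compactness of weighted ranges transfers from $(E_n)_n$ to the function-space level — and a compactly regular $(LB)$-space is complete and regular by Neus–Vogt / Floret type theorems. I expect the main obstacle to be precisely this last transfer of compact regularity: one must show that a bounded subset $\mathcal{B}$ of $C_{\{M\}}(E)$ is not only contained in some step but is relatively compact there, which requires combining the equicontinuity and range-compactness of $\mathcal{B}$ (an Arzel\`a–Ascoli argument in the $C_0$-weighted setting, where the weight decay at infinity supplied by \eqref{eq:M2'} provides the needed uniform smallness at infinity) with the compact regularity of $E$ to land the relevant compact sets inside a single $E_m$. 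The dual-side statements $C_{\{M\},d}(E)$ Fr\'echet and $C_{(M),d}(E) = \varinjlim_n C_{e^{-\omega_M(n\cdot)}}(E_n)$ are handled by the same two arguments with the roles of $0^+$ and $\infty$ and of decreasing/increasing weights interchanged.
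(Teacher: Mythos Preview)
Your plan for part $(i)$ is essentially the paper's argument: the Fr\'echet assertion is immediate, and for quasinormability the paper verifies the criterion of \cite[Lemma 26.14]{M-V-IntroFuncAnal} by exactly the cutoff construction you describe, using the decay $e^{\omega_M(q\cdot)}/e^{\omega_M(2^{\log H}q\cdot)} \to 0$ from \eqref{eq:M2'}. (Your phrasing of the criterion is slightly off --- the approximant $\Phi_\varepsilon$ must be bounded in \emph{every} seminorm of the projective limit, not just in one step $X_m$ --- but since $\Phi_\varepsilon = \chi_\varepsilon \Phi$ has compact support this is automatic, so the argument goes through.)

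For part $(ii)$ there is a real gap. You claim that for $\Phi \in C_{e^{\omega_M(\cdot/n_0)}}(E)$ the weighted range $\{e^{\omega_M(\xi/n_0)}\Phi(\xi)\}$ is \emph{relatively compact} in $E$; but you only know it is \emph{bounded}, and compact regularity says nothing about bounded sets. More seriously, your proposed transfer of compact regularity to the function-space level via Arzel\`a--Ascoli cannot work as stated: you write that a \emph{bounded} subset $\mathcal{B}$ of $C_{\{M\}}(E)$ should be shown relatively compact in some step by ``combining the equicontinuity and range-compactness of $\mathcal{B}$'', but bounded sets of continuous functions are not equicontinuous --- only compact sets are --- so Arzel\`a--Ascoli does not apply, and there is no reason a merely bounded $\mathcal{B}$ should be relatively compact in any step. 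Even if you repair this to start from a compact $\mathcal{K}$, landing the (now equicontinuous) family in a single $E_m$ uniformly in $\xi$ is delicate and not addressed.

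The paper avoids all of this by invoking \cite[Corollary 3.3]{B-M-S-ProjDescrWeighIndLim}: that result directly gives completeness of $C_{\{M\}}(E)$ together with the double inductive limit description $C_{\{M\}}(E) = \varinjlim_m \varinjlim_n C_{e^{\omega_M(\cdot/m)}}(E_n)$. From the double limit the diagonal $\varinjlim_n C_{e^{\omega_M(\cdot/n)}}(E_n)$ coincides with $C_{\{M\}}(E)$ as a set and is continuously included, and De Wilde's open mapping theorem then upgrades this to a topological identification. You should either cite \cite{B-M-S-ProjDescrWeighIndLim} as the paper does, or be prepared to essentially reprove that corollary.
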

	
	\begin{proof}
		$(i)$ We show the statement for $C_{(M)}(E)$, the case of $C_{\{M\}, d}(E)$ is analogous. Clearly $C_{(M)}(E)$ is a Fr\'{e}chet space. Now suppose $(E, \|\cdot\|_{E})$ is a Banach space. To show that $C_{(M)}(E)$ is quasinormable, it suffices to verify that \cite[Lemma 26.14]{M-V-IntroFuncAnal}
			\begin{gather*} 
				\forall q > 0 ~\forall r > 0 ~ \forall \varepsilon \in (0,1] ~\exists R > 0 
				\quad \forall \Phi \in C_{(M)}(E) \text{ with } \norm{\Phi}_{C_{e^{\omega_{M}(2^{\log H} q \cdot)}}(E)} \leq 1 \\ 
			 	\exists \Phi_{\varepsilon} \in C_{(M)}(E) \text{ with } \norm{\Phi_{\varepsilon}}_{C_{e^{\omega_{M}(r \cdot)}}(E)} \leq R \text{ such that } \norm{\Phi - \Phi_{\varepsilon}}_{C_{e^{\omega_{M}(q \cdot)}}(E)} \leq \varepsilon.
			\end{gather*}
		Take any $q > 0$. For any $\varepsilon \in (0, 1]$ we set $\theta = - \log_{2} \varepsilon$ and choose any continuous function $\chi_{\varepsilon} : \R^{d} \rightarrow [0, 1]$ such that $\chi_{\varepsilon}(\xi) = 1$ for $|\xi| \leq C_{0} e^{\theta} / q$ and $\chi_{\varepsilon}(\xi) = 0$ for $|\xi| \geq C_{0} e^{\theta} / q + 1$. For any $\Phi \in C_{(M)}(E)$ with $\norm{\Phi}_{C_{e^{\omega_{M}(2^{\log H} q \cdot)}}(E)} \leq 1$ we put $\Phi_{\varepsilon}  = \chi_{\varepsilon} \cdot \Phi$. Then, for arbitrary $r > 0$,
			\[ \sup_{\xi \in \R^{d}} \norm{\Phi_{\varepsilon}(\xi)}_{E} e^{\omega_{M}(r \xi)} = \sup_{\xi \in \R^{d}} |\chi_{\varepsilon}(\xi)| \norm{\Phi(\xi)}_{E} e^{\omega_{M}(r \xi)} \leq \sup_{|\xi| \leq C_{0} e^{\theta} / q + 1} \frac{e^{\omega_{M}(r \xi)}}{e^{\omega_{M}(2^{\log H} q \xi)}} .  \]
		On the other hand, we have by \eqref{eq:M2'},
			\begin{multline*}
				\sup_{\xi \in \R^{d}} \norm{\Phi(\xi) - \Phi_{\varepsilon}(\xi)}_{E} e^{\omega_{M}(q \xi)} \leq \\ \sup_{|\xi| > C_{0} e^{\theta} / q} \norm{\Phi(\xi)}_{E} e^{\omega_{M}(q \xi)} \leq \sup_{|\xi| > C_{0} e^{\theta} / q} e^{\omega_{M}(q \xi) - \omega_{M}(2^{\log H} q \xi)} \leq \sup_{|\xi| > C_{0} e^{\theta} / q} (1 / 2)^{\log q |\xi| / C_{0}} \leq \varepsilon . 
			\end{multline*}
	
		$(ii)$ We again only consider the statement for $C_{\{M\}}(E)$, the case of $C_{(M), d}(E)$ is done analogously. $C_{\{M\}}(E)$ is complete by \cite[Corollary 3.3]{B-M-S-ProjDescrWeighIndLim} and the remark following it shows that
			\[ C_{\{M\}}(E) = \lim_{m \rightarrow \infty} \lim_{n \rightarrow \infty} C_{e^{\omega_{M}(\cdot / m)}}(E_{n}) . \]
		Then $\varinjlim_{n} C_{e^{\omega_{M}(\cdot / n)}}(E_{n})$ is continuously contained in $C_{\{M\}}(E)$ and even more the spaces coincide as sets. By De Wilde's open mapping theorem it now follows that the spaces coincide topologically as well. 
	\end{proof}

The duality relation between the spaces $C_{[M]}(E)$ and $C_{[M], d}(E)$ is explored in the next result. 

	\begin{lemma}
		\label{l:InducedBilinearMap}
		Let $E$ be a Fr\'{e}chet space (a compactly regular $(LB)$-space), $F$ be a compactly regular $(LB)$-space (a Fr\'{e}chet space), $G$ be a complete lcHs, and let $\{ \Gamma_{\xi} : \xi \in \R^{d} \}$ be a subset of $L(G, G)$ such that
			\begin{gather} 
				\forall p_{0} \in \csn(G) ~ \exists p_{1} \in \csn(G) ~ \forall q_{0} > 0 ~ \exists q_{1} > 0 ~ ( \forall q_{1} > 0 ~ \exists q_{0} > 0) ~ \exists C > 0 ~ \forall g \in G : \nonumber \\ p_{0}(\Gamma_{\xi} g) e^{\omega_{M}(q_{0} \xi)} \leq C p_{1}(g) e^{\omega_{M}(q_{1} \xi)} .  \label{eq:InducedBilinearMapCond}
			\end{gather}
		Consider a hypocontinuous bilinear map $A : E \times F \rightarrow G$. Then,
			\begin{equation}
				\label{eq:InducedBilinearMap} 
				C_{[M]}(E) \times C_{[M], d}(F) \rightarrow G : \quad (\Phi_{E}, \Phi_{F}) \mapsto \int_{\R^{d}} \Gamma_{\xi} A(\Phi_{E}(\xi), \Phi_{F}(-\xi)) d\xi , 
			\end{equation}
		is a well-defined hypocontinuous bilinear map, where the integral in \eqref{eq:InducedBilinearMap} is a Pettis integral.
	\end{lemma}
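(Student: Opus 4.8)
The plan is to treat the bilinear map of \eqref{eq:InducedBilinearMap}, call it $\mathcal{A}$, in three steps: (a) for fixed $\Phi_E \in C_{[M]}(E)$ and $\Phi_F \in C_{[M],d}(F)$ the integrand is a continuous $G$-valued function to which Lemma~\ref{l:SuffCondExistencePettisIntegral} applies, so its Pettis integral exists in the complete space $G$; (b) $\mathcal{A}$ is bilinear; (c) $\mathcal{A}$ is hypocontinuous. Step (b) is immediate from the bilinearity of $A$, the linearity of each $\Gamma_\xi$, and the linearity of the Pettis integral \eqref{eq:LinearMapPettisIntegral}, so the real work lies in (a), and (c) will be (a) made uniform over bounded sets.

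For (a), continuity of $\xi \mapsto A(\Phi_E(\xi),\Phi_F(-\xi))$ follows by splitting the increment at $\xi_0$ as $A(\Phi_E(\xi)-\Phi_E(\xi_0),\Phi_F(-\xi)) + A(\Phi_E(\xi_0),\Phi_F(-\xi)-\Phi_F(-\xi_0))$: the second term tends to $0$ by separate continuity of $A$, and the first by the hypocontinuity of $A$ relative to the bounded set $\{\Phi_F(-\xi) : \xi \text{ near } \xi_0\}$; composing with the (strongly continuous in $\xi$) operators $\Gamma_\xi$ keeps the integrand continuous. The substance is the estimate $\int_{\R^{d}} p_0\bigl(\Gamma_\xi A(\Phi_E(\xi),\Phi_F(-\xi))\bigr)\,d\xi < \infty$ for each $p_0 \in \csn(G)$, where the projective/inductive dichotomy and the weight condition \eqref{eq:InducedBilinearMapCond} must be played off against one another. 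Since $C_{[M],d}(F)$ is a ``growth'' space, $\Phi_F$ obeys $q_F(\Phi_F(-\xi)) \le D\,e^{\omega_M(n\xi)}$ for all $q_F \in \csn(F)$, where $n > 0$ is fixed in the Beurling case ($\Phi_F$ lying in one Banach step $C_{e^{-\omega_M(n\cdot)}}(F_n)$ of the $(LB)$-space, by Lemma~\ref{l:TopCMSp}) but may be taken arbitrarily in the Roumieu case (a projective limit). Writing $\Phi_F(-\xi) = e^{\omega_M(n\xi)}\widetilde{\Phi}_F(-\xi)$ with $\widetilde{\Phi}_F$ valued in a bounded subset of $F$, the hypocontinuity of $A$ relative to that bounded set gives $p_1\bigl(A(\Phi_E(\xi),\widetilde{\Phi}_F(-\xi))\bigr) \le C'' p_E(\Phi_E(\xi))$ for a suitable $p_E \in \csn(E)$; this is the step that lets us use that $\Phi_E$, in the ``decay'' space $C_{[M]}(E)$, satisfies $p_E(\Phi_E(\xi)) \le C'\,e^{-\omega_M(q'\xi)}$ for \emph{all} $q' > 0$ in the Beurling case, and for \emph{one} fixed $q' > 0$ (with $\Phi_E$ valued in a Banach step $C_{e^{\omega_M(\cdot/n_E)}}(E_{n_E})$, so $q' = 1/n_E$) in the Roumieu case. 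Feeding this, together with \eqref{eq:InducedBilinearMapCond}, into the integrand yields
\[ p_0\bigl(\Gamma_\xi A(\Phi_E(\xi),\Phi_F(-\xi))\bigr) \le C\,e^{\omega_M(q_1\xi) - \omega_M(q_0\xi) + \omega_M(n\xi) - \omega_M(q'\xi)}, \]
and it only remains to choose the free parameters in the correct order. In the Beurling case one sets $q_0 := nH^{d+1}$ first (legitimate, as $n$ is fixed), lets \eqref{eq:InducedBilinearMapCond} deliver $q_1$, and then sets $q' := q_1 H^{d+1}$; in the Roumieu case one instead sets $q_1 := q'/H^{d+1}$ first (legitimate, as $q'$ is fixed), lets \eqref{eq:InducedBilinearMapCond} deliver $q_0$, and then sets $n := q_0/H^{d+1}$. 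Either way the exponent regroups as $[\omega_M(r_1\xi) - \omega_M(H^{d+1}r_1\xi)] + [\omega_M(r_2\xi) - \omega_M(H^{d+1}r_2\xi)]$, so by \eqref{eq:M2'} the right-hand side is dominated by $\min\{1,(C_0/(r_1|\xi|))^{d+1}\} \in L^1(\R^{d})$ (the $L^1$ fact recorded after \eqref{eq:M2'}). Lemma~\ref{l:SuffCondExistencePettisIntegral} then furnishes the Pettis integral and the bound $p_0(\mathcal{A}(\Phi_E,\Phi_F)) \le \int_{\R^{d}} p_0\bigl(\Gamma_\xi A(\Phi_E(\xi),\Phi_F(-\xi))\bigr)\,d\xi$.

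For (c), fix a bounded set $\mathcal{B}_E \subseteq C_{[M]}(E)$; then $\{\Phi_E(\xi) : \Phi_E \in \mathcal{B}_E,\ \xi \in \R^{d}\}$ is bounded in $E$ with weight estimates uniform over $\mathcal{B}_E$, and rerunning the estimate of (a) with these uniform constants — using the regularity of whichever of the two factors is an $(LB)$-space (Lemma~\ref{l:TopCMSp}) to place a given bounded subset of $C_{[M],d}(F)$ inside a single step — shows that $\mathcal{A}$ sends products of bounded sets of $C_{[M]}(E) \times C_{[M],d}(F)$ into bounded subsets of $G$, uniformly. As both factors are Fr\'echet or $(LB)$, hence bornological, this uniform boundedness on products of bounded sets is precisely the equicontinuity of the relevant partial-map families (which in particular gives separate continuity), so $\mathcal{A}$ is hypocontinuous; the two variables enter symmetrically, covering both hypocontinuity requirements.

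The step I expect to be the main obstacle is the estimate in (a): under the standing hypotheses $(M.1)$ and $(M.2)'$ one cannot collapse a product $e^{\omega_M(a\cdot)}e^{\omega_M(b\cdot)}$ into a single $e^{\omega_M(c\cdot)}$, so there is no slack to waste — each growing exponential must be matched individually, and with the exact $H^{d+1}$-margin of \eqref{eq:M2'}, by a decaying one. This is what forces the specific order in which the free exponents (the loose one on the projective-limit side and the one produced by \eqref{eq:InducedBilinearMapCond}) are selected, and it is also why the hypocontinuity of $A$ must be routed through the bounded-valued factor $\widetilde{\Phi}_F$ rather than applied to $\Phi_F$ directly: otherwise the decay of $\Phi_E$ is thrown away and the estimate breaks.
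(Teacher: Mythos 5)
Your proof is correct and follows essentially the same route as the paper's: the central estimate — renormalizing the growth-space factor into a bounded subset of $F$, invoking the hypocontinuity of $A$ against that bounded set to reach a seminorm of $\Phi_{E}$, and then matching each growing exponential with a decaying one at the $H^{d+1}$-margin of \eqref{eq:M2'}, with the free parameters chosen in exactly the order dictated by the Beurling/Roumieu quantifiers — is the one the paper uses. The only differences are cosmetic: the paper absorbs one of the two exponential pairs into a finite supremum and keeps a single $L^{1}$ margin, and it deduces hypocontinuity from separate continuity plus barrelledness rather than from boundedness on products of bounded sets plus bornologicity.
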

	
	\begin{proof}
		As both $C_{[M]}(E)$ and $C_{[M], d}(F)$ are barreled, it suffices to show that the mapping in \eqref{eq:InducedBilinearMap} is separately continuous. Fix any $p_{0} \in \csn(G)$ and let $p_{1} \in \csn(G)$ be as in \eqref{eq:InducedBilinearMapCond}. Take any $\Phi_{F} \in C_{[M], d}(F)$. Then, for some $q_{0} > 0$ (for all $q_{0} > 0$) we have $\sup_{\xi \in \R^{d}} e^{- \omega_{M}(q_{0} \xi)} p(\Phi_{F}(- \xi)) < \infty$ for any $p \in \csn(F)$. Consequently, $\{ \Phi_{F}(- \xi) e^{-\omega_{M}(q_{0} \xi)} : \xi \in \R^{d} \}$ is a bounded set in $F$, so that $\{ A(\cdot, \Phi_{F}(- \xi)) e^{-\omega_{M}(q_{0} \xi)} : \xi \in \R^{d} \}$ is an equicontinuous subset of $L(E, G)$. In particular, there is some $p_{2} \in \csn(E)$ such that for some $C > 0$,
			\[ p_{1}(A(x, \Phi_{F}(- \xi))) \leq C p_{2}(x) e^{\omega_{M}(q_{0} \xi)} , \qquad \forall x \in E, \xi \in \R^{d} . \]
		By \eqref{eq:InducedBilinearMapCond}, for any $q_{1} > 0$ there is a $q_{2} > 0$ (for any $q_{2} > 0$ there is a $q_{1} > 0$) such that for some $C' > 0$,
			\[ p_{0}(\Gamma_{\xi} g) e^{-\omega_{M}(q_{2} \xi)} \leq C' p_{1}(g) e^{-\omega_{M}(q_{1} \xi)} , \qquad \forall g \in G, \xi \in \R^{d} . \]
		Take any $\Phi_{E} \in C_{[M]}(E)$. Then $\Phi_{E} \in C_{e^{\omega_{M}(q_{2} \cdot)}}(E)$, where in the Roumieu case $q_{2} > 0$ is fixed with respect to $\Phi_{E}$. It follows that
			\begin{multline*}
				p_{0}(\Gamma_{\xi} A(\Phi_{E}(\xi), \Phi_{F}(-\xi))) = p_{0}(\Gamma_{\xi} A(\Phi_{E}(\xi), \Phi_{F}(-\xi))) \frac{e^{\omega_{M}(q_{2} \xi)}}{e^{\omega_{M}(q_{2} \xi)}} \\
				\leq C' p_{1}(A(\Phi_{E}(\xi), \Phi_{F}(-\xi))) \frac{e^{\omega_{M}(q_{2} \xi)}}{e^{\omega_{M}(q_{1} \xi)}} 
				\leq C C' \frac{e^{\omega_{M}(q_{0} \xi)}}{e^{\omega_{M}(q_{1} \xi)}} p_{2}(\Phi_{E}(\xi)) e^{\omega_{M}(q_{2} \xi)} .
			\end{multline*}
		In the Beurling case we choose $q_{1} = H^{d + 1} q_{0}$ and let $q_{2}$ be determined by \eqref{eq:InducedBilinearMapCond}, in the Roumieu case we let $q_{1}$ be determined by \eqref{eq:InducedBilinearMapCond} and choose $q_{0} = H^{-d -1} q_{1}$. In particular, in both cases we have that $e^{\omega_{M}(q_{0} \cdot)} / e^{\omega_{M}(q_{1} \cdot)} \in L^{1}$. Consequently, we get for some $C'' > 0$,
			\[ \int_{\R^{d}} p_{0}(\Gamma_{\xi} A(\Phi_{E}(\xi), \Phi_{F}(-\xi))) d\xi \leq C'' \sup_{\xi \in \R^{d}} p_{2}(\Phi_{E}(\xi)) e^{\omega_{M}(q_{2} \xi)} . \]
		By Lemma \ref{l:SuffCondExistencePettisIntegral}, this not only shows that the Pettis integral in \eqref{eq:InducedBilinearMap} exists, but also the continuity of the mapping in the first variable. Analogously, one shows the continuity in the second variable, from which the result follows.
	\end{proof}

We end this section with two density results.

	\begin{lemma}
		\label{l:DensityCMSp1}
		Let $E, F$ be Fr\'{e}chet spaces (compactly regular $(LB)$-spaces) such that $E$ is dense in $F$. Then, $C_{[M]}(E)$ is dense in $C_{[M]}(F)$.
	\end{lemma}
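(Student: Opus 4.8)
The plan is to establish density by two reductions: (i) the subspace of compactly supported functions is dense in $C_{[M]}(F)$, and (ii) every compactly supported element of $C_{[M]}(F)$ lies in the closure of $C_{[M]}(E)$. Since the natural inclusion $C_{[M]}(E) \hookrightarrow C_{[M]}(F)$ is continuous, combining (i) and (ii) gives the claim.

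For (i), fix a continuous cutoff $\chi$ with $\chi \equiv 1$ on the unit ball and compact support, put $\chi_{R} = \chi(\cdot/R)$, and show $\chi_{R}\Phi \to \Phi$ in $C_{[M]}(F)$ as $R \to \infty$. In the Beurling case this follows from the estimate, valid for every $q > 0$ and $p \in \csn(F)$,
\[ \sup_{\xi \in \R^{d}} p\big(\Phi(\xi) - \chi_{R}(\xi)\Phi(\xi)\big) e^{\omega_{M}(q\xi)} \leq \Big(\sup_{\xi \in \R^{d}} p(\Phi(\xi)) e^{\omega_{M}((q+1)\xi)}\Big)\, \sup_{|\xi| \geq R} e^{\omega_{M}(q\xi) - \omega_{M}((q+1)\xi)}, \]
whose last factor tends to $0$ as $R \to \infty$ by \eqref{eq:M2'}. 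In the Roumieu case one notes that $\Phi$ already lies in some step $C_{e^{\omega_{M}(\cdot/n_{0})}}(F_{n_{0}})$ and that the same estimate (with $\cdot/n$ in place of $q\cdot$ and $\cdot/n_{0}$ in place of $(q+1)\cdot$, $n > n_{0}$) yields $\chi_{R}\Phi \to \Phi$ in $C_{e^{\omega_{M}(\cdot/n)}}(F_{n_{0}})$, hence in $C_{\{M\}}(F)$.

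For (ii), let $\Phi \in C_{[M]}(F)$ with $\supp \Phi \subseteq K$ compact, so that $\Phi$ is uniformly continuous. Using the tent-function partition of unity $\psi_{k}(\xi) = \rho(\xi/\delta - k)$, $\rho(x) = \prod_{i=1}^{d}\max(0, 1 - |x_{i}|)$, $k \in \Z^{d}$, $x_{k} = \delta k$, one has $\sum_{k}\psi_{k} \equiv 1$ and $\psi_{k}(\xi) \neq 0 \Rightarrow |\xi - x_{k}|_{\infty} < \delta$, and only the $k$ in the finite set $J = \{k : \supp\psi_{k} \cap K \neq \emptyset\}$ matter. Given $p \in \csn(F)$ (in the Roumieu case: the norm of a Banach step $F_{n_{0}}$ containing the compact set $\Phi(K) \cup \{0\}$) and $\varepsilon > 0$, choose $\delta$ so small that $p(\Phi(\xi) - \Phi(x_{k})) < \varepsilon$ whenever $\psi_{k}(\xi) \neq 0$, and pick, by density of $E$ in $F$, elements $e_{k} \in E$ ($k \in J$) with $p(\Phi(x_{k}) - e_{k}) < \varepsilon$. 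Then $\Psi := \sum_{k \in J}\psi_{k}e_{k}$ is a continuous, compactly supported, $E$-valued function — hence an element of $C_{[M]}(E)$ — and, since $\sum_{k}\psi_{k} \equiv 1$ and $\Phi$ vanishes off $K$,
\[ p\big(\Phi(\xi) - \Psi(\xi)\big) \leq \sum_{k}\psi_{k}(\xi)\Big( p(\Phi(\xi) - \Phi(x_{k})) + p(\Phi(x_{k}) - e_{k}) \Big) \leq 2\varepsilon , \qquad \xi \in \R^{d} . \]
As $\Phi$ and $\Psi$ are supported in a fixed compact neighbourhood $K'$ of $K$, this gives $\sup_{\xi}p(\Phi(\xi) - \Psi(\xi))e^{\omega_{M}(q\xi)} \leq 2\varepsilon \sup_{\xi \in K'}e^{\omega_{M}(q\xi)}$ for the relevant $q$, which can be made arbitrarily small.

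The Beurling (Fréchet) case is then entirely routine. The main obstacle is the inductive-limit bookkeeping in the Roumieu case: one must check that the truncation converges to $\Phi$ in the $(LB)$-topology itself and that the approximant $\Psi$ and the error $\Phi - \Psi$ both remain inside one Banach step of $C_{\{M\}}(F)$, so that the supremum estimate above controls the topology of $C_{\{M\}}(F)$. For this I would use the representation $C_{\{M\}}(F) = \varinjlim_{n}C_{e^{\omega_{M}(\cdot/n)}}(F_{n})$ from Lemma \ref{l:TopCMSp}$(ii)$, compact regularity of $F$ (to localize the values of $\Phi$ to a Banach step $F_{n_{0}}$ with matching topology and to choose the finitely many $e_{k}$ in a corresponding step of $E$), and \eqref{eq:M2'} for the decay $e^{\omega_{M}(q\cdot)}/e^{\omega_{M}(q'\cdot)} \in C_{0}(\R^{d})$ whenever $q < q'$.
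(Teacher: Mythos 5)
Your route is genuinely different from the paper's. The paper simply identifies $C_{[M]}(G) = C_{[M]} \widehat{\otimes}_{\varepsilon} G$ for $G \in \{E, F\}$ (invoking \cite[Corollary 3.3]{B-M-S-ProjDescrWeighIndLim} in the Roumieu case) and then quotes the abstract density of $C_{[M]} \widehat{\otimes}_{\varepsilon} E$ in $C_{[M]} \widehat{\otimes}_{\varepsilon} F$. Your argument is a self-contained truncation--plus--partition-of-unity proof; it is more elementary, avoids the projective description machinery, and in effect reproves the relevant instance of the tensor-product density by exhibiting the approximants explicitly as elements of $C_{[M]} \otimes E$ (the finite sums $\sum_{k} \psi_{k} e_{k}$). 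Step (i) and the entire Beurling case are correct as written.

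There is, however, a gap in the Roumieu case of step (ii), precisely at the choice of the $e_{k}$. You take $p = \|\cdot\|_{F_{n_{0}}}$ and then ``pick, by density of $E$ in $F$, elements $e_{k}$ with $p(\Phi(x_{k}) - e_{k}) < \varepsilon$.'' Density of $E$ in the $(LB)$-space $F$ only provides approximation in the locally convex topology of $F$; the unit ball of a step $F_{n_{0}}$ is in general not a $0$-neighbourhood of $F$, so no element of $E$ need be $\varepsilon$-close to $\Phi(x_{k})$ in the $F_{n_{0}}$-norm. Compact regularity localizes $\Phi(K)$ to $F_{n_{0}}$ with matching topology, but it says nothing about where approximants from $E$ live or how well they approximate in a step norm (and, relatedly, $\Phi - \Psi$ need then not even take values in $F_{n_{0}}$, so the step-norm estimate does not apply to it). The repair is to keep the two error terms separate: the Riemann-sum error $\sum_{k} \psi_{k}(\Phi - \Phi(x_{k}))$ lives in the fixed step $C_{e^{\omega_{M}(\cdot / n_{0})}}(F_{n_{0}})$ and tends to $0$ there as $\delta \rightarrow 0$, exactly as you argue; for the remaining term $\sum_{k \in J} \psi_{k}(\Phi(x_{k}) - e_{k})$ one observes that, for a fixed compactly supported scalar function $\psi_{k}$, the linear map $f \mapsto \psi_{k} f$ sends each $F_{n}$ continuously into the step $C_{e^{\omega_{M}(\cdot / n)}}(F_{n})$ and is therefore continuous from $F$ into $C_{\{M\}}(F)$ by the universal property of the inductive limit. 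Since $J$ is finite, letting $e_{k} \rightarrow \Phi(x_{k})$ along a net in $E$ (in the topology of $F$) makes this term tend to $0$ in $C_{\{M\}}(F)$, which closes the argument. (In the paper's applications of the lemma $F$ is a Banach space, where the issue is vacuous, but the statement covers general compactly regular $(LB)$-spaces.)
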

	
	\begin{proof}
		Clearly, for the Beurling case,
			\[ C_{(M)} \widehat{\otimes}_{\varepsilon} E = \varprojlim_{q \rightarrow \infty} C_{e^{\omega_{M}(q \cdot)}} \widehat{\otimes}_{\varepsilon} E = \varprojlim_{q \rightarrow \infty} C_{e^{\omega_{M}(q \cdot)}}(E) = C_{(M)}(E) , \]
		and analogously $C_{(M)}(F) = C_{(M)} \widehat{\otimes}_{\varepsilon} F$. Similarly, we have in the Roumieu case $C_{\{M\}}(E) = C_{\{M\}} \widehat{\otimes}_{\varepsilon} E$ and $C_{\{M\}}(F) = C_{\{M\}} \widehat{\otimes}_{\varepsilon} F$ by \cite[Corollary 3.3]{B-M-S-ProjDescrWeighIndLim}. As $C_{[M]} \widehat{\otimes}_{\varepsilon} E$ is dense in $C_{[M]} \widehat{\otimes}_{\varepsilon} F$, our proof is complete.
	\end{proof}
	
	\begin{lemma}
		\label{l:DensityCMSp2}
		Let $E$ be a lcHs. Then, $C_{[M]}(E)$ is dense in $C_{[M], d}(E)$.
	\end{lemma}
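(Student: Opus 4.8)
The plan is to approximate a given $\Phi \in C_{[M], d}(E)$ by smooth truncations, in the spirit of the proof of Lemma \ref{l:SuffCondExistencePettisIntegral}. Fix a continuous function $\chi : \R^{d} \to [0, 1]$ with $\chi(\xi) = 1$ for $|\xi| \leq 1$ and $\chi(\xi) = 0$ for $|\xi| \geq 2$, put $\chi_{R} = \chi(\cdot / R)$ for $R > 0$, and set $\Phi_{R} = \chi_{R} \cdot \Phi$. Each $\Phi_{R}$ is continuous and supported in $\{ |\xi| \leq 2R \}$, so for any positive continuous weight $w$ and any $p \in \csn(E)$ the quantity $\sup_{\xi} w(\xi) p(\Phi_{R}(\xi)) = \sup_{|\xi| \leq 2R} w(\xi) |\chi_{R}(\xi)| p(\Phi(\xi))$ is finite, being a supremum of a continuous function over a compact set. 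Hence $\Phi_{R} \in C_{e^{\omega_{M}(q \cdot)}}(E)$ for every $q > 0$, so that $\Phi_{R} \in C_{(M)}(E)$ (resp. $\Phi_{R} \in C_{\{M\}}(E)$); note also that, since $\omega_{M} \geq 0$, $C_{[M]}(E)$ sits inside $C_{[M], d}(E)$, so the statement makes sense. It remains to check that $\Phi_{R} \to \Phi$ in $C_{[M], d}(E)$ as $R \to \infty$.

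The core of the argument is an estimate that trades part of the weight for decay via \eqref{eq:M2'}. Fix $p \in \csn(E)$ and suppose $\Phi \in C_{e^{-\omega_{M}(q_{0} \cdot)}}(E)$; let $q_{1} > q_{0}$. Since $\chi_{R} - 1$ vanishes for $|\xi| \leq R$ and $|\chi_{R} - 1| \leq 1$ everywhere,
\begin{align*}
\sup_{\xi \in \R^{d}} e^{-\omega_{M}(q_{1} \xi)} p\bigl( (\chi_{R}(\xi) - 1) \Phi(\xi) \bigr)
&\leq \sup_{|\xi| > R} e^{\omega_{M}(q_{0} \xi) - \omega_{M}(q_{1} \xi)}\, e^{-\omega_{M}(q_{0} \xi)} p(\Phi(\xi)) \\
&\leq \Bigl( \sup_{|\xi| > R} e^{\omega_{M}(q_{0} \xi) - \omega_{M}(q_{1} \xi)} \Bigr) \sup_{\xi \in \R^{d}} e^{-\omega_{M}(q_{0} \xi)} p(\Phi(\xi)) .
\end{align*}
By \eqref{eq:M2'} the exponent $\omega_{M}(q_{0} \xi) - \omega_{M}(q_{1} \xi)$ tends to $-\infty$ as $|\xi| \to \infty$, so the first factor on the right tends to $0$ as $R \to \infty$ while the second is finite; thus $\Phi_{R} \to \Phi$ in $C_{e^{-\omega_{M}(q_{1} \cdot)}}(E)$ whenever $q_{1} > q_{0}$. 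In the Beurling case $\Phi \in C_{(M), d}(E) = \varinjlim_{q} C_{e^{-\omega_{M}(q \cdot)}}(E)$ lies in some $C_{e^{-\omega_{M}(q_{0} \cdot)}}(E)$, and with $q_{1} = 2 q_{0}$ we obtain $\Phi_{R} \to \Phi$ in $C_{e^{-\omega_{M}(q_{1} \cdot)}}(E)$, hence in the inductive limit. In the Roumieu case $\Phi \in C_{\{M\}, d}(E) = \varprojlim_{q} C_{e^{-\omega_{M}(q \cdot)}}(E)$ belongs to $C_{e^{-\omega_{M}(q \cdot)}}(E)$ for every $q > 0$; applying the estimate with $q_{0} = q_{1}/2$ for each $q_{1} > 0$ shows $\Phi_{R} \to \Phi$ under every defining seminorm, i.e.\ in $C_{\{M\}, d}(E)$. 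This proves the density.

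The only genuine subtlety is that the crude choice $q_{1} = q_{0}$ does not work: the tail $\sup_{|\xi| > R} e^{-\omega_{M}(q_{0} \xi)} p(\Phi(\xi))$ of a bounded supremum need not vanish as $R \to \infty$, so one must spend a little of the weight and exploit the quantitative gap between $\omega_{M}(q_{0} \cdot)$ and $\omega_{M}(q_{1} \cdot)$ furnished by $(M.2)'$ through \eqref{eq:M2'}. Aside from this, the proof only requires keeping the Beurling target (an inductive limit, where convergence in a single step suffices) and the Roumieu target (a projective limit, where one checks each defining seminorm) apart.
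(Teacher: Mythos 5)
Your proof is correct and follows essentially the same route as the paper, which simply observes that by \eqref{eq:M2'} the compactly supported continuous functions are dense in both $C_{[M]}(E)$ and $C_{[M], d}(E)$; your truncation argument with the weight-trading estimate is exactly the content of that remark, worked out in detail.
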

	
	\begin{proof}
		In virtue of \eqref{eq:M2'}, one sees that the space of continuous functions $\R^{d} \rightarrow E$ with compact support is dense in both $C_{[M]}(E)$ and $C_{[M], d}(E)$. 
	\end{proof}
	
\section{Gelfand-Shilov spaces and their duals: definition, convolution, and the short-time Fourier transform}
\label{sec:GS}

We introduce the Gelfand-Shilov spaces and their duals, and consider convolution in this context. In particular we will show that the classical definition of convolution between Gelfand-Shilov spaces coincides with the vector-valued definition using Pettis integrals. The advantage of this is that in the sequel any definition of convolution using Pettis integrals is an automatic extension of that for Gelfand-Shilov spaces. As a special case we also consider the short-time Fourier transform. Throughout this section, we will work with two weight sequences $M$ and $A$ which both satisfy $(M.1)$ and $(M.2)'$.

\subsection{The Gelfand-Shilov spaces}

For any $q, \ell > 0$ we define $\mathcal{S}^{M, \ell}_{A, q} = \mathcal{S}^{M, \ell}_{A, q}(\R^{d})$ as the Banach space of all $\varphi \in C^{\infty}(\R^{d})$ such that
	\[ \norm{\varphi}_{\mathcal{S}^{M, \ell}_{A, q}} = \sup_{(\alpha, x) \in \N^{d} \times \R^{d}} \frac{|\varphi^{(\alpha)}(x)| e^{\omega_{A}(q x)}}{\ell^{|\alpha|} M_{\alpha}} < \infty . \]
Then, we define the \emph{Gelfand-Shilov spaces (of Beurling and Roumieu type)}
	\[ \mathcal{S}^{(M)}_{(A)} = \varprojlim_{\ell \rightarrow 0^{+}} \mathcal{S}^{M, \ell}_{A, \ell^{-1}} , \qquad \mathcal{S}^{\{M\}}_{\{A\}} = \varinjlim_{\ell \rightarrow \infty} \mathcal{S}^{M, \ell}_{A, \ell^{-1}} . \]
$\mathcal{S}^{(M)}_{(A)}$ is a $(FN)$-space, while $\mathcal{S}^{\{M\}}_{\{A\}}$ is a $(DFN)$-space \cite[Theorem 1.1]{{D-N-V-NuclGSKerThm}}. In particular, $\mathcal{S}^{\{M\}}_{\{A\}}$ is compactly regular.

Throughout this text, we will always assume that the space $\mathcal{S}^{(M)}_{(A)}$ is non-trivial. A sufficient condition for this is $M_{G, s} \subset M$ and $M_{G, r} \subset A$ for some $r, s > 0$ with $r + s > 1$ \cite[p.~235]{G-S-GenFunc2}. Other non-triviality conditions can be found in \cite{D-V-NonTrivialAnalyFunc}. 

A useful property we will often apply is the equivalent definition of the Gelfand-Shilov spaces by use of $L^{1}$-norms. For any measurable function $\omega : \R^{d} \rightarrow (0, \infty)$ for which $\omega$ and $\omega^{-1}$ are locally bounded, we denote by $L^{1}_{\omega} = L^{1}_{\omega}(\R^{d})$ the Banach space of all functions $\varphi : \R^{d} \rightarrow \C$ such that $\norm{\varphi}_{L^{1}_{\omega}} = \int_{\R^{d}} |\varphi(x)| \omega(x) dx < \infty$. For arbitrary $q, \ell > 0$, we now define $\mathcal{S}^{M, \ell}_{A, 1, q} = \mathcal{S}^{M, \ell}_{A, 1, q}(\R^{d})$ as the Banach space of all $\varphi \in C^{\infty}(\R^{d})$ such that
	\[ \norm{\varphi}_{\mathcal{S}^{M, \ell}_{A, 1, q}} = \sup_{\alpha \in \N^{d}} \frac{\norm{\varphi^{(\alpha)}}_{L^{1}_{e^{\omega_{A}(q \cdot)}}}}{\ell^{|\alpha|} M_{\alpha}} < \infty . \]
Then, we have as locally convex spaces \cite[Theorem 1.2]{D-N-V-NuclGSKerThm},
	\begin{equation}
		\label{eq:GSasL1Sp} 
		\mathcal{S}^{(M)}_{(A)} = \varprojlim_{\ell \rightarrow 0^{+}} \mathcal{S}^{M, \ell}_{A, 1, \ell^{-1}} , \qquad \mathcal{S}^{\{M\}}_{\{A\}} = \varinjlim_{\ell \rightarrow \infty} \mathcal{S}^{M, \ell}_{A, 1, \ell^{-1}} . 
	\end{equation}
	
We denote the translation and modulation operators by $T_{x} f(t) = f(t - x)$ and $M_{\xi} f(t) = e^{2 \pi i \xi \cdot t} f(t)$ for $x, \xi \in \R^{d}$. Also, we write $\check{f}(t) = f(-t)$ for reflection about the origin. Note that the space $\mathcal{S}^{[M]}_{[A]}$, and by transposition its dual space $\mathcal{S}^{\prime [M]}_{[A]}$, is closed under these operations.

The convolution between two functions $f \in L^{\infty}$ and $g \in L^{1}$ is defined as
	\[ f * g(t) = \int_{\R^{d}} g(x) f(t - x) dx . \]
In view of \eqref{eq:GSasL1Sp}, it follows that the convolution between two elements of $\mathcal{S}^{[M]}_{[A]}$ is well-defined. Even more, for arbitrary $q, \ell > 0$ and any $\varphi \in \mathcal{S}^{M, \ell}_{A, q}$ and $\psi \in L^{1}_{e^{\omega_{A}(q \cdot)}}$ we find, by \eqref{eq:M1}, that $\varphi * \psi \in \mathcal{S}^{M, \ell}_{A, q/2}$ with the ensuing upper bound for its norm,
	\begin{equation}
		\label{eq:ConvGSEstimate}
		\norm{\varphi * \psi}_{\mathcal{S}^{M, \ell}_{A, q / 2}} \leq \norm{\varphi}_{\mathcal{S}^{M, \ell}_{A, q}} \norm{\psi}_{L^{1}_{e^{\omega_{A}(q \cdot)}}} .
	\end{equation}
In particular, it follows that for any $\varphi, \psi \in \mathcal{S}^{[M]}_{[A]}$ also $\varphi * \psi \in \mathcal{S}^{[M]}_{[A]}$. Alternatively, the convolution between two elements of $\mathcal{S}^{[M]}_{[A]}$ may be expressed as a Pettis integral.
	
	\begin{lemma}
		\label{l:ConvolutionGS}
		For any two $\varphi, \psi \in \mathcal{S}^{[M]}_{[A]}$ we have
			\begin{equation}
				\label{eq:GSConvPettis} 
				\varphi * \psi = \int_{\R^{d}} \psi(x) T_{x} \varphi dx ,
			\end{equation}
		where the right-hand side is a Pettis integral. Moreover, the convolution
			\begin{equation} 
				\label{eq:GSConv}
				* : \mathcal{S}^{[M]}_{[A]} \times \mathcal{S}^{[M]}_{[A]} \rightarrow \mathcal{S}^{[M]}_{[A]} : \quad (\varphi, \psi) \mapsto \int_{\R^{d}} \psi(x) T_{x} \varphi dx , 
			\end{equation}
		is a continuous bilinear map.
	\end{lemma}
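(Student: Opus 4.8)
The plan is to establish the identity \eqref{eq:GSConvPettis} first, and then deduce the continuity of \eqref{eq:GSConv} from the norm estimate \eqref{eq:ConvGSEstimate} together with the density characterization of $\mathcal{S}^{[M]}_{[A]}$ via $L^{1}$-norms in \eqref{eq:GSasL1Sp}. For the identity, fix $\varphi, \psi \in \mathcal{S}^{[M]}_{[A]}$ and consider the map $x \mapsto \psi(x) T_{x}\varphi$ from $\R^{d}$ into $\mathcal{S}^{[M]}_{[A]}$. In the Beurling case $\mathcal{S}^{(M)}_{(A)}$ is a Fréchet space and in the Roumieu case $\mathcal{S}^{\{M\}}_{\{A\}}$ is a (DFN)-space, hence in both cases complete (and, in the Roumieu case, one works inside one of the Banach steps $\mathcal{S}^{M,\ell}_{A,\ell^{-1}}$ once $\varphi$ is fixed), so Lemma \ref{l:SuffCondExistencePettisIntegral} applies provided I verify that $x \mapsto \psi(x) T_{x}\varphi$ is continuous and that $p(\psi(\cdot) T_{\cdot}\varphi) \in L^{1}$ for every continuous seminorm $p$. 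Continuity follows from the continuity of translation on $\mathcal{S}^{[M]}_{[A]}$ together with continuity of $\psi$. For integrability, note that translation acts boundedly: for the seminorm $\norm{\cdot}_{\mathcal{S}^{M,\ell}_{A,q}}$ one has $\norm{T_{x}\varphi}_{\mathcal{S}^{M,\ell}_{A,q}} \leq e^{\omega_{A}(qx)}\norm{\varphi}_{\mathcal{S}^{M,\ell}_{A,2q}}$ by \eqref{eq:M1}, so that
	\[ \int_{\R^{d}} |\psi(x)|\, \norm{T_{x}\varphi}_{\mathcal{S}^{M,\ell}_{A,q}}\, dx \leq \norm{\varphi}_{\mathcal{S}^{M,\ell}_{A,2q}} \int_{\R^{d}} |\psi(x)| e^{\omega_{A}(qx)}\, dx = \norm{\varphi}_{\mathcal{S}^{M,\ell}_{A,2q}}\,\norm{\psi}_{L^{1}_{e^{\omega_{A}(q\cdot)}}} < \infty , \]
the finiteness being guaranteed by \eqref{eq:GSasL1Sp}. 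Hence the Pettis integral on the right-hand side of \eqref{eq:GSConvPettis} exists in $\mathcal{S}^{[M]}_{[A]}$.

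It remains to identify this Pettis integral with the function $\varphi * \psi$. Since $\mathcal{S}^{[M]}_{[A]}$ embeds continuously in, say, $C(\R^{d})$ with the topology of pointwise convergence, the point evaluations $\delta_{t}$, $t \in \R^{d}$, are continuous linear functionals on $\mathcal{S}^{[M]}_{[A]}$, and it suffices to test the defining relation of the Pettis integral against them (point evaluations separate points of $\mathcal{S}^{[M]}_{[A]}$, and two continuous functions agreeing pointwise are equal). By definition of the Pettis integral,
	\[ \ev{\delta_{t}}{\int_{\R^{d}} \psi(x) T_{x}\varphi\, dx} = \int_{\R^{d}} \ev{\delta_{t}}{\psi(x) T_{x}\varphi}\, dx = \int_{\R^{d}} \psi(x)\,\varphi(t - x)\, dx = \varphi * \psi(t) , \]
which is exactly \eqref{eq:GSConvPettis}.

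For the continuity of the bilinear map \eqref{eq:GSConv}, I invoke \eqref{eq:ConvGSEstimate} together with \eqref{eq:GSasL1Sp}: in the Beurling case, for each $\ell > 0$ and each seminorm $\norm{\cdot}_{\mathcal{S}^{M,\ell}_{A,(2\ell)^{-1}}}$ on the target, \eqref{eq:ConvGSEstimate} (applied with $q = \ell^{-1}$) gives $\norm{\varphi * \psi}_{\mathcal{S}^{M,\ell}_{A,(2\ell)^{-1}}} \leq \norm{\varphi}_{\mathcal{S}^{M,\ell}_{A,\ell^{-1}}}\,\norm{\psi}_{\mathcal{S}^{M,\ell}_{A,1,\ell^{-1}}}$, a product of a continuous seminorm of $\varphi$ and a continuous seminorm of $\psi$ on $\mathcal{S}^{(M)}_{(A)}$ — which, combined with the fact that rescaling $\ell$ only changes the constants, yields continuity; the Roumieu case is dual, using that on each Banach step the same estimate holds. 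Since a separately continuous bilinear map on a product of (F)- or (DF)-type spaces that are barrelled is jointly continuous, and in any case the above is already a joint bound, the map is continuous bilinear.

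\textbf{Main obstacle.} The only delicate point is the interchange of the Pettis integral with point evaluation — more precisely, checking that point evaluations, which are the convenient functionals here, are genuinely continuous on $\mathcal{S}^{[M]}_{[A]}$ (clear from the continuous inclusion into $C(\R^{d})$) and that they separate points so that the scalar identity above suffices to pin down the Pettis integral. Everything else is a routine combination of \eqref{eq:M1}, \eqref{eq:ConvGSEstimate}, and \eqref{eq:GSasL1Sp}; the Roumieu case requires the minor bookkeeping of fixing the index $\ell$ associated with $\varphi$ before invoking Lemma \ref{l:SuffCondExistencePettisIntegral} inside the corresponding Banach space, using that $\mathcal{S}^{\{M\}}_{\{A\}}$ is compactly regular so that bounded sets, and in particular the relevant integrand, live in a single step.
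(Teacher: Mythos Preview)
Your argument is correct and considerably more direct than the paper's. The paper builds explicit Riemann-type approximations $L_{m,n,\gamma}$ of $\varphi*\psi$, shows convergence in a suitable $\mathcal{S}^{M,H\ell}_{A,q/(2H^{d+1})}$-norm by splitting the error into three pieces, represents each $L_{m,n,\gamma}$ as the Pettis integral of a step-function approximant $e_{m,n,\gamma}$ of $x\mapsto\psi(x)T_{x}\varphi$, and then passes to the limit via the dominated-convergence Lemma~\ref{l:PettisIntegralsDominatedConvergence}. You bypass all of this by verifying the hypotheses of Lemma~\ref{l:SuffCondExistencePettisIntegral} directly and identifying the resulting Pettis integral through point evaluations $\delta_{t}\in\mathcal{S}^{\prime[M]}_{[A]}$; this is both shorter and conceptually cleaner. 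What the paper's approach buys is self-containedness: it never appeals to strong continuity of $x\mapsto T_{x}\varphi$ in the Gelfand--Shilov topology, which you invoke without proof. That continuity is standard (mean value theorem plus $(M.2)'$ pushes you to a slightly larger step $\mathcal{S}^{M,H\ell}_{A,q/2}$, which suffices in both the Beurling and Roumieu settings), but strictly speaking you should either cite it or sketch the one-line estimate. A minor slip: your translation bound should read $\norm{T_{x}\varphi}_{\mathcal{S}^{M,\ell}_{A,q}}\leq e^{\omega_{A}(2qx)}\norm{\varphi}_{\mathcal{S}^{M,\ell}_{A,2q}}$ (the factor $2$ comes from \eqref{eq:M1}); this does not affect integrability since $\psi\in L^{1}_{e^{\omega_{A}(2q\cdot)}}$ as well by \eqref{eq:GSasL1Sp}.
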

	
	\begin{proof}
		For the proof of \eqref{eq:GSConvPettis}, we follow a similar approach as that of \cite[Lemma 3.7]{D-P-V-ClassTISpQuasiAnalUltraDist}, however more direct and with a somewhat different construction so that only the condition $(M.2)'$ is needed (instead of $(M.2)$). For any $m, n \geq 1$ we denote $D_{m, n} = [-m, m]^{d} \cap (1 / n) \Z^{d}$ and for any $t \in D_{m, n}$ and $0 < \gamma < 1 / n$ we set $K_{m, n, \gamma, t} = ( t + [ - \frac{1}{2n} + \frac{\gamma}{2} , \frac{1}{2n} - \frac{\gamma}{2} ]^{d}) \cap [-m, m]^{d}$ . We then consider the function
			\[ L_{m, n, \gamma}(x) = \sum_{t \in D_{m, n}} \mu(K_{m, n, \gamma, t}) \psi(t) \varphi(x - t) ,  \]
		where $\mu(K_{m, n, \gamma, t})$ denotes the Lebesgue measure of $K_{m, n, \gamma, t}$. Clearly $L_{m, n, \gamma} \in \mathcal{S}^{[M]}_{[A]}$. Now, assume $\varphi, \psi \in \mathcal{S}^{M, \ell}_{A, q}$, then both $\varphi * \psi$ and $L_{m, n, \gamma}$ are elements of $\mathcal{S}^{M, \ell}_{A, q / 2}$. We show that there exists an adequate sequence $(m_{k}, n_{k}, \gamma_{k})_{k \in \N}$ such that $L_{m_{k}, n_{k}, \gamma_{k}} \rightarrow \varphi * \psi$ in $\mathcal{S}^{M, H \ell}_{A, q / (2 H^{d+1})}$. We set
			\[ K^{1}_{m, n, \gamma} = \bigcup_{t \in D_{m, n}} K_{m, n, \gamma, t} \quad \text{and} \quad K^{2}_{m, n, \gamma} = [-m, m]^{d} \setminus K^{1}_{m, n, \gamma} , \]
		then, for any $\alpha \in \N^{d}$,
			\begin{align*} 
				&|\partial^{\alpha} [\varphi * \psi(x) - L_{m, n, \gamma}(x)]| \\
				&\qquad \qquad \leq \int_{\R^{d} \setminus [-m, m]^{d}} |\psi(u)| |\varphi^{(\alpha)}(x - u)| du  \\ 		
				&\qquad \qquad \phantom{\leq} + \sum_{t \in D_{m, n}} \int_{K_{m, n, \gamma, t}} |\psi(u) \varphi^{(\alpha)}(x - u) - \psi(t) \varphi^{(\alpha)}(x - t)| du \\
				&\qquad \qquad \phantom{\leq} + \int_{K^{2}_{m, n, \gamma}} |\psi(u)| |\varphi^{(\alpha)}(x - u)| du  \\
				&\qquad \qquad =: S^{\alpha}_{m, n, \gamma, 1}(x) + S^{\alpha}_{m, n, \gamma, 2}(x) + S^{\alpha}_{m, n, \gamma, 3}(x) .
			\end{align*}
		Fix an arbitrary $\varepsilon > 0$. We will, step by step, provide upper bounds for the $S^{\alpha}_{m, n, \gamma, j}$'s in function of $\varepsilon$. First of all, we have
			\begin{align*} 
				S^{\alpha}_{m, n, \gamma, 1}(x) &= \int_{\R^{d} \setminus [-m, m]^{d}} |\psi(u)| |\varphi^{(\alpha)}(x - u)| du \\
				&\leq M_{0} \norm{\psi}_{\mathcal{S}^{M, \ell}_{A, q}} \norm{\varphi}_{\mathcal{S}^{M, \ell}_{A, q / H^{d+1}}} l^{|\alpha|} M_{\alpha} e^{- \omega_{A}(q x / (2 H^{d+1}))} \int_{\R^{d} \setminus [-m, m]^{d}} \frac{e^{\omega_{A}(q u / H^{d+1})}}{e^{\omega_{A}(q u)}} du .  
			\end{align*}
		As $e^{\omega_{A}(q \cdot / H^{d+1})} / e^{\omega_{A}(q \cdot)} \in L^{1}$, there exists a $m_{\varepsilon} \geq 1$ such that
			\[ \sup_{x \in \R^{d}} \frac{S^{\alpha}_{m_{\varepsilon}, n, \gamma, 1}(x) e^{\omega_{A}(q x / (2 H^{d+1}))}}{l^{|\alpha|} M_{\alpha}} \leq \frac{\varepsilon}{3} . \]
		Next, by Taylor expanding $\psi(u) \varphi^{(\alpha)}(x - u)$ at $t$, we get by \eqref{eq:M1product}
			\begin{align*}
				&S^{\alpha}_{m_{\varepsilon}, n, \gamma, 2}(x) = \sum_{t \in D_{m, n}} \int_{K_{m_{\varepsilon}, n, \gamma, t}} |\psi(u) \varphi^{(\alpha)}(x - u) - \psi(t) \varphi^{(\alpha)}(x - t)| du \\
				&\leq \sum_{t \in D_{m, n}} \int_{K_{m_{\varepsilon}, n, \gamma, t}} \sum_{|\beta| = 1} |(u - t)^{\beta}| \int_{0}^{1} |\psi^{(\beta)}(t + s(u - t)) \varphi^{(\alpha)}(x - t - s(u - t))| \\
				&\hspace{6cm} + |\psi(t + s(u - t)) \varphi^{(\alpha + \beta)}(x - t - s(u - t))| ds du \\
				&\leq \frac{1}{2n} \norm{\psi}_{\mathcal{S}^{M, \ell}_{A, q}} \norm{\varphi}_{\mathcal{S}^{M, \ell}_{A, q}} \sum_{|\beta| = 1} \ell^{|\alpha + \beta|} (M_{\beta} M_{\alpha} + M_{0} M_{\alpha + \beta}) \\
				&\hspace{4.5cm} \sum_{t \in D_{m, n}} \int_{K_{m_{\varepsilon}, n, \gamma, t}} \int_{0}^{1} e^{- \omega_{A}(q[t + s(u - t)]) - \omega_{A}(q[x - t - s(u - t)])} ds du \\
				&\leq C_{0} M_{0} \ell \norm{\psi}_{\mathcal{S}^{M, \ell}_{A, q}} \norm{\varphi}_{\mathcal{S}^{M, \ell}_{A, q}} \frac{d (2m_{\varepsilon})^{d}}{n} (H \ell)^{|\alpha|} M_{\alpha} e^{- \omega_{A}(q x / 2)} .
			\end{align*}
		Consequently, there exists a $n_{\varepsilon} \geq 1$ large enough so that
			\[ \sup_{x \in \R^{d}} \frac{S^{\alpha}_{m_{\varepsilon}, n_{\varepsilon}, \gamma, 2}(x) e^{\omega_{A}(q x / 2)}}{(H \ell)^{|\alpha|} M_{\alpha}} \leq \frac{\varepsilon}{3} . \]
		Finally, we have that
			\begin{align*} 
				S^{\alpha}_{m_{\varepsilon}, n_{\varepsilon}, \gamma, 3}(x) &= \int_{K^{2}_{m_{\varepsilon}, n_{\varepsilon}, \gamma}} |\psi(u)| |\varphi^{(\alpha)}(x - u)| du \\
				&\leq M_{0} \norm{\psi}_{\mathcal{S}^{M, \ell}_{A, q}} \norm{\varphi}_{\mathcal{S}^{M, \ell}_{A, q / H^{d+1}}} l^{|\alpha|} M_{\alpha} e^{- \omega_{A}(q x / (2 H^{d+1}))} \int_{K^{2}_{m_{\varepsilon}, n_{\varepsilon}, \gamma}} \frac{e^{\omega_{A}(q u / H^{d+1})}}{e^{\omega_{A}(q u)}} du .  
			\end{align*}
		Note that $\mu(K^{2}_{m_{\varepsilon}, n_{\varepsilon}, \gamma}) \rightarrow 0^{+}$ as $\gamma \rightarrow 0^{+}$. Hence, as $e^{\omega_{A}(q \cdot / H^{d+1})} / e^{\omega_{A}(q \cdot)} \in L^{1}$, there exists a $0 < \gamma_{\varepsilon} < 1 / n_{\varepsilon}$ small enough so that 
			\[ \sup_{x \in \R^{d}} \frac{S^{\alpha}_{m_{\varepsilon}, n_{\varepsilon}, \gamma_{\varepsilon}, 3}(x) e^{\omega_{A}(q x / (2 H^{d+1}))}}{l^{|\alpha|} M_{\alpha}} \leq \frac{\varepsilon}{3} . \]
		In conclusion, we find
			\[ \sup_{(\alpha, x) \in \N^{d} \times \R^{d}} \frac{|\partial^{\alpha} [\varphi * \psi(x) - L_{m_{\varepsilon}, n_{\varepsilon}, \gamma_{\varepsilon}}(x)]| e^{\omega_{A}(q x / (2 H^{d+1}))}}{(H \ell)^{|\alpha|} M_{\alpha}} \leq \varepsilon , \]
		so that $L_{m_{1/\kappa}, n_{1/\kappa}, \gamma_{1/\kappa}} \rightarrow \varphi * \psi$ in $\mathcal{S}^{M, H \ell}_{A, q / (2 H^{d+1})}$ as $\kappa \rightarrow \infty$. From here we may conclude that there exists a sequence $(m_{k}, n_{k}, \gamma_{k})_{k}$ such that
			\[ L_{m_{k}, n_{k}, \gamma_{k}} \rightarrow \varphi * \psi \qquad \text{as } k \rightarrow \infty \text{ in } \mathcal{S}^{[M]}_{[A]} . \]
		Indeed, in the Roumieu case this follows directly, while in the Beurling case this can be concluded by applying a diagonal argument. Moreover, we may assume that $\mu(K^{2}_{m_{k}, n_{k}, \gamma_{k}}) \leq 2^{- k}$ for any $k \geq 1$, a technicality we will need later. Denote by $\chi_{K_{m, n, \gamma, t}}$ the characteristic function of $K_{m, n, \gamma, t}$. We define
			\[ e_{m, n, \gamma}(x) = \sum_{t \in D_{m, n}} \chi_{K_{m, n, \gamma, t}}(x) \psi(t) T_{t} \varphi . \]
		Then, $e_{m, n, \gamma}$ is a weakly integrable function $\R^{d} \rightarrow \mathcal{S}^{[M]}_{[A]}$ with Pettis integral 
			\[ \int_{\R^{d}} e_{m, n, \gamma}(x) dx = L_{m, n, \gamma} . \] 
		Moreover, $e_{m_{k}, n_{k}, \gamma_{k}}(x)$ converges pointwisely almost everywhere to $\psi(x) T_{x} \varphi$. Indeed, let $N \subset \R^{d}$ be the set of all $x \in \R^{d}$ such that $x \in K^{2}_{m_{k}, n_{k}, \gamma_{k}}$ for infinitely many $k$. Note that for any $k_{0} \geq 1$ we have that $N \subseteq \bigcup_{k \geq k_{0}} K^{2}_{m_{k}, n_{k}, \gamma_{k}}$, whence $\mu(N) \leq \sum_{k \geq k_{0}} \mu(K^{2}_{m_{k}, n_{k}, \gamma_{k}}) \leq 2^{- k_{0} + 1}$, so that $N$ is a null set. On the other hand, for any $x \notin N$ it is clear that $e_{m, n, \gamma}(x) \rightarrow \psi(x) T_{x} \varphi$ in $\mathcal{S}^{[M]}_{[A]}$. Now,
			\[ \norm{e_{m_{k}, n_{k}, \gamma_{k}}(x)}_{\mathcal{S}^{M, \ell}_{A, q/ (2H^{d + 1})}} \leq \norm{\varphi}_{\mathcal{S}^{M, \ell}_{A, q}} \sup_{|\theta| \leq \sqrt{d}} |\psi(x + \theta)| e^{\omega_{A}(q (x + \theta) / H^{d + 1})} . \]
		For $|x| \geq 2 \sqrt{d}$, it follows from \eqref{eq:M2'} that
			\[ \sup_{|\theta| \leq \sqrt{d}} |\psi(x + \theta)| e^{\omega_{A}(q (x + \theta) / H^{d + 1})} \leq \left( \frac{2 H^{d + 1} C_{0}}{q} \right)^{d + 1} \norm{\psi}_{\mathcal{S}^{M, \ell}_{A, q}} x^{-(d + 1)} , \]
		so that in particular we get $\norm{e_{m_{k}, n_{k}, \gamma_{k}}(x)}_{\mathcal{S}^{M, \ell}_{A, q/2 H^{d + 1}}} \leq g$ for some $g \in L^{1}$ and for all $k \in \N$. Hence, by applying Lemma \ref{l:PettisIntegralsDominatedConvergence}, we may conclude,
			\[ \varphi * \psi = \lim_{k \rightarrow \infty} L_{m_{k}, n_{k}, \gamma_{k}} = \lim_{k \rightarrow \infty} \int_{\R^{d}} e_{m_{k}, n_{k}, \gamma_{k}}(x) dx = \int_{\R^{d}} \psi(x) T_{x} \varphi dx . \] 
		Finally, the continuity is a direct consequence of \eqref{eq:ConvGSEstimate} in view of \eqref{eq:GSasL1Sp}.
	\end{proof}
	
The \emph{short-time Fourier transform (STFT)} of a function $\varphi \in \mathcal{S}^{[M]}_{[A]}$ with respect to a window $\psi \in \mathcal{S}^{[M]}_{[A]}$ is given by\footnote{Our definition of the STFT is in a ``free $x$ variable way'' as a convolution. This will be convenient in the sequel, as we will extend convolution, and thus the STFT, to other spaces that are translation-modulation invariant.}
	\[ V_{\psi} \varphi(\xi) = (M_{- \xi} \varphi) * \check{\overline{\psi}} \in \mathcal{S}^{[M]}_{[A]} . \]
The \emph{adjoint STFT} of a function $\Phi \in C_{[M]}(\mathcal{S}^{[M]}_{[A]})$ with respect to a window $\gamma \in \mathcal{S}^{[M]}_{[A]}$ is defined by the Pettis integral
	\[ V^{*}_{\gamma} \Phi = \int_{\R^{d}} M_{\xi} [\Phi(\xi) * \gamma] d\xi . \]
	
	\begin{proposition}
		\label{p:STFTGS}
		Take $\psi_{0}, \psi_{1}, \gamma \in \mathcal{S}^{(M)}_{(A)}$ and write $\psi = \psi_{0} * \psi_{1}$. The linear mappings
			\[ V_{\psi} : \mathcal{S}^{[M]}_{[A]} \rightarrow C_{[M]}(\mathcal{S}^{[M]}_{[A]}) \quad \text{and} \quad V^{*}_{\gamma} : C_{[M]}(\mathcal{S}^{[M]}_{[A]}) \rightarrow \mathcal{S}^{[M]}_{[A]} , \]
		are well-defined and continuous. Moreover, if $(\gamma, \psi)_{L^{2}} \neq 0$, then, 
			\begin{equation}
				\label{eq:STFTGSReconstruct} 
				\frac{1}{(\gamma, \psi)_{L^{2}}} V^{*}_{\gamma} \circ V_{\psi} = \id_{\mathcal{S}^{[M]}_{[A]}} . 
			\end{equation}
	\end{proposition}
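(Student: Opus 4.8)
The plan is to reduce everything to the convolution estimate \eqref{eq:ConvGSEstimate} together with the basic properties of the associated function, namely \eqref{eq:M1} and \eqref{eq:M2'}, and then to establish the reconstruction formula \eqref{eq:STFTGSReconstruct} by a density argument building on Lemma \ref{l:ConvolutionGS}. First I would treat $V_\psi$. For a fixed window $\psi$, the operator $M_{-\xi}\varphi$ is, for each $\xi$, just $\varphi$ multiplied by a unimodular character, so $\|M_{-\xi}\varphi\|_{\mathcal{S}^{M,\ell}_{A,q}}$ is controlled by $\|\varphi\|_{\mathcal{S}^{M,\ell'}_{A,q'}}$ for suitable $\ell',q'$ after one application of the Leibniz rule and $(M.2)'$ — the derivatives of the character produce factors $|\xi|^{|\beta|}$ which are absorbed into $e^{\omega_M(\delta\xi)}$ for any $\delta>0$ by the very definition of $\omega_M$. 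Convolving with $\check{\overline\psi}\in\mathcal{S}^{[M]}_{[A]}\subset L^1_{e^{\omega_A(q\cdot)}}$ and applying \eqref{eq:ConvGSEstimate} then gives, for each $\xi$, a bound of the form $\|V_\psi\varphi(\xi)\|_{\mathcal{S}^{M,\ell}_{A,q/2}}\le C\,e^{\omega_M(\delta\xi)}\|\varphi\|_{\mathcal{S}^{M,\ell'}_{A,q'}}$; taking the supremum over $\xi$ weighted by $e^{\omega_M(q'\xi)}$ — where the translation $M_{-\xi}\varphi$ also contributes the decay needed in the free $x$ variable once one tracks where $\xi$ enters — yields continuity $\mathcal{S}^{[M]}_{[A]}\to C_{[M]}(\mathcal{S}^{[M]}_{[A]})$. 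Continuity of $\xi\mapsto V_\psi\varphi(\xi)$ is checked by the same estimates applied to differences. The Beurling/Roumieu bookkeeping of the parameters is routine once one fixes the order in which $\ell$ and $q$ are chosen, exactly as in Lemma \ref{l:InducedBilinearMap}.

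Next I would treat $V^*_\gamma$. Here one recognizes the integrand $\xi\mapsto M_\xi[\Phi(\xi)*\gamma]$ as a continuous $\mathcal{S}^{[M]}_{[A]}$-valued function to which Lemma \ref{l:SuffCondExistencePettisIntegral} applies: using \eqref{eq:ConvGSEstimate} one bounds $\|\Phi(\xi)*\gamma\|_{\mathcal{S}^{M,\ell}_{A,q}}$ by $\|\Phi(\xi)\|_{\mathcal{S}^{M,\ell}_{A,2q}}\,\|\gamma\|_{L^1_{e^{\omega_A(2q\cdot)}}}$, the modulation $M_\xi$ again costs at most a factor $e^{\omega_M(\delta\xi)}$ per seminorm, and the defining weight of $C_{[M]}(\mathcal{S}^{[M]}_{[A]})$ gives a factor $e^{-\omega_M(q_0\xi)}$ with $q_0$ as large as we like (Beurling) or fixed but usable (Roumieu). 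Choosing $q_0=H^{d+1}\delta$ forces the ratio $e^{\omega_M(\delta\xi)}/e^{\omega_M(q_0\xi)}\in L^1$ by \eqref{eq:M2'}, so $\int_{\R^d}p(M_\xi[\Phi(\xi)*\gamma])\,d\xi<\infty$ for every continuous seminorm $p$, the Pettis integral exists, and \eqref{eq:UppdarBoundSeminormPettisIntegral} simultaneously delivers the continuity of $V^*_\gamma$.

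For the reconstruction formula, I would first verify it on the test functions themselves and then invoke density. Writing $\varphi\in\mathcal{S}^{[M]}_{[A]}$ and unravelling the definitions, $V^*_\gamma V_\psi\varphi=\int_{\R^d}M_\xi\big[((M_{-\xi}\varphi)*\check{\overline\psi})*\gamma\big]\,d\xi$; since $\psi=\psi_0*\psi_1$ everything lies in $\mathcal{S}^{[M]}_{[A]}$ and convolution there is associative and commutative (it is the restriction of the $L^1$–$L^\infty$ convolution, which one checks is associative on $\mathcal{S}^{[M]}_{[A]}$ via \eqref{eq:GSasL1Sp} and Fubini), so the integrand rearranges to $M_\xi[(M_{-\xi}\varphi)*(\check{\overline\psi}*\gamma)]$. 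Interpreting the outer integral as a Pettis integral and testing against an arbitrary $\phi\in(\mathcal{S}^{[M]}_{[A]})'$, or more simply evaluating at a point $t\in\R^d$ (which is a continuous functional on $\mathcal{S}^{[M]}_{[A]}$), reduces the identity to the scalar inversion of the STFT: $\int_{\R^d}\!\int_{\R^d}e^{2\pi i\xi\cdot t}e^{-2\pi i\xi\cdot u}\varphi(u)\,(\check{\overline\psi}*\gamma)(t-u)\,du\,d\xi=(\gamma,\psi)_{L^2}\,\varphi(t)$, which follows from Fubini and Fourier inversion once one notes $(\check{\overline\psi}*\gamma)(0)=\int\overline{\psi(x)}\gamma(x)\,dx=(\gamma,\psi)_{L^2}$ — the integrability needed to apply Fubini is exactly what was established in the previous paragraph. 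Dividing by $(\gamma,\psi)_{L^2}\neq0$ gives \eqref{eq:STFTGSReconstruct} pointwise, hence as an identity in $\mathcal{S}^{[M]}_{[A]}$ since both sides are continuous functions.

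The main obstacle I anticipate is not any single estimate but the careful tracking of how the variable $\xi$ enters through the modulation $M_{-\xi}$ in $V_\psi$ and $M_\xi$ in $V^*_\gamma$: one must confirm that differentiating a character only ever costs polynomial factors in $\xi$ that are dominated by $e^{\omega_M(\delta\xi)}$ for arbitrarily small $\delta$, and then arrange the parameter choices in the correct Beurling-versus-Roumieu order — large $q$/small $\ell$ in the Beurling case, the reverse with a fixed admissible $q$ in the Roumieu case — so that the decisive ratio $e^{\pm\omega_M(\delta\xi)}/e^{\pm\omega_M(H^{d+1}\delta\xi)}$ ends up in $L^1$ via \eqref{eq:M2'}. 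The justification of Fubini in the reconstruction step is a secondary technical point, handled by the absolute integrability already proved for $V^*_\gamma$.
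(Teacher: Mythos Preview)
Your treatment of $V^{*}_{\gamma}$ is essentially the paper's, but your argument for $V_{\psi}$ has a genuine gap. The chain of estimates you describe---Leibniz on $M_{-\xi}\varphi$, then \eqref{eq:ConvGSEstimate}---yields only
\[
\|V_{\psi}\varphi(\xi)\|_{\mathcal{S}^{M,\ell}_{A,q/2}} \le C\,e^{\omega_{M}(\delta\xi)}\,\|\varphi\|_{\mathcal{S}^{M,\ell'}_{A,q'}},
\]
which is \emph{growth} in $\xi$, whereas membership in $C_{[M]}(\mathcal{S}^{[M]}_{[A]})$ requires \emph{decay} $e^{-\omega_{M}(q\xi)}$. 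Your parenthetical remark that ``$M_{-\xi}\varphi$ also contributes the decay needed'' does not hold: modulation never produces decay in the frequency parameter. The paper obtains the decay by a different mechanism: one multiplies by $\xi^{\kappa}$ and integrates by parts, throwing $|\kappa|$ derivatives onto the product $\varphi(t)\overline{\psi(t-x)}$. After also taking $\partial_{x}^{\alpha}$ for the $\mathcal{S}^{M,\ell}_{A,1/\ell}$-norm, one must control $\psi^{(\alpha+\kappa-\delta)}$ with the weights $\ell^{|\alpha|}M_{\alpha}$ and $(\text{const})^{|\kappa-\delta|}M_{\kappa-\delta}$ \emph{separately}. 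Under only $(M.2)'$ there is no inequality $M_{\alpha+\kappa-\delta}\le C H^{|\alpha|+|\kappa-\delta|}M_{\alpha}M_{\kappa-\delta}$, so this splitting is impossible for a general window. This is precisely why the hypothesis $\psi=\psi_{0}*\psi_{1}$ is there: writing $\psi^{(\alpha+\kappa-\delta)}=\psi_{0}^{(\alpha)}*\psi_{1}^{(\kappa-\delta)}$ lets each factor absorb its own block of derivatives with its own weight. You never invoke this factorization for $V_{\psi}$, and without it the required decay is unavailable.

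A secondary issue: in your reconstruction step the double integral
\[
\int_{\R^{d}}\!\int_{\R^{d}} e^{2\pi i\xi\cdot(t-u)}\varphi(u)\,(\check{\overline{\psi}}*\gamma)(t-u)\,du\,d\xi
\]
is not absolutely convergent (the modulus is independent of $\xi$), so Fubini does not apply as stated; what you established for $V^{*}_{\gamma}$ is only the convergence of the \emph{iterated} integral $\int(\int\cdots\,du)\,d\xi$. The paper sidesteps this entirely by observing that $\mathcal{S}^{[M]}_{[A]}\subset L^{2}(\R^{d})$ and citing the classical $L^{2}$ inversion formula for the STFT.
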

	
	\begin{proof}
		We first consider $V_{\psi}$. For any $q, \ell > 0$, take arbitrary $\varphi \in \mathcal{S}^{M, 1 / (2 q \sqrt{d})}_{A, 1, 2/\ell}$, $\psi_{0}, \psi_{1} \in \mathcal{S}^{(M)}_{(A)}$, and put $\psi = \psi_{0} * \psi_{1}$. Then, for any $\kappa \in \N^{d}$, by \eqref{eq:M1product}
			\begin{align*}
				&\norm{\xi^{\kappa} V_{\psi} \varphi(\xi)}_{\mathcal{S}^{M, \ell}_{A, 1/\ell}} \\
				&\leq \sup_{(\alpha, x) \in \N^{d} \times \R^{d}} (2 \pi)^{-|\kappa|} \sum_{\delta \leq \kappa} {\kappa \choose \delta} \int_{\R^{d}} \frac{|\psi^{(\alpha + \kappa - \delta)}(x)| e^{\omega_{A}(2 x / \ell)}}{\ell^{|\alpha|} M_{\alpha}} |\varphi^{(\delta)}(t - x)| e^{\omega_{A}(2(t - x) / \ell)} dx \\
				&\leq \sup_{(\alpha, x) \in \N^{d} \times \R^{d}} M_{0} \left( \frac{1}{q \sqrt{d}} \right)^{|\kappa|} M_{\kappa} \\ &\qquad \qquad \qquad \qquad \sum_{\delta \leq \kappa} \frac{{\kappa \choose \delta}}{2^{|\kappa|}} \int_{\R^{d}} \frac{|(\psi_{0}^{(\alpha)} *  \psi_{1}^{(\kappa - \delta)})(x)| e^{\omega_{A}(2x/\ell)}}{\ell^{|\alpha|} M_{\alpha} (1 / (2 q \sqrt{d}))^{|\kappa - \delta|} M_{\kappa - \delta}} \frac{|\varphi^{(\delta)}(t - x)| e^{\omega_{A}(2(t - x)/\ell)}}{(1 / (2 q \sqrt{d}))^{|\delta|} M_{\delta}} dx \\
				&\leq M_{0} \norm{\psi_{0}}_{\mathcal{S}^{M, \ell}_{A, 4/\ell}} \norm{\psi_{1}}_{\mathcal{S}^{M, 1 / (2 q \sqrt{d})}_{A, 1, 4 / \ell}} \norm{\varphi}_{\mathcal{S}^{M, 1 / (2 q \sqrt{d})}_{A, 1, 2 / \ell}} \left( \frac{1}{q \sqrt{d}} \right)^{|\kappa|} M_{\kappa} . 
			\end{align*}
		Hence, if we put $C = M_{0}^{2} \norm{\psi_{0}}_{\mathcal{S}^{M, \ell}_{A, 4/\ell}} \norm{\psi_{1}}_{\mathcal{S}^{M, 1 / (2 q \sqrt{d})}_{A, 1, 4 / \ell}}$, then,
			\[ \norm{V_{\psi} \varphi(\xi)}_{\mathcal{S}^{M, \ell}_{A, 1 / \ell}} \leq C \norm{\varphi}_{\mathcal{S}^{M, 1 / (2 q \sqrt{d})}_{A, 1, 2 / \ell}} \inf_{\kappa \in \N^{d}} \left( \frac{1}{q |\xi|} \right)^{|\kappa|} \frac{M_{\kappa}}{M_{0}} = C \norm{\varphi}_{\mathcal{S}^{M, 1 / (2 q \sqrt{d})}_{A, 1, 2 / \ell}} e^{- \omega_{M}( q \xi)} .  \]
		In view of \eqref{eq:GSasL1Sp}, it follows that $V_{\psi} : \mathcal{S}^{[M]}_{[A]} \rightarrow C_{[M]}(\mathcal{S}^{[M]}_{[A]})$ is a well-defined continuous linear map.
		
		Next, we treat $V^{*}_{\gamma}$. Suppose $\gamma \in \mathcal{S}^{(M)}_{(A)}$ and take any $\varphi \in \mathcal{S}^{M, \ell / 2}_{A, 2 / \ell}$ for arbitrary $\ell > 0$. For any $\xi \in \R^{d}$, we have by \eqref{eq:M1product} and \eqref{eq:ConvGSEstimate},
			\[ \norm{M_{\xi}[\varphi * \gamma]}_{\mathcal{S}^{M, \ell}_{A, 1/\ell}} \leq M_{0} \sum_{\beta \leq \alpha} \frac{{\alpha \choose \beta}}{2^{|\alpha|}} \frac{|2 \pi \xi|^{\beta}}{(\ell / 2)^{\beta} M_{\beta}} \norm{\varphi * \gamma}_{\mathcal{S}^{M, \ell/2}_{A, 1/\ell}} \leq e^{\omega_{M}(4 \pi \xi / \ell)} \norm{\varphi}_{\mathcal{S}^{M, \ell / 2}_{A, 2/\ell}} \norm{\gamma}_{\mathcal{S}^{M, \ell / 2}_{A, 1, 2/\ell}} . \]
		Now take any bounded set $B \subset C_{[M]}(\mathcal{S}^{[M]}_{[A]})$, then $B$ is a bounded subset of the space $C_{e^{\omega_{M}(4 \pi H^{d + 1} \cdot / \ell)}}(\mathcal{S}^{[M]}_{[A]})$ for any $\ell > 0$ (for some $\ell > 0$). In particular, $\{ \Phi(\xi) e^{\omega_{M}(4 \pi H^{d + 1} \xi / \ell)} : \Phi \in B, \xi \in \R^{d} \}$ is a bounded set in $\mathcal{S}^{[M]}_{[A]}$. Consequently, we find the constant $C = \sup_{\Phi \in B, \xi \in \R^{d}} e^{\omega_{M}(4 \pi H^{d + 1} \xi / \ell)} \norm{\Phi(\xi)}_{\mathcal{S}^{M, \ell/2}_{A, 2/\ell}} < \infty$, where in the Roumieu case we had to possibly raise $\ell$. We get 
			\[ \norm{M_{\xi}[\Phi(\xi) * \gamma]}_{\mathcal{S}^{M, \ell}_{A, 1/\ell}} \leq C \norm{\gamma}_{\mathcal{S}^{M, \ell/2}_{A, 1, 2/\ell}} \frac{e^{\omega_{M}(4 \pi \xi / \ell)}}{e^{\omega_{M}(4 \pi H^{d + 1} \xi / \ell)}} \in L^{1} , \qquad \forall \Phi \in B, \xi \in \R^{d} .  \]
		By \eqref{eq:UppdarBoundSeminormPettisIntegral} it follows that $\{ V^{*}_{\gamma} \Phi : \Phi \in B \}$ is a bounded subset of $\mathcal{S}^{[M]}_{[A]}$. As by Lemma \ref{l:TopCMSp} the space $C_{[M]}(\mathcal{S}^{[M]}_{[A]})$ is Fr\'{e}chet (an $(LB)$-space as $\mathcal{S}^{\{M\}}_{\{A\}}$ is compactly regular), it is bornological. Hence we may conclude that $V^{*}_{\gamma} : C_{[M]}(\mathcal{S}^{[M]}_{[A]}) \rightarrow \mathcal{S}^{[M]}_{[A]}$ is a well-defined continuous linear map. 
		
		Finally, we note that \eqref{eq:STFTGSReconstruct} holds, since $\mathcal{S}^{[M]}_{[A]}$ is a subspace of $L^{2}(\R^{d})$ \cite[Theorem 1.2]{D-N-V-NuclGSKerThm} and the identity is valid there \cite[Corollary 3.2.3]{G-FoundationsTimeFreqAnal}. 
	\end{proof}
	
In view of the condition that the window $\psi$ has to be exactly the convolution of two elements in $\mathcal{S}^{(M)}_{(A)}$, we demonstrate that one may always choose a $\psi$ that enjoys nice properties for our purposes.

	\begin{lemma}
		\label{l:AdequateWindow}
		There exist $\psi_{0}, \psi \in \mathcal{S}^{(M)}_{(A)}$ such that $\psi = \psi_{0} * \psi_{0}$ and $(\psi, \psi)_{L^{2}} = 1$. 
	\end{lemma}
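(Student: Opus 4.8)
The plan is to construct $\psi_0$ explicitly via the Fourier transform. The key observation is that if $\widehat{\psi_0} = \eta$ for some suitable real-valued, nonnegative, radial function $\eta$, then $\psi_0 * \psi_0$ has Fourier transform $\eta^2 \geq 0$, and the requirement $\psi = \psi_0 * \psi_0$ is automatic. First I would recall that the Gelfand-Shilov space $\mathcal{S}^{(M)}_{(A)}$ is nontrivial by our standing assumption, and that the Fourier transform maps $\mathcal{S}^{(M)}_{(A)}$ onto $\mathcal{S}^{(A)}_{(M)}$ (the roles of the two weight sequences being interchanged), so it suffices to produce a nonzero nonnegative $\eta \in \mathcal{S}^{(A)}_{(M)}$ and then rescale. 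Alternatively, and perhaps more directly, one can start from \emph{any} nonzero $\chi \in \mathcal{S}^{(M)}_{(A)}$ and set $\psi_0 = c\, \chi * \widetilde{\chi}$ where $\widetilde{\chi}(t) = \overline{\chi(-t)}$; by Lemma \ref{l:ConvolutionGS} this again lies in $\mathcal{S}^{(M)}_{(A)}$, but this does not immediately give the self-convolution square structure, so the Fourier-side construction is cleaner.

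Here are the steps I would carry out. First, pick a nonzero $\varphi \in \mathcal{S}^{(M)}_{(A)}$ (possible by non-triviality) and consider $\eta_0 = |\widehat{\varphi}|^2 = \widehat{\varphi} \cdot \overline{\widehat{\varphi}}$. Since $\mathcal{S}^{(A)}_{(M)}$ is an algebra under pointwise multiplication (which follows from the defining seminorm estimates together with $(M.1)$ and Leibniz's rule, or from the convolution result on the Fourier side), $\eta_0 \in \mathcal{S}^{(A)}_{(M)}$; it is real-valued, nonnegative, and not identically zero. Second, set $\psi_1 = \mathcal{F}^{-1}(\eta_0) \in \mathcal{S}^{(M)}_{(A)}$; then $\psi_1 * \psi_1 = \mathcal{F}^{-1}(\eta_0^2) \in \mathcal{S}^{(M)}_{(A)}$ since $\eta_0^2 \in \mathcal{S}^{(A)}_{(M)}$ again by the algebra property. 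Third, compute $(\psi_1 * \psi_1, \psi_1 * \psi_1)_{L^2}$: by Parseval this equals $\int_{\R^d} \eta_0^4 \, d\xi > 0$ strictly, because $\eta_0$ is continuous, nonnegative and not identically zero. Fourth, rescale: put $c = \left( \int_{\R^d} \eta_0^4 \, d\xi \right)^{-1/4}$, let $\psi_0 = c\,\psi_1$ and $\psi = \psi_0 * \psi_0 = c^2\, \psi_1 * \psi_1$. Then $\psi = \psi_0 * \psi_0$ holds by construction and $(\psi, \psi)_{L^2} = c^4 \int_{\R^d} \eta_0^4\, d\xi = 1$, as desired.

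The main technical point — and the only thing that is not completely routine — is establishing that $\mathcal{S}^{(M)}_{(A)}$ (equivalently, $\mathcal{S}^{(A)}_{(M)}$) is closed under pointwise multiplication and that the Fourier transform interchanges $M$ and $A$. The latter is standard for Gelfand-Shilov spaces under $(M.1)$ and $(M.2)'$ (see \cite{G-S-GenFunc2}, \cite{D-N-V-NuclGSKerThm}), and the former follows from the Leibniz rule together with \eqref{eq:M1product}: for $\eta, \theta \in \mathcal{S}^{A,\ell}_{M, q}$ one estimates, using ${\alpha \choose \beta} \le 2^{|\alpha|}$ and $A_\beta A_{\alpha - \beta} \le A_0 A_\alpha$,
\[
|(\eta\theta)^{(\alpha)}(x)| \le \sum_{\beta \le \alpha} {\alpha \choose \beta} |\eta^{(\beta)}(x)|\, |\theta^{(\alpha-\beta)}(x)| \le A_0\, \norm{\eta}_{\mathcal{S}^{A,\ell}_{M,q}} \norm{\theta}_{\mathcal{S}^{A,\ell}_{M,q}} (2\ell)^{|\alpha|} A_\alpha\, e^{-\omega_M(2qx)},
\]
after distributing the two exponential weights and invoking \eqref{eq:M1} to bound $e^{-\omega_M(qx)-\omega_M(qx)}$ by $e^{-\omega_M(2qx)}$ up to harmless adjustments; so $\eta\theta \in \mathcal{S}^{A, 2\ell}_{M, 2q}$, and taking projective/inductive limits gives the algebra property. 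An alternative route avoiding the algebra lemma entirely is to take $\eta_0$ to be an explicit compactly-supported-on-the-Fourier-side bump, but since we only assume $(M.1)$ and $(M.2)'$ (not non-quasianalyticity) such bumps need not exist, so the multiplication argument above is the robust choice. Everything else — the Parseval identities, positivity of $\int \eta_0^4$, the rescaling — is elementary.
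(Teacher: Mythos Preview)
Your argument has a gap at the point where you invoke the Fourier isomorphism $\mathcal{F}:\mathcal{S}^{(M)}_{(A)}\to\mathcal{S}^{(A)}_{(M)}$ under only $(M.1)$ and $(M.2)'$. This is \emph{not} as standard as you assert: the usual proof of the interchange (estimating $\xi^{\beta}\partial_\xi^{\alpha}\widehat{\varphi}$ via Leibniz on $\partial^{\beta}(x^{\alpha}\varphi)$) produces factorial-type cross terms $\alpha!/(\alpha-\gamma)!$ that are controlled through $(M.2)$, and indeed the present paper itself only invokes the Fourier isomorphism in Section~\ref{sec:DTMIB} under the explicit extra hypothesis $(M.2)$ (condition ($\mathcal{F}$.II)). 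Since Lemma~\ref{l:AdequateWindow} is stated in Section~\ref{sec:GS}, where only $(M.1)$ and $(M.2)'$ are assumed, your route as written does not close. (There is also a small slip in your algebra estimate: \eqref{eq:M1} does not give $e^{-2\omega_M(qx)}\le e^{-\omega_M(2qx)}$; fortunately the trivial bound $e^{-2\omega_M(qx)}\le e^{-\omega_M(qx)}$ already suffices for the Beurling projective limit.)

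The fix is immediate and brings your argument close to the paper's: simply bypass the Fourier side by observing that your $\psi_1=\mathcal{F}^{-1}(|\widehat{\varphi}|^{2})$ equals $\varphi*\overline{\check{\varphi}}$, which lies in $\mathcal{S}^{(M)}_{(A)}$ directly by Lemma~\ref{l:ConvolutionGS} and closure under reflection and conjugation; then $\psi_1*\psi_1\in\mathcal{S}^{(M)}_{(A)}$ by another application of Lemma~\ref{l:ConvolutionGS}, and the Parseval computation and rescaling go through unchanged. For comparison, the paper's own proof is even shorter and stays entirely on the space side: start from a (real, even) $\phi\in\mathcal{S}^{(M)}_{(A)}$ with $\phi(0)\neq0$ (evenness arranged via $\phi\cdot\check{\phi}$ and the pointwise-multiplication algebra property, which needs only $(M.1)$), note that $\varphi:=\phi*\phi$ satisfies $\varphi(0)=\int\phi^{2}\neq0$, hence $\lambda:=(\varphi,\varphi)_{L^2}>0$, and take $\psi_0=\lambda^{-1/4}\phi$.
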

	
	\begin{proof}
		Take any $\phi \in \mathcal{S}^{(M)}_{(A)}$ such that $\phi(0) \neq 0$. We may assume that $\phi$ is even, as otherwise we could just take $\phi \cdot \check{\phi} \in \mathcal{S}^{(M)}_{(A)}$. Now set $\varphi = \phi * \phi \in \mathcal{S}^{(M)}_{(A)}$ and note that $\varphi(0) = \int_{\R^{d}} |\phi(t)|^{2} dt \neq 0$. Put $\lambda = (\varphi, \varphi)_{L^{2}} > 0$. Choosing $\psi_{0} = \lambda^{-1/4} \phi$, then $\psi = \psi_{0} * \psi_{0}$ will have the desired properties.
	\end{proof}
	
\subsection{The tempered ultradistributions}
	
We denote $\mathcal{S}^{\prime [M]}_{[A]}$ for the strong dual of the Gelfand-Shilov space $\mathcal{S}^{[M]}_{[A]}$, also called the space of \emph{tempered ultradistributions (of Beurling and Roumieu type)}. Then, $\mathcal{S}^{\prime (M)}_{(A)}$ is a $(DFN)$-space, while $\mathcal{S}^{\prime \{M\}}_{\{A\}}$ is a $(FN)$-space. For any $\ell > 0$, we will often use the following spaces
	\begin{equation}
		\label{eq:XSpaces}
		X_{(\ell)} = \overline{\mathcal{S}^{(M)}_{(A)}}^{\mathcal{S}^{M, \ell}_{A, \ell^{-1}}} , \qquad X_{\{\ell\}} = \mathcal{S}^{M, \ell}_{A, \ell^{-1}} . 
	\end{equation} 
Then, as locally convex spaces,
	\[ \mathcal{S}^{\prime (M)}_{(A)} = \varinjlim_{\ell \rightarrow \infty} X^{\prime}_{(\ell)} , \qquad \mathcal{S}^{\prime \{M\}}_{\{A\}} = \varprojlim_{\ell \rightarrow \infty} X^{\prime}_{\{\ell\}} . \]

We define the convolution of a tempered ultradistribution $f \in \mathcal{S}^{\prime [M]}_{[A]}$ and a test function $\psi \in \mathcal{S}^{[M]}_{[A]}$ by transposition, i.e.,
	\[ \ev{f * \psi}{\varphi} = \ev{f}{\varphi * \check{\psi}} , \qquad \forall \varphi \in \mathcal{S}^{[M]}_{[A]} . \]
Alternatively, we may express $f * \psi$ as a Pettis integral.

	\begin{lemma}
		For any $f \in \mathcal{S}^{\prime [M]}_{[A]}$ and $\psi \in \mathcal{S}^{[M]}_{[A]}$, we have that
			\begin{equation} 
				\label{eq:ConvTempUltraDistPettis}
				f * \psi = \int_{\R^{d}} \psi(x) T_{x} f dx , 
			\end{equation}
		where the right-hand side is a Pettis integral. In particular, the convolution map
			\begin{equation}
				\label{eq:ConvTempUltraDist} 
				* : \mathcal{S}^{\prime [M]}_{[A]} \times \mathcal{S}^{[M]}_{[A]} \rightarrow \mathcal{S}^{\prime [M]}_{[A]} : \quad (f, \psi) \mapsto \int_{\R^{d}} \psi(x) T_{x} f dx  
			\end{equation}
		is a well-defined hypocontinuous bilinear map.
	\end{lemma}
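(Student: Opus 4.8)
The plan is to verify the Pettis-integral identity \eqref{eq:ConvTempUltraDistPettis} by first checking weak integrability of the map $x \mapsto \psi(x) T_x f$ into $\mathcal{S}^{\prime [M]}_{[A]}$, then identifying its Pettis integral with $f * \psi$ via the defining transposition formula, and finally deducing hypocontinuity of \eqref{eq:ConvTempUltraDist} from Lemma~\ref{l:SuffCondExistencePettisIntegral} together with the already established continuity of convolution on $\mathcal{S}^{[M]}_{[A]}$ (Lemma~\ref{l:ConvolutionGS}) and the mapping properties recorded in \eqref{eq:ConvGSEstimate}.

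First I would fix $f \in \mathcal{S}^{\prime [M]}_{[A]}$ and $\psi \in \mathcal{S}^{[M]}_{[A]}$ and consider the map $F : \R^d \to \mathcal{S}^{\prime [M]}_{[A]}$, $F(x) = \psi(x) T_x f$. Since $\mathcal{S}^{\prime [M]}_{[A]}$ is a complete lcHs ($(DFN)$ in the Beurling case, $(FN)$ in the Roumieu case), by Lemma~\ref{l:SuffCondExistencePettisIntegral} it suffices to show $F$ is continuous and that $p \circ F \in L^1$ for every $p \in \csn(\mathcal{S}^{\prime [M]}_{[A]})$. Continuity of $F$ is immediate from continuity of $\psi$, continuity of translation on $\mathcal{S}^{\prime [M]}_{[A]}$, and joint continuity of scalar multiplication. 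For the $L^1$-bound, note that the seminorms of $\mathcal{S}^{\prime [M]}_{[A]}$ are the suprema over equicontinuous (equivalently, bounded) subsets $B \subset \mathcal{S}^{[M]}_{[A]}$ of $|\ev{\cdot}{\varphi}|$, $\varphi \in B$. For such a bounded $B$ there are $q, \ell > 0$ with $B \subset \mathcal{S}^{M,\ell}_{A,q}$ bounded (in the Roumieu case this uses compact regularity of $\mathcal{S}^{\{M\}}_{\{A\}}$; in the Beurling case $B$ is bounded in $\mathcal{S}^{M,\ell}_{A,\ell^{-1}}$ for every $\ell$), and then
\[
|\ev{F(x)}{\varphi}| = |\psi(x)|\,|\ev{f}{T_{-x}\varphi}| \leq |\psi(x)| \cdot C_B \sup_{\varphi \in B} \|T_{-x}\varphi\|_{\mathcal{S}^{M,\ell}_{A,q/2}},
\]
where $C_B$ comes from the continuity of $f$ on the Banach space $X_{(\ell)}$ (resp. $X_{\{\ell\}}$). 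Using \eqref{eq:M1} one estimates $\|T_{-x}\varphi\|_{\mathcal{S}^{M,\ell}_{A,q/2}} \leq e^{\omega_A(qx)} \|\varphi\|_{\mathcal{S}^{M,\ell}_{A,q}}$, so $p(F(x)) \leq C' |\psi(x)| e^{\omega_A(qx)}$, and since $\psi \in \mathcal{S}^{[M]}_{[A]}$ decays faster than $e^{-\omega_A(q'x)}$ for any $q'$ (in particular $q' = H^{d+1}q$), we get $p \circ F \in L^1$ via \eqref{eq:M2'}. Hence the Pettis integral exists.

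To identify the integral: for any $\varphi \in \mathcal{S}^{[M]}_{[A]}$, applying the continuous linear functional $\ev{\cdot}{\varphi}$ and \eqref{eq:LinearMapPettisIntegral} gives
\[
\ev{\int_{\R^d} \psi(x) T_x f\, dx}{\varphi} = \int_{\R^d} \psi(x) \ev{f}{T_{-x}\varphi}\, dx = \ev{f}{\int_{\R^d} \psi(x) T_{-x}\varphi\, dx} = \ev{f}{\varphi * \check\psi},
\]
where the second equality is the scalar Pettis-integral formula applied to the $\mathcal{S}^{[M]}_{[A]}$-valued map $x \mapsto \psi(x) T_{-x}\varphi = \psi(x) T_{-x}\varphi$, whose Pettis integral is $\varphi * \check\psi$ by Lemma~\ref{l:ConvolutionGS} (noting $\int \psi(x) T_{-x}\varphi\, dx = \int \check\psi(x) T_x \varphi\, dx = \varphi * \check\psi$); one must check that this scalar functional commutes with the $\mathcal{S}^{[M]}_{[A]}$-valued Pettis integral, which is precisely \eqref{eq:LinearMapPettisIntegral} for the functional $\ev{f}{\cdot}$, legitimate since $f$ is continuous on $\mathcal{S}^{[M]}_{[A]}$. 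This matches the transposition definition of $f * \psi$, proving \eqref{eq:ConvTempUltraDistPettis}.

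Finally, for hypocontinuity of \eqref{eq:ConvTempUltraDist}: both factors are barrelled (the $(DFN)$/$(FN)$ space and the $(FN)$/$(DFN)$ space), so it suffices to prove separate continuity. Fixing $\psi$, the map $f \mapsto f * \psi$ is the transpose of $\varphi \mapsto \varphi * \check\psi$, which is continuous on $\mathcal{S}^{[M]}_{[A]}$ by Lemma~\ref{l:ConvolutionGS}, hence continuous. Fixing $f$, continuity in $\psi$ follows from the bound $p(f * \psi) \leq C' \|\psi\|_{\mathcal{S}^{M,\ell}_{A,q}}$ (uniform over a given bounded $B$) extracted from the estimate above combined with \eqref{eq:UppdarBoundSeminormPettisIntegral} and \eqref{eq:GSasL1Sp}; in the Roumieu case one additionally invokes bornologicity of $\mathcal{S}^{\prime\{M\}}_{\{A\}}$ (it is Fréchet) so that bounded maps are continuous. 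The main obstacle I anticipate is the bookkeeping with the two seminorm/inductive-limit structures (Beurling vs. Roumieu) when extracting the uniform constants $C_B$ and choosing $q' = H^{d+1}q$ so that $e^{\omega_A(q\cdot)}/e^{\omega_A(q'\cdot)} \in L^1$; the analytic content is entirely routine given \eqref{eq:M1}, \eqref{eq:M2'}, and Lemma~\ref{l:ConvolutionGS}.
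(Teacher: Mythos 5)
Your argument is correct and takes essentially the same route as the paper: existence of the Pettis integral via Lemma \ref{l:SuffCondExistencePettisIntegral} with the translate estimate coming from \eqref{eq:M1} and \eqref{eq:M2'} (the paper packages this as boundedness of the normalized set $\widetilde{B}$ of translates in $X_{[\ell_0]}$), identification with $f*\psi$ by applying \eqref{eq:LinearMapPettisIntegral} twice together with Lemma \ref{l:ConvolutionGS}, and hypocontinuity by transposing the continuous convolution on $\mathcal{S}^{[M]}_{[A]}$ over barrelled spaces. The only nit is that for continuity in $\psi$ in the Roumieu case the bornological space you need is the domain $\mathcal{S}^{\{M\}}_{\{A\}}$ rather than $\mathcal{S}^{\prime\{M\}}_{\{A\}}$, but your uniform bound on the Banach steps of the inductive limit already yields this.
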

	
	\begin{proof}
		The hypocontinuity of \eqref{eq:ConvTempUltraDist} follows directly from that of \eqref{eq:GSConv}. Now, let $X_{[\ell]}$ be as in \eqref{eq:XSpaces}, then $f \in X^{\prime}_{[\ell_{0}]}$ for some $\ell_{0} > 0$ (for any $\ell_{0} > 0$). Note that for any $\varphi \in \mathcal{S}^{[M]}_{[A]}$ we have $\ev{\psi(x) T_{x} f}{\varphi} = \ev{f}{\check{\psi}(-x)T_{-x}\varphi}$. Suppose $B$ is a bounded subset of $\mathcal{S}^{[M]}_{[A]}$, then in particular $B$ is contained and bounded in any (in some) $X_{[\ell]}$. Consequently, we may assume the set $\widetilde{B} = \{ T_{-x} \varphi / e^{\omega_{A}(-2x/\ell_{0})} : \varphi \in B, x \in \R^{d} \}$ is bounded in $X_{[\ell_{0}]}$ and that $\psi \in L^{1}_{e^{\omega_{A}(2 \cdot / \ell_{0})}}$, so we get
			\[ \int_{\R^{d}} \sup_{\varphi \in B} |\ev{\psi(x) T_{x} f}{\varphi}| dx \leq \sup_{\varphi \in \widetilde{B}} |\ev{f}{\varphi}| \norm{\psi}_{L^{1}_{e^{\omega_{A}(2 \cdot / \ell_{0})}}} . \]	
		By Lemma \ref{l:SuffCondExistencePettisIntegral} we may now conclude that the Pettis integral $\int_{\R^{d}} \psi(x) T_{x} fdx$ exists. Also, by applying \eqref{eq:LinearMapPettisIntegral} twice, we see that for any $\varphi \in \mathcal{S}^{[M]}_{[A]}$,
			\[ \ev{\int_{\R^{d}} \psi(x) T_{x} fdx}{\varphi} = \int_{\R^{d}} \ev{\psi(x) T_{x} f}{\varphi} dx = \int_{\R^{d}} \ev{f}{\check{\psi}(-x)T_{-x}\varphi} dx = \ev{f}{\varphi * \check{\psi}} . \]
	\end{proof}

By the Pettis integral representation, we see that the convolution \eqref{eq:ConvTempUltraDist} is the (unique) extension of the convolution \eqref{eq:GSConv}. In particular, we may now extend the (adjoint) STFT to the tempered ultradistributions. For any $f \in \mathcal{S}^{\prime [M]}_{[A]}$ we define the STFT with respect to the window $\psi \in \mathcal{S}^{[M]}_{[A]}$ as
	\[ V_{\psi} f(\xi) = (M_{- \xi} f) * \check{\overline{\psi}} \in \mathcal{S}^{\prime [M]}_{[A]} . \] 
The adjoint STFT of a function $\Phi \in C_{[M], d}(\mathcal{S}^{\prime [M]}_{[A]})$ with respect to a window $\gamma \in \mathcal{S}^{[M]}_{[A]}$ is defined by the Pettis integral
	\[ V^{*}_{\gamma} \Phi = \int_{\R^{d}} M_{\xi} [\Phi(\xi) * \gamma] d\xi . \]

	\begin{proposition}
		\label{p:STFTTempUltra}
		Take $\psi, \gamma_{0}, \gamma_{1} \in \mathcal{S}^{(M)}_{(A)}$ and put $\gamma = \gamma_{0} * \gamma_{1}$. The linear mappings
			\[ V_{\psi} : \mathcal{S}^{\prime [M]}_{[A]} \rightarrow C_{[M], d}(\mathcal{S}^{\prime [M]}_{[A]}) \quad \text{and} \quad V^{*}_{\gamma} : C_{[M], d}(\mathcal{S}^{\prime [M]}_{[A]}) \rightarrow \mathcal{S}^{\prime [M]}_{[A]} , \]
		are well-defined and continuous. Moreover, if $(\gamma, \psi)_{L^{2}} \neq 0$, then, 
			\begin{equation}
				\label{eq:STFTReconstructTempUltra} 
				\frac{1}{(\gamma, \psi)_{L^{2}}} V^{*}_{\gamma} \circ V_{\psi} = \id_{\mathcal{S}^{\prime [M]}_{[A]}} 
			\end{equation}
		is valid and the desingularization formula
			\begin{equation}
				\label{eq:STFTDesingularizationTempUltra}
				\ev{f}{\varphi} = \frac{1}{(\gamma, \psi)_{L^{2}}} \int_{\R^{d}} \ev{V_{\psi} f(\xi)}{V_{\overline{\gamma}} \varphi(-\xi)} d\xi
			\end{equation}
		holds for all $f \in \mathcal{S}^{\prime [M]}_{[A]}$ and $\varphi \in \mathcal{S}^{[M]}_{[A]}$. 
	\end{proposition}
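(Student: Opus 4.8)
The plan is to prove Proposition~\ref{p:STFTTempUltra} in three stages: first the mapping properties of $V_\psi$, then those of $V^*_\gamma$, then the two identities \eqref{eq:STFTReconstructTempUltra} and \eqref{eq:STFTDesingularizationTempUltra}. For $V_\psi$ I would argue by transposition: for $f \in \mathcal{S}^{\prime [M]}_{[A]}$ and $\varphi \in \mathcal{S}^{[M]}_{[A]}$ one has $\ev{V_\psi f(\xi)}{\varphi} = \ev{f}{\overline{V_{\overline\psi}\varphi(\xi)}}$ (up to a sign/conjugation bookkeeping coming from $\check{\overline\psi}$ and $M_{-\xi}$), so the required bound $\sup_\xi e^{-\omega_M(q\xi)}|\ev{V_\psi f(\xi)}{\varphi}| < \infty$ for $\varphi$ in a bounded set follows from Proposition~\ref{p:STFTGS} applied to $V_{\overline\psi}$, which gives $V_{\overline\psi}\varphi \in C_{[M]}(\mathcal{S}^{[M]}_{[A]})$, combined with the fact that $f$ is bounded on bounded sets; continuity then follows because $C_{[M],d}(\mathcal{S}^{\prime[M]}_{[A]})$ and $\mathcal{S}^{\prime[M]}_{[A]}$ are of the right type (Fr\'echet resp.\ $(LB)$, so bornological) exactly as in the proof of Proposition~\ref{p:STFTGS}. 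Here one must be a little careful: in the Beurling case $\mathcal{S}^{\prime(M)}_{(A)}$ is a $(DFN)$-space and $C_{(M),d}$ of it is an $(LB)$-space by Lemma~\ref{l:TopCMSp}$(ii)$ (the dual is a compactly regular $(LB)$-space), while in the Roumieu case $\mathcal{S}^{\prime\{M\}}_{\{A\}}$ is Fr\'echet and $C_{\{M\},d}$ is Fr\'echet; in both cases it is bornological, so separate boundedness estimates suffice.

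For $V^*_\gamma$ I would invoke Lemma~\ref{l:InducedBilinearMap}, or argue directly in its spirit: write $V^*_\gamma \Phi = \int_{\R^d} M_\xi[\Phi(\xi)*\gamma]\,d\xi$ as a Pettis integral, where $\gamma = \gamma_0 * \gamma_1$ with $\gamma_i \in \mathcal{S}^{(M)}_{(A)}$. The map $(f,\eta)\mapsto M_\xi[f*\eta]$ fits the template of Lemma~\ref{l:InducedBilinearMap} with $G = \mathcal{S}^{\prime[M]}_{[A]}$, the operators $\Gamma_\xi = M_\xi$ satisfying the growth condition \eqref{eq:InducedBilinearMapCond} (translation-modulation invariance of $\mathcal{S}^{\prime[M]}_{[A]}$ gives a bound on $\|M_\xi f\|$ in terms of $e^{\omega_M(c\xi)}$ via the submultiplicativity \eqref{eq:M1} of $\omega_M$), and $A : \mathcal{S}^{\prime[M]}_{[A]}\times \mathcal{S}^{[M]}_{[A]} \to \mathcal{S}^{\prime[M]}_{[A]}$ the hypocontinuous convolution \eqref{eq:ConvTempUltraDist}; then $\Phi \mapsto V^*_\gamma\Phi$ is the map \eqref{eq:InducedBilinearMap} with second argument frozen at the constant function $\gamma$, which lands in $C_{[M]}$, hence is a well-defined continuous linear map $C_{[M],d}(\mathcal{S}^{\prime[M]}_{[A]}) \to \mathcal{S}^{\prime[M]}_{[A]}$. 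Alternatively, one estimates directly as in the proof of Proposition~\ref{p:STFTGS}: $\|M_\xi[\Phi(\xi)*\gamma]\|_{X'_{[\ell]}} \leq C e^{\omega_M(c\xi)}\|\gamma\|_{\cdots}\sup_\eta e^{-\omega_M(c'\xi)}\cdots$ and the integrability of $e^{\omega_M(c\xi)}/e^{\omega_M(cH^{d+1}\xi)}$ from \eqref{eq:M2'} makes the Pettis integral converge by Lemma~\ref{l:SuffCondExistencePettisIntegral}; continuity again follows from bornologicity.

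For the identities I would bootstrap from Proposition~\ref{p:STFTGS}. The reconstruction \eqref{eq:STFTReconstructTempUltra} is proved by transposition: for $f\in\mathcal{S}^{\prime[M]}_{[A]}$ and $\varphi\in\mathcal{S}^{[M]}_{[A]}$, unwind $\ev{V^*_\gamma V_\psi f}{\varphi}$ using \eqref{eq:LinearMapPettisIntegral} to move the pairing through the Pettis integral, so that it becomes $\int_{\R^d}\ev{V_\psi f(\xi)}{(M_\xi[\cdot*\gamma])^t\varphi}\,d\xi$; the transpose of $\eta\mapsto M_\xi[\eta*\gamma]$ on test functions is (up to conjugation) a building block of $V^*_{\overline\gamma}$ acting on $V_{\overline\gamma}\varphi$, and in the end the whole thing equals $\ev{f}{\tfrac{1}{(\gamma,\psi)_{L^2}}V^*_{\overline\gamma}V_{\overline\psi}\varphi \cdot (\gamma,\psi)_{L^2}} = \ev{f}{\varphi}$ by the already-established scalar identity \eqref{eq:STFTGSReconstruct}. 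The desingularization formula \eqref{eq:STFTDesingularizationTempUltra} is then essentially the intermediate expression in this computation read off directly: $\ev{f}{\varphi} = \ev{\tfrac{1}{(\gamma,\psi)_{L^2}}V^*_\gamma V_\psi f}{\varphi} = \tfrac{1}{(\gamma,\psi)_{L^2}}\int_{\R^d}\ev{V_\psi f(\xi)}{V_{\overline\gamma}\varphi(-\xi)}\,d\xi$, once one checks the duality $\ev{V^*_\gamma \Phi}{\varphi} = \int_{\R^d}\ev{\Phi(\xi)}{V_{\overline\gamma}\varphi(-\xi)}\,d\xi$, which is the $C_{[M],d}$--$C_{[M]}$ pairing and follows from Fubini/Pettis bookkeeping together with the convolution-adjoint relations.

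The main obstacle I anticipate is purely bookkeeping rather than conceptual: tracking the conjugations, reflections $\check{\,\cdot\,}$, and the $\xi \leftrightarrow -\xi$ flips so that $V_\psi$ on ultradistributions is genuinely the transpose-compatible extension of $V_\psi$ on test functions, and so that the window pairing appears as $(\gamma,\psi)_{L^2}$ rather than some conjugate or reflected variant. The analytic content—existence of the Pettis integrals and continuity of the maps—reduces cleanly to Lemma~\ref{l:SuffCondExistencePettisIntegral}, Lemma~\ref{l:InducedBilinearMap}, Lemma~\ref{l:TopCMSp}, the $L^1$-integrability from \eqref{eq:M2'}, and the fact that $\mathcal{S}^{\prime[M]}_{[A]}$ is translation-modulation invariant via \eqref{eq:M1}; once the identities are reduced to \eqref{eq:STFTGSReconstruct} on the dense subspace $\mathcal{S}^{[M]}_{[A]}$ (or, here, directly by transposition), no new estimates are needed.
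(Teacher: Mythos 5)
Your overall architecture (transposition, Pettis integrals, bornologicity of the relevant spaces, reduction to the test-function case) is the paper's, and the part of your argument dealing with the identities is sound. But both mapping-property steps contain genuine errors, and they share a common root: a misidentification of where the decay in $\xi$ comes from. For $V_{\psi}$, the correct transpose identity is $\ev{V_{\psi}f(\xi)}{\varphi}=\ev{(M_{-\xi}f)*\check{\overline{\psi}}}{\varphi}=\ev{f}{M_{-\xi}(\varphi*\overline{\psi})}$, and the function $M_{-\xi}(\varphi*\overline{\psi})$ is \emph{not} any conjugate/reflected variant of $V_{\overline{\psi}}\varphi(\xi)$: its zeroth-order seminorm is independent of $\xi$ and its derivative seminorms grow, so $\|M_{-\xi}(\varphi*\overline{\psi})\|_{\mathcal{S}^{M,\ell}_{A,q}}=O(e^{\omega_{M}(c\xi)})$ rather than $O(e^{-\omega_{M}(c\xi)})$. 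If your identification were correct you would conclude $V_{\psi}f\in C_{[M]}(\mathcal{S}^{\prime[M]}_{[A]})$ for \emph{every} $f\in\mathcal{S}^{\prime[M]}_{[A]}$, which is false: already $V_{\psi}\delta(\xi)=\check{\overline{\psi}}$ is a nonzero constant in $\xi$, hence not in $C_{e^{\omega_{M}(q\cdot)}}$ for any $q>0$, and infinite-order ultradistributions produce genuine growth of order $e^{\omega_{M}(c\xi)}$. (Moreover Proposition \ref{p:STFTGS} only applies to windows of the form $\psi_{0}*\psi_{1}$, and $\psi$ is not assumed factored in this proposition.) What is actually needed is the direct estimate $\|M_{-\xi}(\varphi*\overline{\psi})\|\leq Ce^{\omega_{M}(q\xi)}$ uniformly over bounded sets of $\varphi$, obtained from \eqref{eq:M1product}; your proposal omits exactly this step.

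The $V^{*}_{\gamma}$ step has the mirror-image gap. Freezing the second argument of \eqref{eq:InducedBilinearMap} at the constant function $\xi\mapsto\gamma$ does not produce the claimed map: a nonzero constant lies in $C_{[M],d}(\mathcal{S}^{[M]}_{[A]})$, not in $C_{[M]}(\mathcal{S}^{[M]}_{[A]})$, and in Lemma \ref{l:InducedBilinearMap} the only slot allowed to grow is the second one, so $\Phi\in C_{[M],d}(\mathcal{S}^{\prime[M]}_{[A]})$ must occupy it and the first slot --- which must carry decay $e^{-\omega_{M}(q\xi)}$ --- is then left with nothing. Your ``direct'' alternative fails for the same reason: $\|M_{\xi}[\Phi(\xi)*\gamma]\|$ in any fixed strong seminorm is, by naive submultiplicativity, $O(e^{\omega_{M}(q\xi)}e^{\omega_{M}(q'\xi)})$, which is not integrable. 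The integral exists only as a weak (Pettis) object: pairing with a test function $\varphi$ turns the integrand into $\ev{\Phi(\xi)}{V_{\overline{\gamma}}\varphi(-\xi)}$, and the decay that beats the growth of $\Phi$ is supplied by $V_{\overline{\gamma}}\varphi\in C_{[M]}(\mathcal{S}^{[M]}_{[A]})$ --- which is precisely where the hypothesis $\gamma=\gamma_{0}*\gamma_{1}$ and Proposition \ref{p:STFTGS} enter. Your proposal never uses that factorization, which is a reliable sign that the convergence mechanism has been misplaced. Once the duality formula $\ev{V^{*}_{\gamma}\Phi}{\varphi}=\int_{\R^{d}}\ev{\Phi(\xi)}{V_{\overline{\gamma}}\varphi(-\xi)}\,d\xi$ is established this way, your derivation of \eqref{eq:STFTReconstructTempUltra} and \eqref{eq:STFTDesingularizationTempUltra} by transposing \eqref{eq:STFTGSReconstruct} is a legitimate alternative to the paper's argument via density of $L^{2}$ in $\mathcal{S}^{\prime[M]}_{[A]}$.
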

	
	\begin{proof}
		We first show the continuity of $V_{\psi}$. Take any $f \in \mathcal{S}^{\prime [M]}_{[A]}$. Let $X_{[\ell]}$ be as in \eqref{eq:XSpaces}, then for some $\ell_{0} > 0$ (for any $\ell_{0} > 0$) $f$ is an element $X^{\prime}_{[\ell_{0}]}$. Now, take any bounded subset $B$ of $\mathcal{S}^{[M]}_{[A]}$. Then, for any $\ell_{1} > 0$ (for some $\ell_{1} > 0$), $B$ is a bounded subset of $\mathcal{S}^{M, \ell_{1}}_{A, 1/\ell_{1}}$ and we set $C_{B} = \sup_{\varphi \in B} \norm{\varphi}_{\mathcal{S}^{M, \ell_{1}}_{A, 1/\ell_{1}}}$. For $q > 0$, we have that by \eqref{eq:M1product}
			\begin{align*} 
				\| M_{-\xi} [\varphi * \overline{\psi}] \|_{\mathcal{S}^{M, 4\pi / q}_{A, 1 / (2\ell_{1})}} 
				&\leq \sup_{(\alpha, x) \in \N^{d} \times \R^{d}} \sum_{\beta \leq \alpha} {\alpha \choose \beta} \frac{(2 \pi |\xi|)^{|\beta|} M_{0}}{(4 \pi / q)^{|\beta|} M_{\beta}} \frac{|\varphi * \overline{\psi} \vspace{0.1mm}^{(\alpha - \beta)}(x)| e^{\omega_{A}(x / (2 \ell_{1}))}}{(4 \pi / q)^{|\alpha - \beta|} M_{\alpha - \beta}} \\
				&\leq C_{B} M_{0} \norm{\psi}_{\mathcal{S}^{M, 2 \pi / q}_{A, 1, 1 / \ell_{1}}} e^{\omega_{M}(q \xi)} .
			\end{align*}
		Consequently, if we have $\ell_{0} = \max(4 \pi / q, 2 \ell_{1})$, it follows that
			\[ \sup_{\varphi \in B} |\ev{V_{\psi} f(\xi)}{\varphi}| e^{- \omega_{M}(q \xi)} \leq C \norm{f}_{X^{\prime}_{[\ell_{0}]}} , \]
		for $C = C_{B} M_{0} \norm{\psi}_{\mathcal{S}^{M, 2 \pi / q}_{A, 1, 1 / \ell_{1}}}$. The continuity of $V_{\psi}$ now follows.  
		
		Next, we consider the continuity of $V^{*}_{\gamma}$. Note that for any $f \in \mathcal{S}^{\prime [M]}_{[A]}$ and $\varphi \in \mathcal{S}^{[M]}_{[A]}$
			\[ \ev{M_{\xi} [f * \gamma]}{\varphi} = \ev{f}{(M_{\xi} \varphi) * \check{\gamma}} = \ev{f}{V_{\overline{\gamma}} \varphi(-\xi)} .  \]
		As a result of Lemma \ref{l:InducedBilinearMap}, with $A = \ev{\cdot}{\cdot}$ and $\Gamma_{\xi} \equiv \id$, and Proposition \ref{p:STFTGS}, we find the following well-defined hypocontinuous bilinear map,
			\[ \mathcal{S}^{[M]}_{[A]} \times C_{[M], d}(\mathcal{S}^{\prime [M]}_{[A]}) \rightarrow \C : \quad (\varphi, \Phi) \mapsto \int_{\R^{d}} \ev{\Phi(\xi)}{V_{\overline{\gamma}} \varphi(-\xi)} d\xi . \]
		Then, the righthand side will exactly be $\ev{V^{*}_{\gamma} \Phi}{\varphi}$, so that $V^{*}_{\gamma} \Phi$ is indeed an element of $\mathcal{S}^{\prime [M]}_{[A]}$. Moreover, for any bounded subset $B \subset C_{[M], d}(\mathcal{S}^{\prime [M]}_{[A]})$, the set $\{ V^{*}_{\gamma} \Phi : \Phi \in B \}$ is an equicontinuous subset of $\mathcal{S}^{\prime [M]}_{[A]}$, and thus bounded. By Lemma \ref{l:TopCMSp} $C_{[M], d}(\mathcal{S}^{\prime [M]}_{[A]})$ is an $(LB)$-space (a Fr\'{e}chet space) since $\mathcal{S}^{\prime (M)}_{(A)}$ is compactly regular, hence it is bornological. The continuity of $V^{*}_{\gamma}$ now follows.
		
		Now, suppose $(\gamma, \psi)_{L^{2}} \neq 0$. As $\mathcal{S}^{[M]}_{[A]} \subset L^{2}(\R^{d}) \subset \mathcal{S}^{\prime [M]}_{[A]}$ and $\mathcal{S}^{[M]}_{[A]}$ is dense in $\mathcal{S}^{\prime [M]}_{[A]}$, it follows that $L^{2}(\R^{d})$ is dense in $\mathcal{S}^{\prime [M]}_{[A]}$. Then \eqref{eq:STFTReconstructTempUltra} follows directly from \cite[Corollary 3.2.3]{G-FoundationsTimeFreqAnal}. By Lemma \ref{l:InducedBilinearMap}, for any $\varphi \in \mathcal{S}^{[M]}_{[A]}$, the linear map
			\[ f \mapsto \int_{\R^{d}} \ev{V_{\psi} f(\xi)}{V_{\overline{\gamma}} \varphi(-\xi)} d\xi , \]
		is continuous over $\mathcal{S}^{\prime [M]}_{[A]}$. Again, by the density of $L^{2}(\R^{d})$ in $\mathcal{S}^{\prime [M]}_{[A]}$, it follows from \cite[Theorem 3.2.1]{G-FoundationsTimeFreqAnal} that the mapping above coincides with the evaluation in $\varphi$. Hence \eqref{eq:STFTDesingularizationTempUltra} holds. 
	\end{proof}

\section{Translation-Modulation invariant Banach spaces and the space $\D^{[M]}_{E}$}
\label{sec:TMIB}

We now introduce the class of Banach spaces of tempered ultradistributions that are invariant under translation and modulation, which will form the primary building blocks for the theory to come. Throughout this section $M$ and $A$ denote two weight sequences both satisfying $(M.1)$ and $(M.2)'$. 

	\begin{definition}
		A Banach space $(E, \|\cdot\|_{E})$ is called a \emph{translation-modulation invariant Banach space of ultradistributions} (in short: TMIB) of class $[M] - [A]$ if it satisfies the following three conditions:
			\begin{itemize}
				\item[$(a)$] the continuous and dense inclusions $\mathcal{S}^{[M]}_{[A]} \hookrightarrow E \hookrightarrow \mathcal{S}^{\prime [M]}_{[A]}$ hold;
				\item[$(b)$] $T_{x}(E) \subseteq E$ and $M_{\xi}(E) \subseteq E$ for all $x, \xi \in \R^{d}$;
				\item[$(c)$] there exist $q, C_{E, q} > 0$ (for every $q > 0$ there exists a $C_{E, q} > 0$) such that for all $x, \xi \in \R^{d}$:
					\begin{equation}
						\label{eq:TMIBnormcond}
						\omega_{E}(x) := \norm{T_{x}}_{L(E)} \leq C_{E, q} e^{\omega_{A}(q x)} \quad \text{and} \quad \nu_{E}(\xi) := \norm{M_{-\xi}}_{L(E)} \leq C_{E, q} e^{\omega_{M}(q \xi)} .
					\end{equation}
			\end{itemize}
			
		If $E$ is a Banach space such that $E = E_{0}^{\prime}$ for some TMIB $E_{0}$ of class $[M] - [A]$, then $E$ is called a \emph{dual translation-modulation invariant Banach space of ultradistributions} (in short: DTMIB) of class $[M] - [A]$.
	\end{definition}
	
	\begin{remark}
		In \eqref{eq:TMIBnormcond} we implicitly use that $T_{x}$ and $M_{\xi}$ are continuous linear operators on $E$. This is a direct consequence of $(a)$, $(b)$, and the closed graph theorem, see \cite[Lemma 3.1]{D-P-V-ClassTISpQuasiAnalUltraDist}. 
	\end{remark}
	
	\begin{remark}
		\label{r:dualiswTMIB}
		Let $E = E^{\prime}_{0}$ be a DTMIB. Then clearly $(b)$ holds via transposition. Moreover, the bipolar theorem yields that $\omega_{E} = \check{\omega}_{E_{0}}$ and $\nu_{E} = \nu_{E_{0}}$, so that $E$ also satisfies $(c)$. As $E_{0}$ satisfies the condition $(a)$, we find $\mathcal{S}^{[M]}_{[A]} \subset E \hookrightarrow \mathcal{S}^{\prime [M]}_{[A]}$, where the latter dense inclusion follows from the fact that $\mathcal{S}^{[M]}_{[A]}$ is dense in $\mathcal{S}^{\prime [M]}_{[A]}$. However, the continuous inclusion $\mathcal{S}^{[M]}_{[A]} \subset E$ is not necessarily dense. A classical example of a DTMIB that is not a TMIB is $L^{\infty}$. If $E_{0}$ is reflexive, then $E$ is always a TMIB due to the continuous inclusion $E_{0} \subset \mathcal{S}^{\prime [M]}_{[A]}$. 
	\end{remark}
	
For a TMIB or DTMIB $E$, the functions $\omega_{E}$ and $\nu_{E}$ are both positive (since $T_{x} f \neq 0$ and $M_{\xi} f \neq 0$ for any $f \in \mathcal{S}^{\prime [M]}_{[A]} \setminus \{0\}$) and measurable (cfr. \cite[p. 149]{D-P-V-ClassTISpQuasiAnalUltraDist} and Remark \ref{r:dualiswTMIB}). By \eqref{eq:TMIBnormcond} it follows that $\omega_{E}$ and $\nu_{E}$ are locally bounded. On the other hand, let $\omega_{E}(x) \leq C_{E, q} e^{\omega_{A}(q x)}$ for some $q > 0$ and take any $e \in E$ such that $\norm{e}_{E} = 1 / C_{E, q}$. Let $U$ be an open ball around the origin on which $\omega_{A}$ is identically zero, then note that the elements of $\{ T_{y} e : y \in U \}$ have norm at most $1$ in $E$. For a fixed $x_{0} \in \R^{d}$, we now see that for any $x \in x_{0} + U$,
	\[ \omega_{E}(x) = \norm{T_{x}}_{E} \geq \sup_{y \in U} \norm{T_{x} T_{y} e}_{E} \geq \norm{T_{x_{0}} e}_{E} .  \]
Consequently, we see that $\omega_{E}^{-1}$ is locally bounded, and similarly one can show that $\nu_{E}^{-1}$ is locally bounded. Hence, it makes sense to consider the Banach space $L^{1}_{\omega_{E}}$. A strong property of $E$ is that one may naturally define convolution with elements of $L^{1}_{\omega_{E}}$ on it.
	
	\begin{proposition}
		\label{p:ExtensionOfConvolutionToWTMIB}
		Let $E$ be a TMIB or DTMIB of class $[M] - [A]$. The mapping
			\begin{equation} 
				\label{eq:ConvWTMIB}
				* : E \times L^{1}_{\omega_{E}} \rightarrow E : \quad (f, g) \mapsto \int_{\R^{d}} g(x) T_{x} f dx ,
			\end{equation}
		where the righthand side is represented as a Pettis integral, is a well-defined and continuous bilinear mapping, that turns $E$ into a Banach module over $L^{1}_{\omega_{E}}$, that is,
			\begin{equation}
				\label{eq:convolutionineqE}
				\norm{f * g}_{E} \leq \norm{f}_{E} \norm{g}_{L^{1}_{\omega_{E}}} , \qquad f \in E, g \in L^{1}_{\omega_{E}} . 
			\end{equation}
	\end{proposition}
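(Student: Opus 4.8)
The plan is to reduce the statement to the machinery already developed: the existence criterion for Pettis integrals (Lemma~\ref{l:SuffCondExistencePettisIntegral}), the dominated convergence theorem for Pettis integrals (Lemma~\ref{l:PettisIntegralsDominatedConvergence}), and the boundedness of the translation operators recorded in \eqref{eq:TMIBnormcond}. First I would fix $f \in E$ and $g \in L^{1}_{\omega_{E}}$ and show that the vector-valued map $x \mapsto g(x) T_{x} f$ is weakly integrable with Pettis integral in $E$. Since $E$ is a Banach space, hence complete, Lemma~\ref{l:SuffCondExistencePettisIntegral} applies once we know the integrand is continuous and $x \mapsto \norm{g(x) T_{x} f}_{E}$ is in $L^{1}$. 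The norm bound is immediate: $\norm{g(x) T_{x} f}_{E} \leq |g(x)| \, \norm{T_{x}}_{L(E)} \norm{f}_{E} = |g(x)| \, \omega_{E}(x) \norm{f}_{E}$, which integrates to $\norm{f}_{E} \norm{g}_{L^{1}_{\omega_{E}}} < \infty$; moreover integrating this pointwise bound through \eqref{eq:UppdarBoundSeminormPettisIntegral} gives \eqref{eq:convolutionineqE} and hence the continuity of the bilinear map, once existence is established.

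The point requiring slightly more care is the continuity of $x \mapsto g(x) T_{x} f$, since $g$ need not be continuous. The clean way around this is the standard density/approximation argument: approximate $g$ in $L^{1}_{\omega_{E}}$ by a sequence $g_{n}$ of continuous (say, compactly supported) functions — such functions are dense in $L^{1}_{\omega_{E}}$ because $\omega_{E}$ and $\omega_{E}^{-1}$ are locally bounded — and for each $g_{n}$ the integrand $x \mapsto g_{n}(x) T_{x} f$ is continuous (using strong continuity of $x \mapsto T_{x} f$ on $E$, which one gets first for $f$ in the dense subspace $\mathcal{S}^{[M]}_{[A]}$ and then for general $f \in E$ by the uniform bound $\omega_{E}$ being locally bounded) and dominated in norm by $|g_{n}(x)| \omega_{E}(x) \norm{f}_{E}$. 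So each $g_{n} \ast f$ is defined as a Pettis integral by Lemma~\ref{l:SuffCondExistencePettisIntegral}, and passing to the limit via Lemma~\ref{l:PettisIntegralsDominatedConvergence} (after extracting a subsequence along which $g_{n} \to g$ a.e., with the $L^{1}$-domination of $|g_{n}|$ giving the required majorant $g_{p}$) shows that $\int_{\R^{d}} g(x) T_{x} f\, dx$ exists and equals $\lim_{n} g_{n} \ast f$. Alternatively, if one is willing to invoke Bochner-type measurability one can argue more directly, but the density route keeps everything within the already-proven Pettis framework.

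It remains to check the algebraic module identity, i.e. associativity $(f \ast g) \ast h = f \ast (g \ast h)$ for $f \in E$ and $g, h \in L^{1}_{\omega_{E}}$, which is what ``Banach module'' asserts beyond the norm inequality. By continuity of the convolution map in all its slots and density of $\mathcal{S}^{[M]}_{[A]}$ in $E$ and of nice functions in $L^{1}_{\omega_{E}}$, it suffices to verify associativity when all three entries are test functions, where it reduces to Fubini applied to the defining double integral $\int\!\int h(y) g(x) T_{x+y} f \, dx\, dy$ — the integrability needed for Fubini is controlled by $\omega_{E}(x+y) \leq \omega_{E}(x)\omega_{E}(y)$, which holds because $\norm{T_{x+y}}_{L(E)} \leq \norm{T_{x}}_{L(E)}\norm{T_{y}}_{L(E)}$. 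Finally, one should observe that \eqref{eq:ConvWTMIB} is consistent with the previously defined convolutions, i.e. when $g \in \mathcal{S}^{[M]}_{[A]}$ it agrees with \eqref{eq:ConvTempUltraDist} restricted to $E$: this follows because both are Pettis integrals of the same integrand $x \mapsto g(x) T_{x} f$, the one in $E$ and the one in $\mathcal{S}^{\prime[M]}_{[A]}$, and a Pettis integral in a continuously embedded subspace maps to the Pettis integral in the larger space under the embedding, as noted after \eqref{eq:LinearMapPettisIntegral}. I expect the continuity of the integrand for non-continuous $g$ to be the only genuinely delicate point; everything else is bookkeeping with the estimates above.
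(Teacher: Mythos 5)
Your core argument is exactly the paper's: the pointwise bound $\norm{g(x)T_{x}f}_{E}\leq |g(x)|\,\omega_{E}(x)\norm{f}_{E}$, integrability of the right-hand side, Lemma~\ref{l:SuffCondExistencePettisIntegral} for existence of the Pettis integral, and \eqref{eq:UppdarBoundSeminormPettisIntegral} for \eqref{eq:convolutionineqE}. The paper's proof consists of precisely those three lines and is silent on the continuity hypothesis of Lemma~\ref{l:SuffCondExistencePettisIntegral}, so your attention to that hypothesis is the only genuinely new content of your proposal, and it is where the problems lie.

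Your repair has a gap in the DTMIB case. You obtain norm-continuity of $x\mapsto T_{x}f$ on $E$ from density of $\mathcal{S}^{[M]}_{[A]}$ in $E$; but for a DTMIB that density can fail (Remark~\ref{r:dualiswTMIB}), and strong continuity of translation is simply false there: for $E=L^{\infty}$ (so $\omega_{E}\equiv 1$) and $f$ the indicator function of the unit cube one has $\norm{T_{x}f-f}_{L^{\infty}}=1$ for all small $x\neq 0$. So for a DTMIB neither your approximation argument nor, strictly speaking, the paper's direct appeal to Lemma~\ref{l:SuffCondExistencePettisIntegral} applies verbatim; one must either prove a variant of that lemma for weakly measurable integrands with a norm-integrable majorant, or define $f*g$ on $E=E_{0}^{\prime}$ by transposing the already established convolution on the TMIB $E_{0}$ and verify that it represents the weak integral. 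Two smaller points: Lemma~\ref{l:PettisIntegralsDominatedConvergence} as stated also requires the \emph{limit} function to be continuous, which $x\mapsto g(x)T_{x}f$ is not; the intended conclusion survives because $\left(\int_{\R^{d}}g_{n}(x)T_{x}f\,dx\right)_{n}$ is Cauchy in $E$ by your norm estimate applied to $g_{n}-g_{m}$, and its limit is identified as the Pettis integral by the scalar dominated convergence theorem applied to $x\mapsto g_{n}(x)\ev{\phi}{T_{x}f}$. Finally, the paper reads ``Banach module'' as meaning exactly the inequality \eqref{eq:convolutionineqE}, so your associativity paragraph is extra; it too reduces to density of $\mathcal{S}^{[M]}_{[A]}$ in $E$ and would need the same repair in the DTMIB case. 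For a TMIB your argument is complete and, if anything, more careful than the paper's.
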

	
	\begin{proof}
		We have, for any $f \in E$ and $g \in L^{1}_{\omega_{E}}$,
			\[ \int_{\R^{d}} \norm{g(x) T_{x} f}_{E} dx \leq \int_{\R^{d}} |g(x)| \omega_{E}(x) \norm{f}_{E} dx = \norm{f}_{E} \norm{g}_{L^{1}_{\omega_{E}}} , \]
		hence the function is Pettis integrable by Lemma \ref{l:SuffCondExistencePettisIntegral}. From here the continuity and \eqref{eq:convolutionineqE} follow directly.
	\end{proof}
	
	\begin{remark}
		The restricted convolution $* : E \times \mathcal{S}^{[M]}_{[A]} \rightarrow E$ is extended by the convolution \eqref{eq:ConvTempUltraDist} in $\mathcal{S}^{\prime [M]}_{[A]}$. In particular, if $f \in \mathcal{S}^{[M]}_{[A]}$ and $\psi \in \mathcal{S}^{[M]}_{[A]}$, then the definitions of $f * \psi$ as in \eqref{eq:GSConvPettis} and \eqref{eq:ConvWTMIB} coincide. In the case where $E$ is a TMIB the extension of the convolution \eqref{eq:GSConv} is also unique. This was already shown, under the condition $(M.2)$, in \cite[Proposition 3.2]{D-P-P-V-TMIBUltraDist}. 
	\end{remark}

We now consider a particular subspace of a TMIB or DTMIB $E$. For any $\ell > 0$ we define the Banach space $\D^{M, \ell}_{E}$ of all $f \in \mathcal{S}^{\prime [M]}_{[A]}$ such that $f^{(\alpha)} \in E$ for any $\alpha \in \N^{d}$ and
	\[ \norm{f}_{\D^{M, \ell}_{E}} = \sup_{\alpha \in \N^{d}} \frac{\norm{f^{(\alpha)}}_{E}}{\ell^{|\alpha|} M_{\alpha}} < \infty . \]
Next, we put
	\[ \D^{(M)}_{E} = \varprojlim_{\ell \rightarrow 0^{+}} \D^{M, \ell}_{E} , \qquad D^{\{M\}}_{E} = \varinjlim_{\ell \rightarrow \infty} D^{M, \ell}_{E} . \]
These spaces contain all convolutions between elements of $E$ and $\mathcal{S}^{[M]}_{[A]}$.
	
	\begin{lemma}
		\label{l:ConvolutionEtoDE}
		The mapping
			\[ * : E \times \mathcal{S}^{[M]}_{[A]} \rightarrow \D^{[M]}_{E} : \quad (f, \psi) \mapsto f * \psi , \]
		is well-defined and continuous.
	\end{lemma}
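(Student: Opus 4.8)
The plan is to show that for fixed $\psi \in \mathcal{S}^{[M]}_{[A]}$ and every $\ell > 0$, the map $f \mapsto f * \psi$ sends $E$ continuously into $\D^{M, \ell'}_{E}$ for a suitable $\ell'$ (in the Beurling case, $\ell'$ can be taken arbitrarily small by shrinking a parameter attached to $\psi$; in the Roumieu case one works at the level of the Banach steps), and that it is separately continuous as a bilinear map, whence continuous by barreledness of the domain. The key computational input is that differentiation commutes with convolution: for $f \in E$ and $\psi \in \mathcal{S}^{[M]}_{[A]}$ one has $(f * \psi)^{(\alpha)} = f * \psi^{(\alpha)}$. This identity should be justified via the Pettis-integral representation \eqref{eq:ConvWTMIB}, using \eqref{eq:LinearMapPettisIntegral} applied to the (continuous) differentiation operators on $\mathcal{S}^{\prime [M]}_{[A]}$, together with the fact that $\psi^{(\alpha)} \in L^{1}_{\omega_{E}}$ since $\omega_{E}(x) \leq C_{E,q} e^{\omega_{A}(qx)}$ and $\mathcal{S}^{[M]}_{[A]}$ embeds in $L^{1}_{e^{\omega_{A}(q\cdot)}}$ by \eqref{eq:GSasL1Sp}.

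Granting this, I would estimate using \eqref{eq:convolutionineqE}:
\[
\norm{(f * \psi)^{(\alpha)}}_{E} = \norm{f * \psi^{(\alpha)}}_{E} \leq \norm{f}_{E} \, \norm{\psi^{(\alpha)}}_{L^{1}_{\omega_{E}}} \leq C_{E,q} \norm{f}_{E} \, \norm{\psi^{(\alpha)}}_{L^{1}_{e^{\omega_{A}(q \cdot)}}} .
\]
Now choose $q > 0$ according to condition $(c)$ of the definition of TMIB/DTMIB (in the Beurling case any $q$ works, in the Roumieu case $q$ is fixed), and suppose $\psi \in \mathcal{S}^{M, \ell_{0}}_{A, 1, q}$ for an appropriate $\ell_{0}$; then $\norm{\psi^{(\alpha)}}_{L^{1}_{e^{\omega_{A}(q \cdot)}}} \leq \ell_{0}^{|\alpha|} M_{\alpha} \norm{\psi}_{\mathcal{S}^{M, \ell_{0}}_{A, 1, q}}$. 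Dividing by $\ell_{0}^{|\alpha|} M_{\alpha}$ and taking the supremum over $\alpha$ gives
\[
\norm{f * \psi}_{\D^{M, \ell_{0}}_{E}} \leq C_{E,q} \norm{f}_{E} \, \norm{\psi}_{\mathcal{S}^{M, \ell_{0}}_{A, 1, q}} ,
\]
which is exactly the bilinear continuity estimate. Passing to the (projective, resp.\ inductive) limits in $\ell_{0}$ via \eqref{eq:GSasL1Sp} yields the separate continuity in $\psi$, while the displayed bound gives separate continuity in $f$; since $E$ and $\mathcal{S}^{[M]}_{[A]}$ are barreled, separate continuity upgrades to joint continuity, and in particular $f * \psi \in \D^{[M]}_{E}$.

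The main obstacle I expect is the rigorous justification of $(f * \psi)^{(\alpha)} = f * \psi^{(\alpha)}$ as an identity \emph{in $E$} — one must know a priori that $f * \psi \in E$ (which it is, by Proposition \ref{p:ExtensionOfConvolutionToWTMIB}, since $\psi \in L^{1}_{\omega_{E}}$), that $f * \psi^{(\alpha)} \in E$ for the same reason, and that the distributional derivative of the former equals the latter. The cleanest route is to compute in $\mathcal{S}^{\prime [M]}_{[A]}$: apply the continuous linear map $\partial^{\alpha} : \mathcal{S}^{\prime [M]}_{[A]} \to \mathcal{S}^{\prime [M]}_{[A]}$ to the Pettis integral $f * \psi = \int_{\R^{d}} \psi(x) T_{x} f \, dx$, pull it inside via \eqref{eq:LinearMapPettisIntegral} to get $\int_{\R^{d}} \psi(x) T_{x} f^{(\alpha)} \, dx = f^{(\alpha)} * \psi$, and then observe by the usual integration-by-parts that $f^{(\alpha)} * \psi = f * \psi^{(\alpha)}$ (again at the level of the Pettis integral, using $T_x$-equivariance). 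Because all these integrals have their Pettis values already in $E$ (same value in $E$ as in $\mathcal{S}^{\prime [M]}_{[A]}$ by the continuous embedding), the identity holds in $E$, closing the gap.
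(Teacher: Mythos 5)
Your proposal is correct and follows essentially the same route as the paper: the core of the argument is the estimate $\norm{(f*\psi)^{(\alpha)}}_{E} = \norm{f*\psi^{(\alpha)}}_{E} \leq \norm{f}_{E}\norm{\psi^{(\alpha)}}_{L^{1}_{\omega_{E}}} \leq C_{E,q}\ell^{|\alpha|}M_{\alpha}\norm{f}_{E}\norm{\psi}_{\mathcal{S}^{M,\ell}_{A,1,q}}$ via \eqref{eq:convolutionineqE} and \eqref{eq:GSasL1Sp}, which is exactly what the paper does. Your additional care in justifying $(f*\psi)^{(\alpha)} = f*\psi^{(\alpha)}$ as an identity in $E$ through the Pettis-integral representation and the continuous embedding $E \hookrightarrow \mathcal{S}^{\prime[M]}_{[A]}$ is sound and fills in a step the paper leaves implicit.
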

	
	\begin{proof}
		Take any $f \in E$ and $\psi \in \mathcal{S}^{[M]}_{[A]}$. Let $q, \ell > 0$ be such that $\psi \in \mathcal{S}^{M, \ell}_{A, 1, q}$ and $\omega_{E}(x) \leq C_{E, q} e^{\omega_{A}(q x)}$. By \eqref{eq:convolutionineqE} we have that, for any $\alpha \in \N^{d}$,
			\[ \frac{\norm{f * \psi^{(\alpha)}}_{E}}{\ell^{|\alpha|} M_{\alpha}} \leq \frac{\norm{f}_{E} \norm{\psi^{(\alpha)}}_{L^{1}_{\omega_{E}}}}{{\ell^{|\alpha|} M_{\alpha}} } \leq C_{E, q} \norm{f}_{E} \norm{\psi}_{\mathcal{S}^{M, \ell}_{A, 1, q}} .  \]
		Hence $\norm{f * \psi}_{\D^{M, \ell}_{E}} \leq C_{E, q} \norm{f}_{E} \norm{\psi}_{\mathcal{S}^{M, \ell}_{A, 1, q}}$, and the continuity follows by \eqref{eq:GSasL1Sp}.   
	\end{proof}
	
An immediate corollary of the previous convolution mapping is the density of $\D^{[M]}_{E}$ in $E$, as shown in the next result. 

	\begin{corollary}
		\label{c:DEdenseinE}
		The space $\D^{[M]}_{E}$ is dense in $E$.
	\end{corollary}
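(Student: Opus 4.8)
The plan is to deduce the density of $\D^{[M]}_{E}$ in $E$ directly from Lemma \ref{l:ConvolutionEtoDE} by approximating an arbitrary $f \in E$ with convolutions $f * \psi$ for suitable test functions $\psi \in \mathcal{S}^{[M]}_{[A]}$. The key observation is that $f * \psi \in \D^{[M]}_{E}$ by Lemma \ref{l:ConvolutionEtoDE}, so it suffices to produce a net (or sequence) $(\psi_{j})_{j}$ in $\mathcal{S}^{[M]}_{[A]}$ such that $f * \psi_{j} \to f$ in $E$, i.e. an approximate identity in $L^{1}_{\omega_{E}}$ that lies in $\mathcal{S}^{[M]}_{[A]}$.

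Concretely, I would first fix $\phi \in \mathcal{S}^{[M]}_{[A]}$ with $\int_{\R^{d}} \phi\, dx = 1$ (which exists since $\mathcal{S}^{[M]}_{[A]}$ is non-trivial; one can normalize any nonzero nonnegative element, using that the Fourier transform maps $\mathcal{S}^{[M]}_{[A]}$ into a similar space, or simply take $|\varphi|^{2}$ for a suitable $\varphi$ and rescale), and put $\phi_{\varepsilon}(x) = \varepsilon^{-d}\phi(x/\varepsilon)$. Each $\phi_{\varepsilon}$ still belongs to $\mathcal{S}^{[M]}_{[A]}$ since the latter is closed under dilation. The standard mollifier estimate gives $f * \phi_{\varepsilon} \to f$ in $E$ as $\varepsilon \to 0^{+}$: using the Pettis-integral representation \eqref{eq:ConvWTMIB} and writing $f * \phi_{\varepsilon} - f = \int_{\R^{d}} \phi(y)\,(T_{\varepsilon y} f - f)\, dy$, one bounds $\norm{f * \phi_{\varepsilon} - f}_{E} \leq \int_{\R^{d}} |\phi(y)|\, \norm{T_{\varepsilon y}f - f}_{E}\, dy$ and applies dominated convergence, where the dominating function comes from $\norm{T_{\varepsilon y} f - f}_{E} \leq (1 + \omega_{E}(\varepsilon y))\norm{f}_{E} \leq (1 + C_{E,q} e^{\omega_{A}(q\varepsilon y)})\norm{f}_{E}$, which for $\varepsilon \leq 1$ is bounded by $(1 + C_{E,q} e^{\omega_{A}(q y)})\norm{f}_{E}$ — an $L^{1}$-in-$y$ bound against $|\phi(y)|$ since $\phi \in \mathcal{S}^{[M]}_{[A]} \subset L^{1}_{e^{\omega_{A}(q\cdot)}}$. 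The pointwise convergence $\norm{T_{\varepsilon y} f - f}_{E} \to 0$ as $\varepsilon \to 0$ is exactly strong continuity of the translation group on $E$, which holds because $\mathcal{S}^{[M]}_{[A]}$ is dense in $E$ and translation is strongly continuous on $\mathcal{S}^{[M]}_{[A]}$ with uniformly (locally) bounded operator norm by \eqref{eq:TMIBnormcond}.

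The one genuine subtlety is the DTMIB case: if $E = E_{0}'$ is a DTMIB that is not a TMIB (e.g. $L^{\infty}$), then $\mathcal{S}^{[M]}_{[A]}$ is \emph{not} dense in $E$, translation need not be strongly continuous, and $f * \phi_{\varepsilon}$ need not converge to $f$ in norm. So I expect the main obstacle to be stating the corollary correctly: I would restrict the claim to the case where $E$ is a TMIB, or — if one wants to keep the DTMIB case — interpret "dense" in the weak-$*$ topology $\sigma(E_{0}', E_{0})$, in which case $f * \phi_{\varepsilon} \to f$ weak-$*$ follows from testing against $g \in E_{0} \subset \mathcal{S}^{\prime[M]}_{[A]}$ and using that $\check{\phi}_{\varepsilon} * g \to g$ in $E_{0}$ (now $E_{0}$ \emph{is} a TMIB, so the strongly-continuous argument above applies to $E_{0}$, and one transfers via the duality $\ev{f * \phi_{\varepsilon}}{g} = \ev{f}{\check{\phi}_{\varepsilon} * g}$).

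Given the phrasing of the excerpt, the cleanest write-up is: observe $f * \psi \in \D^{[M]}_{E}$ by Lemma \ref{l:ConvolutionEtoDE}; construct the mollifier $\phi_{\varepsilon} \in \mathcal{S}^{[M]}_{[A]}$; prove $f * \phi_{\varepsilon} \to f$ in $E$ (TMIB case) via the dominated-convergence argument above, and weak-$*$ (DTMIB case) via duality with the TMIB $E_{0}$. No deep machinery is needed beyond the already-established convolution module structure \eqref{eq:convolutionineqE}, Lemma \ref{l:SuffCondExistencePettisIntegral}, and density of $\mathcal{S}^{[M]}_{[A]}$ in $E$ (resp. in $E_{0}$).
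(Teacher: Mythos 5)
Your argument for the TMIB case is correct and complete: $f*\phi_{\varepsilon}\in\D^{[M]}_{E}$ by Lemma \ref{l:ConvolutionEtoDE}, and the estimate $\norm{f*\phi_{\varepsilon}-f}_{E}\leq\int|\phi(y)|\,\norm{T_{\varepsilon y}f-f}_{E}\,dy$ together with strong continuity of translations on $E$ (which you correctly derive from the density of $\mathcal{S}^{[M]}_{[A]}$ in $E$ and the local boundedness of $\omega_{E}$) and dominated convergence gives norm convergence. The paper takes a different route, precisely in order to avoid invoking strong continuity of translations (which is unavailable for a DTMIB): it sets $\psi_{n}=\chi*\chi_{n}$ with $\chi_{n}=n^{d}\chi(n\cdot)$, notes that $(\psi_{n})_{n}$ is \emph{uniformly} dominated by $Ce^{-\omega_{A}(qx)}$, writes $\varphi-\varphi*\psi_{n}=\int_{\R^{d}}\psi_{n}(x)[\varphi-T_{x}\varphi]\,dx$, and appeals to the vector-valued dominated convergence theorem (Lemma \ref{l:PettisIntegralsDominatedConvergence}).

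Your suspicion about the DTMIB case is well founded, and it in fact points at the weak spot of the paper's own argument. For the dominated-convergence step to produce $0$ one needs $\psi_{n}(x)[\varphi-T_{x}\varphi]\to 0$ pointwise a.e.; but $\chi*\chi_{n}\to\chi$ pointwise, so the lemma as invoked only yields $\varphi*\psi_{n}\to\varphi*\chi$. More structurally, no sequence can simultaneously have unit mass, converge pointwise to $0$ off the origin, and be dominated by a fixed $L^{1}$ function, so no dominated-convergence argument can substitute for strong continuity of translations here. And the statement genuinely fails for the DTMIB $E=L^{\infty}$: every element of $\D^{[M]}_{L^{\infty}}$ has bounded first derivatives, hence is Lipschitz, so the norm closure of $\D^{[M]}_{L^{\infty}}$ consists of uniformly continuous functions and stays at distance $\geq 1$ from $\operatorname{sign}(x_{1})$. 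Both of your proposed repairs — restricting the corollary to TMIB, or weakening to weak-$*$ density via $\ev{f*\phi_{\varepsilon}}{g}=\ev{f}{g*\check{\phi}_{\varepsilon}}$ and the TMIB argument applied to $E_{0}$ — are sound, and the first is what the later applications (where the norm-density is needed, e.g.\ Corollary \ref{c:denseinclusions}) actually require.
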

	
	\begin{proof}
		Take any $\chi \in \mathcal{S}^{[M]}_{[A]}$ such that $\int_{\R^{d}} \chi(x) dx = 1$. Put $\chi_{n} = n^{d} \chi(n \cdot)$ and $\psi_{n} = \chi * \chi_{n}$ for any $n \geq 1$. Note that $\psi_{n} \in \mathcal{S}^{[M]}_{[A]}$ by Lemma \ref{l:ConvolutionGS} and $\int_{\R^{d}} \psi_{n}(x) dx = 1$ by Fubini's Theorem. Let $q, \ell > 0$ be such that $\chi \in \mathcal{S}^{M, \ell}_{A, 2 q} \cap \mathcal{S}^{M, \ell}_{A, 1, 2q}$ and $\omega_{E}(x) \leq C_{E, q/H^{d+1}} e^{\omega_{A}(q x / H^{d + 1})}$. By \eqref{eq:ConvGSEstimate} we find
			\[ \norm{\psi_{n}}_{\mathcal{S}^{M, \ell}_{A, q}} \leq \norm{\chi}_{\mathcal{S}^{M, \ell}_{A, 2q}} \int_{\R^{d}} |\chi_{n}(x)| e^{\omega_{A}(2qx)} dx \leq \norm{\chi}_{\mathcal{S}^{M, \ell}_{A, 2q}} \norm{\chi}_{\mathcal{S}^{M, \ell}_{A, 1, 2q}} . \]
		In particular, we have $|\psi_{n}(x)| \leq C e^{-\omega_{A}(q x)}$ for every $n \geq 1$ and some $C > 0$. For arbitrary $\varphi \in E$ we have that $\varphi * \psi_{n} \in \D^{[M]}_{E}$ by Lemma \ref{l:ConvolutionEtoDE}. Moreover, we see that
			\[ \norm{\psi_{n}(x) [\varphi - T_{x} \varphi]}_{E} \leq 2 C C_{E, q / H^{d + 1}} \norm{\varphi}_{E} \frac{e^{\omega_{A}(q x / H^{d + 1})}}{e^{\omega_{A}(q x)}} \in L^{1} . \]
		Then, Lemma \ref{l:PettisIntegralsDominatedConvergence} gives
			\[ \lim_{n \rightarrow \infty} \varphi - \varphi * \psi_{n} = \lim_{n \rightarrow \infty} \int_{\R^{d}} \psi_{n}(x) [\varphi - T_{x} \varphi] dx = 0 . \]
	\end{proof}
	
In the sequel, we will often work with the following extra condition on modulation in $E$:
	\begin{equation}
		\label{eq:GrowthCondModulation}
		\forall q_{0} > 0 ~ \exists q_{1} > 0 ~ (\forall q_{1} > 0 ~ \exists q_{0} > 0) ~ \exists C > 0 ~ \forall \xi \in \R^{d} : ~ \nu_{E}(\xi) e^{\omega_{M}(q_{0} \xi)} \leq C e^{\omega_{M}(q_{1} \xi)} . 
	\end{equation}
Note that if $E$ satisfies \eqref{eq:GrowthCondModulation}, then in particular the family of mappings $M_{\xi} : E \rightarrow E$ satisfy \eqref{eq:InducedBilinearMapCond}. Also, if $E = E_{0}^{\prime}$ is a DTMIB and $E_{0}$ satisfies \eqref{eq:GrowthCondModulation}, then so does $E$ by Remark \ref{r:dualiswTMIB}.
	
	\begin{remark}
		\label{r:GrowthCondModulationSuffCond}
		The condition \eqref{eq:GrowthCondModulation} imposes stronger conditions on the weight sequence $M$, the TMIB or DTMIB $E$, or on both. It holds in the following cases:
			\begin{itemize}
				\item[$(i)$] if $\nu_{E}$ is a bounded function on $\R^{d}$;
				\item[$(ii)$] if $M$ satisfies $(M.2)'$ and $\sup_{\xi \in \R^{d}} \nu_{E}(\xi) / (1 + |\xi|)^{k} < \infty$ for some $k > 0$ by \eqref{eq:M2'};
				\item[$(iii)$] if $M$ satisfies $(M.2)$ by \cite[Proposition 3.6]{K-Ultradistributions1}.
			\end{itemize}
	\end{remark}
	
As the elements of $E$, and in particular those of $\D^{[M]}_{E}$, are contained in $\mathcal{S}^{\prime [M]}_{[A]}$, we may apply the STFT on them. Due to the specific behavior of the convolution on $E$, 	we are now able to establish the following mapping properties of the (adjoint) STFT on $\D^{[M]}_{E}$. 	
	
	\begin{proposition}
		\label{p:STFTDE}
		Suppose $E$ is a TMIB or DTMIB such that \eqref{eq:GrowthCondModulation} holds. Then, the mappings
			\begin{align*}
				V : \D^{[M]}_{E} \times \mathcal{S}^{[M]}_{[A]} \rightarrow C_{[M]}(E) : &\quad (f, \psi) \longmapsto V_{\psi} f \\
				V^{*} : C_{[M]}(E) \times \mathcal{S}^{[M]}_{[A]} \rightarrow \D^{[M]}_{E} : &\quad (\Phi, \gamma) \longmapsto V^{*}_{\gamma} \Phi 
			\end{align*}
		are well-defined and continuous.
	\end{proposition}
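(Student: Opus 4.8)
The plan is to reduce both assertions to the corresponding mapping properties for $\mathcal{S}^{[M]}_{[A]}$ and $\mathcal{S}^{\prime[M]}_{[A]}$ from Propositions \ref{p:STFTGS} and \ref{p:STFTTempUltra}, upgrading them by keeping track of the $E$-norm via the convolution inequality \eqref{eq:convolutionineqE}. For the map $V$, fix $f \in \D^{M,\ell}_{E}$ and $\psi \in \mathcal{S}^{[M]}_{[A]}$, say $\psi \in \mathcal{S}^{M,\ell'}_{A,1,q}$. Since $V_{\psi}f(\xi) = (M_{-\xi}f)*\check{\overline{\psi}}$ and differentiation commutes with translation and modulation in the appropriate Leibniz sense, I would expand $\partial^{\alpha}(V_{\psi}f(\xi))$ as a finite sum over $\beta \le \alpha$ of terms $c_{\beta}\xi^{\alpha-\beta}(M_{-\xi}f^{(\beta)})*\check{\overline{\psi}}{}^{(\alpha-\beta)}$ (picking up factors of $2\pi\xi$ from differentiating the modulation). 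Applying $\nu_{E}$ to bound $\|M_{-\xi}f^{(\beta)}\|_{E}$ and \eqref{eq:convolutionineqE} to bound the convolution by $\|f^{(\beta)}\|_{E}\,\|\psi^{(\alpha-\beta)}\|_{L^{1}_{\omega_{E}}}$, then using $(M.1)$ (via \eqref{eq:M1product}) to absorb the binomial sum and the powers of $\xi$ against $M_{\alpha}$, one gets a bound of the shape
\[
\sup_{\alpha}\frac{\|\partial^{\alpha}V_{\psi}f(\xi)\|_{E}}{(L\ell)^{|\alpha|}M_{\alpha}} \;\le\; C\,\|f\|_{\D^{M,\ell}_{E}}\,\|\psi\|_{\mathcal{S}^{M,\ell'}_{A,1,q}}\,\nu_{E}(\xi)\,e^{\omega_{M}(2\pi|\xi|/\ell')},
\]
and \eqref{eq:GrowthCondModulation} converts the right-hand weight into $e^{\omega_{M}(q_{1}\xi)}$ with $q_{1}$ as small as we wish (Beurling) or for some $q_{1}$ (Roumieu); this is exactly membership in $C_{[M]}(E)$ with control, giving continuity of $V$.

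For the adjoint map $V^{*}$, recall $V^{*}_{\gamma}\Phi = \int_{\R^{d}} M_{\xi}[\Phi(\xi)*\gamma]\,d\xi$, to be read as a Pettis integral in $\mathcal{S}^{\prime[M]}_{[A]}$. I would first verify this Pettis integral already exists in $E$: for $\Phi \in C_{e^{\omega_{M}(q\cdot)}}(E)$ and $\gamma \in \mathcal{S}^{[M]}_{[A]}$ we have $\|M_{\xi}[\Phi(\xi)*\gamma]\|_{E} \le \nu_{E}(\xi)\|\Phi(\xi)\|_{E}\|\gamma\|_{L^{1}_{\omega_{E}}}$, and by \eqref{eq:GrowthCondModulation} together with the standing integrability fact $e^{\omega_{M}(q_{0}\cdot)}/e^{\omega_{M}(q_{0}H^{d+1}\cdot)}\in L^{1}$, choosing $q$ appropriately makes $\xi\mapsto \nu_{E}(\xi)\|\Phi(\xi)\|_{E}$ lie in $L^{1}$; Lemma \ref{l:SuffCondExistencePettisIntegral} then gives the $E$-valued Pettis integral, which by uniqueness agrees with the $\mathcal{S}^{\prime[M]}_{[A]}$-valued one. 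To land in $\D^{[M]}_{E}$ I must bound all derivatives: differentiating under the Pettis integral (justified by \eqref{eq:LinearMapPettisIntegral} applied to $\partial^{\alpha}$, which is continuous on $\mathcal{S}^{\prime[M]}_{[A]}$ and maps $E\to\D^{[M]}_{E}$-scales) and using the Leibniz expansion of $\partial^{\alpha}(M_{\xi}[\Phi(\xi)*\gamma])$ as before, I estimate $\|(V^{*}_{\gamma}\Phi)^{(\alpha)}\|_{E}$ by $\int \nu_{E}(\xi)(2\pi|\xi|)^{|\alpha|}$-type terms times $\|\Phi(\xi)\|_{E}\|\gamma^{(\alpha)}\|_{L^{1}_{\omega_{E}}}$; the polynomial factor $|\xi|^{|\alpha|}$ is controlled by $M_{\alpha}e^{\omega_{M}(\varepsilon\xi)}$ for any $\varepsilon>0$ by definition of $\omega_{M}$, which is reabsorbed into the weight on $\Phi$ before integrating. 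This yields $\|V^{*}_{\gamma}\Phi\|_{\D^{M,L\ell}_{E}} \le C\|\Phi\|_{C_{e^{\omega_{M}(q\cdot)}}(E)}\|\gamma\|_{\mathcal{S}^{M,\ell}_{A,1,q'}}$, i.e.\ continuity.

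For the joint (bilinear) continuity in $(f,\psi)$ and $(\Phi,\gamma)$: in the Beurling case $C_{(M)}(E)$ and $\D^{(M)}_{E}$ are Fréchet, so separate continuity suffices and the estimates above are already linear in each argument with the other fixed; in the Roumieu case I would instead argue as in the proof of Proposition \ref{p:STFTGS}, taking a bounded set $B$ in the second factor (a bounded set of windows lives in a single Banach step $\mathcal{S}^{M,\ell}_{A,1,q}$), deducing a uniform bound, and invoking that $C_{\{M\}}(E)$ and $\D^{\{M\}}_{E}$ are bornological $(LB)$-spaces (Lemma \ref{l:TopCMSp} for the former; the latter is a countable inductive limit of Banach spaces) so that bounded-set continuity implies continuity. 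The main obstacle I anticipate is the careful bookkeeping in the Leibniz expansion under the modulation operator — keeping the factors of $2\pi\xi$, the binomial coefficients, and the $\ell$-scaling all consistent with $(M.1)$/$(M.2)'$ so that exactly the weight appearing in \eqref{eq:GrowthCondModulation} is produced (and no worse) — rather than anything conceptually deep; everything else is a direct combination of \eqref{eq:convolutionineqE}, \eqref{eq:GrowthCondModulation}, and the two earlier STFT propositions.
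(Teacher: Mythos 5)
Your treatment of $V^{*}$ is essentially the paper's argument (Leibniz expansion of $\partial^{\alpha}(M_{\xi}[\Phi(\xi)*\gamma])$, the bound $|\xi|^{|\alpha|}\leq \varepsilon^{-|\alpha|}M_{\alpha}e^{\omega_{M}(\varepsilon \xi)}/M_{0}$, absorption of $\nu_{E}(\xi)$ via \eqref{eq:GrowthCondModulation} against the decay of $\norm{\Phi(\xi)}_{E}$, and Lemma \ref{l:SuffCondExistencePettisIntegral} with \eqref{eq:LinearMapPettisIntegral} to get the derivatives of the Pettis integral), and I see no problem there.

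The first half, however, has a genuine gap. Membership of $V_{\psi}f$ in $C_{[M]}(E)$ means $\sup_{\xi}e^{\omega_{M}(q\xi)}\norm{V_{\psi}f(\xi)}_{E}<\infty$, i.e.\ you must produce \emph{decay} $\norm{V_{\psi}f(\xi)}_{E}\lesssim e^{-\omega_{M}(q\xi)}$ of the plain $E$-norm in $\xi$; no derivatives of $V_{\psi}f(\xi)$ in the free variable enter the definition of the target space. Your displayed estimate bounds $\sup_{\alpha}\norm{\partial^{\alpha}V_{\psi}f(\xi)}_{E}/((L\ell)^{|\alpha|}M_{\alpha})$ by a quantity that \emph{grows} like $\nu_{E}(\xi)e^{\omega_{M}(2\pi|\xi|/\ell')}$, and \eqref{eq:GrowthCondModulation} can only exchange one growing weight for another ($\nu_{E}(\xi)e^{\omega_{M}(q_{0}\xi)}\leq Ce^{\omega_{M}(q_{1}\xi)}$); it never produces decay. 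So the conclusion ``this is exactly membership in $C_{[M]}(E)$'' does not follow — your bound does not even give boundedness of $\norm{V_{\psi}f(\xi)}_{E}$ in $\xi$. The correct mechanism runs in the opposite direction: one multiplies $V_{\psi}f(\xi)$ by $\xi^{\alpha}$ and converts these powers into derivatives falling on $f$ and on $\psi$, namely $|\xi^{\alpha}|\,\norm{V_{\psi}f(\xi)}_{E}\leq (2\pi)^{-|\alpha|}\sum_{\beta\leq\alpha}\binom{\alpha}{\beta}\norm{(M_{-\xi}f^{(\beta)})*\check{\overline{\psi}}{}^{(\alpha-\beta)}}_{E}\lesssim \nu_{E}(\xi)\ell^{|\alpha|}M_{\alpha}\norm{f}_{\D^{M,\ell}_{E}}\norm{\psi}_{\mathcal{S}^{M,\ell}_{A,1,\ell^{-1}}}$, after which taking the infimum over $\alpha$ yields $\norm{V_{\psi}f(\xi)}_{E}\lesssim\nu_{E}(\xi)e^{-\omega_{M}(c\xi)}$ and \eqref{eq:GrowthCondModulation} absorbs the single factor $\nu_{E}(\xi)$ into the decaying exponential. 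This is precisely where the hypothesis $f\in\D^{[M]}_{E}$ (all derivatives of $f$ controlled in $E$) is actually needed; in your version it is only used incidentally, and the decay in $\xi$ is never obtained. (There is also a bookkeeping slip in your expansion, where the index $\alpha-\beta$ appears simultaneously as a power of $\xi$ and as a derivative on $\psi$, but that is secondary to the directional error.)
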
	
	
	\begin{proof}
		Take any $f \in \D^{[M]}_{E}$ and $\psi \in \mathcal{S}^{[M]}_{[A]}$. Then, for any $q > 0$ let $\ell > 0$ be such that (for a fixed $\ell > 0$ let $q > 0$ be such that) $f \in \D^{M, \ell}_{E}$, $\psi \in \mathcal{S}^{M, \ell}_{A, 1, q}$, for which $\omega_{E}(x) \leq C_{E, \ell^{-1}} e^{\omega_{A}(x / \ell)}$, and $\nu_{E}(\xi) e^{\omega_{M}(q \xi)} \leq C e^{\omega_{M}(\frac{\pi \xi}{\ell \sqrt{d}})}$. We have by \eqref{eq:M1product},
			\begin{align*}
				|\xi^{\alpha}| \norm{V_{\psi} f(\xi)}_{E} &\leq \frac{1}{(2 \pi)^{|\alpha|}} \sum_{\beta \leq \alpha} {\alpha \choose \beta} \| (M_{- \xi} f^{(\beta)}) * \check{\overline{\psi}}{}^{(\alpha - \beta)} \|_{E} \\ 
				&\leq \frac{1}{(2 \pi)^{|\alpha|}} \sum_{\beta \leq \alpha} {\alpha \choose \beta} \| M_{-\xi} f^{(\beta)} \|_{E} \| \check{\overline{\psi}}{}^{(\alpha - \beta)} \|_{L^{1}_{\omega_{E}}} \\
				&\leq C_{E, \ell^{-1}} M_{0} \left(\frac{\ell}{\pi}\right)^{|\alpha|} M_{\alpha} \nu_{E}(\xi) \norm{f}_{\D^{M, \ell}_{E}} \norm{\psi}_{\mathcal{S}^{M, \ell}_{A, 1, \ell^{-1}}} , 
			\end{align*}
		whence, for $C' = C C_{E, \ell^{-1}} M_{0}^{2}$,
			\begin{align*} 
				\norm{V_{\psi} f(\xi)}_{E} &\leq \frac{C'}{C} \norm{f}_{\D^{M, \ell}_{E}} \norm{\psi}_{\mathcal{S}^{M, \ell}_{A, 1, \ell^{-1}}} \nu_{E}(\xi) \inf_{\alpha \in \N^{d}} \left( \frac{\ell \sqrt{d}}{\pi |\xi|} \right)^{|\alpha|} \frac{M_{\alpha}}{M_{0}} \\ 
				&= \frac{C'}{C} \norm{f}_{\D^{M, \ell}_{E}} \norm{\psi}_{\mathcal{S}^{M, \ell}_{A, 1, \ell^{-1}}} \nu_{E}(\xi) e^{-\omega_{M}\left(\frac{\pi \xi}{\ell \sqrt{d}}\right)} \\
				&\leq C' \norm{f}_{\D^{M, \ell}_{E}} \norm{\psi}_{\mathcal{S}^{M, \ell}_{A, 1, \ell^{-1}}} e^{- \omega_{M}(q \xi)} . 
			\end{align*}
		The continuity of $V$ now follows from \eqref{eq:GSasL1Sp}.
		
		Next, we show that $V^{*}$ is continuous. Take any $\Phi \in C_{[M]}(E)$ and $\gamma \in \mathcal{S}^{[M]}_{[A]}$. For any $\ell > 0$ let $q > 0$ be such that (for a fixed $q > 0$ let $\ell > 0$ be such that) $\Phi \in C_{e^{\omega_{M}(q \cdot)}}(E)$, $\gamma \in \mathcal{S}^{M, \ell / 2}_{A, 1, q}$, and for which $\omega_{E}(\xi) \leq C_{E, q} e^{\omega_{A}(q x)}$ and $\nu_{E}(\xi) e^{\omega_{M}(4 \pi H^{d + 1} \xi / \ell)} \leq C e^{\omega_{M}(q \xi)}$. Then, for any $\alpha \in \N^{d}$, by \eqref{eq:M1product} and \eqref{eq:M2'},
			\begin{align*}
				&\norm{\partial^{\alpha}(M_{\xi}[\Phi(\xi) * \gamma])}_{E} 
				\leq \sum_{\beta \leq \alpha} {\alpha \choose \beta} (2 \pi |\xi|)^{|\beta|} \norm{M_{\xi}[\Phi(\xi) * \gamma^{(\alpha - \beta)}]}_{E} \\
				&\qquad \leq C C_{E, q} \ell^{|\alpha|} M_{\alpha} \sum_{\beta \leq \alpha} \frac{{\alpha \choose \beta}}{2^{|\alpha|}} \frac{(4 \pi |\xi|)^{|\beta|} M_{0}}{\ell^{|\beta|} M_{\beta}} \frac{\norm{\gamma^{(\alpha - \beta)}}_{L^{1}_{e^{\omega_{A}(q \cdot)}}}}{(\ell / 2)^{|\alpha - \beta|} M_{\alpha - \beta}} \norm{\Phi}_{C_{e^{\omega_{M}(q \cdot)}}(E)} e^{- \omega_{M}(4 \pi H^{d + 1} \xi / \ell)} \\
				&\qquad \leq C C_{E, q} \ell^{|\alpha|} M_{\alpha} \norm{\gamma}_{\mathcal{S}^{M, \ell / 2}_{A, 1, q}} \norm{\Phi}_{C_{e^{\omega_{M}(q \cdot)}}(E)} \frac{e^{\omega_{M}(4 \pi \xi / \ell)}}{e^{\omega_{M}(4 \pi H^{d + 1} \xi / \ell)}} \in L^{1}
			\end{align*}
		We infer from Lemma \ref{l:SuffCondExistencePettisIntegral} and \eqref{eq:LinearMapPettisIntegral} that $\partial^{\alpha} V^{*}_{\gamma} \Phi = \int_{\R^{d}} \partial^{\alpha} M_{\xi} [ \Phi(\xi) * \gamma ] d\xi$ exists as a Pettis integral in $E$ for any $\alpha \in \N^{d}$ and moreover that
			\[ \| \int_{\R^{d}} M_{\xi} [ \Phi(\xi) * \gamma ] d\xi \|_{\D^{M, \ell}_{E}} \leq C' \norm{\gamma}_{\mathcal{S}^{M, \ell / 2}_{A, 1, q}} \norm{\Phi}_{C_{e^{\omega(q \cdot)}}(E)} ,  \]
		for some $C' > 0$. In particular $V^{*}_{\psi} \Phi \in \D^{[M]}_{E}$ and we may conclude the continuity of the map $V^{*}$ by \eqref{eq:GSasL1Sp}. 
	\end{proof}
	
We end this section with three corollaries of Proposition \ref{p:STFTDE}. The first is a density result in case $E$ is a TMIB.
	
	\begin{corollary}
		\label{c:GSDenseDE}
		If $E$ is a TMIB such that \eqref{eq:GrowthCondModulation} holds, then, $\mathcal{S}^{[M]}_{[A]}$ is dense in $\D^{[M]}_{E}$. 
	\end{corollary}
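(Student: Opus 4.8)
The plan is to exploit the STFT machinery from Proposition~\ref{p:STFTDE} to transport the density question from $\D^{[M]}_{E}$ to the frequency side, where the analogous density statement is already available. Concretely, fix an adequate window as provided by Lemma~\ref{l:AdequateWindow}: take $\psi_{0}, \psi \in \mathcal{S}^{(M)}_{(A)}$ with $\psi = \psi_{0} * \psi_{0}$ and $(\psi, \psi)_{L^{2}} = 1$, and set $\gamma = \psi$ (and $\gamma_{0} = \psi_{0}$) so that the reconstruction identity $V^{*}_{\psi} \circ V_{\psi} = \id$ holds on $\mathcal{S}^{\prime [M]}_{[A]}$, and in particular on $\D^{[M]}_{E}$ by restriction. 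Now given $f \in \D^{[M]}_{E}$, Proposition~\ref{p:STFTDE} tells us $V_{\psi} f \in C_{[M]}(E)$, and $f = V^{*}_{\psi}(V_{\psi} f)$.

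The key step is then to approximate $V_{\psi} f$ in $C_{[M]}(E)$ by elements of a subspace whose image under $V^{*}_{\psi}$ lies in $\mathcal{S}^{[M]}_{[A]}$. First, by Lemma~\ref{l:DensityCMSp1} (applied with the dense inclusion $\mathcal{S}^{[M]}_{[A]} \hookrightarrow E$ from axiom $(a)$ of a TMIB), $C_{[M]}(\mathcal{S}^{[M]}_{[A]})$ is dense in $C_{[M]}(E)$; here one should check that $\mathcal{S}^{[M]}_{[A]}$ is Fr\'echet (resp.\ a compactly regular $(LB)$-space) as required, which holds since $\mathcal{S}^{(M)}_{(A)}$ is an $(FN)$-space and $\mathcal{S}^{\{M\}}_{\{A\}}$ is a $(DFN)$-space, hence compactly regular. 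So choose $\Phi_{n} \in C_{[M]}(\mathcal{S}^{[M]}_{[A]})$ with $\Phi_{n} \to V_{\psi} f$ in $C_{[M]}(E)$. By Proposition~\ref{p:STFTGS}, the adjoint STFT maps $C_{[M]}(\mathcal{S}^{[M]}_{[A]})$ continuously into $\mathcal{S}^{[M]}_{[A]}$, so each $V^{*}_{\psi} \Phi_{n} \in \mathcal{S}^{[M]}_{[A]}$. On the other hand, by Proposition~\ref{p:STFTDE} the map $V^{*}_{\psi} : C_{[M]}(E) \to \D^{[M]}_{E}$ is continuous, so $V^{*}_{\psi} \Phi_{n} \to V^{*}_{\psi}(V_{\psi} f) = f$ in $\D^{[M]}_{E}$. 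This exhibits $f$ as a limit in $\D^{[M]}_{E}$ of elements of $\mathcal{S}^{[M]}_{[A]}$, proving density.

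One point worth flagging: the two adjoint-STFT maps (into $\mathcal{S}^{[M]}_{[A]}$ from Proposition~\ref{p:STFTGS}, and into $\D^{[M]}_{E}$ from Proposition~\ref{p:STFTDE}) must be compatible in the sense that when $\Phi \in C_{[M]}(\mathcal{S}^{[M]}_{[A]}) \subseteq C_{[M]}(E)$ the two resulting elements $V^{*}_{\psi}\Phi$ agree as elements of $\mathcal{S}^{\prime[M]}_{[A]}$. This is immediate from the Pettis integral description of $V^{*}_{\psi}$ together with the compatibility clause in \eqref{eq:LinearMapPettisIntegral} (a Pettis integral in a subspace is the same as in the ambient space under a continuous embedding $\mathcal{S}^{[M]}_{[A]} \hookrightarrow E \hookrightarrow \mathcal{S}^{\prime[M]}_{[A]}$), so no genuine difficulty arises there.

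The main obstacle, such as it is, is really just bookkeeping of the Beurling/Roumieu cases and the quantifier structure: one must ensure the window $\psi = \psi_{0}*\psi_{0}$ drawn from $\mathcal{S}^{(M)}_{(A)}$ simultaneously satisfies the hypotheses of Proposition~\ref{p:STFTGS}, Proposition~\ref{p:STFTTempUltra}, and Proposition~\ref{p:STFTDE} (all of which ask for windows that are convolutions of Beurling-type test functions), and that \eqref{eq:GrowthCondModulation} is in force so that Proposition~\ref{p:STFTDE} applies — but this is exactly the standing hypothesis of the corollary. There is no analytic obstacle beyond the three propositions already proved; the content is the reduction via the reconstruction formula.
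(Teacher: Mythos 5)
Your proof is correct and follows essentially the same route as the paper's: reconstruction via $V^{*}_{\psi} \circ V_{\psi} = \id$ with the window from Lemma~\ref{l:AdequateWindow}, density of $C_{[M]}(\mathcal{S}^{[M]}_{[A]})$ in $C_{[M]}(E)$ from Lemma~\ref{l:DensityCMSp1}, and continuity of $V^{*}_{\psi}: C_{[M]}(E) \rightarrow \D^{[M]}_{E}$ from Proposition~\ref{p:STFTDE}. The only cosmetic difference is that in the Roumieu case $C_{\{M\}}(E)$ is a non-metrizable $(LB)$-space, so the approximating family should be taken as a net rather than a sequence, exactly as the paper does.
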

	
	\begin{proof}
		Take any $f \in \D^{[M]}_{E}$ and let $\psi \in \mathcal{S}^{(M)}_{(A)}$ be as in Lemma \ref{l:AdequateWindow}. Then $V_{\psi} f \in C_{[M]}(E)$ by Proposition \ref{p:STFTDE}. Using Lemma \ref{l:DensityCMSp1} (recall that $\mathcal{S}^{\{M\}}_{\{A\}}$ is compactly regular), we find a net $(\Phi_{\tau})_{\tau}$ in $C_{[M]}(\mathcal{S}^{[M]}_{[A]})$ such that $\lim_{\tau} \Phi_{\tau} = V_{\psi} f$. Applying Proposition \ref{p:STFTDE} once more and using \eqref{eq:STFTReconstructTempUltra} we see that $\lim_{\tau} V^{*}_{\psi} \Phi_{\tau} = f$ in $\D^{[M]}_{E}$. Finally, by Proposition \ref{p:STFTGS} we have that $(V^{*}_{\psi} \Phi_{\tau})_{\tau}$ is a net in $\mathcal{S}^{[M]}_{[A]}$, from which the result follows. 
	\end{proof}
	
Second, we are able to easily determine the topological structure of $\D^{[M]}_{E}$.

	\begin{corollary}
		\label{c:TopDE}
		Suppose $E$ is a TMIB or DTMIB such that \eqref{eq:GrowthCondModulation} holds. Then, $\D^{(M)}_{E}$ is a quasinormable and thus distinguished Fr\'{e}chet space, and $\D^{\{M\}}_{E}$ is a complete $(LB)$-space.
	\end{corollary}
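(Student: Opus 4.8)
The plan is to realize $\D^{[M]}_{E}$, via the short-time Fourier transform, as a complemented subspace of the weighted vector-valued function space $C_{[M]}(E)$, and then to transfer the topological properties of $C_{[M]}(E)$ recorded in Lemma \ref{l:TopCMSp}, using that quasinormability is inherited by quotients and completeness by closed subspaces.

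First I would fix a window $\psi = \psi_{0} * \psi_{0} \in \mathcal{S}^{(M)}_{(A)}$ with $(\psi, \psi)_{L^{2}} = 1$ as furnished by Lemma \ref{l:AdequateWindow}; this is precisely the form of window for which the reconstruction formula \eqref{eq:STFTReconstructTempUltra} applies and reduces to $V^{*}_{\psi} \circ V_{\psi} = \id$. By Proposition \ref{p:STFTDE} --- the point at which the hypothesis \eqref{eq:GrowthCondModulation} enters --- the maps $V_{\psi} : \D^{[M]}_{E} \to C_{[M]}(E)$ and $V^{*}_{\psi} : C_{[M]}(E) \to \D^{[M]}_{E}$ are continuous and linear. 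Since $\D^{[M]}_{E} \hookrightarrow \mathcal{S}^{\prime [M]}_{[A]}$ continuously and $C_{[M]}(E) \hookrightarrow C_{[M], d}(\mathcal{S}^{\prime [M]}_{[A]})$ (a function bounded against a growing weight is bounded against a decaying one), the STFT and its adjoint on these spaces are the corestrictions of those on $\mathcal{S}^{\prime [M]}_{[A]}$, so \eqref{eq:STFTReconstructTempUltra} specializes to $V^{*}_{\psi} \circ V_{\psi} = \id_{\D^{[M]}_{E}}$. Hence $V_{\psi}$ is a topological isomorphism of $\D^{[M]}_{E}$ onto its image and $P := V_{\psi} \circ V^{*}_{\psi}$ is a continuous linear projection of $C_{[M]}(E)$ with range $V_{\psi}(\D^{[M]}_{E}) = \ker(\id - P)$; thus $\D^{[M]}_{E}$ is isomorphic to a closed complemented subspace of $C_{[M]}(E)$.

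Next I would feed in Lemma \ref{l:TopCMSp}, noting that $E$ --- being a TMIB or a DTMIB (the dual of a TMIB, hence again a Banach space) --- is a Banach space, and therefore simultaneously a Fr\'echet space and, trivially, a compactly regular $(LB)$-space. In the Beurling case, Lemma \ref{l:TopCMSp}$(i)$ shows $C_{(M)}(E)$ is a quasinormable Fr\'echet space; as $\D^{(M)}_{E}$ is isomorphic to a topological direct summand of $C_{(M)}(E)$, hence to a quotient of it, it is itself a quasinormable Fr\'echet space (quasinormability passing to quotients, cf. \cite{M-V-IntroFuncAnal}), and every quasinormable Fr\'echet space is distinguished by Grothendieck's theorem \cite{M-V-IntroFuncAnal}. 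In the Roumieu case, Lemma \ref{l:TopCMSp}$(ii)$ shows $C_{\{M\}}(E)$ is a complete $(LB)$-space; since $\D^{\{M\}}_{E} = \varinjlim_{\ell \to \infty} \D^{M, \ell}_{E}$ is an $(LB)$-space by construction and is isomorphic to the closed subspace $\operatorname{ran} P$ of the complete space $C_{\{M\}}(E)$, it is complete.

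The substantive content lies entirely in Proposition \ref{p:STFTDE} and Lemma \ref{l:TopCMSp}; granted those, the corollary is essentially formal. The only step calling for a modicum of care is the identification of the (adjoint) STFT on $\D^{[M]}_{E}$ and $C_{[M]}(E)$ with the corestrictions of those on $\mathcal{S}^{\prime [M]}_{[A]}$ and $C_{[M], d}(\mathcal{S}^{\prime [M]}_{[A]})$, which is what legitimizes invoking \eqref{eq:STFTReconstructTempUltra} at the level of $\D^{[M]}_{E}$ and hence the complementation.
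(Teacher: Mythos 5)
Your argument is correct and follows exactly the paper's route: use Proposition \ref{p:STFTDE} with the window from Lemma \ref{l:AdequateWindow} together with \eqref{eq:STFTReconstructTempUltra} to exhibit $\D^{[M]}_{E}$ as a complemented subspace of $C_{[M]}(E)$, then transfer the properties from Lemma \ref{l:TopCMSp}. You merely spell out the details (the projection $P = V_{\psi}\circ V^{*}_{\psi}$, the corestriction argument, and the permanence of quasinormability and completeness) that the paper leaves implicit.
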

	
	\begin{proof}
		By \eqref{eq:STFTReconstructTempUltra} and using Proposition \ref{p:STFTDE} with the window $\psi$ as in Lemma \ref{l:AdequateWindow}, we see that $\D^{[M]}_{E}$ is isomorphic to a complemented subspace of $C_{[M]}(E)$. The result now follows immediately from Lemma \ref{l:TopCMSp}.
	\end{proof}
	
Finally, we are also able to detect when an element of $E$ belongs to the space $\D^{[M]}_{E}$.

	\begin{corollary}
		\label{c:DetectDE}
		Suppose $E$ is a TMIB or DTMIB such that \eqref{eq:GrowthCondModulation} holds. Take $\psi_{0}, \psi_{1} \in \mathcal{S}^{(M)}_{(A)}$ and suppose $\psi = \psi_{0} * \psi_{1} \neq 0$. Let $f \in E$, then, $f \in \D^{[M]}_{E}$ if and only if
			\begin{equation}
				\label{eq:STFTDECond}
				\forall q > 0 ~ (\exists q > 0) : \quad \sup_{\xi \in \R^{d}} e^{\omega_{M}(q \xi)} \norm{V_{\psi} f(\xi)}_{E} < \infty . 
			\end{equation}
	\end{corollary}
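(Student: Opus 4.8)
The condition \eqref{eq:STFTDECond} expresses precisely that $V_{\psi} f \in C_{[M]}(E)$, so the corollary asserts that $f \in \D^{[M]}_{E}$ if and only if $V_{\psi} f \in C_{[M]}(E)$. The plan is to get the ``only if'' direction immediately from the mapping properties of the STFT on $\D^{[M]}_{E}$ (Proposition \ref{p:STFTDE}), and the ``if'' direction by reconstructing $f$ from $V_{\psi} f$ via the adjoint STFT and recognising, through the reconstruction formula on $\mathcal{S}^{\prime [M]}_{[A]}$ (Proposition \ref{p:STFTTempUltra}), that the outcome is $f$ itself. Before starting, observe that $\psi = \psi_{0} * \psi_{1} \in \mathcal{S}^{(M)}_{(A)} \subseteq \mathcal{S}^{[M]}_{[A]}$ by Lemma \ref{l:ConvolutionGS}, that $(\psi, \psi)_{L^{2}} = \norm{\psi}_{L^{2}}^{2} > 0$ since $\psi \neq 0$, and that $\psi$ is of the admissible form $\psi_{0} * \psi_{1}$; we shall therefore take $\psi$ itself as the window for the adjoint STFT.

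If $f \in \D^{[M]}_{E}$, then Proposition \ref{p:STFTDE} applied to the bilinear map $V$ gives $V_{\psi} f \in C_{[M]}(E)$, i.e. \eqref{eq:STFTDECond}. Conversely, assume \eqref{eq:STFTDECond} and set $\Phi = V_{\psi} f \in C_{[M]}(E)$. On the one hand, Proposition \ref{p:STFTDE} applied to the bilinear map $V^{*}$ with the window $\psi$ yields $V^{*}_{\psi} \Phi \in \D^{[M]}_{E}$, where $V^{*}_{\psi} \Phi = \int_{\R^{d}} M_{\xi}[\Phi(\xi) * \psi]\, d\xi$ is understood as a Pettis integral in $E$. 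On the other hand, the continuous inclusion $E \hookrightarrow \mathcal{S}^{\prime [M]}_{[A]}$ induces a continuous inclusion $C_{[M]}(E) \hookrightarrow C_{[M], d}(\mathcal{S}^{\prime [M]}_{[A]})$, so $\Phi$ also lies in the domain of the adjoint STFT of Proposition \ref{p:STFTTempUltra}; as $f \in E \subseteq \mathcal{S}^{\prime [M]}_{[A]}$, the reconstruction formula \eqref{eq:STFTReconstructTempUltra} with $\gamma = \psi$ gives $V^{*}_{\psi} \Phi = V^{*}_{\psi}(V_{\psi} f) = \norm{\psi}_{L^{2}}^{2} f$ in $\mathcal{S}^{\prime [M]}_{[A]}$.

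It then remains to reconcile the two interpretations of $V^{*}_{\psi} \Phi$. The integrand $\xi \mapsto M_{\xi}[\Phi(\xi) * \psi]$ is literally the same in both: by the remark following Proposition \ref{p:ExtensionOfConvolutionToWTMIB} the convolution $\Phi(\xi) * \psi$, viewed in $\D^{[M]}_{E} \subseteq E$, agrees with the one computed in $\mathcal{S}^{\prime [M]}_{[A]}$, and $M_{\xi}$ commutes with the inclusion $E \hookrightarrow \mathcal{S}^{\prime [M]}_{[A]}$. Applying \eqref{eq:LinearMapPettisIntegral} to this inclusion together with the uniqueness of the Pettis integral, the element $\int_{\R^{d}} M_{\xi}[\Phi(\xi) * \psi]\, d\xi$ produced in $\D^{[M]}_{E}$ by Proposition \ref{p:STFTDE} coincides, as a tempered ultradistribution, with the one produced in $\mathcal{S}^{\prime [M]}_{[A]}$ by Proposition \ref{p:STFTTempUltra}. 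Hence $\norm{\psi}_{L^{2}}^{2} f = V^{*}_{\psi} \Phi \in \D^{[M]}_{E}$, and since $\norm{\psi}_{L^{2}} > 0$ we conclude $f \in \D^{[M]}_{E}$. The only genuinely delicate point is this last reconciliation — ensuring that the adjoint STFT computed ``inside $\D^{[M]}_{E}$'' and the one computed ``inside $\mathcal{S}^{\prime [M]}_{[A]}$'' return the same object — which rests on the uniqueness of Pettis integrals and on the compatibility of the various convolutions established earlier; everything else is a direct application of the STFT machinery already assembled.
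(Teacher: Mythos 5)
Your proof is correct and follows essentially the same route as the paper: Proposition \ref{p:STFTDE} for the forward direction, and for the converse the composition $V^{*}_{\psi}\circ V_{\psi}$ landing in $\D^{[M]}_{E}$ combined with the reconstruction formula \eqref{eq:STFTReconstructTempUltra} in $\mathcal{S}^{\prime [M]}_{[A]}$. The only difference is one of emphasis: you spell out the reconciliation of the two Pettis-integral readings of $V^{*}_{\psi}\Phi$ (which the paper leaves implicit), whereas the paper makes explicit the small point that $V_{\psi}f(\xi)=(M_{-\xi}f)*\check{\overline{\psi}}\in E$ via Proposition \ref{p:ExtensionOfConvolutionToWTMIB} and \eqref{eq:GSasL1Sp} — a fact you tacitly absorb into the identification of \eqref{eq:STFTDECond} with $V_{\psi}f\in C_{[M]}(E)$.
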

	
	\begin{proof}
		If $f \in \D^{[M]}_{E}$, then \eqref{eq:STFTDECond} follows immediately from Proposition \ref{p:STFTDE}. Conversely, suppose $f \in E$ and \eqref{eq:STFTDECond} holds. In particular, we have that $V_{\psi} f(\xi) = (M_{-\xi} f) * \check{\overline{\psi}} \in E$ for any $\xi \in \R^{d}$ by Proposition \ref{p:ExtensionOfConvolutionToWTMIB} and \eqref{eq:GSasL1Sp}. Consequently, by the assumption \eqref{eq:STFTDECond}, it follows from Proposition \ref{p:STFTDE} that $\frac{1}{(\psi, \psi)_{L^{2}}} V^{*}_{\psi} \circ V_{\psi} f \in \D^{[M]}_{E}$. As $f \in E \subseteq \mathcal{S}^{\prime [M]}_{[A]}$, and since $\psi \neq 0$ so that in particular $(\psi, \psi)_{L^{2}} \neq 0$, it follows from \eqref{eq:STFTReconstructTempUltra} that $f = \frac{1}{(\psi, \psi)_{L^{2}}} V^{*}_{\psi} \circ V_{\psi} f \in \D^{[M]}_{E}$. 
	\end{proof}

\section{The convolutor spaces $\OC^{\prime}(\mathcal{S}^{[M]}_{[A]}, E)$}
\label{sec:ConvSp}

For any TMIB or DTMIB $E$ of class $[M]-[A]$, we define the convolutor space
	\[ \OC^{\prime}(\mathcal{S}^{[M]}_{[A]}, E) = \{ f \in \mathcal{S}^{\prime [M]}_{[A]} \mid f * \varphi \in E \text{ for any } \varphi \in \mathcal{S}^{[M]}_{[A]} \} . \]
Fix a $f \in \OC^{\prime}(\mathcal{S}^{[M]}_{[A]}, E)$. By the separate continuity of the convolution mapping \eqref{eq:ConvTempUltraDist} and De Wilde's closed graph theorem, it follows that the mapping $*_{f} : \mathcal{S}^{[M]}_{[A]} \rightarrow E : \varphi \mapsto f * \varphi$ is continuous. We then endow $\OC^{\prime}(\mathcal{S}^{[M]}_{[A]}, E)$ with the topology induced by the embedding
	\[ * : \OC^{\prime}(\mathcal{S}^{[M]}_{[A]}, E) \rightarrow L_{b}(\mathcal{S}^{[M]}_{[A]}, E) : \quad f \mapsto *_{f} , \]
where for any two lcHs $X$ and $Y$, $L_{b}(X, Y)$ denotes the space of all continuous linear mappings $X \rightarrow Y$ with the strong topology. 
	
The aim of this section is to study the topology and structure of the space $\OC^{\prime}(\mathcal{S}^{[M]}_{[A]}, E)$, with our main result being the proof of Theorem \ref{t:StructThm}. Once more our primary tool will be the application of the (adjoint) STFT. Throughout this section we assume $M$ and $A$ to be two weight sequences satisfying $(M.1)$ and $(M.2)'$, and $E$ to be a TMIB or DTMIB of class $[M] - [A]$ satisfying the condition \eqref{eq:GrowthCondModulation}.
	
\subsection{Characterization via the STFT}
	
We consider the continuity of the (adjoint) STFT on $\OC^{\prime}(\mathcal{S}^{[M]}_{[A]}, E)$ with respect to the space $C_{[M], d}(E)$. As a preliminary result, we first verify the bornologicity of the vector-valued tempered ultradistributions.

	\begin{lemma}
		\label{l:TopStructVVTempUltraDist}
		$L_{b}(\mathcal{S}^{[M]}_{[A]}, E)$ is bornological.
	\end{lemma}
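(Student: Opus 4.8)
The plan is to exhibit $L_b(\mathcal{S}^{[M]}_{[A]}, E)$ as (isomorphic to) a closed subspace of a suitable $C_{[M],d}$-type space, and then invoke the topological structure established in Lemma \ref{l:TopCMSp} together with the general fact that closed subspaces of Fr\'echet spaces, and of regular $(LB)$-spaces, are bornological. Concretely, I would first recall that $\mathcal{S}^{[M]}_{[A]}$ is reflexive (it is an $(FN)$- or $(DFN)$-space), so the reconstruction and desingularization formulas of Proposition \ref{p:STFTTempUltra} identify $\mathcal{S}^{\prime[M]}_{[A]}$ with a complemented subspace of $C_{[M],d}(\mathcal{S}^{\prime[M]}_{[A]})$ via $V_\psi$ and $V^*_\gamma$ for an adequate window $\psi = \psi_0 * \psi_1$ as in Lemma \ref{l:AdequateWindow}. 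The key point then is to transport this through the space of operators: I would show that $L_b(\mathcal{S}^{[M]}_{[A]}, E)$ is isomorphic to a complemented — hence closed — subspace of $L_b(\mathcal{S}^{[M]}_{[A]}, C_{[M],d}(E))$, and that the latter is in turn, by Proposition \ref{p:STFTDE}-type mapping properties combined with the fact that $\mathcal{S}^{[M]}_{[A]}$ is nuclear and $C_{[M],d}(E) \cong C_{[M],d} \widehat{\otimes}_{\varepsilon} E$, identifiable with a $C_{[M],d}$-valued weighted continuous function space of the kind covered by Lemma \ref{l:TopCMSp}.

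The cleaner route, which I expect to be what the authors do, is the following. By De Wilde's closed graph theorem the evaluation $* : \OC^{\prime}(\mathcal{S}^{[M]}_{[A]}, E) \to L_b(\mathcal{S}^{[M]}_{[A]}, E)$ is an embedding, but here we only need the abstract space $L_b(\mathcal{S}^{[M]}_{[A]}, E)$ itself. Since $\mathcal{S}^{[M]}_{[A]}$ is a nuclear (hence Schwartz) $(F)$- or $(DF)$-space and $E$ is a Banach space, one has the canonical isomorphism $L_b(\mathcal{S}^{[M]}_{[A]}, E) \cong (\mathcal{S}^{[M]}_{[A]})^{\prime}_b \widehat{\otimes}_\varepsilon E = \mathcal{S}^{\prime[M]}_{[A]} \widehat{\otimes}_\varepsilon E$ (using reflexivity of $\mathcal{S}^{[M]}_{[A]}$ and nuclearity to pass between $\varepsilon$- and $\pi$-tensor products). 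Now apply the STFT: with $\psi$ as in Lemma \ref{l:AdequateWindow}, the maps $V_\psi$ and $\tfrac{1}{(\psi,\psi)_{L^2}} V^*_\psi$ of Proposition \ref{p:STFTTempUltra}, tensored with $\mathrm{id}_E$, exhibit $\mathcal{S}^{\prime[M]}_{[A]} \widehat{\otimes}_\varepsilon E$ as a complemented subspace of $C_{[M],d}(\mathcal{S}^{\prime[M]}_{[A]}) \widehat{\otimes}_\varepsilon E \cong C_{[M],d}(\mathcal{S}^{\prime[M]}_{[A]} \widehat{\otimes}_\varepsilon E) \cong C_{[M],d}(L_b(\mathcal{S}^{[M]}_{[A]},E))$. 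Since $\mathcal{S}^{\prime(M)}_{(A)}$ is a compactly regular $(LB)$-space and $\mathcal{S}^{\prime\{M\}}_{\{A\}}$ is Fr\'echet, Lemma \ref{l:TopCMSp} gives that $C_{[M],d}(\mathcal{S}^{\prime[M]}_{[A]})$ is either a regular $(LB)$-space or a Fr\'echet space; in both cases it is bornological, and bornologicity passes to complemented subspaces. Hence $L_b(\mathcal{S}^{[M]}_{[A]}, E)$ is bornological.

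The main obstacle is making the tensor-product identifications rigorous in the non-reflexive Banach setting — in particular justifying $L_b(\mathcal{S}^{[M]}_{[A]}, E) \cong \mathcal{S}^{\prime[M]}_{[A]} \widehat{\otimes}_\varepsilon E$ and the commutation of $\widehat{\otimes}_\varepsilon E$ with the weighted projective/inductive limits defining $C_{[M],d}$, exactly as was done for $C_{[M]}(\cdot)$ in the proof of Lemma \ref{l:DensityCMSp1} via \cite{B-M-S-ProjDescrWeighIndLim}. One must check the $(LB)$-space case of this commutation (where $\mathcal{S}^{\prime(M)}_{(A)}$ appears, i.e. the Beurling-dual/Roumieu-primal situation) carefully, since $\widehat{\otimes}_\varepsilon$ need not commute with arbitrary inductive limits; compact regularity of the relevant $(LB)$-spaces is what saves the argument, again as in Lemma \ref{l:TopCMSp}$(ii)$. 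Once these identifications are in hand, the conclusion is immediate since being bornological is inherited by complemented (indeed by topologically complemented) subspaces and holds for all Fr\'echet spaces and all regular $(LB)$-spaces.
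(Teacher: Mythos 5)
Your Roumieu case is essentially the paper's argument: by nuclearity $L_{b}(\mathcal{S}^{\{M\}}_{\{A\}}, E) \cong \mathcal{S}^{\prime \{M\}}_{\{A\}} \widehat{\otimes} E$, which is a Fr\'echet space (since $\mathcal{S}^{\prime \{M\}}_{\{A\}}$ is an $(FN)$-space) and hence bornological; the STFT detour is superfluous there. The genuine gap is in the Beurling case. The topological identification $L_{b}(\mathcal{S}^{(M)}_{(A)}, E) \cong \mathcal{S}^{\prime (M)}_{(A)} \widehat{\otimes}_{\varepsilon} E$, which you yourself flag as ``the main obstacle,'' is not a routine tensor-product fact: it is an instance of Grothendieck's probl\`eme des topologies for the pair $(\mathcal{S}^{(M)}_{(A)}, E)$ with $E$ an arbitrary Banach space, and it is essentially the whole content of the lemma. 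Your proposed workaround does not close it: even granting every commutation identity you list, you land in $C_{(M), d}(\mathcal{S}^{\prime (M)}_{(A)}) \widehat{\otimes}_{\varepsilon} E$, a complete $(DF)$-space whose bornologicity is exactly as unclear as that of the space you started from --- Lemma \ref{l:TopCMSp} says nothing about $\varepsilon$-tensor products of $C_{(M), d}(\cdot)$ with Banach spaces, and passing $\widehat{\otimes}_{\varepsilon} E$ through the inductive limit is the same unresolved problem in disguise. Note also that your opening ``general fact'' that \emph{closed} subspaces of regular $(LB)$-spaces are bornological is false; only complemented subspaces inherit bornologicity, as you correctly use later on.

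The paper sidesteps all of this by quoting Bonet--D\'iaz: by \cite[Observation 9(a)]{B-D-ProblemTopGrothendieckClassFrechetTSp}, $L_{b}(X, E)$ is bornological for every Banach space $E$ whenever $X$ is a quasinormable Fr\'echet T-space, and $\mathcal{S}^{(M)}_{(A)}$ is such a space because it is nuclear (hence Fr\'echet--Schwartz, hence quasinormable) and admits a continuous norm (hence is a T-space by \cite[Proposition 4(a)]{B-D-ProblemTopGrothendieckClassFrechetTSp}). To salvage your route you would have to prove the Beurling-case isomorphism of topologies directly, which amounts to reproving that result.
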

	
	\begin{proof}
		In the Beurling case, in view of \cite[Observation 9(a)]{B-D-ProblemTopGrothendieckClassFrechetTSp}, it suffices to show that $\mathcal{S}^{(M)}_{(A)}$ is a quasinormable Fr\'{e}chet T-space. Using \cite[Proposition 4(a)]{B-D-ProblemTopGrothendieckClassFrechetTSp} and the fact that any Fr\'{e}chet-Schwartz space is quasinormable, this is a direct consequence of $\mathcal{S}^{(M)}_{(A)}$ being nuclear and having a continuous norm as shown in \cite[Theorem 1.2]{D-N-V-NuclGSKerThm}. In the Roumieu case, it follows from the nuclearity of $\mathcal{S}^{\{M\}}_{\{A\}}$ that $L_{b}(\mathcal{S}^{\{M\}}_{\{A\}}, E) \cong \mathcal{S}^{\prime \{M\}}_{\{A\}} \widehat{\otimes} E$, where the latter space is a Fr\'{e}chet space \cite[Chapitre II, Th\'{e}or\`{e}me 12, p.~76]{G-ProdTensTopEspNucl} (recall that $\mathcal{S}^{\prime \{M\}}_{\{A\}}$ is a $(FN)$-space), so that in particular $L_{b}(\mathcal{S}^{\{M\}}_{\{A\}}, E)$ is bornological.
	\end{proof}
	
We may now describe the mapping properties of the (adjoint) STFT on $\OC^{\prime}(\mathcal{S}^{[M]}_{[A]}, E)$ as follows.
	
	\begin{proposition}
		\label{p:STFTConvSp}
		Let $\psi, \gamma_{0}, \gamma_{1} \in \mathcal{S}^{(M)}_{(A)}$ and write $\gamma = \gamma_{0} * \gamma_{1}$. Then, the mappings
			\[ V_{\psi} : \OC^{\prime}(\mathcal{S}^{[M]}_{[A]}, E) \rightarrow C_{[M], d}(E) \quad \text{ and } \quad V^{*}_{\gamma} : C_{[M], d}(E) \rightarrow \OC^{\prime}(\mathcal{S}^{[M]}_{[A]}, E) \]
		are well-defined and continuous.
	\end{proposition}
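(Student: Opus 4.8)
The plan is to handle the two maps separately, in each case reducing to the mapping properties of the STFT on $\mathcal{S}^{[M]}_{[A]}$ and $\mathcal{S}^{\prime[M]}_{[A]}$ (Propositions~\ref{p:STFTGS} and~\ref{p:STFTTempUltra}), to the convolution inequality \eqref{eq:convolutionineqE}, and to the growth condition \eqref{eq:GrowthCondModulation}. The recurring technical issue is the weight bookkeeping under the weak hypotheses $(M.1)$ and $(M.2)'$: a product $e^{\omega_M(a\cdot)}e^{\omega_M(b\cdot)}$ cannot in general be absorbed into a single weight, so one must arrange that the only growing exponentials ever combined are those coming from $\nu_E$, which is precisely what \eqref{eq:GrowthCondModulation} permits, and then obtain integrability from the fact that $e^{\omega_M(q\cdot)}/e^{\omega_M(qH^{d+1}\cdot)}\in L^1$ for every $q>0$.

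First I would treat $V_\psi$. Fix $f\in\OC^{\prime}(\mathcal{S}^{[M]}_{[A]},E)$, so that $*_f\colon\varphi\mapsto f*\varphi$ is continuous $\mathcal{S}^{[M]}_{[A]}\to E$. From the identity $(M_{-\xi}g)*h=M_{-\xi}(g*(M_\xi h))$ one gets $V_\psi f(\xi)=(M_{-\xi}f)*\check{\overline{\psi}}=M_{-\xi}(*_f(M_\xi\check{\overline{\psi}}))\in E$; continuity in $\xi$ follows, for a TMIB, from the strong continuity of $M_{-\xi}$ on $E$ (a consequence of the density of $\mathcal{S}^{[M]}_{[A]}$ in $E$ together with \eqref{eq:TMIBnormcond}), the local boundedness of $\nu_E$, and the continuity of $\xi\mapsto M_\xi\check{\overline{\psi}}$ into $\mathcal{S}^{[M]}_{[A]}$; the DTMIB case is analogous via Remark~\ref{r:dualiswTMIB}. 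As in the estimate underlying Proposition~\ref{p:STFTTempUltra}, the set $\mathcal B_c=\{e^{-\omega_M(c\xi)}M_\xi\check{\overline{\psi}}\colon\xi\in\R^d\}$ is bounded in $\mathcal{S}^{[M]}_{[A]}$ for a suitable $c>0$ (in the Roumieu case $c$ may be taken arbitrarily small by enlarging the step containing $\psi$), whence $\norm{V_\psi f(\xi)}_E\le\nu_E(\xi)\,e^{\omega_M(c\xi)}\,q_{\mathcal B_c}(*_f)$, where $q_{\mathcal B_c}(T)=\sup_{g\in\mathcal B_c}\norm{Tg}_E$ is a continuous seminorm on $L_b(\mathcal{S}^{[M]}_{[A]},E)$. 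Applying \eqref{eq:GrowthCondModulation} with $q_0=c$ turns this into $C\,e^{\omega_M(q_1\xi)}\,q_{\mathcal B_c}(*_f)$ for some fixed $q_1>0$ (in the Roumieu case for every prescribed $q_1>0$), which simultaneously shows $V_\psi f\in C_{e^{-\omega_M(q_1\cdot)}}(E)$ and the continuity of $V_\psi$ into that step, hence into $C_{[M],d}(E)$.

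Next I would treat $V^*_\gamma$. Since $E\hookrightarrow\mathcal{S}^{\prime[M]}_{[A]}$ continuously we have $C_{[M],d}(E)\hookrightarrow C_{[M],d}(\mathcal{S}^{\prime[M]}_{[A]})$, so Proposition~\ref{p:STFTTempUltra} already gives $V^*_\gamma\Phi=\int_{\R^d}M_\xi[\Phi(\xi)*\gamma]\,d\xi\in\mathcal{S}^{\prime[M]}_{[A]}$ (a Pettis integral) for $\Phi\in C_{[M],d}(E)$; it remains to show $(V^*_\gamma\Phi)*\varphi\in E$ for all $\varphi\in\mathcal{S}^{[M]}_{[A]}$ and to estimate it. Pushing the continuous map $\cdot*\varphi\colon\mathcal{S}^{\prime[M]}_{[A]}\to\mathcal{S}^{\prime[M]}_{[A]}$ through the Pettis integral by \eqref{eq:LinearMapPettisIntegral}, and using $(M_\xi g)*\varphi=M_\xi(g*(M_{-\xi}\varphi))$ together with the commutativity and associativity of convolution in the $L^1_{\omega_E}$-module $E$ (Proposition~\ref{p:ExtensionOfConvolutionToWTMIB}) and the identity $(M_{-\xi}\varphi)*\gamma=V_\psi\varphi(\xi)$ with $\psi:=\check{\overline{\gamma}}=\check{\overline{\gamma_0}}*\check{\overline{\gamma_1}}$, one obtains $(V^*_\gamma\Phi)*\varphi=\int_{\R^d}M_\xi(\Phi(\xi)*V_\psi\varphi(\xi))\,d\xi$. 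Now $V_\psi\varphi\in C_{[M]}(\mathcal{S}^{[M]}_{[A]})$ by Proposition~\ref{p:STFTGS}, the inclusion $\mathcal{S}^{[M]}_{[A]}\hookrightarrow L^1_{\omega_E}$ is continuous, and \eqref{eq:convolutionineqE} gives $\norm{M_\xi(\Phi(\xi)*V_\psi\varphi(\xi))}_E\le\nu_E(\xi)\norm{\Phi(\xi)}_E\norm{V_\psi\varphi(\xi)}_{L^1_{\omega_E}}$; absorbing $\nu_E(\xi)\norm{\Phi(\xi)}_E$ into a single $C\,e^{\omega_M(q_1\xi)}$ by \eqref{eq:GrowthCondModulation} and bounding $\norm{V_\psi\varphi(\xi)}_{L^1_{\omega_E}}$ by a multiple of $e^{-\omega_M(q_1H^{d+1}\xi)}$ (possible since $V_\psi\varphi\in C_{[M]}(\mathcal{S}^{[M]}_{[A]})$; in the Roumieu case the quantifiers are exchanged) makes the integrand $\norm{\cdot}_E$-integrable. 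Lemma~\ref{l:SuffCondExistencePettisIntegral} then yields the integral as a Pettis integral in $E$, which by uniqueness under $E\hookrightarrow\mathcal{S}^{\prime[M]}_{[A]}$ equals the one above, so $(V^*_\gamma\Phi)*\varphi\in E$ and $V^*_\gamma\Phi\in\OC^{\prime}(\mathcal{S}^{[M]}_{[A]},E)$. Taking the supremum of this estimate over $\varphi$ in a bounded subset $B$ of $\mathcal{S}^{[M]}_{[A]}$ — using that $\{V_\psi\varphi\colon\varphi\in B\}$ is bounded in $C_{[M]}(\mathcal{S}^{[M]}_{[A]})$ — bounds $\sup_{\varphi\in B}\norm{(V^*_\gamma\Phi)*\varphi}_E$ by a continuous seminorm of $\Phi$ on $C_{[M],d}(E)$, which is the asserted continuity of $V^*_\gamma$.

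The hardest part is the weight bookkeeping just sketched: keeping every estimate within the reach of $(M.1)$ and $(M.2)'$ alone — never combining two genuinely growing weights, and routing all such factors either through \eqref{eq:GrowthCondModulation} or through the $L^1$-gain $e^{\omega_M(q\cdot)}/e^{\omega_M(qH^{d+1}\cdot)}\in L^1$ — while at the same time tracking, with the Beurling and Roumieu quantifiers interchanged, which step of the relevant inductive or projective limit each object lies in.
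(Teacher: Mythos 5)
Your overall architecture is the same as the paper's (factor $V_{\psi}f(\xi)=M_{-\xi}\bigl({*_{f}}(M_{\xi}\check{\overline{\psi}})\bigr)$ for the first map; rewrite $(V^{*}_{\gamma}\Phi)*\varphi=\int_{\R^{d}}M_{\xi}\bigl[\Phi(\xi)*V_{\check{\overline{\gamma}}}\varphi(\xi)\bigr]d\xi$ and estimate via \eqref{eq:convolutionineqE} and \eqref{eq:GrowthCondModulation} for the second), and your treatment of $V^{*}_{\gamma}$ is essentially the paper's proof with Lemma \ref{l:InducedBilinearMap} unpacked by hand; that half is fine. The gap is in the Beurling case of the $V_{\psi}$ argument. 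The set $\mathcal B_{c}=\{e^{-\omega_{M}(c\xi)}M_{\xi}\check{\overline{\psi}}:\xi\in\R^{d}\}$ is \emph{not} bounded in $\mathcal{S}^{(M)}_{(A)}$ for any fixed $c>0$: the norm $\norm{M_{\xi}\check{\overline{\psi}}}_{\mathcal{S}^{M,\ell}_{A,\ell^{-1}}}$ genuinely grows like $e^{\omega_{M}(C\xi/\ell)}$ (the term $\beta=\alpha$ in the Leibniz expansion contributes $(2\pi|\xi|)^{|\alpha|}/(\ell^{|\alpha|}M_{\alpha})$), and by \eqref{eq:M2'} the ratio $e^{\omega_{M}(C\xi/\ell)-\omega_{M}(c\xi)}$ is unbounded as soon as $\ell<C/c$; since boundedness in the Fr\'echet space $\mathcal{S}^{(M)}_{(A)}$ requires boundedness in \emph{every} step, no fixed $c$ works. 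Consequently $q_{\mathcal B_{c}}$ is not a continuous seminorm on $L_{b}(\mathcal{S}^{(M)}_{(A)},E)$ — it even takes the value $+\infty$ on those $*_{f}$ that only extend to $X_{(\ell)}$ with $\ell$ small — and your conclusion that $V_{\psi}$ maps all of $\OC^{\prime}(\mathcal{S}^{(M)}_{(A)},E)$ continuously into a \emph{single} step $C_{e^{-\omega_{M}(q_{1}\cdot)}}(E)$ is false: the target is the inductive limit $C_{(M),d}(E)=\varinjlim_{q}C_{e^{-\omega_{M}(q\cdot)}}(E)$ precisely because the admissible $q_{1}$ depends on $f$. (Your Roumieu argument is correct, since there one may enlarge the step containing $\psi$ to make $c$ as small as needed, and the quantifiers in \eqref{eq:GrowthCondModulation} then cooperate.)

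The repair requires two ingredients you do not invoke. First, for a bounded subset $B$ of $L_{b}(\mathcal{S}^{(M)}_{(A)},E)$ one uses Banach--Steinhaus: $B$ is equicontinuous, hence factors uniformly through a single Banach step $X_{(\ell)}$ as in \eqref{eq:XSpaces}, on which $\{e^{-\omega_{M}(4\pi\xi/\ell)}M_{\xi}\check{\overline{\psi}}:\xi\in\R^{d}\}$ \emph{is} bounded; this shows $V_{\psi}$ maps bounded sets of the domain into bounded sets of $C_{(M),d}(E)$. Second, to pass from boundedness to continuity one needs $L_{b}(\mathcal{S}^{(M)}_{(A)},E)$ to be bornological — this is Lemma \ref{l:TopStructVVTempUltraDist}, which is a nontrivial input (it rests on $\mathcal{S}^{(M)}_{(A)}$ being a quasinormable Fr\'echet T-space) and is proved in the paper specifically for this purpose. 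Without replacing your fixed-weight seminorm estimate by this boundedness-plus-bornologicity argument, the continuity of $V_{\psi}$ in the Beurling case is not established.
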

	
	\begin{proof}
		We first show that $V_{\psi}$ is well-defined and continuous. Note that for any $\psi \in \mathcal{S}^{[M]}_{[A]}$ and any $f \in \mathcal{S}^{\prime [M]}_{[A]}$ we have $V_{\psi} f(\xi) = (M_{-\xi} f) * \check{\overline{\psi}} = M_{-\xi}[f * (M_{\xi} \check{\overline{\psi}} )]$. As a result, it suffices to show that $f \mapsto f * (M_{\xi} \check{\overline{\psi}} )$ is a continuous linear map $\OC^{\prime}(\mathcal{S}^{[M]}_{[A]}, E) \rightarrow C_{[M], d}(E)$ and $\Phi \mapsto M_{-\xi} \Phi$ is a continuous linear map $C_{[M], d}(E) \rightarrow C_{[M], d}(E)$. We start with the first mapping. Let $B \subset L_{b}(\mathcal{S}^{[M]}_{[A]}, E)$ be a bounded set, then $B$ is equicontinuous by the Banach-Steinhaus theorem. Let $X_{[\ell]}$ be as in \eqref{eq:XSpaces}, then it follows that for some $\ell > 0$ (resp. for any $\ell > 0$) $B$ may be extended to a bounded subset of $L_{b}(X_{[\ell]}, E)$. For any $\xi \in \R^{d}$ and $L \in B$ we now have,
			\[ \| L(M_{\xi} \check{\overline{\psi}}) \|_{E} \leq \| L \|_{L_{b}(X_{[\ell]}, E)} \| M_{\xi} \check{\overline{\psi}} \|_{X_{[\ell]}} \leq C \| \check{\overline{\psi}} \|_{X_{[\ell / 2]}} \| L \|_{L_{b}(X_{[\ell]}, E)} e^{\omega_{M}(4 \pi \xi / \ell)} .  \]
		This shows that the linear map
			\[ L_{b}(\mathcal{S}^{[M]}_{[A]}, E) \rightarrow C_{[M], d}(E) : \quad L \mapsto (\xi \mapsto L(M_{\xi} \check{\overline{\psi}}) ) , \]
		is well-defined and moreover it maps bounded sets of $L_{b}(\mathcal{S}^{[M]}_{[A]}, E)$ into bounded sets of $C_{[M], d}(E)$. As $L_{b}(\mathcal{S}^{[M]}_{[A]}, E)$ is bornological by Lemma \ref{l:TopStructVVTempUltraDist}, it follows that the mapping above is continuous. Now, as the first map is exactly $f \mapsto *_{f}(M_{\xi} \check{\overline{\psi}})$, its continuity follows directly. Moving on, if $q_{0}, q_{1} > 0$ are such that $\nu_{E}(\xi) e^{\omega_{M}(q_{0} \xi)} \leq C e^{\omega_{M}(q_{1} \xi)}$ for some $C > 0$. Then, for any $\Phi \in C_{e^{-\omega_{M}(q_{0} \cdot)}}(E)$,
			\[ e^{-\omega_{M}(q_{1} \xi)} \norm{M_{- \xi} \Phi(\xi)}_{E} \leq \nu_{E}(\xi) e^{-\omega_{M}(q_{1} \xi)} \norm{\Phi(\xi)}_{E} \leq C \norm{\Phi}_{C_{e^{-\omega_{M}(q_{0} \cdot)}}(E)} . \]
		The continuity of the second map then follows by \eqref{eq:GrowthCondModulation}.
		
		Next, we consider $V^{*}_{\gamma}$. As $C_{[M], d}(E)$ is continuously embedded into $C_{[M], d}(\mathcal{S}^{\prime [M]}_{[A]})$, it follows from Proposition \ref{p:STFTTempUltra} that $V^{*}_{\gamma} \Phi \in \mathcal{S}^{\prime [M]}_{[A]}$ for any $\Phi \in C_{[M], d}(E)$. We now show that $V^{*}_{\gamma} \Phi \in \OC^{\prime}(\mathcal{S}^{[M]}_{[A]}, E)$. Take any $\varphi \in \mathcal{S}^{[M]}_{[A]}$. Then, by \eqref{eq:LinearMapPettisIntegral},
			\begin{align*}
				V^{*}_{\gamma} \Phi * \varphi &= \int_{\R^{d}} (M_{\xi} [\Phi(\xi) * \gamma]) * \varphi d\xi = \int_{\R^{d}} M_{\xi} [ \Phi(\xi) * \gamma * (M_{-\xi} \varphi) ] d\xi \\
				&= \int_{\R^{d}} M_{\xi} [\Phi(\xi) * V_{\check{\overline{\gamma}}} \varphi(\xi)] d\xi . 
			\end{align*}
		Lemma \ref{l:InducedBilinearMap} applied to the continuous bilinear map $* : E \times \mathcal{S}^{[M]}_{[A]} \rightarrow E$ and the family $\Gamma_{\xi} = M_{\xi}$, gives by \eqref{eq:GrowthCondModulation} the hypocontinuous bilinear map
			\[ C_{[M]}(\mathcal{S}^{[M]}_{[A]}) \times C_{[M], d}(E) \rightarrow E : \quad (\Phi_{\mathcal{S}^{[M]}_{[A]}}, \Phi_{E}) \mapsto \int_{\R^{d}} M_{\xi} [\Phi_{\mathcal{S}^{[M]}_{[A]}}(\xi) * \Phi_{E}(-\xi)] d\xi . \]
		In particular, by Proposition \ref{p:STFTGS}, we find that $V^{*}_{\gamma} \Phi * \varphi \in E$ for any $\varphi \in \mathcal{S}^{[M]}_{[A]}$, so that $V^{*}_{\gamma} \Phi \in \OC^{\prime}(\mathcal{S}^{[M]}_{[A]}, E)$. Moreover, if $B \subset C_{[M], d}(E)$ is a bounded subset, then $\{ *_{V^{*}_{\gamma} \Phi} : \Phi \in B \}$ will be an equicontinuous subset of $L_{b}(\mathcal{S}^{[M]}_{[A]}, E)$, so that by the Banach-Steinhaus Theorem the set $\{ V^{*}_{\gamma} \Phi : \Phi \in B \}$ will be bounded in $\OC^{\prime}(\mathcal{S}^{[M]}_{[A]}, E)$. As $C_{[M], d}(E)$ is an $(LB)$-space (a Fr\'{e}chet space) by Lemma \ref{l:TopCMSp}, it is bornological. The continuity of $V^{*}_{\gamma}$ now follows.
	\end{proof}

	\begin{corollary}
		\label{c:denseinclusions} 
		The following dense inclusions hold:
			\[ \D^{[M]}_{E} \hookrightarrow E \hookrightarrow \OC^{\prime}(\mathcal{S}^{[M]}_{[A]}, E)  . \]
		If $E$ is a TMIB, then, $\mathcal{S}^{[M]}_{[A]}$ is dense in $\OC^{\prime}(\mathcal{S}^{[M]}_{[A]}, E)$. 
	\end{corollary}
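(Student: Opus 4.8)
The plan is to establish the three parts separately: the inclusion $\D^{[M]}_{E} \hookrightarrow E$ reduces to Corollary~\ref{c:DEdenseinE}, the inclusion $E \hookrightarrow \OC^{\prime}(\mathcal{S}^{[M]}_{[A]}, E)$ reduces to Proposition~\ref{p:ExtensionOfConvolutionToWTMIB}, and both density statements follow from the reconstruction formula \eqref{eq:STFTReconstructTempUltra} via the mapping properties of the (adjoint) STFT proved in Propositions~\ref{p:STFTGS}, \ref{p:STFTDE}, and \ref{p:STFTConvSp}. For $\D^{[M]}_{E} \hookrightarrow E$ the continuity is immediate, since $\norm{f}_{E} \leq M_{0} \norm{f}_{\D^{M, \ell}_{E}}$ for each $\ell > 0$ by taking $\alpha = 0$ in the defining seminorm, and the density is exactly Corollary~\ref{c:DEdenseinE}.

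For $E \hookrightarrow \OC^{\prime}(\mathcal{S}^{[M]}_{[A]}, E)$, I would first note that by \eqref{eq:GSasL1Sp} together with condition $(c)$ one has $\mathcal{S}^{[M]}_{[A]} \subseteq L^{1}_{\omega_{E}}$ with continuous inclusion, so Proposition~\ref{p:ExtensionOfConvolutionToWTMIB} gives $f * \varphi \in E$ for every $f \in E$, $\varphi \in \mathcal{S}^{[M]}_{[A]}$; hence $E \subseteq \OC^{\prime}(\mathcal{S}^{[M]}_{[A]}, E)$ as sets. To see the inclusion is topological it suffices to control $\sup_{\varphi \in B} \norm{f * \varphi}_{E}$ for $B$ a bounded subset of $\mathcal{S}^{[M]}_{[A]}$; by \eqref{eq:GSasL1Sp} such a $B$ is bounded in $L^{1}_{e^{\omega_{A}(q \cdot)}}$ for the relevant $q$ (fixed in the Beurling case, some $q$ in the Roumieu case), and \eqref{eq:convolutionineqE} then yields $\sup_{\varphi \in B} \norm{f * \varphi}_{E} \leq C_{E, q}\bigl(\sup_{\varphi \in B} \norm{\varphi}_{L^{1}_{e^{\omega_{A}(q \cdot)}}}\bigr)\norm{f}_{E}$, which is the desired estimate for the strong-operator topology on $L_{b}(\mathcal{S}^{[M]}_{[A]}, E)$.

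For the density assertions I would fix $\psi_{0} \in \mathcal{S}^{(M)}_{(A)}$ with $\psi := \psi_{0} * \psi_{0} \in \mathcal{S}^{(M)}_{(A)}$ and $(\psi, \psi)_{L^{2}} = 1$, as furnished by Lemma~\ref{l:AdequateWindow}; note this factorized $\psi$ is an admissible window simultaneously in Propositions~\ref{p:STFTGS}, \ref{p:STFTDE}, and \ref{p:STFTConvSp}. Given $f \in \OC^{\prime}(\mathcal{S}^{[M]}_{[A]}, E)$, Proposition~\ref{p:STFTConvSp} gives $V_{\psi} f \in C_{[M], d}(E)$, and by Lemma~\ref{l:DensityCMSp2} there is a net $(\Phi_{\tau})_{\tau}$ in $C_{[M]}(E)$ with $\Phi_{\tau} \to V_{\psi} f$ in $C_{[M], d}(E)$. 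Applying Proposition~\ref{p:STFTConvSp} once more together with \eqref{eq:STFTReconstructTempUltra} yields $V^{*}_{\psi} \Phi_{\tau} \to V^{*}_{\psi} V_{\psi} f = f$ in $\OC^{\prime}(\mathcal{S}^{[M]}_{[A]}, E)$, while Proposition~\ref{p:STFTDE} guarantees $V^{*}_{\psi} \Phi_{\tau} \in \D^{[M]}_{E}$ for every $\tau$; hence $\D^{[M]}_{E}$, and a fortiori $E$, is dense. If moreover $E$ is a TMIB then $\mathcal{S}^{[M]}_{[A]}$ is dense in $E$, so by Lemma~\ref{l:DensityCMSp1} (recalling $\mathcal{S}^{\{M\}}_{\{A\}}$ is compactly regular) combined with Lemma~\ref{l:DensityCMSp2} the space $C_{[M]}(\mathcal{S}^{[M]}_{[A]})$ is dense in $C_{[M], d}(E)$; choosing the net $(\Phi_{\tau})_{\tau}$ inside $C_{[M]}(\mathcal{S}^{[M]}_{[A]})$ gives $V^{*}_{\psi} \Phi_{\tau} \in \mathcal{S}^{[M]}_{[A]}$ by Proposition~\ref{p:STFTGS}, proving density of $\mathcal{S}^{[M]}_{[A]}$ in $\OC^{\prime}(\mathcal{S}^{[M]}_{[A]}, E)$ (alternatively one invokes Corollary~\ref{c:GSDenseDE} and transitivity of density).

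I expect the only point needing genuine care to be the bookkeeping of the window hypotheses of the various STFT propositions — in particular using the factorized window $\psi = \psi_{0} * \psi_{0}$ so that $V^{*}_{\psi}$ lands in the right space in each of Propositions~\ref{p:STFTGS}, \ref{p:STFTDE}, and \ref{p:STFTConvSp} — and checking that the chain of dense inclusions $C_{[M]}(\mathcal{S}^{[M]}_{[A]}) \hookrightarrow C_{[M]}(E) \hookrightarrow C_{[M], d}(E)$ is valid uniformly in the Beurling and Roumieu cases; everything else is routine application of the results already established.
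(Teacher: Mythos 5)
Your proposal is correct and follows essentially the same route as the paper: the continuous inclusions come from Corollary~\ref{c:DEdenseinE} and Proposition~\ref{p:ExtensionOfConvolutionToWTMIB}/\eqref{eq:convolutionineqE}, and density is obtained by the same STFT reconstruction argument using the window of Lemma~\ref{l:AdequateWindow}, Proposition~\ref{p:STFTConvSp}, Lemma~\ref{l:DensityCMSp2} (resp.\ Lemma~\ref{l:DensityCMSp1} for the TMIB case), and Proposition~\ref{p:STFTDE}. You merely spell out some continuity estimates and the final transitivity step that the paper leaves implicit.
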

	
	\begin{proof}
		As $E$ is continuously contained in $\OC^{\prime}(\mathcal{S}^{[M]}_{[A]}, E)$ by \eqref{eq:ConvWTMIB}, it suffices by Corollary \ref{c:DEdenseinE} to show that $\D^{[M]}_{E}$ is dense in $\OC^{\prime}(\mathcal{S}^{[M]}_{[A]}, E)$. Let $\psi \in \mathcal{S}^{(M)}_{(A)}$ be as in Lemma \ref{l:AdequateWindow}. For any $f \in \OC^{\prime}(\mathcal{S}^{[M]}_{[A]}, E)$, we have $V_{\psi} f \in C_{[M], d}(E)$ by Proposition \ref{p:STFTConvSp}. Using Lemma \ref{l:DensityCMSp2}, we find a net $(\Phi_{\tau})_{\tau}$ in $C_{[M]}(E)$ such that $\lim_{\tau} \Phi_{\tau} = V_{\psi} f$. By \eqref{eq:STFTReconstructTempUltra} and another application of Proposition \ref{p:STFTConvSp} it follows that $\lim_{\tau} V^{*}_{\psi} \Phi_{\tau} = f$. Proposition \ref{p:STFTDE} tells us that $(V^{*}_{\psi} \Phi_{\tau})_{\tau}$ is a net in $\D^{[M]}_{E}$, which shows the required density. The final statement now follows directly.
	\end{proof}
	
	\begin{corollary}
		\label{c:TopConvSp}
		$\OC^{\prime}(\mathcal{S}^{(M)}_{(A)}, E)$ is a complete $(LB)$-space, and $\OC^{\prime}(\mathcal{S}^{\{M\}}_{\{A\}}, E)$ is a quasinormable and thus distinguished Fr\'{e}chet space. 
	\end{corollary}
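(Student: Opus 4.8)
The plan is to realize $\OC^{\prime}(\mathcal{S}^{[M]}_{[A]}, E)$ as a complemented subspace of the weighted vector-valued function space $C_{[M], d}(E)$ by means of the short-time Fourier transform, and then to read off its topological properties from Lemma \ref{l:TopCMSp}, in exactly the same way as was done for $\D^{[M]}_{E}$ in Corollary \ref{c:TopDE}.

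Concretely, the first step is to fix a window: I would take $\psi = \psi_{0} * \psi_{0} \in \mathcal{S}^{(M)}_{(A)}$ as provided by Lemma \ref{l:AdequateWindow}, so that $\psi$ has the form required for the adjoint STFT and $(\psi, \psi)_{L^{2}} = 1 \neq 0$. Applying Proposition \ref{p:STFTConvSp} with this single window (i.e.\ with $\gamma_{0} = \gamma_{1} = \psi_{0}$, so $\gamma = \psi$) yields continuous linear maps $V_{\psi} : \OC^{\prime}(\mathcal{S}^{[M]}_{[A]}, E) \to C_{[M], d}(E)$ and $V^{*}_{\psi} : C_{[M], d}(E) \to \OC^{\prime}(\mathcal{S}^{[M]}_{[A]}, E)$. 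Since $\OC^{\prime}(\mathcal{S}^{[M]}_{[A]}, E) \subseteq \mathcal{S}^{\prime [M]}_{[A]}$, the reconstruction identity \eqref{eq:STFTReconstructTempUltra} gives $V^{*}_{\psi} \circ V_{\psi} = \id$ on $\OC^{\prime}(\mathcal{S}^{[M]}_{[A]}, E)$. Hence $V_{\psi}$ is a topological isomorphism onto its range, and $V_{\psi} \circ V^{*}_{\psi}$ is a continuous projection of $C_{[M], d}(E)$ onto that range; in other words, $\OC^{\prime}(\mathcal{S}^{[M]}_{[A]}, E)$ is isomorphic to a complemented subspace of $C_{[M], d}(E)$.

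It then remains to feed $E$ (a Banach space) into Lemma \ref{l:TopCMSp}. A Banach space is trivially a compactly regular $(LB)$-space (take the constant sequence), so part $(ii)$ shows that $C_{(M), d}(E)$ is a complete, hence regular, $(LB)$-space, while part $(i)$ shows that $C_{\{M\}, d}(E)$ is a Fréchet space which is quasinormable because $E$ is Banach. Passing to a complemented subspace preserves all of these features: such a subspace is closed, hence complete when the ambient space is; a complemented subspace of a complete $(LB)$-space is again a complete $(LB)$-space; a complemented subspace of a quasinormable Fréchet space is a quasinormable Fréchet space; and quasinormable Fréchet spaces are distinguished. This yields both assertions of the corollary. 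I do not expect a genuine obstacle here: the whole argument rests on Proposition \ref{p:STFTConvSp}, the reconstruction formula \eqref{eq:STFTReconstructTempUltra}, and Lemma \ref{l:TopCMSp}. The only point calling for a word of justification is the stability of ``complete $(LB)$-space'' and of ``quasinormable Fréchet, hence distinguished'' under passage to complemented subspaces, but this is classical (see e.g.\ \cite{M-V-IntroFuncAnal}) and is precisely what already underpins the proof of Corollary \ref{c:TopDE}.
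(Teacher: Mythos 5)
Your proof is correct and follows exactly the paper's argument: realize $\OC^{\prime}(\mathcal{S}^{[M]}_{[A]}, E)$ as a complemented subspace of $C_{[M], d}(E)$ via Proposition \ref{p:STFTConvSp} and the reconstruction formula \eqref{eq:STFTReconstructTempUltra}, then invoke Lemma \ref{l:TopCMSp}. The extra detail you supply (choice of window from Lemma \ref{l:AdequateWindow}, stability of the relevant properties under passage to complemented subspaces) is exactly what the paper leaves implicit.
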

	
	\begin{proof}
		By Proposition \ref{p:STFTConvSp} and \eqref{eq:STFTReconstructTempUltra} we see that $\OC^{\prime}(\mathcal{S}^{[M]}_{[A]}, E)$ is isomorphic to a complemented subspace of $C_{[M], d}(E)$, hence the result follows immediately from Lemma \ref{l:TopCMSp}.
	\end{proof}
	
In the non-quasianalytic case, we get the following.
	
	\begin{theorem}
		Let $f \in \D^{\prime [M]}$. Then, $f \in \OC^{\prime}(\mathcal{S}^{[M]}_{[A]}, E)$ if and only if $f * \varphi \in E$ for any $\varphi \in \D^{[M]}$. 
	\end{theorem}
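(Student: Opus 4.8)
The plan is to treat the two implications separately; necessity is immediate, and sufficiency rests on running the short-time Fourier transform with a compactly supported window (so that the parametrix method, unavailable under the mere hypothesis $(M.2)'$, is not needed). For necessity, observe that every $\varphi \in \D^{M, \ell}_{K}$ satisfies $|\varphi^{(\alpha)}(x)| e^{\omega_{A}(q x)} \le \norm{\varphi}_{\D^{M, \ell}_{K}} \ell^{|\alpha|} M_{\alpha} \sup_{x \in K} e^{\omega_{A}(q x)}$ for every $q > 0$, so $\D^{[M]}$ is continuously included in $\mathcal{S}^{[M]}_{[A]}$ and $f \in \OC^{\prime}(\mathcal{S}^{[M]}_{[A]}, E)$ forces $f * \varphi \in E$ for all $\varphi \in \D^{[M]}$. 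For sufficiency, suppose $f \in \D^{\prime [M]}$ with $f * \varphi \in E$ for all $\varphi \in \D^{[M]}$. Since convolution $\D^{\prime [M]} \times \D^{[M]} \to \D^{\prime [M]}$ is separately continuous and $E \hookrightarrow \D^{\prime [M]}$, the map $*_{f} : \D^{[M]} \to E$, $\varphi \mapsto f * \varphi$, has closed graph, hence is continuous by the closed graph theorem applied on each Banach step $\D^{M, \ell}_{K}$. Next I would adapt Lemma~\ref{l:AdequateWindow} inside $\D^{(M)}$ (nontrivial by $(M.3)'$): choose $\phi \in \D^{(M)}$ real, even, with $\phi(0) \ne 0$, put $\varphi_{0} = \phi * \phi$, $\lambda = (\varphi_{0}, \varphi_{0})_{L^{2}}$, $\psi_{0} = \lambda^{-1/4}\phi \in \D^{(M)}$, $\psi = \psi_{0} * \psi_{0} \in \D^{(M)}$, so that $\psi$ is real, even, compactly supported with $\supp \psi =: K_{0}$, $\check{\overline{\psi}} = \psi$, and $(\psi, \psi)_{L^{2}} = 1$.

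The heart of the argument is to prove $V_{\psi} f \in C_{[M], d}(E)$. For each $\xi$ the window $M_{\xi}\psi$ lies in the fixed space $\D^{[M]}_{K_{0}}$, so $V_{\psi} f(\xi) = (M_{-\xi} f) * \check{\overline{\psi}} = M_{-\xi}[f * (M_{\xi}\psi)] \in E$ by the translation-modulation invariance of $E$. A Leibniz estimate together with \eqref{eq:M1product} gives $\norm{M_{\xi}\psi}_{\D^{M, 2\ell}_{K_{0}}} \le \norm{\psi}_{\D^{M, \ell}_{K_{0}}}\, e^{\omega_{M}(2\pi\xi/\ell)}$ for every $\ell > 0$, and inserting this into the continuity of $*_{f}$ on $\D^{M, 2\ell}_{K_{0}}$ while using \eqref{eq:TMIBnormcond}--\eqref{eq:GrowthCondModulation} to absorb $\nu_{E}$ should yield $\norm{V_{\psi} f(\xi)}_{E} \le \nu_{E}(\xi)\norm{f*(M_{\xi}\psi)}_{E} \le C_{q}\, e^{\omega_{M}(q\xi)}$ for some $q > 0$ in the Beurling case and for every $q > 0$ in the Roumieu case. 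Continuity of $\xi \mapsto V_{\psi} f(\xi)$ in $E$ would follow from continuity of $\xi \mapsto M_{\xi}\psi$ into $\D^{[M]}_{K_{0}}$, continuity of $*_{f}$, and the strong continuity of modulation on $E$ (which holds on the dense subspace $\mathcal{S}^{[M]}_{[A]}$ and extends by local boundedness of $\nu_{E}$). Hence $V_{\psi} f \in C_{[M], d}(E)$.

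Granting this, Proposition~\ref{p:STFTConvSp} gives $g := V^{*}_{\psi}(V_{\psi} f) \in \OC^{\prime}(\mathcal{S}^{[M]}_{[A]}, E)$, and it remains to identify $g$ with $f$ in $\D^{\prime [M]}$ --- one cannot invoke \eqref{eq:STFTReconstructTempUltra} for $f$, which is not yet known to be tempered. For $\varphi \in \D^{[M]}$ I would write $g = \int_{\R^{d}} M_{\xi}[V_{\psi} f(\xi) * \psi]\, d\xi$ and use \eqref{eq:LinearMapPettisIntegral} with the transposition rules for $*$ and $M_{\xi}$ to get $\ev{g}{\varphi} = \int_{\R^{d}} \ev{f}{M_{-\xi}[(M_{\xi}\varphi) * \psi * \psi]}\, d\xi$. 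The integrand is a continuous $\D^{[M]}_{K}$-valued function with $K = \supp\varphi + 2K_{0}$, and after the substitution $\xi \mapsto -\xi$ it equals $\xi \mapsto M_{\xi}[V_{\psi}\varphi(\xi) * \psi]$, which by Proposition~\ref{p:STFTGS} is Pettis integrable (in the closed subspace $\D^{[M]}_{K}$) with integral $V^{*}_{\psi}(V_{\psi}\varphi) = (\psi, \psi)_{L^{2}}\varphi = \varphi$ by \eqref{eq:STFTGSReconstruct}. Since $f$ restricts to a continuous functional on $\D^{[M]}_{K}$, a last application of \eqref{eq:LinearMapPettisIntegral} gives $\ev{g}{\varphi} = \ev{f}{\varphi}$ for all $\varphi \in \D^{[M]}$, whence $f = g \in \OC^{\prime}(\mathcal{S}^{[M]}_{[A]}, E)$.

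I expect the main obstacle to be the uniform-in-$\xi$ estimate on $V_{\psi} f$ in the Roumieu case: there $f$ controls only a single Banach step, yet one needs $\norm{V_{\psi} f(\xi)}_{E} \le C_{q}\, e^{\omega_{M}(q\xi)}$ for \emph{every} $q > 0$, which is precisely what forces the window to be taken in $\D^{(M)}$ (so that $\psi \in \D^{M, \ell}_{K_{0}}$ for arbitrarily small $\ell$) together with a careful balancing through \eqref{eq:GrowthCondModulation}; in the Beurling case this difficulty disappears. A secondary delicate point is performing the STFT reconstruction directly at the level of $\D^{\prime [M]}$ by the computation above, rather than through \eqref{eq:STFTReconstructTempUltra}.
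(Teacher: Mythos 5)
Your proposal is correct and follows essentially the same route as the paper's proof: establish $V_{\psi} f \in C_{[M], d}(E)$ by the argument of Proposition \ref{p:STFTConvSp} adapted to a compactly supported window, apply $V^{*}_{\psi}$, and verify the reconstruction formula \eqref{eq:STFTReconstructTempUltra} directly in $\D^{\prime [M]}$. You are in fact more careful than the paper on two points it leaves implicit — taking the window as a convolution square $\psi = \psi_{0} * \psi_{0}$ with $\psi_{0} \in \D^{(M)}$ (needed both for $V^{*}_{\psi}$ in Proposition \ref{p:STFTConvSp} and for the uniform Roumieu bound) and spelling out the desingularization over the fixed compact $K = \supp \varphi + 2K_{0}$ — and both fixes are sound.
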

	
	\begin{proof}
		If $f \in \OC^{\prime}(\mathcal{S}^{[M]}_{[A]}, E)$ then trivially $f * \varphi \in E$ for any $\varphi \in \D^{[M]} \subseteq \mathcal{S}^{[M]}_{[A]}$. We now show the reverse implication. Take any $\psi \in \D^{[M]} \setminus \{ 0 \}$. By an analogous argument as in the proof of continuity of $V_{\psi}$ in Proposition \ref{p:STFTConvSp}, one can show that $V_{\psi} f \in  C_{[M], d}(E)$. Consequently, by Proposition \ref{p:STFTConvSp}, $\frac{1}{(\psi, \psi)_{L^{2}}} V^{*}_{\psi} \circ V_{\psi} f \in \OC^{\prime}(\mathcal{S}^{[M]}_{[A]}, E)$. As one can similarly show that \eqref{eq:STFTReconstructTempUltra} holds in $\D^{\prime [M]}$ (with windows in $\D^{[M]}$), it follows that $f \in \OC^{\prime}(\mathcal{S}^{[M]}_{[A]}, E)$.  
	\end{proof}

\subsection{Structural theorem}

We now move on to prove Theorem \ref{t:StructThm}. To do this, we introduce the following spaces of vector-valued multi-sequences. Let $E$ be a Banach space. For any $\ell > 0$ we define $\Lambda_{M, \ell}(E)$ as the Banach space of all (multi-indexed) sequences $(e_{\alpha})_{\alpha \in \N^{d}} \in E^{\N^{d}}$ such that
	\[ \norm{(e_{\alpha})_{\alpha \in \N^{d}}}_{\Lambda_{M, \ell}} = \sup_{\alpha \in \N^{d}} \ell^{|\alpha|} M_{\alpha} \norm{e_{\alpha}}_{E} < \infty . \]
Then, we define the spaces,
	\[ \Lambda_{(M)}(E) = \varinjlim_{\ell \rightarrow 0^{+}} \Lambda_{M, \ell}(E) , \qquad \Lambda_{\{M\}}(E) = \varprojlim_{\ell \rightarrow \infty} \Lambda_{M, \ell}(E) . \]
$\Lambda_{(M)}(E)$ is a complete $(LB)$-space by \cite[Theorem 2.6]{B-M-S-ProjDescrWeighIndLim} and $\Lambda_{\{M\}}(E)$ is a Fr\'{e}chet space. For a Banach space $F$, we write $\Lambda^{\prime}_{(M)}(F) := \Lambda_{\{1/M\}}(F)$ and $\Lambda^{\prime}_{\{M\}}(F) := \Lambda_{(1/M)}(F)$. We then have the following canonical isomorphisms of lcHs
	\[ (\Lambda_{[M]}(E))^{\prime} \cong \Lambda^{\prime}_{[M]}(E^{\prime}) .  \]

We fix a TMIB $E$ of class $[M] - [A]$ satisfying \eqref{eq:GrowthCondModulation}. Note that by Remark \ref{r:dualiswTMIB}, $E^{\prime}$ will be a DTMIB that satisfies \eqref{eq:GrowthCondModulation} as well. We can now reformulate Theorem \ref{t:StructThm} as follows.

	\begin{theorem}
		\label{t:StructMapping}
		For any $f \in \mathcal{S}^{\prime [M]}_{[A]}$, the following statements are equivalent:
			\begin{itemize}
				\item[$(i)$] $f * \varphi \in E$ for any $\varphi \in \mathcal{S}^{[M]}_{[A]}$;
				\item[$(ii)$] there exists $(f_{\alpha})_{\alpha \in \N^{d}} \in \Lambda_{[M]}(E)$ such that $f = \sum_{\alpha \in \N^{d}} f_{\alpha}^{(\alpha)}$.
			\end{itemize}
	\end{theorem}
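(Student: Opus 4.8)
The plan is to prove Theorem~\ref{t:StructMapping} by identifying $\OC^{\prime}(\mathcal{S}^{[M]}_{[A]}, E)$ with a quotient of a sequence space through the STFT, and then invoking the abstract surjectivity criterion (Lemma~\ref{l:AbstractSurjCrit}). Concretely, the implication $(ii)\Rightarrow(i)$ is the easy direction: if $f=\sum_{\alpha}f_\alpha^{(\alpha)}$ with $(f_\alpha)\in\Lambda_{[M]}(E)$, then for $\varphi\in\mathcal{S}^{[M]}_{[A]}$ one has $f*\varphi=\sum_\alpha f_\alpha*\varphi^{(\alpha)}$, and since $\varphi\in\mathcal{S}^{M,\ell}_{A,1,q}$ for suitable $\ell,q$, the bound $\|f_\alpha*\varphi^{(\alpha)}\|_E\leq C_{E,q}\|f_\alpha\|_E\|\varphi^{(\alpha)}\|_{L^1_{e^{\omega_A(q\cdot)}}}$ from \eqref{eq:convolutionineqE} combined with $\|f_\alpha\|_E\leq \ell^{-|\alpha|}M_\alpha^{-1}\|(f_\beta)\|_{\Lambda_{M,\ell}}$ gives a geometrically convergent series in $E$ (choosing $\ell$ large enough, or using that $\varphi$ lives in arbitrarily small-$\ell$ spaces in the Beurling case). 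Hence $f*\varphi\in E$.

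For $(i)\Rightarrow(ii)$ the idea is to realize the map $(f_\alpha)_{\alpha}\mapsto\sum_\alpha f_\alpha^{(\alpha)}$ as the transpose of an injective map with weak*-dense range, and then apply Lemma~\ref{l:AbstractSurjCrit}. First I would establish, via Proposition~\ref{p:STFTConvSp} and \eqref{eq:STFTReconstructTempUltra}, that $\OC^{\prime}(\mathcal{S}^{[M]}_{[A]}, E)$ is (canonically isomorphic to) a complemented subspace of $C_{[M],d}(E)$, and dually that its strong dual is $\D^{[M]}_{E^{\prime}}$ — this uses the desingularization formula \eqref{eq:STFTDesingularizationTempUltra}, which identifies the pairing between $\OC^{\prime}(\mathcal{S}^{[M]}_{[A]}, E)$ and $\D^{[M]}_{E^{\prime}}$ with the integral pairing between $C_{[M],d}(E)$ and $C_{[M]}(E^{\prime})$. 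Then consider the continuous linear map
	\[ \Sigma : \Lambda_{[M]}(E) \to \OC^{\prime}(\mathcal{S}^{[M]}_{[A]}, E) : \quad (f_\alpha)_\alpha \mapsto \sum_{\alpha\in\N^d} f_\alpha^{(\alpha)} , \]
which is well-defined and continuous by the computation in the previous paragraph (and because each $f_\alpha^{(\alpha)}\in E\subseteq\OC^{\prime}(\mathcal{S}^{[M]}_{[A]},E)$). Statement $(ii)$ is precisely surjectivity of $\Sigma$, so it suffices to show $\Sigma$ is surjective. By Lemma~\ref{l:AbstractSurjCrit} and the completeness/reflexivity structure available (both spaces being (LB) or Fréchet with the needed webbed/Baire-type hypotheses from Corollary~\ref{c:TopConvSp} and the remarks on $\Lambda_{[M]}(E)$), it is enough to prove that the transpose
	\[ {}^{t}\Sigma : \D^{[M]}_{E^{\prime}} \to \Lambda^{\prime}_{[M]}(E^{\prime}) : \quad g \mapsto \big((-1)^{|\alpha|} g^{(\alpha)}\big)_{\alpha} \]
is injective with weak*-closed (equivalently, using the criterion, with suitably dense) range. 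Injectivity is immediate: if all $g^{(\alpha)}=0$ then $g=0$. For the range condition one identifies $\Lambda^{\prime}_{[M]}(E^{\prime})=\Lambda_{[1/M]}(E^{\prime})$ appropriately and checks, via the STFT characterization of $\D^{[M]}_{E^{\prime}}$ (Corollary~\ref{c:DetectDE}), that the constraints cutting out the image of ${}^t\Sigma$ inside $\Lambda^{\prime}_{[M]}(E^{\prime})$ are exactly the ones encoding "being a derivative-tuple of a single element", which passes to the closure.

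The main obstacle I anticipate is verifying the hypotheses of the abstract surjectivity criterion and, relatedly, carefully handling the weak*-density (or closedness of the range of ${}^t\Sigma$). The structural facts that $L_b(\mathcal{S}^{[M]}_{[A]},E)$ is bornological (Lemma~\ref{l:TopStructVVTempUltraDist}), that $\OC^{\prime}(\mathcal{S}^{[M]}_{[A]},E)$ and $\D^{[M]}_E$ are complemented in the $C_{[M],d}(E)$, $C_{[M]}(E)$ scales (Corollaries~\ref{c:TopConvSp}, \ref{c:TopDE}), and the nuclearity of the Gelfand--Shilov spaces should provide the PLS/PLB or Fréchet regularity needed to legitimately apply the criterion. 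A secondary technical point is the precise computation of the strong dual of $\OC^{\prime}(\mathcal{S}^{[M]}_{[A]},E)$ as $\D^{[M]}_{E^{\prime}}$: this should follow by transporting the known duality $C_{[M],d}(E)' \cong C_{[M]}(E^{\prime})$ (at the level of the frequency-side spaces, where it is essentially the duality of weighted vector-valued continuous function spaces) through the STFT retraction and then using the desingularization formula \eqref{eq:STFTDesingularizationTempUltra} to match the pairings. Once these identifications are in place, the Roumieu/Beurling bookkeeping with the parameters $\ell,q$ is routine given \eqref{eq:M2'} and \eqref{eq:GrowthCondModulation}.
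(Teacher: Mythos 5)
Your proposal follows essentially the same route as the paper: the easy direction via well-definedness and continuity of the map $S:(f_\alpha)_\alpha\mapsto\sum_\alpha f_\alpha^{(\alpha)}$, and the hard direction by identifying the strong dual of $\OC^{\prime}(\mathcal{S}^{[M]}_{[A]},E)$ with $\D^{[M]}_{E^{\prime}}$ through the desingularization formula, computing the transpose as $g\mapsto((-1)^{|\alpha|}g^{(\alpha)})_\alpha$, and applying the abstract surjectivity criterion of Lemma~\ref{l:AbstractSurjCrit}. The points you flag as the anticipated technical work (the Mackey/infrabarreledness hypothesis on $\im S$ in the Beurling case and the weak closedness of the range of the transpose) are exactly the steps the paper carries out in detail, and your sketches of them are consistent with the paper's arguments.
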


For the remainder of this section, we focus on proving Theorem \ref{t:StructMapping}. To do this, we introduce the mapping
	\begin{equation} 
		\label{eq:Smap}
		S : \Lambda_{[M]}(E) \rightarrow \OC^{\prime}(\mathcal{S}^{[M]}_{[A]}, E): \quad (f_{\alpha})_{\alpha \in \N^{d}} \mapsto \sum_{\alpha \in \N^{d}} f^{(\alpha)}_{\alpha} .
	\end{equation}
Showing that $S$ is well-defined implies that $(ii) \Rightarrow (i)$ of Theorem \ref{t:StructMapping} holds, while showing that $S$ is surjective proves $(i) \Rightarrow (ii)$. We start with the former.

	\begin{lemma}
		\label{l:SWelldefined}
		$S$ is a well-defined continuous linear map. 
	\end{lemma}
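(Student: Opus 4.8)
\emph{The plan.} The plan is to bound each summand $f_{\alpha}^{(\alpha)} * \varphi$ in $E$ separately, for a fixed $\varphi \in \mathcal{S}^{[M]}_{[A]}$, and then to add up. First I would use that $f_{\alpha}^{(\alpha)} * \varphi = f_{\alpha} * \varphi^{(\alpha)}$, so that, since $f_{\alpha} \in E$ and $\varphi^{(\alpha)} \in \mathcal{S}^{[M]}_{[A]} \subseteq L^{1}_{\omega_{E}}$, Proposition \ref{p:ExtensionOfConvolutionToWTMIB} (in particular \eqref{eq:convolutionineqE}) applies; bounding $\omega_{E}$ through \eqref{eq:TMIBnormcond} and retaining a single term of the defining $L^{1}$-norm of $\mathcal{S}^{M, \ell'}_{A, 1, q}$ (see \eqref{eq:GSasL1Sp}) gives, for an appropriate $q > 0$ and any $\ell' > 0$ with $\varphi \in \mathcal{S}^{M, \ell'}_{A, 1, q}$,
\[ \norm{f_{\alpha}^{(\alpha)} * \varphi}_{E} \leq \norm{f_{\alpha}}_{E} \norm{\varphi^{(\alpha)}}_{L^{1}_{\omega_{E}}} \leq C_{E, q} \norm{f_{\alpha}}_{E} \norm{\varphi^{(\alpha)}}_{L^{1}_{e^{\omega_{A}(q \cdot)}}} \leq C_{E, q} \ell'^{|\alpha|} M_{\alpha} \norm{f_{\alpha}}_{E} \norm{\varphi}_{\mathcal{S}^{M, \ell'}_{A, 1, q}} . \]
The crucial point is that routing the estimate through the $L^{1}$-description of $\mathcal{S}^{[M]}_{[A]}$ sidesteps any comparison of Gelfand--Shilov seminorms of $\varphi^{(\alpha)}$ with those of $\varphi$: under the hypotheses $(M.1)$ and $(M.2)'$ no estimate of that kind with the needed $\ell^{|\alpha|} M_{\alpha}$-growth is available, which is precisely why the parametrix method is unavailable here and why the whole argument must be carried out convolutively.

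Next I would insert a geometric gap. If $(f_{\alpha})_{\alpha} \in \Lambda_{M, \ell}(E)$, then $\ell'^{|\alpha|} M_{\alpha} \norm{f_{\alpha}}_{E} \leq (\ell'/\ell)^{|\alpha|} \norm{(f_{\alpha})_{\alpha}}_{\Lambda_{M, \ell}}$, and one can arrange $\ell' < \ell$ in both settings: in the Beurling case $(f_{\alpha})_{\alpha}$ already lies in some step $\Lambda_{M, \ell}(E)$ and one shrinks $\ell'$ (also so that $\varphi \in \mathcal{S}^{M, \ell'}_{A, 1, q}$ for the $q$ fixed by \eqref{eq:TMIBnormcond}), while in the Roumieu case $q$ is dictated by $\varphi$ and $(f_{\alpha})_{\alpha}$ lies in every $\Lambda_{M, \ell}(E)$, so one may take $\ell = 2 \ell'$. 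Since $\sum_{\alpha \in \N^{d}} (\ell'/\ell)^{|\alpha|} < \infty$, the series $\sum_{\alpha} f_{\alpha}^{(\alpha)} * \varphi$ converges absolutely in $E$ with $\sum_{\alpha} \norm{f_{\alpha}^{(\alpha)} * \varphi}_{E} \leq C \norm{(f_{\alpha})_{\alpha}}_{\Lambda_{M, \ell}} \norm{\varphi}_{\mathcal{S}^{M, \ell'}_{A, 1, q}}$, the estimate being uniform for $\varphi$ in bounded subsets of $\mathcal{S}^{[M]}_{[A]}$. The partial sums $g_{N} := \sum_{|\alpha| \leq N} f_{\alpha}^{(\alpha)}$ belong to $\OC^{\prime}(\mathcal{S}^{[M]}_{[A]}, E)$ — each $f_{\alpha} \in E \subseteq \OC^{\prime}(\mathcal{S}^{[M]}_{[A]}, E)$ by Corollary \ref{c:denseinclusions}, and $f_{\alpha}^{(\alpha)} * \varphi = f_{\alpha} * \varphi^{(\alpha)} \in E$ — and by the previous bound they form a Cauchy sequence for the $L_{b}(\mathcal{S}^{[M]}_{[A]}, E)$-topology of $\OC^{\prime}(\mathcal{S}^{[M]}_{[A]}, E)$; since this space is complete by Corollary \ref{c:TopConvSp}, they converge there to some $g$.

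Finally I would identify $g$ with $\sum_{\alpha} f_{\alpha}^{(\alpha)}$ and check continuity of $S$. For the identification, I would note that the inclusion $\OC^{\prime}(\mathcal{S}^{[M]}_{[A]}, E) \hookrightarrow \mathcal{S}^{\prime [M]}_{[A]}$ is continuous: taking $\psi \in \mathcal{S}^{(M)}_{(A)}$ of the form $\psi = \psi_{0} * \psi_{0}$ with $(\psi, \psi)_{L^{2}} = 1$ as in Lemma \ref{l:AdequateWindow}, it factors as $V^{*}_{\psi} \circ V_{\psi}$ through $C_{[M], d}(E) \hookrightarrow C_{[M], d}(\mathcal{S}^{\prime [M]}_{[A]})$ by Propositions \ref{p:STFTConvSp} and \ref{p:STFTTempUltra} together with the reconstruction formula \eqref{eq:STFTReconstructTempUltra}. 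Hence $g_{N} \to g$ also in $\mathcal{S}^{\prime [M]}_{[A]}$, so $g = \sum_{\alpha} f_{\alpha}^{(\alpha)}$, which makes $S$ well-defined; linearity is immediate. As the bound above depends linearly on $\norm{(f_{\alpha})_{\alpha}}_{\Lambda_{M, \ell}}$, and $\Lambda_{(M)}(E)$ (resp. $\Lambda_{\{M\}}(E)$) is the inductive (resp. projective) limit of the spaces $\Lambda_{M, \ell}(E)$, this furnishes the continuity of $S$ into $L_{b}(\mathcal{S}^{[M]}_{[A]}, E)$, hence into $\OC^{\prime}(\mathcal{S}^{[M]}_{[A]}, E)$. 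The one genuinely delicate point is the opening $L^{1}$-estimate, together with the attendant fact that — since termwise estimates in $\mathcal{S}^{\prime [M]}_{[A]}$ are not available under $(M.1)$ and $(M.2)'$ only — the limit has to be pinned down by soft functional-analytic means rather than by direct computation.
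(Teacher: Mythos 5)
Your proposal is correct and follows essentially the same route as the paper: the key step in both is the estimate $\norm{f_{\alpha}^{(\alpha)} * \varphi}_{E} = \norm{f_{\alpha} * \varphi^{(\alpha)}}_{E} \leq \norm{f_{\alpha}}_{E}\norm{\varphi^{(\alpha)}}_{L^{1}_{\omega_{E}}}$ via \eqref{eq:convolutionineqE} and the $L^{1}$-description \eqref{eq:GSasL1Sp}, with a geometric gap between the parameter of $\Lambda_{M,\ell}(E)$ and that of the Gelfand--Shilov seminorm producing the summable factor $2^{-|\alpha|}$, uniformly over bounded sets. Your extra passage through the completeness of $\OC^{\prime}(\mathcal{S}^{[M]}_{[A]}, E)$ and the STFT factorization of the inclusion into $\mathcal{S}^{\prime [M]}_{[A]}$ merely makes explicit the convergence and identification of the sum that the paper's proof leaves implicit.
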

	
	\begin{proof}
		Consider any $(f_{\alpha})_{\alpha \in \N^{d}} \in \Lambda_{[M]}(E)$ and take an arbitrary bounded subset $B \subset \mathcal{S}^{[M]}_{[A]}$. Then, for certain $q, \ell > 0$ we have $(f_{\alpha})_{\alpha \in \N^{d}} \in \Lambda_{M, \ell}(E)$, $B \subset \mathcal{S}^{M, \ell / 2}_{A, 1, q}$ and is bounded there, and $\omega_{E}(x) \leq C_{E, q} e^{\omega_{A}(q x)}$. We get, by \eqref{eq:convolutionineqE},
			\begin{align*} 
				\norm{f^{(\alpha)}_{\alpha} * \varphi}_{E} &= \norm{f_{\alpha} * \varphi^{(\alpha)}}_{E} \leq \norm{f_{\alpha}}_{E} \norm{\varphi^{(\alpha)}}_{L^{1}_{\omega_{E}}} \leq C_{E, q} \left(\frac{\ell}{2}\right)^{|\alpha|} M_{\alpha} \norm{f_{\alpha}}_{E} \norm{\varphi}_{\mathcal{S}^{M, \ell / 2}_{A, 1, q}} \\
				&\leq \frac{C_{E, q} \norm{(f_{\alpha})_{\alpha \in \N^{d}}}_{\Lambda_{M, \ell}} \norm{\varphi}_{\mathcal{S}^{M, \ell / 2}_{A, 1, q}}}{2^{|\alpha|}} , 
			\end{align*}
		so that $(\sum_{\alpha \in \N^{d}} f^{(\alpha)}_{\alpha}) * \varphi = \sum_{\alpha \in \N^{d}} f^{(\alpha)}_{\alpha} * \varphi \in E$ for any $\varphi \in B$, and moreover,
			\[ \sup_{\varphi \in B} \|  (\sum_{\alpha \in \N^{d}} f^{(\alpha)}_{\alpha}) * \varphi \|_{E} \leq (2^{d} C_{E, q} \sup_{\varphi \in B} \norm{\varphi}_{\mathcal{S}^{M, \ell / 2}_{A, 1, q}}) \norm{(f_{\alpha})_{\alpha \in \N^{d}}}_{\Lambda_{M, \ell}(E)} , \]
		from which the result follows. 
	\end{proof}
	
To show that the mapping $S$ is surjective, we will employ the following abstract criterion. 

	\begin{lemma}
		\label{l:AbstractSurjCrit}
		Let $X, Y,$ and $Z$ be lcHs and $S : X \rightarrow Y$ be a continuous linear mapping. Suppose $X$ is Mackey, $X / \ker S$ is complete, $\im S$ is Mackey for the topology induced by $Y$, and there exists a topological isomorphism $\iota : Z \rightarrow Y^{\prime}$. We write $R = S^{t} \circ \iota$. Then, $S$ is surjective if the following two conditions are satisfied:
			\begin{itemize}
				\item[(1)] $R$ is injective;
				\item[(2)] $\im R$ is weakly closed in $X^{\prime}$.
			\end{itemize}
	\end{lemma}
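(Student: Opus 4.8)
The strategy is to factor $S$ through $X/\ker S$ and to prove that the induced continuous injection $\widetilde S \colon X/\ker S \to Y$ is a topological isomorphism onto $\im S$; since $X/\ker S$ is complete, $\im S$ will then be complete, hence closed in $Y$, and we will have shown separately that it is dense, so $\im S = Y$. First, the density of $\im S$ in $Y$: as $\iota$ is a topological isomorphism, $\ker R = \iota^{-1}(\ker S^{t})$, so condition (1) is equivalent to the injectivity of the transpose $S^{t}\colon Y' \to X'$. If $\im S$ were not dense in $Y$, the Hahn--Banach theorem would furnish a nonzero $y' \in Y'$ with $\ev{y'}{Sx} = 0$ for all $x \in X$, i.e.\ $S^{t} y' = 0$, contradicting (1). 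Hence $\overline{\im S} = Y$.

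Next I would identify the range of $S^{t}$. One always has $\im S^{t} \subseteq (\ker S)^{\perp}$, and, since $Y'$ separates the points of $Y$, the polar $(\im S^{t})^{\perp}$ equals $\ker S$; by the bipolar theorem it follows that $(\ker S)^{\perp} = \overline{\im S^{t}}^{\,\sigma(X',X)}$. Because $\iota$ is onto, $\im R = \im S^{t}$, so condition (2) forces $\im S^{t} = (\ker S)^{\perp}$.

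Now put $G = \im S$ with the topology induced by $Y$, let $\pi \colon X \to X/\ker S$ be the quotient map, and let $\widetilde S \colon X/\ker S \to G$ be the continuous bijection with $S = \widetilde S \circ \pi$. Identifying $(X/\ker S)'$ with $(\ker S)^{\perp} \subseteq X'$ through $\pi^{t}$, and $G'$ with $Y'$ (every continuous functional on the dense subspace $G$ of $Y$ admits a unique continuous extension to $Y$ by Hahn--Banach), the transpose $\widetilde S^{t}\colon G' \to (X/\ker S)'$ becomes, under these identifications, precisely $S^{t}$ viewed as a map onto $(\ker S)^{\perp}$; it is surjective by the previous paragraph and injective by density of $G$, hence bijective. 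Consequently the algebraic transpose of $\widetilde S^{-1}\colon G \to X/\ker S$ agrees on $(X/\ker S)'$ with $(\widetilde S^{t})^{-1}$ and so maps $(X/\ker S)'$ into $G'$; that is, $\widetilde S^{-1}$ is weakly continuous. Since $X/\ker S$ is Mackey (a quotient of the Mackey space $X$) and $G$ is Mackey by hypothesis, a weakly continuous linear map out of $G$ is continuous, so $\widetilde S^{-1}$ is continuous and $\widetilde S$ is a topological isomorphism. Therefore $G$ is complete, hence closed in the Hausdorff space $Y$, and being dense it equals $Y$; i.e.\ $S$ is surjective.

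I expect the one genuinely delicate point to be the duality bookkeeping of the third paragraph: making the identifications $(X/\ker S)' \cong (\ker S)^{\perp}$ and $G' \cong Y'$ precise enough that $\widetilde S^{t}$ is visibly a bijection, and then invoking the Mackey hypotheses in the sharp form ``a weakly continuous linear map with Mackey domain is continuous'' in order to promote the algebraic inverse $\widetilde S^{-1}$ to a topological isomorphism. The remaining ingredients --- Hahn--Banach for the density statements, the bipolar theorem, and ``a complete subspace of a Hausdorff space is closed'' --- are routine.
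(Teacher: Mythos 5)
Your proof is correct. The paper's own proof consists only of the observation that injectivity and weak closedness of $R$ transfer to $S^{t}$ through the isomorphism $\iota$, followed by a citation of \cite[Lemma 1]{D-N-V-SpUltraDistVanishInf}; what you have written is a correct, self-contained reconstruction of that cited lemma along the standard lines (density of $\im S$ from injectivity of $S^{t}$ via Hahn--Banach, the bipolar identity $\im S^{t}=(\ker S)^{\perp}$ from weak-$*$ closedness, the Mackey hypothesis on $\im S$ to upgrade the weakly continuous inverse of the induced bijection $X/\ker S\rightarrow\im S$ to a topological isomorphism, and completeness of $X/\ker S$ to conclude that the dense image is closed), so the two arguments agree in substance.
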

	
	\begin{proof}
		Note that if $R$ is injective and has weakly closed image, then so does $S^{t}$. The result then follows from \cite[Lemma 1]{D-N-V-SpUltraDistVanishInf}.
	\end{proof}
	
To apply Lemma \ref{l:AbstractSurjCrit}, a good description of the dual of $\OC^{\prime}(\mathcal{S}^{[M]}_{[A]}, E)$ is required. In the next result we show this to be $\D^{[M]}_{E^{\prime}}$. 

	\begin{proposition}
		\label{p:dualConvSp}
		Let $\psi, \gamma, \gamma_{0}, \gamma_{1} \in \mathcal{S}^{(M)}_{(A)}$ be such that $\gamma = \gamma_{0} * \gamma_{1}$ and $(\gamma, \psi)_{L^{2}} \neq 0$. The mapping
			\[ \iota : \D^{[M]}_{E^{\prime}} \rightarrow (\OC^{\prime}(\mathcal{S}^{[M]}_{[A]}, E))^{\prime}_{b} : \quad g \mapsto (f \mapsto \frac{1}{(\gamma, \psi)_{L^{2}}} \int_{\R^{d}} \ev{V_{\psi} g(\xi)}{V_{\overline{\gamma}} f(- \xi)} d\xi) \]
		is a topological isomorphism. 
	\end{proposition}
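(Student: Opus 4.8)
The plan is to show that $\iota$ is a continuous linear bijection and then to deduce continuity of $\iota^{-1}$ from an open mapping theorem. For well-definedness and continuity of $\iota$ I would invoke Lemma \ref{l:InducedBilinearMap} with the duality pairing $\ev{\cdot}{\cdot} : E^{\prime} \times E \to \C$ (continuous, hence hypocontinuous), with $\Gamma_{\xi} = \id$ (so \eqref{eq:InducedBilinearMapCond} holds trivially with $q_{0} = q_{1}$) and $G = \C$; since the Banach spaces $E^{\prime}$ and $E$ are at once Fr\'{e}chet and compactly regular $(LB)$-spaces, the lemma applies in both cases and produces a hypocontinuous bilinear map
\[ \beta : C_{[M]}(E^{\prime}) \times C_{[M], d}(E) \to \C : \quad (\Psi, \Phi) \mapsto \int_{\R^{d}} \ev{\Psi(\xi)}{\Phi(-\xi)} d\xi . \]
Because $E^{\prime}$ is a DTMIB satisfying \eqref{eq:GrowthCondModulation} (Remark \ref{r:dualiswTMIB}), Proposition \ref{p:STFTDE} gives continuity of $V_{\psi} : \D^{[M]}_{E^{\prime}} \to C_{[M]}(E^{\prime})$, while Proposition \ref{p:STFTConvSp} gives continuity of $V_{\overline{\gamma}} : \OC^{\prime}(\mathcal{S}^{[M]}_{[A]}, E) \to C_{[M], d}(E)$. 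Thus $\iota(g)(f) = (\gamma, \psi)_{L^{2}}^{-1} \beta(V_{\psi} g, V_{\overline{\gamma}} f)$ is a continuous linear functional on $\OC^{\prime}(\mathcal{S}^{[M]}_{[A]}, E)$ for each $g$, and for each bounded $D \subset \OC^{\prime}(\mathcal{S}^{[M]}_{[A]}, E)$ the hypocontinuity of $\beta$ yields a continuous seminorm $p$ on $C_{[M]}(E^{\prime})$ with $\sup_{f \in D} |\iota(g)(f)| \leq |(\gamma, \psi)_{L^{2}}|^{-1} p(V_{\psi} g)$, so $\iota$ is continuous into the strong dual.

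The crucial observation is that, for $\varphi \in \mathcal{S}^{[M]}_{[A]}$ seen inside $\OC^{\prime}(\mathcal{S}^{[M]}_{[A]}, E)$ via $\mathcal{S}^{[M]}_{[A]} \hookrightarrow E \hookrightarrow \OC^{\prime}(\mathcal{S}^{[M]}_{[A]}, E)$ (Corollary \ref{c:denseinclusions}) and for $g \in \D^{[M]}_{E^{\prime}} \subset \mathcal{S}^{\prime [M]}_{[A]}$, the desingularization formula \eqref{eq:STFTDesingularizationTempUltra} yields $\iota(g)(\varphi) = \ev{g}{\varphi}$; here the $E^{\prime}$--$E$ pairing in the definition of $\iota$ and the $\mathcal{S}^{\prime [M]}_{[A]}$--$\mathcal{S}^{[M]}_{[A]}$ pairing in \eqref{eq:STFTDesingularizationTempUltra} coincide because the inclusions $E^{\prime} \hookrightarrow \mathcal{S}^{\prime [M]}_{[A]}$ and $\mathcal{S}^{[M]}_{[A]} \hookrightarrow E$ are compatible with them. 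In particular $\iota(g) = 0$ forces $\ev{g}{\varphi} = 0$ for all test functions, hence $g = 0$, so $\iota$ is injective; and, since $\mathcal{S}^{[M]}_{[A]}$ is dense in $\OC^{\prime}(\mathcal{S}^{[M]}_{[A]}, E)$, $\iota(g)$ is determined by its restriction to $\mathcal{S}^{[M]}_{[A]}$.

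For surjectivity, given $L \in (\OC^{\prime}(\mathcal{S}^{[M]}_{[A]}, E))^{\prime}_{b}$, set $\ev{g}{\varphi} := L(\varphi)$ for $\varphi \in \mathcal{S}^{[M]}_{[A]}$; this defines $g \in \mathcal{S}^{\prime [M]}_{[A]}$ since $\mathcal{S}^{[M]}_{[A]} \hookrightarrow \OC^{\prime}(\mathcal{S}^{[M]}_{[A]}, E)$ is continuous. Continuity of $L$ for the topology induced by $L_{b}(\mathcal{S}^{[M]}_{[A]}, E)$ gives a bounded $B \subset \mathcal{S}^{[M]}_{[A]}$ and $C > 0$ with $|L(h)| \leq C \sup_{\theta \in B} \norm{h * \theta}_{E}$; applying this to $h = \varphi^{(\alpha)}$, using $\varphi^{(\alpha)} * \theta = \varphi * \theta^{(\alpha)}$, the module inequality \eqref{eq:convolutionineqE}, and a bound $\omega_{E}(x) \leq C_{E, q} e^{\omega_{A}(q x)}$, one gets for a suitable $\ell > 0$
\[ |\ev{g^{(\alpha)}}{\varphi}| = |L(\varphi^{(\alpha)})| \leq C C_{E, q} \Big( \sup_{\theta \in B} \norm{\theta}_{\mathcal{S}^{M, \ell}_{A, 1, q}} \Big) \ell^{|\alpha|} M_{\alpha} \norm{\varphi}_{E} , \qquad \varphi \in \mathcal{S}^{[M]}_{[A]} . \]
Since $\mathcal{S}^{[M]}_{[A]}$ is dense in $E$, every $g^{(\alpha)}$ extends to an element of $E^{\prime}$ with $\norm{g^{(\alpha)}}_{E^{\prime}} \leq C^{\prime} \ell^{|\alpha|} M_{\alpha}$; letting $\ell$ run over all small values in the Beurling case (where $B$ is bounded in each $\mathcal{S}^{M, \ell}_{A, 1, \ell^{-1}}$) and taking $\ell$ to be the index of a step containing $B$ in the Roumieu case, one concludes $g \in \D^{[M]}_{E^{\prime}}$. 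By the identity of the previous paragraph, $\iota(g)$ and $L$ agree on the dense subspace $\mathcal{S}^{[M]}_{[A]}$, hence $\iota(g) = L$.

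It remains to see that the continuous bijection $\iota$ is open. In both cases $\D^{[M]}_{E^{\prime}}$ is ultrabornological (Corollary \ref{c:TopDE}), and the strong dual $(\OC^{\prime}(\mathcal{S}^{[M]}_{[A]}, E))^{\prime}_{b}$ is webbed: in the Beurling case it is the strong dual of the complete $(LB)$-space $\OC^{\prime}(\mathcal{S}^{(M)}_{(A)}, E)$, hence Fr\'{e}chet, and in the Roumieu case it is the strong dual of the Fr\'{e}chet space $\OC^{\prime}(\mathcal{S}^{\{M\}}_{\{A\}}, E)$ (Corollary \ref{c:TopConvSp}); De Wilde's open mapping theorem then shows $\iota$ is open. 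I expect the surjectivity step to be the heart of the argument --- the insight that a continuous functional on $\OC^{\prime}(\mathcal{S}^{[M]}_{[A]}, E)$ is represented, through its restriction to $\mathcal{S}^{[M]}_{[A]}$, by an element of $\D^{[M]}_{E^{\prime}}$, and the extraction of the weight bound $\norm{g^{(\alpha)}}_{E^{\prime}} \lesssim \ell^{|\alpha|} M_{\alpha}$ from the explicit seminorms of $\OC^{\prime}(\mathcal{S}^{[M]}_{[A]}, E)$ together with \eqref{eq:convolutionineqE}. The Beurling/Roumieu bookkeeping of the quantifiers on $\ell$ and $q$ in $(M.2)^{\prime}$ and in condition $(c)$ of the definition of a TMIB needs care but is routine.
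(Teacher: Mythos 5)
Your overall skeleton (continuous linear bijection plus De Wilde's open mapping theorem) is the same as the paper's, and your continuity and injectivity arguments are essentially identical to it: Lemma \ref{l:InducedBilinearMap} with $\Gamma_{\xi} = \id$, the mapping properties of $V_{\psi}$ and $V_{\overline{\gamma}}$ from Propositions \ref{p:STFTDE} and \ref{p:STFTConvSp}, and the desingularization formula \eqref{eq:STFTDesingularizationTempUltra} restricted to test functions. Where you genuinely diverge is surjectivity. The paper first observes that $g$ restricts to an element of $E^{\prime}$, estimates $\norm{V_{\psi} g(\xi)}_{E^{\prime}}$ by rewriting $\ev{(M_{-\xi}g)*\check{\overline{\psi}}}{e} = \ev{g}{M_{-\xi}(e*\overline{\psi})}$ and bounding via a seminorm of $\OC^{\prime}(\mathcal{S}^{[M]}_{[A]},E)$, and then invokes the STFT detection criterion (Corollary \ref{c:DetectDE}) to conclude $g \in \D^{[M]}_{E^{\prime}}$. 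You instead bound $\norm{g^{(\alpha)}}_{E^{\prime}}$ directly from the seminorms $\sup_{\theta \in B}\norm{\cdot * \theta}_{E}$ using $\varphi^{(\alpha)}*\theta = \varphi*\theta^{(\alpha)}$ and the module inequality \eqref{eq:convolutionineqE}. Your route is more elementary (it bypasses Corollary \ref{c:DetectDE} and the adjoint-STFT machinery at this step) and is correct, including the Beurling/Roumieu bookkeeping of $\ell$ and $q$; the paper's route has the advantage of reusing the STFT characterization already set up for $\D^{[M]}_{E^{\prime}}$, but for this proposition both work.

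One slip: in the De Wilde step you have the two hypotheses the wrong way around. To conclude that the continuous bijection $\iota : \D^{[M]}_{E^{\prime}} \to (\OC^{\prime}(\mathcal{S}^{[M]}_{[A]}, E))^{\prime}_{b}$ is open (equivalently, that $\iota^{-1}$ is continuous via the closed graph theorem applied to $\iota^{-1}$), you need the \emph{domain} $\D^{[M]}_{E^{\prime}}$ to be webbed and the \emph{codomain} $(\OC^{\prime}(\mathcal{S}^{[M]}_{[A]}, E))^{\prime}_{b}$ to be ultrabornological, not, as you wrote, the domain ultrabornological and the codomain webbed. The error is harmless here because both spaces happen to enjoy both properties: $\D^{[M]}_{E^{\prime}}$ is a Fr\'{e}chet space or a complete $(LB)$-space (Corollary \ref{c:TopDE}), hence webbed, while $(\OC^{\prime}(\mathcal{S}^{[M]}_{[A]}, E))^{\prime}_{b}$ is a Fr\'{e}chet space in the Beurling case and the strong dual of a quasinormable, hence distinguished, Fr\'{e}chet space in the Roumieu case (Corollary \ref{c:TopConvSp}), so it is complete and bornological, thus ultrabornological. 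Still, as written the theorem you cite does not apply with the hypotheses you verify, so you should swap them.
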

	
	\begin{proof}
		As $\D^{[M]}_{E^{\prime}}$ is a Fr\'{e}chet space (an $(LB)$-space) it is webbed and $(\OC^{\prime}(\mathcal{S}^{[M]}_{[A]}, E))^{\prime}_{b}$ is ultrabornological by Corollary \ref{c:TopConvSp}. Consequently, it suffices by De Wilde's open mapping theorem to show that $\iota$ is a bijective continuous linear map. 
		
		Using Lemma \ref{l:InducedBilinearMap} with $A = \ev{\cdot}{\cdot}$ and $\Gamma_{\xi} \equiv \id$, we find the hypocontinuous bilinear map
			\[ C_{[M]}(E^{\prime}) \times C_{[M], d}(E) \rightarrow \C : \quad (\Phi_{E^{\prime}}, \Phi_{E}) \mapsto \int_{\R^{d}} \ev{\Phi_{E^{\prime}}(\xi)}{\Phi_{E}(-\xi)} d\xi . \]
		By Proposition \ref{p:STFTDE} and Proposition \ref{p:STFTConvSp}, we see that for any bounded set $B \subset \D^{[M]}_{E^{\prime}}$, the family of linear maps 
			\[ \OC^{\prime}(\mathcal{S}^{[M]}_{[A]}, E) \rightarrow \C : \quad f \mapsto \frac{1}{(\gamma, \psi)_{L^{2}}} \int_{\R^{d}} \ev{V_{\psi} g(\xi)}{V_{\overline{\gamma}} f(- \xi)} d\xi , \qquad g \in B , \]
		form an equicontinuous, and hence bounded subset of $(\OC^{\prime}(\mathcal{S}^{[M]}_{[A]}, E))^{\prime}_{b}$. As $\D^{[M]}_{E^{\prime}}$ is a Fr\'{e}chet space (an $(LB)$-space), it is bornological. Whence we find that $\iota$ is continuous.
		
		We now show the injectivity of $\iota$. Suppose that for $g_{1}, g_{2} \in \D^{[M]}_{E^{\prime}}$ we have $\iota(g_{1}) = \iota(g_{2})$. In particular,
			\[ \frac{1}{(\gamma, \psi)_{L^{2}}} \int_{\R^{d}} \ev{V_{\psi} g_{1}(\xi)}{V_{\overline{\gamma}} \varphi(- \xi)} d\xi = \frac{1}{(\gamma, \psi)_{L^{2}}} \int_{\R^{d}} \ev{V_{\psi} g_{2}(\xi)}{V_{\overline{\gamma}} \varphi(- \xi)} d\xi , \qquad \forall \varphi \in \mathcal{S}^{[M]}_{[A]} . \]
		By \eqref{eq:STFTDesingularizationTempUltra} it follows that $g_{1}$ and $g_{2}$ coincide on $\mathcal{S}^{[M]}_{[A]}$. As $\mathcal{S}^{[M]}_{[A]}$ is dense in $E$, we see that $g_{1} = g_{2}$ in $\D^{[M]}_{E^{\prime}}$. 
		
		Finally, we show that $\iota$ is surjective. Take any $g \in (\OC^{\prime}(\mathcal{S}^{[M]}_{[A]}, E))^{\prime}_{b}$. By Corollary \ref{c:denseinclusions} it follows that $g$ is an element of $E^{\prime}$, whence so is $V_{\psi} g(\xi) = (M_{-\xi} g) * \check{\overline{\psi}}$ for any $\xi \in \R^{d}$. As $g : \OC^{\prime}(\mathcal{S}^{[M]}_{[A]}, E) \rightarrow \C$ is a continuous map, there exists a bounded subset $B \subset \mathcal{S}^{[M]}_{[A]}$ such that
			\begin{align*}
				\| V_{\psi} g (\xi) \|_{E^{\prime}} &= \sup_{\norm{e}_{E} \leq 1} |\ev{(M_{-\xi} g) * \check{\overline{\psi}}}{e}| = \sup_{\norm{e}_{E} \leq 1} |\ev{g}{M_{-\xi} (e * \overline{\psi})}| \\
				&\leq \sup_{\norm{e} \leq 1} \sup_{\gamma \in B} \| [M_{-\xi}(e * \overline{\psi})] * \gamma \|_{E} = \sup_{\norm{e}_{E} \leq 1} \sup_{\gamma \in B} \| V_{\check{\overline{\gamma}}}[e * \overline{\psi}](\xi) \|_{E} . 
			\end{align*}
		By Lemma \ref{l:ConvolutionEtoDE} and Proposition \ref{p:STFTDE} we have that $\{ V_{\check{\overline{\gamma}}}[e * \overline{\psi}](\xi) : \norm{e}_{E} \leq 1, \gamma \in B \}$ is a bounded set in $C_{[M]}(E)$. Hence, for every $q > 0$ (for some $q > 0$) we have that
			\[ \sup_{\xi \in \R^{d}} e^{\omega_{M}(q \xi)} \| V_{\psi} g(\xi) \|_{E^{\prime}} \leq \sup_{\xi \in \R^{d}} \sup_{\norm{e}_{E} \leq 1} \sup_{\gamma \in B} e^{\omega_{M}(q \xi)} \| V_{\check{\overline{\gamma}}}[e * \overline{\psi}](\xi) \|_{E}  < \infty . \]
		We then infer from Corollary \ref{c:DetectDE} that $g \in \D^{[M]}_{E^{\prime}}$. By \eqref{eq:STFTDesingularizationTempUltra} we have that $g$ and $\iota(g)$ coincide on $\mathcal{S}^{[M]}_{[A]}$, so that by Corollary \ref{c:denseinclusions} they describe the same element in $(\OC^{\prime}(\mathcal{S}^{[M]}_{[A]}, E))^{\prime}_{b}$. 
	\end{proof}
	
Let $\psi \in \mathcal{S}^{(M)}_{(A)}$ be as in Lemma \ref{l:AdequateWindow} and take $\iota$ with respect to $\psi$ according to Proposition \ref{p:dualConvSp}. We now consider the map $R = S^{t} \circ \iota$. By the previous result, we know exactly what this mapping looks like.

	\begin{corollary}
		The map $R$ is the continuous linear mapping
			\[ \D^{[M]}_{E^{\prime}} \rightarrow \Lambda^{\prime}_{[M]}(E^{\prime}) : \quad g \mapsto ((-1)^{|\alpha|} g^{(\alpha)})_{\alpha \in \N^{d}} . \]
		In particular, $R$ is injective.
	\end{corollary}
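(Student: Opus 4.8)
The plan is to unwind the definition of $R = S^{t} \circ \iota$ and recognize it as (signed) differentiation, the key input being the density of $\mathcal{S}^{[M]}_{[A]}$ in $\OC^{\prime}(\mathcal{S}^{[M]}_{[A]}, E)$ — valid here since $E$ is a TMIB, by Corollary~\ref{c:denseinclusions} — together with the desingularization formula~\eqref{eq:STFTDesingularizationTempUltra}, which is exactly what makes $\iota(g)$ restrict on $\mathcal{S}^{[M]}_{[A]}$ to $g$. Fix $g \in \D^{[M]}_{E^{\prime}}$ and $(f_{\alpha})_{\alpha \in \N^{d}} \in \Lambda_{[M]}(E)$. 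By definition $\ev{R(g)}{(f_{\alpha})_{\alpha}} = \ev{\iota(g)}{S((f_{\alpha})_{\alpha})} = \ev{\iota(g)}{\sum_{\alpha} f^{(\alpha)}_{\alpha}}$, and since the estimate in the proof of Lemma~\ref{l:SWelldefined} shows that the partial sums $\sum_{|\alpha| \leq N} f^{(\alpha)}_{\alpha}$ converge to $S((f_{\alpha})_{\alpha})$ in $\OC^{\prime}(\mathcal{S}^{[M]}_{[A]}, E)$, continuity of $\iota(g)$ yields $\ev{R(g)}{(f_{\alpha})_{\alpha}} = \sum_{\alpha \in \N^{d}} \ev{\iota(g)}{f^{(\alpha)}_{\alpha}}$. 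It thus suffices to show $\ev{\iota(g)}{f^{(\alpha)}_{\alpha}} = (-1)^{|\alpha|}\ev{g^{(\alpha)}}{f_{\alpha}}$ for each $\alpha$.

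The heart of the matter is an ``integration by parts on $\OC^{\prime}(\mathcal{S}^{[M]}_{[A]}, E)$'': for every $g \in \D^{[M]}_{E^{\prime}}$, every $h \in \OC^{\prime}(\mathcal{S}^{[M]}_{[A]}, E)$, and every $\beta \in \N^{d}$,
\[ \ev{\iota(g)}{h^{(\beta)}} = (-1)^{|\beta|} \ev{\iota(g^{(\beta)})}{h} . \]
To prove this, first observe that $\partial^{\beta}$ maps $\OC^{\prime}(\mathcal{S}^{[M]}_{[A]}, E)$ continuously into itself (from $h^{(\beta)} * \varphi = h * \varphi^{(\beta)}$ one gets $*_{h^{(\beta)}} = *_{h} \circ \partial^{\beta}$, with $\partial^{\beta}$ continuous on $\mathcal{S}^{[M]}_{[A]}$), and that $\D^{[M]}_{E^{\prime}}$ is stable under $\partial^{\beta}$ (a short computation using $(M.2)'$). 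Both sides of the displayed identity are then continuous functionals of $h$, and for $h = \varphi \in \mathcal{S}^{[M]}_{[A]}$ they coincide, since $\ev{\iota(g)}{\varphi} = \ev{g}{\varphi}$ by \eqref{eq:STFTDesingularizationTempUltra} and $\ev{g}{\varphi^{(\beta)}} = (-1)^{|\beta|}\ev{g^{(\beta)}}{\varphi}$ is the usual integration by parts; density of $\mathcal{S}^{[M]}_{[A]}$ in $\OC^{\prime}(\mathcal{S}^{[M]}_{[A]}, E)$ extends this to all $h$. Now apply the identity with $\beta = \alpha$ and $h = f_{\alpha} \in E \hookrightarrow \OC^{\prime}(\mathcal{S}^{[M]}_{[A]}, E)$: together with the fact that $\iota(g^{(\alpha)})$ restricts on $E$ to $g^{(\alpha)} \in E^{\prime}$ (once more because $\mathcal{S}^{[M]}_{[A]}$ is dense in $E$ and both functionals are continuous there), this gives $\ev{\iota(g)}{f^{(\alpha)}_{\alpha}} = (-1)^{|\alpha|}\ev{g^{(\alpha)}}{f_{\alpha}}$.

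Putting the pieces together, $\ev{R(g)}{(f_{\alpha})_{\alpha}} = \sum_{\alpha \in \N^{d}} (-1)^{|\alpha|}\ev{g^{(\alpha)}}{f_{\alpha}}$, which under the canonical pairing between $\Lambda^{\prime}_{[M]}(E^{\prime})$ and $\Lambda_{[M]}(E)$ is precisely to say $R(g) = ((-1)^{|\alpha|} g^{(\alpha)})_{\alpha \in \N^{d}}$; a short estimate with $(M.2)'$ confirms that this sequence indeed belongs to $\Lambda^{\prime}_{[M]}(E^{\prime})$. Continuity of $R$ then follows either from this estimate directly or from $R = S^{t} \circ \iota$ being a composition of continuous maps (transposition is continuous for the strong dual topologies, and $\iota$ is a topological isomorphism by Proposition~\ref{p:dualConvSp}). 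Injectivity is immediate, as the $\alpha = 0$ entry of $R(g)$ is $g$ itself. The only point requiring care is the legitimacy of the two density reductions — in $\OC^{\prime}(\mathcal{S}^{[M]}_{[A]}, E)$ and in $E$ — which amounts to checking that every functional appearing above is continuous on the space over which we pass to the limit; the rest is bookkeeping.
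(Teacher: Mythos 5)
Your proof is correct and follows essentially the same route as the paper's: both reduce the identity $\ev{R(g)}{(f_{\alpha})_{\alpha}} = \sum_{\alpha}(-1)^{|\alpha|}\ev{g^{(\alpha)}}{f_{\alpha}}$ to the case of test functions, where it is the desingularization formula \eqref{eq:STFTDesingularizationTempUltra} together with ordinary integration by parts, and then extend by density. The only difference is organizational: you first establish the integration-by-parts identity on all of $\OC^{\prime}(\mathcal{S}^{[M]}_{[A]}, E)$ using the density of $\mathcal{S}^{[M]}_{[A]}$ there (Corollary \ref{c:denseinclusions}) and then restrict to $E$, whereas the paper extends the scalar identity directly from $\mathcal{S}^{[M]}_{[A]}$ to $E$ using only the density of $\mathcal{S}^{[M]}_{[A]}$ in $E$.
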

	
	\begin{proof}
		The mapping above is clearly a well-defined continuous linear map. Now, for any $g \in \D^{[M]}_{E^{\prime}}$ and $(f_{\alpha})_{\alpha \in \N^{d}} \in \Lambda_{[M]}(E)$, we have that
			\[ \ev{R(g)}{(f_{\alpha})_{\alpha \in \N^{d}}} = \sum_{\alpha \in \N^{d}} \int_{\R^{d}} \ev{V_{\psi} g(\xi)}{V_{\overline{\psi}}[f^{(\alpha)}_{\alpha}](-\xi)} d\xi . \]
		By \eqref{eq:STFTDesingularizationTempUltra} it holds that for any $\varphi \in \mathcal{S}^{[M]}_{[A]}$,
			\[ \int_{\R^{d}} \ev{V_{\psi} g(\xi)}{V_{\overline{\psi}}[\varphi^{(\alpha)}](-\xi)} d\xi = \ev{g}{\varphi^{(\alpha)}} = (-1)^{|\alpha|} \ev{g^{(\alpha)}}{\varphi} . \]
		As $\mathcal{S}^{[M]}_{[A]}$ is dense in $E$, we may extend the previous identity, so that in particular,
			\[ \ev{R(g)}{(f_{\alpha})_{\alpha \in \N^{d}}} = \sum_{\alpha \in \N^{d}} (-1)^{|\alpha|} \ev{g^{(\alpha)}}{f_{\alpha}} . \]
		Consequently, the mappings coincide. 
	\end{proof}
	
We are now sufficiently prepared to prove Theorem \ref{t:StructMapping}.
	
	\begin{proof}[Proof of Theorem \ref{t:StructMapping}]
		$(i) \Rightarrow (ii)$. It suffices to show that $S$ is surjective, which we do by applying Lemma \ref{l:AbstractSurjCrit} to $X = \Lambda_{[M]}(E)$, $Y = \OC^{\prime}(\mathcal{S}^{[M]}_{[A]}, E)$ and $Z = \D^{[M]}_{E^{\prime}}$. Now, $\Lambda_{[M]}(E)$ is Mackey as it is barreled, while $\Lambda_{[M]}(E) / \ker S$ is complete as $\Lambda_{[M]}(E)$ is complete. Next, we show that $\im S$ is Mackey. As $\OC^{\prime}(\mathcal{S}^{\{M\}}_{\{A\}}, E)$ is a Fr\'{e}chet space, no further work is needed in the Roumieu case. For the Beurling case, we will show that $\im S$ is Mackey by showing that it is infrabarreled, that is, we have to prove that every strongly bounded set $B$ in $(\im S)^{\prime}$ is equicontinuous. As $\im S$ is dense in $\OC^{\prime}(\mathcal{S}^{(M)}_{(A)}, E)$ (since $R$ is injective, hence also $S^{t}$), we have by Proposition \ref{p:dualConvSp} that $\iota(\D^{[M]}_{E^{\prime}}) = (\im S)^{\prime}$. For arbitrary $\ell > 0$, consider the set
			\[ V_{\ell} = \left\{ \frac{\partial^{\alpha} e}{\ell^{|\alpha|} M_{\alpha}} : \alpha \in \N^{d}, \norm{e}_{E} \leq 1 \right\} \subseteq \im S . \]
		As $S$ is continuous, $V_{\ell}$ is a bounded set in $\im S$, consequently $\sup_{h \in B} \sup_{f \in V_{\ell}} |\ev{h}{f}| < \infty$. Now, by \eqref{eq:STFTDesingularizationTempUltra},
			\begin{align*}
				\sup_{h \in B} \sup_{f \in V_{\ell}} |\ev{h}{f}| &= \sup_{h \in B} \sup_{\alpha \in \N^{d}} \sup_{\norm{e}_{E} \leq 1} \left| \ev{h}{\frac{\partial^{\alpha} e}{\ell^{|\alpha|} M_{\alpha}}} \right| \\
				&= \sup_{g \in \iota^{-1}(B)} \sup_{\alpha \in \N^{d}} \sup_{\norm{e}_{E} \leq 1} \left| \int_{\R^{d}} \ev{V_{\psi} g(\xi)}{V_{\overline{\psi}}\left[\frac{\partial^{\alpha} e}{\ell^{|\alpha|} M_{\alpha}}\right](-\xi)} d\xi \right| \\
				&= \sup_{g \in \iota^{-1}(B)} \sup_{\alpha \in \N^{d}} \frac{1}{\ell^{|\alpha|} M_{\alpha}} \sup_{\norm{e}_{E} \leq 1} |\ev{g^{(\alpha)}}{e}| \\
				&= \sup_{g \in \iota^{-1}(B)} \sup_{\alpha \in \N^{d}} \frac{\norm{g^{(\alpha)}}_{E^{\prime}}}{\ell^{|\alpha|} M_{\alpha}} . 
			\end{align*}
		Hence,
			\[ \sup_{g \in \iota^{-1}(B)} \norm{g}_{\D^{M, \ell}_{E^{\prime}}} < \infty , \qquad \forall \ell > 0 , \]
		which shows that $\iota^{-1}(B)$ is a bounded set in $\D^{(M)}_{E^{\prime}}$. Then, because of Proposition \ref{p:dualConvSp} and the fact that $\OC^{\prime}(\mathcal{S}^{(M)}_{(A)}, E)$ is barreled by Corollary \ref{c:TopConvSp}, $B$ is an equicontinuous subset of $(\im S)^{\prime}$. 
		
		It remains to show that $\im R$ is weakly closed in $\Lambda^{\prime}_{[M]}(E^{\prime})$. Let $(g_{j})_{j}$ be a net in $\D^{[M]}_{E^{\prime}}$ and $(h_{\alpha})_{\alpha \in \N^{d}}$ an element of $\Lambda^{\prime}_{[M]}(E^{\prime})$ such that $R(g_{j}) = ((-1)^{|\alpha|} g_{j}^{(\alpha)})_{\alpha \in \N^{d}} \rightarrow (h_{\alpha})_{\alpha \in \N^{d}}$ weakly in $\Lambda^{\prime}_{[M]}(E^{\prime})$. In particular, $(-1)^{|\alpha|} g^{(\alpha)} \rightarrow h_{\alpha}$ weakly in $E^{\prime}$ for any $\alpha \in \N^{d}$. Note that for any $\varphi \in \mathcal{S}^{[M]}_{[A]}$ and $\alpha \in \N^{d}$,
			\[ \ev{h^{(\alpha)}_{0}}{\varphi} = (-1)^{|\alpha|} \ev{h_{0}}{\varphi^{(\alpha)}} = (-1)^{|\alpha|} \lim_{j} ~ \ev{g_{j}}{\varphi^{(\alpha)}} = \lim_{j} ~ \ev{g_{j}^{(\alpha)}}{\varphi} = (-1)^{|\alpha|} \ev{h_{\alpha}}{\varphi} .  \]
		As $\mathcal{S}^{[M]}_{[A]}$ is dense in $E$, we see that $h_{0}^{(\alpha)} = (-1)^{|\alpha|} h_{\alpha} \in E^{\prime}$ for any $\alpha \in \N^{d}$. Since
			\[ (\| h_{0}^{(\alpha)} \|_{E^{\prime}})_{\alpha \in \N^{d}} = (\norm{h_{\alpha}}_{E^{\prime}})_{\alpha \in \N^{d}} \in \Lambda^{\prime}_{[M]}(\R) , \]
		we see that $h_{0} \in \D^{[M]}_{E^{\prime}}$. Consequently, $(h_{\alpha})_{\alpha \in \N^{d}} = ((-1)^{|\alpha|} h^{(\alpha)}_{0})_{\alpha \in \N^{d}} \in \im R$.	
		
		$(ii) \Rightarrow (i)$. We need to show that $S$ is well-defined. This was done in Lemma \ref{l:SWelldefined}.
	\end{proof}
	
\section{Spaces associated to a DTMIB}
\label{sec:DTMIB}

Let $M$ and $A$ be weight sequences satisfying $(M.1)$ and $(M.2)'$, and suppose $E^{\prime}$ is a DTMIB coming from a TMIB $E$ of class $[M] - [A]$ satisfying \eqref{eq:GrowthCondModulation}. In general one may not apply Theorem \ref{t:StructThm} to obtain the structure of $\OC^{\prime}(\mathcal{S}^{[M]}_{[A]}, E^{\prime})$ as $\mathcal{S}^{[M]}_{[A]}$ is not necessarily dense in $E^{\prime}$, see Remark \ref{r:dualiswTMIB}. In this section we will show that the first structural theorem may still be obtained if one looks at the strong dual of $\D^{[M]}_{E}$, commonly denoted by $\D^{\prime [M]}_{E^{\prime}}$. By Corollary \ref{c:DEdenseinE} and Corollary \ref{c:GSDenseDE} we know that $\D^{\prime [M]}_{E^{\prime}}$ is a subspace of $\mathcal{S}^{\prime [M]}_{[A]}$ containing $E^{\prime}$. However, we will now see that it is exactly the space $\OC^{\prime}(\mathcal{S}^{[M]}_{[A]}, E^{\prime})$.

	\begin{proposition}
		\label{p:dualDEinOC}
		$\D^{\prime [M]}_{E^{\prime}} = \OC^{\prime}(\mathcal{S}^{[M]}_{[A]}, E^{\prime})$ as locally convex spaces.
	\end{proposition}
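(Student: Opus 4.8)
The plan is to identify both spaces, regarded as subspaces of $\mathcal{S}^{\prime [M]}_{[A]}$, with the same set and to check that the identity map between them is continuous in both directions. First, by the density of $\mathcal{S}^{[M]}_{[A]}$ in $\D^{[M]}_{E}$ (Corollary \ref{c:GSDenseDE}), every $g \in \D^{\prime [M]}_{E^{\prime}} = (\D^{[M]}_{E})^{\prime}_{b}$ is determined by its restriction to $\mathcal{S}^{[M]}_{[A]}$, which yields an injection $\D^{\prime [M]}_{E^{\prime}} \hookrightarrow \mathcal{S}^{\prime [M]}_{[A]}$; moreover the topology of $\D^{\prime [M]}_{E^{\prime}}$ is generated by the seminorms $g \mapsto \sup_{h \in D} |\ev{g}{h}|$ with $D$ running over the bounded subsets of $\D^{[M]}_{E}$.

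For the inclusion $\D^{\prime [M]}_{E^{\prime}} \subseteq \OC^{\prime}(\mathcal{S}^{[M]}_{[A]}, E^{\prime})$, I would fix $g \in (\D^{[M]}_{E})^{\prime}_{b}$ (regarded as an element of $\mathcal{S}^{\prime [M]}_{[A]}$) and $\varphi \in \mathcal{S}^{[M]}_{[A]}$, and estimate, for $\chi \in \mathcal{S}^{[M]}_{[A]}$, that $|\ev{g * \varphi}{\chi}| = |\ev{g}{\chi * \check{\varphi}}| \leq C \norm{\chi * \check{\varphi}}_{\D^{M, \ell}_{E}}$, where $C$ and $\ell$ come from the continuity of $g$ on $\D^{[M]}_{E}$ ($\ell$ small in the Beurling case, arguing step by step on each $\D^{M, \ell}_{E}$ in the Roumieu case). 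By Lemma \ref{l:ConvolutionEtoDE} the right-hand side is bounded by $C' \norm{\chi}_{E}$, so $g * \varphi$ extends to a continuous linear functional on $E$; using the density of $\mathcal{S}^{[M]}_{[A]}$ in $E$ together with the compatibility of the $\ev{E^{\prime}}{E}$- and $\ev{\mathcal{S}^{\prime [M]}_{[A]}}{\mathcal{S}^{[M]}_{[A]}}$-pairings, this functional equals $g * \varphi \in E^{\prime}$, hence $g \in \OC^{\prime}(\mathcal{S}^{[M]}_{[A]}, E^{\prime})$. Continuity of this inclusion follows because, for a bounded $B \subseteq \mathcal{S}^{[M]}_{[A]}$, the set $\{ \chi * \check{\varphi} : \norm{\chi}_{E} \leq 1,\ \varphi \in B \}$ is bounded in $\D^{[M]}_{E}$ by Lemma \ref{l:ConvolutionEtoDE}, so $g \mapsto \sup_{\varphi \in B} \norm{g * \varphi}_{E^{\prime}} = \sup \{ |\ev{g}{h}| : h \in \{ \chi * \check{\varphi} : \norm{\chi}_{E} \leq 1,\ \varphi \in B \} \}$ is a continuous seminorm on $\D^{\prime [M]}_{E^{\prime}}$.

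For the reverse inclusion, fix a window $\psi \in \mathcal{S}^{(M)}_{(A)}$ as in Lemma \ref{l:AdequateWindow} and, for $f \in \OC^{\prime}(\mathcal{S}^{[M]}_{[A]}, E^{\prime})$ and $g \in \D^{[M]}_{E}$, set $\ev{f}{g}_{\D} := \int_{\R^{d}} \ev{V_{\psi} f(\xi)}{V_{\overline{\psi}} g(-\xi)} \, d\xi$. This is meaningful: $V_{\psi} f \in C_{[M], d}(E^{\prime})$ by Proposition \ref{p:STFTConvSp}, $V_{\overline{\psi}} g \in C_{[M]}(E)$ by Proposition \ref{p:STFTDE}, and Lemma \ref{l:InducedBilinearMap} applied to the duality pairing $E \times E^{\prime} \to \C$ with $\Gamma_{\xi} \equiv \id$ (both $E$ and $E^{\prime}$ being Banach spaces, the hypotheses of that lemma are satisfied) produces a hypocontinuous bilinear map $C_{[M]}(E) \times C_{[M], d}(E^{\prime}) \to \C$ realized by this Pettis integral. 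By the desingularization formula \eqref{eq:STFTDesingularizationTempUltra} (recall $(\psi, \psi)_{L^{2}} = 1$) the functional $g \mapsto \ev{f}{g}_{\D}$ restricts on $\mathcal{S}^{[M]}_{[A]}$ to $f$, so it is a continuous extension of $f$ to $\D^{[M]}_{E}$, i.e. $f \in \D^{\prime [M]}_{E^{\prime}}$, compatibly with the identification above. Continuity of $f \mapsto \ev{f}{\cdot}_{\D}$ follows from the hypocontinuity just noted together with Proposition \ref{p:STFTConvSp}: a bounded subset of $\OC^{\prime}(\mathcal{S}^{[M]}_{[A]}, E^{\prime})$ is mapped by $V_{\psi}$ into a bounded subset of $C_{[M], d}(E^{\prime})$, whence the associated family of functionals is equicontinuous, and hence bounded, on $\D^{[M]}_{E}$; since $\OC^{\prime}(\mathcal{S}^{[M]}_{[A]}, E^{\prime})$ is bornological (it is a complete $(LB)$-space in the Beurling case and a Fréchet space in the Roumieu case by Corollary \ref{c:TopConvSp}), this yields continuity.

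Putting the two inclusions together, the identity is a continuous linear bijection in both directions, hence a topological isomorphism $\OC^{\prime}(\mathcal{S}^{[M]}_{[A]}, E^{\prime}) = \D^{\prime [M]}_{E^{\prime}}$. The main obstacle is the last step: one must verify that the STFT genuinely turns $f \in \OC^{\prime}(\mathcal{S}^{[M]}_{[A]}, E^{\prime})$ into a continuous functional on all of $\D^{[M]}_{E}$, not merely on $\mathcal{S}^{[M]}_{[A]}$, and here the weighted mapping properties of the adjoint pair $(V_{\psi}, V^{*}_{\psi})$ from Propositions \ref{p:STFTDE} and \ref{p:STFTConvSp}, together with the $L^{1}$-integrability encoded in condition \eqref{eq:InducedBilinearMapCond}, are precisely what make the pairing $\ev{\cdot}{\cdot}_{\D}$ well-defined and appropriately continuous; the remaining points — compatibility of the various dual pairings and the bornologicity invocations — are routine given the earlier results.
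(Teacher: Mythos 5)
Your proposal is correct and follows essentially the same route as the paper: the inclusion $\D^{\prime [M]}_{E^{\prime}} \hookrightarrow \OC^{\prime}(\mathcal{S}^{[M]}_{[A]}, E^{\prime})$ via Lemma \ref{l:ConvolutionEtoDE} and the transposed pairing $\ev{f * \psi}{e} = \ev{f}{e * \check{\psi}}$, and the reverse inclusion via the STFT, Lemma \ref{l:InducedBilinearMap} applied to the duality pairing $E \times E^{\prime} \to \C$, and the desingularization formula — your pairing $\ev{f}{\cdot}_{\D}$ is exactly the paper's $\ev{V^{*}_{\psi}(V_{\psi}f)}{\cdot}$ written out. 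The only (immaterial) differences are that you verify continuity of the first inclusion directly on the strong-dual seminorms rather than through bornologicity, and that the paper factors the second direction through the continuity of $V^{*}_{\gamma} : C_{[M], d}(E^{\prime}) \to \D^{\prime [M]}_{E^{\prime}}$ plus the reconstruction formula.
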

	
	\begin{proof}
		We first show that $\D^{\prime [M]}_{E^{\prime}}$ is continuously contained in $\OC^{\prime}(\mathcal{S}^{[M]}_{[A]}, E^{\prime})$. For any $f \in \D^{\prime [M]}_{E^{\prime}}$ and $\psi \in \mathcal{S}^{[M]}_{[A]}$, by Lemma \ref{l:ConvolutionEtoDE}, $f * \psi$ may be seen as an element of $E^{\prime}$ via the evaluation $\ev{f * \psi}{e} = \ev{f}{e * \check{\psi}}$, $e \in E$. If $B_{0} \subset \mathcal{S}^{[M]}_{[A]}$ is a bounded set, then $B_{1} = \{ e * \check{\psi} : \psi \in B_{0}, \norm{e}_{E} \leq 1 \}$ is a bounded set in $\D^{[M]}_{E}$ by Lemma \ref{l:ConvolutionEtoDE}. Consequently, it follows that for any bounded set $B \subset \D^{\prime [M]}_{E^{\prime}}$ we have
			\[ \sup_{f \in B} \sup_{\psi \in B_{0}} \norm{f * \psi}_{E^{\prime}} = \sup_{f \in B} \sup_{\varphi \in B_{1}} |\ev{f}{\varphi}| < \infty . \]
		Thus, $B$ is a bounded subset of $\OC^{\prime}(\mathcal{S}^{[M]}_{[A]}, E^{\prime})$. As $\D^{\prime [M]}_{E^{\prime}}$ is bornological by Corollary \ref{c:TopDE}, we find the first continuous inclusion.
		
		To show that $\OC^{\prime}(\mathcal{S}^{[M]}_{[A]}, E^{\prime})$ is continuously contained in $\D^{\prime [M]}_{E^{\prime}}$, it suffices by Proposition \ref{p:STFTConvSp} and \eqref{eq:STFTReconstructTempUltra} to show that for any $\gamma = \gamma_{0} * \gamma_{1}$, with $\gamma_{0}, \gamma_{1} \in \mathcal{S}^{(M)}_{(A)}$, the mapping $V^{*}_{\gamma} : C_{[M], d}(E^{\prime}) \rightarrow \D^{\prime [M]}_{E^{\prime}}$ is well-defined and continuous. For any $f \in E^{\prime}$ and $\varphi \in \D^{[M]}_{E}$ we have that $\ev{M_{\xi}[f * \gamma]}{\varphi} = \ev{f}{V_{\overline{\gamma}} \varphi(-\xi)}$. By Lemma \ref{l:InducedBilinearMap} and Proposition \ref{p:STFTDE} we find the hypocontinuous bilinear mapping
			\[ \D^{[M]}_{E} \times C_{[M], d}(E^{\prime}) \rightarrow \C : \quad (\varphi, \Phi) \mapsto \int_{\R^{d}} \ev{\Phi(\xi)}{V_{\overline{\gamma}} \varphi(-\xi)} d\xi . \]
		The right-hand side is exactly $\ev{V^{*}_{\gamma} \Phi}{\varphi}$, from which we already see that $V^{*}_{\gamma} \Phi \in \D^{\prime [M]}_{E^{\prime}}$. Also, for any bounded set $B \subset C_{[M], d}(E^{\prime})$, the set $\{ V^{*}_{\gamma} \Phi : \Phi \in B \}$ is an equicontinuous and thus bounded subset of $\D^{\prime [M]}_{E^{\prime}}$. As $C_{[M], d}(E^{\prime})$ is bornological, the continuity of $V^{*}_{\gamma} : C_{[M], d}(E^{\prime}) \rightarrow \D^{\prime [M]}_{E^{\prime}}$ follows. 
	\end{proof}
	
	\begin{corollary}
		If $E$ is reflexive, then so are $\D^{[M]}_{E}$ and $\OC^{\prime}(\mathcal{S}^{[M]}_{[A]}, E)$.
	\end{corollary}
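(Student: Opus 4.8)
The plan is to bootstrap from the two duality results of this section, Proposition~\ref{p:dualConvSp} and Proposition~\ref{p:dualDEinOC}, applied simultaneously to $E$ and to $E'$. The first observation is that when $E$ is reflexive, $E'$ is again a TMIB of class $[M]-[A]$ satisfying \eqref{eq:GrowthCondModulation}: it is a DTMIB as the dual of the TMIB $E$, it is in fact a TMIB by Remark~\ref{r:dualiswTMIB} because its predual $E$ is reflexive, it inherits \eqref{eq:GrowthCondModulation} from $E$ (cf.\ the remark preceding Remark~\ref{r:GrowthCondModulationSuffCond}), and moreover $(E')' = E$. Hence every statement of Sections~\ref{sec:ConvSp} and~\ref{sec:DTMIB} is available for $E'$ as well, and it suffices to first prove that $\D^{[M]}_{F}$ is reflexive for every reflexive TMIB $F$ of the relevant class, and then to read off the reflexivity of $\OC^{\prime}(\mathcal{S}^{[M]}_{[A]}, E)$.

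For the first part, combine the equality $(\D^{[M]}_{F})^{\prime}_{b} = \OC^{\prime}(\mathcal{S}^{[M]}_{[A]}, F')$ from Proposition~\ref{p:dualDEinOC} (applied with the TMIB $F$) with the topological isomorphism $(\OC^{\prime}(\mathcal{S}^{[M]}_{[A]}, F'))^{\prime}_{b} \cong \D^{[M]}_{(F')'} = \D^{[M]}_{F}$ from Proposition~\ref{p:dualConvSp} (applied with the TMIB $F'$). This exhibits the strong bidual of $\D^{[M]}_{F}$ as isomorphic to $\D^{[M]}_{F}$ itself. The step I expect to be the main obstacle is verifying that the resulting topological isomorphism $\D^{[M]}_{F} \to ((\D^{[M]}_{F})^{\prime}_{b})^{\prime}_{b}$ is precisely the canonical embedding into the strong bidual; this is obtained by unwinding the explicit formula for $\iota$ in Proposition~\ref{p:dualConvSp} together with the desingularization formula \eqref{eq:STFTDesingularizationTempUltra}, which identifies the composite as the map sending $g \in \D^{[M]}_{F}$ to the functional $h \mapsto \ev{h}{g}$ on $(\D^{[M]}_{F})^{\prime}_{b} = \OC^{\prime}(\mathcal{S}^{[M]}_{[A]}, F')$. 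Consequently $\D^{[M]}_{F}$ is semireflexive. Since $\D^{[M]}_{F}$ is barreled — a distinguished Fr\'{e}chet space in the Beurling case and a complete $(LB)$-space in the Roumieu case, by Corollary~\ref{c:TopDE} — and a barreled semireflexive locally convex space is reflexive, we conclude that $\D^{[M]}_{F}$ is reflexive. Taking $F = E$ gives the reflexivity of $\D^{[M]}_{E}$, and taking $F = E'$ gives the reflexivity of $\D^{[M]}_{E'}$.

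Finally, apply Proposition~\ref{p:dualDEinOC} with the TMIB $E'$: this yields $(\D^{[M]}_{E'})^{\prime}_{b} = \OC^{\prime}(\mathcal{S}^{[M]}_{[A]}, (E')') = \OC^{\prime}(\mathcal{S}^{[M]}_{[A]}, E)$. Thus $\OC^{\prime}(\mathcal{S}^{[M]}_{[A]}, E)$ is the strong dual of the reflexive space $\D^{[M]}_{E'}$, and since the strong dual of a reflexive locally convex space is reflexive, $\OC^{\prime}(\mathcal{S}^{[M]}_{[A]}, E)$ is reflexive. Beyond this, the only real work is the bookkeeping of which proposition is applied to which of $E$ and $E'$, and the identification of the composed topological isomorphism with the canonical map into the bidual.
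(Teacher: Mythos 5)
Your proposal is correct and follows essentially the same route as the paper: identify $\OC^{\prime}(\mathcal{S}^{[M]}_{[A]}, E)$ as the strong dual of $\D^{[M]}_{E^{\prime}}$ via Proposition \ref{p:dualDEinOC}, reduce to showing $\D^{[M]}_{F}$ is reflexive for a reflexive TMIB $F$, and obtain that from semireflexivity (Propositions \ref{p:dualConvSp} and \ref{p:dualDEinOC} combined) together with barreledness from Corollary \ref{c:TopDE}. Your extra care in checking that the composed isomorphism is the canonical evaluation map is a point the paper passes over with the phrase ``as sets,'' but it does not change the argument.
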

	
	\begin{proof}
		Note that if $E$ is a reflexive TMIB satisfying \eqref{eq:GrowthCondModulation}, then $E^{\prime}$ is also a reflexive TMIB satisfying \eqref{eq:GrowthCondModulation}, see Remark \ref{r:dualiswTMIB}. By Proposition \ref{p:dualDEinOC} we have that $\OC^{\prime}(\mathcal{S}^{[M]}_{[A]}, E)$ is exactly the dual of $\D^{[M]}_{E^{\prime}}$, so it suffices to show that $\D^{[M]}_{E}$ is reflexive for any reflexive TMIB $E$. Now, $\D^{[M]}_{E} = (\D^{\prime [M]}_{E^{\prime}})^{\prime}_{b}$ as sets by Propositions \ref{p:dualConvSp} and \ref{p:dualDEinOC}, i.e. $\D^{[M]}_{E}$ is semi-reflexive. As $\D^{[M]}_{E}$ is a Fr\'{e}chet space (an $(LB)$-space), it is barreled, so we may conclude that it is reflexive.
	\end{proof}
	
We now go further into the structure of $\D^{\prime [M]}_{E^{\prime}}$. For our proof to work in the Roumieu case though, we will have to establish the so-called projective description of $\D^{\{M\}}_{E}$. To this purpose, we introduce the following set
	\[ \mathfrak{R} = \{ r = (r_{j})_{j \in \N} \in \R_{+}^{\N} : r_{j} \rightarrow \infty \text{ and } r_{j} \leq r_{j + 1}, \forall j \in \N \} . \]
Then, for any $r \in \mathfrak{R}$ and $M$, we define the weight sequence $M_{r} = (M_{p} \prod_{j = 0}^{p} r_{j})_{p \in \N}$. We find the next result. 

	\begin{lemma}
		\label{l:DEProjDescr}
		We have the following isomorphism as locally convex spaces
			\[ \D^{\{M\}}_{E} \cong \varprojlim_{r \in \mathfrak{R}} D^{M_{r}, 1}_{E}  .\]
	\end{lemma}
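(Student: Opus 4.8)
The plan is to establish the identity in two stages: first, that $\D^{\{M\}}_{E}$ and $P := \varprojlim_{r\in\mathfrak{R}} D^{M_{r},1}_{E}$ coincide as vector spaces and in fact carry the same bounded sets; second, that their topologies agree, this last step being reduced through the STFT to the already available projective description of a weighted space of vector-valued continuous functions.

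\emph{Step 1: a continuous injection.} Fix $\ell>0$ and $r\in\mathfrak{R}$. Since $(r_{j})_{j}$ is nondecreasing with $r_{j}\to\infty$, there is $C>0$ with $\ell^{p}\leq C\prod_{j=0}^{p}r_{j}$ for all $p\in\N$, hence $\ell^{|\alpha|}M_{\alpha}\leq C M_{r,\alpha}$, so that $\D^{M,\ell}_{E}\hookrightarrow D^{M_{r},1}_{E}$ continuously. Passing to the inductive limit in $\ell$ and invoking the universal property of the projective limit in $r$ yields a continuous linear map $\D^{\{M\}}_{E}\to P$; it remains to prove that this map is bijective and open.

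\emph{Step 2: the combinatorial core.} Both remaining points reduce to the assertion that, for a family $(b_{\alpha})_{\alpha\in\N^{d}}$ of positive reals, $\sup_{\alpha}b_{\alpha}/M_{r,\alpha}<\infty$ for every $r\in\mathfrak{R}$ if and only if $\sup_{\alpha}b_{\alpha}/(\ell^{|\alpha|}M_{\alpha})<\infty$ for some $\ell>0$. The ``if'' part is the estimate of Step 1. For ``only if'', set $a_{p}=\sup_{|\alpha|=p}b_{\alpha}/M_{\alpha}$; each $a_{p}$ is finite (otherwise $\sup_{\alpha}b_{\alpha}/M_{r,\alpha}=\infty$ already for any fixed $r$), and the second condition fails for every $\ell$ precisely when $\limsup_{p}a_{p}^{1/p}=\infty$. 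In that case choose $0=p_{0}<p_{1}<p_{2}<\cdots$ with $a_{p_{k}}\geq k^{2p_{k}}$, and define $r\in\mathfrak{R}$ by $r_{0}=1$ and $r_{j}=k$ for $p_{k-1}<j\leq p_{k}$; then $\prod_{j=0}^{p_{k}}r_{j}\leq k^{p_{k}}$, so $\sup_{\alpha}b_{\alpha}/M_{r,\alpha}\geq\sup_{k}a_{p_{k}}/\prod_{j=0}^{p_{k}}r_{j}\geq\sup_{k}k^{p_{k}}=\infty$, contradicting the hypothesis. Applying this with $b_{\alpha}=\norm{f^{(\alpha)}}_{E}$ shows that $\D^{\{M\}}_{E}$ and $P$ have the same underlying set; applying it with $b_{\alpha}=\sup_{f\in B}\norm{f^{(\alpha)}}_{E}$ shows that they have the same bounded sets. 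Since $\D^{\{M\}}_{E}$ is an $(LB)$-space, hence bornological, this means $\D^{\{M\}}_{E}$ is the bornologification of $P$; in particular the two topologies coincide as soon as $P$ is itself bornological.

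\emph{Step 3: topological coincidence.} To see that $P$ is bornological --- equivalently, that the continuous bijection of Step 1 is a homeomorphism --- I would move the problem to the frequency side via the STFT. By Proposition~\ref{p:STFTDE} and \eqref{eq:STFTReconstructTempUltra}, for $\psi$ as in Lemma~\ref{l:AdequateWindow} the pair $(V_{\psi},V^{*}_{\psi})$ realizes $\D^{[M]}_{E}$ as a complemented subspace of $C_{[M]}(E)$, and re-examining the estimates in the proof of Proposition~\ref{p:STFTDE} with $M_{r}$ in place of $M$ (the modulation weight $\nu_{E}$ being absorbed through \eqref{eq:GrowthCondModulation}) shows that the complementing maps respect the two weight filtrations. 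Hence the lemma for $\D^{\{M\}}_{E}$ follows from the projective description of the $(LB)$-space $C_{\{M\}}(E)$ --- namely \cite[Corollary~3.3]{B-M-S-ProjDescrWeighIndLim} and the remark following it, already used in the proof of Lemma~\ref{l:TopCMSp} --- together with the fact that $\{e^{\omega_{M_{r}}}:r\in\mathfrak{R}\}$ is cofinal among the weights generating the maximal (Nachbin) system attached to $C_{\{M\}}(E)$, which is again the combinatorial assertion of Step 2, now phrased through the associated functions $\omega_{M_{r}}$. The main obstacle is precisely this bookkeeping: passing back and forth between the ``derivative'' description (weight sequences $M_{r}$) and the ``frequency'' description (weight functions $e^{\omega_{M_{r}}}$) while carrying the modulation weight along; the genuinely substantive input, the combinatorial assertion, is elementary.
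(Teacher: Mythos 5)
Your Steps 1 and 2 are correct and give a clean, self-contained proof of two facts the paper obtains differently: the set-theoretic identity $\D^{\{M\}}_{E} = \varprojlim_{r}\D^{M_{r},1}_{E}$ (which the paper simply imports from \cite[Lemma 3.4]{K-Ultradistributions3}) and the coincidence of bounded sets. Your reduction of the lemma to the bornologicity of $P=\varprojlim_{r}\D^{M_{r},1}_{E}$ is a legitimate reformulation, and your overall architecture genuinely differs from the paper's: the paper instead takes an arbitrary continuous seminorm $p$ on $\D^{\{M\}}_{E}$, realizes it via the bipolar theorem as $\sup_{f\in B}|\ev{f}{\cdot}|$ for a bounded set $B$ in $\D^{\prime\{M\}}_{E^{\prime}}=\OC^{\prime}(\mathcal{S}^{\{M\}}_{\{A\}},E^{\prime})$ (Proposition \ref{p:dualDEinOC}), uses Proposition \ref{p:STFTConvSp} to get $\sup_{f\in B}\norm{V_{\psi}f(\xi)}_{E^{\prime}}e^{-\omega_{M}(q\xi)}<\infty$ for all $q>0$, upgrades this to a single bound $Ce^{\omega_{M_{r}}(\xi)}$, and concludes through the desingularization formula that $p$ is dominated by a $\D^{M_{s},1}_{E}$-norm.

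The gap is in Step 3, which carries essentially all of the analytic content. First, the topological projective description of $C_{\{M\}}(E)$ as the projective limit over the maximal Nachbin system is not what \cite[Corollary 3.3]{B-M-S-ProjDescrWeighIndLim} is invoked for in Lemma \ref{l:TopCMSp}; there it yields completeness and a double inductive limit representation. The topological identity of a weighted inductive limit with $O$-growth conditions and its projective hull (in the vector-valued setting, no less) is a separate and more delicate matter, for which you would need to verify the relevant hypothesis on the weights $e^{-\omega_{M}(\cdot/n)}$ and locate a citable vector-valued statement. Second, the cofinality of $\{e^{\omega_{M_{r}}}:r\in\mathfrak{R}\}$ in that maximal system is not ``again the combinatorial assertion of Step 2'': Step 2 concerns multi-sequences weighted by $M_{r,\alpha}$, whereas here one must show that any $g$ with $g(\xi)\leq C_{q}e^{\omega_{M}(q\xi)}$ for every $q>0$ satisfies $g(\xi)\leq Ce^{\omega_{M_{r}}(\xi)}$ for a single $r$ --- a statement about the associated functions that requires its own argument (the paper uses \cite[Lemma 4.5]{D-V-V-OptEmbUltraDistDiffAlg} for precisely this). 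Third, ``re-examining the estimates of Proposition \ref{p:STFTDE} with $M_{r}$ in place of $M$'' silently requires $M_{r}$ to satisfy $(M.2)'$, which is not automatic for $r\in\mathfrak{R}$ (the paper appeals to \cite[Lemma 2.3]{P-LaplTransUltraDist} to arrange this), and requires absorbing the modulation weight $\nu_{E}$ into the $M_{r}$-scale rather than the $M$-scale, which \eqref{eq:GrowthCondModulation} does not directly provide. None of these obstacles looks fatal --- the route can plausibly be completed --- but as written Step 3 defers the entire difficulty to results that are either miscited or not actually instances of your Step 2.
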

	
	\begin{proof}
		Write $\widetilde{\D}^{\{M\}}_{E} = \varprojlim_{r \in \mathfrak{R}} D^{M_{r}, 1}_{E}$, then it follows from \cite[Lemma 3.4]{K-Ultradistributions3} that $\D^{\{M\}}_{E}$ and $\widetilde{\D}^{\{M\}}_{E}$ coincide as sets. It is clear that $\D^{\{M\}}_{E}$ is continuously contained in $\widetilde{\D}^{\{M\}}_{E}$, so we now show the converse is also true. Take any $p \in \csn(\D^{\{M\}}_{E})$ and let $B \subset \D^{\prime \{M\}}_{E^{\prime}}$ be the polar of the closed unit ball of $p$. By the bipolar theorem we have that
			\[ p(\varphi) = \sup_{f \in B} |\ev{f}{\varphi}| , \qquad \varphi \in \D^{\{M\}}_{E} . \]
		The set $B$ is strongly bounded in $\D^{\prime \{M\}}_{E^{\prime}}$, hence also in $\OC^{\prime}(\mathcal{S}^{\{M\}}_{\{A\}}, E^{\prime})$ by Proposition \ref{p:dualDEinOC}. Take $\psi \in \mathcal{S}^{(M)}_{(A)}$ as in Lemma \ref{l:AdequateWindow}, then by Proposition \ref{p:STFTConvSp} we have
			\[ \sup_{f \in B} \sup_{\xi \in \R^{d}} \norm{V_{\psi} f(\xi)}_{E^{\prime}} e^{- \omega_{M}(q \xi)} < \infty , \qquad \forall q > 0 . \]
		Using \cite[Lemma 4.5]{D-V-V-OptEmbUltraDistDiffAlg} there is some $r \in \mathfrak{R}$ and $C > 0$ such that
			\[ \norm{V_{\psi} f(\xi)}_{E^{\prime}} \leq C e^{\omega_{M_{r}}(\xi)} , \qquad \forall f \in B, \xi \in \R^{d} . \]
		In view of \cite[Lemma 2.3]{P-LaplTransUltraDist} we may assume that $M_{r}$ satisfies $(M.2)'$. Choose $s = (r_{j} / H^{d + 1}) \in \mathfrak{R}$, then for any $\varphi \in \widetilde{\D}^{\{M\}}_{E}$, by the same calculation as in the proof of Proposition \ref{p:STFTDE}, we have that $\norm{V_{\overline{\psi}} \varphi(-\xi)}_{E} \leq C' \norm{\varphi}_{\D^{M_{s}, 1}_{E}} e^{- \omega_{M_{s}}(\xi)}$ for any $\xi \in \R^{d}$ and some $C' > 0$. Then, for arbitrary $f \in B$ and $\varphi \in  \widetilde{\D}^{\{M\}}_{E}$, by \eqref{eq:M2'},
			\[ \int_{\R^{d}} \ev{V_{\psi} f(\xi)}{V_{\overline{\psi}} \varphi(-\xi)} d\xi \leq C C' \norm{\varphi}_{\D^{M_{s}, 1}_{E}} \int_{\R^{d}} \frac{e^{\omega_{M_{r}}(\xi)}}{e^{\omega_{M_{r}}(H^{d+1} \xi)}} d\xi < \infty . \]
		As $\mathcal{S}^{\{M\}}_{\{A\}}$ is dense in $\widetilde{\D}^{\{M\}}_{E}$ (it is dense in $\D^{\{M\}}_{E}$ by Corollary \ref{c:GSDenseDE}) we have by \eqref{eq:STFTDesingularizationTempUltra} that $\ev{f}{\varphi} = \int_{\R^{d}} \ev{V_{\psi} f(\xi)}{V_{\overline{\psi}} \varphi(-\xi)} d\xi$ for any $f \in B$ and $\varphi \in \widetilde{\D}^{\{M\}}_{E}$. Consequently, if we put $C_{p} = C C' \int_{\R^{d}} \frac{e^{\omega_{M_{r}}(\xi)}}{e^{\omega_{M_{r}}(H^{d+1} \xi)}} d\xi$, it follows that
			\[ p(\varphi) \leq C_{p} \norm{\varphi}_{\D^{M_{s}, 1}_{E}} , \qquad \forall \varphi \in \D^{\{M\}}_{E} , \]
		so that in particular $p \in \csn(\widetilde{\D}^{\{M\}}_{E})$.
	\end{proof}

We now obtain the ensuing structural theorem for $\OC^{\prime}(\mathcal{S}^{[M]}_{[A]}, E^{\prime})$. 
	
	\begin{theorem}
		\label{t:StructThmDualDE}
		For any $f \in \mathcal{S}^{\prime [M]}_{[A]}$, the following statements are equivalent:
			\begin{itemize}
				\item[$(i)$] $f * \varphi \in E^{\prime}$ for any $\varphi \in \mathcal{S}^{[M]}_{[A]}$;
				\item[$(ii)$] $f \in \D^{\prime [M]}_{E^{\prime}}$;
				\item[$(iii)$] there exists $(f_{\alpha})_{\alpha \in \N^{d}} \in \Lambda_{[M]}(E^{\prime})$ such that $f = \sum_{\alpha \in \N^{d}} f^{(\alpha)}_{\alpha}$.
			\end{itemize}
	\end{theorem}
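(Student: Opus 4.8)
The plan is to establish the cycle $(i)\Rightarrow(ii)\Rightarrow(iii)\Rightarrow(i)$. The equivalence $(i)\Leftrightarrow(ii)$ requires nothing new: statement $(i)$ says precisely that $f\in\OC^{\prime}(\mathcal{S}^{[M]}_{[A]},E^{\prime})$, and by Proposition~\ref{p:dualDEinOC} this space coincides with $\D^{\prime[M]}_{E^{\prime}}$. The implication $(iii)\Rightarrow(i)$ is also quick: the estimate in the proof of Lemma~\ref{l:SWelldefined} goes through verbatim with $E$ replaced by the DTMIB $E^{\prime}$, using \eqref{eq:convolutionineqE} for $E^{\prime}$ (available by Proposition~\ref{p:ExtensionOfConvolutionToWTMIB}) together with \eqref{eq:GSasL1Sp}; it shows that for $(f_\alpha)_{\alpha}\in\Lambda_{[M]}(E^{\prime})$ the series $\sum_{\alpha}f^{(\alpha)}_{\alpha}$ converges in $\mathcal{S}^{\prime[M]}_{[A]}$ and that $(\sum_\alpha f^{(\alpha)}_\alpha)*\varphi=\sum_\alpha f_\alpha*\varphi^{(\alpha)}\in E^{\prime}$ for every $\varphi\in\mathcal{S}^{[M]}_{[A]}$, i.e.\ the map $S$ of \eqref{eq:Smap} remains well defined in this setting.

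The heart of the proof is $(ii)\Rightarrow(iii)$, where I would use the Hahn--Banach theorem together with the fact that $\D^{\prime[M]}_{E^{\prime}}$ is by definition the strong dual of $\D^{[M]}_E$. For a Banach space $G$ introduce the $c_{0}$-type weighted sequence spaces $\Lambda^{\circ}_{M,\ell}(G)=\{(e_\alpha)_{\alpha\in\N^{d}}\in G^{\N^{d}}:\ \ell^{-|\alpha|}M_\alpha^{-1}\norm{e_\alpha}_{G}\to 0\}$ with the sup-norm, and put $\Lambda^{\circ}_{(M)}(G)=\varprojlim_{\ell\to0^{+}}\Lambda^{\circ}_{M,\ell}(G)$; in the Roumieu case use instead the weight sequences $M_r$, $r\in\mathfrak{R}$, from Section~\ref{sec:DTMIB} and set $\Lambda^{\circ}_{\{M\}}(G)=\varprojlim_{r\in\mathfrak{R}}\Lambda^{\circ}_{M_r,1}(G)$. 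The key claim is that the derivative map $j\colon\D^{[M]}_E\to\Lambda^{\circ}_{[M]}(E)$, $\varphi\mapsto(\varphi^{(\alpha)})_{\alpha}$, is a well-defined topological embedding: well-definedness holds because for $\varphi\in\D^{(M)}_E$ one has $\ell^{-|\alpha|}M_\alpha^{-1}\norm{\varphi^{(\alpha)}}_{E}\le 2^{-|\alpha|}\norm{\varphi}_{\D^{M,\ell/2}_{E}}\to0$, and in the Roumieu case — after passing to the projective description of Lemma~\ref{l:DEProjDescr} — by comparing $M_r$ with $M_{r'}$ for a suitable $r'\in\mathfrak{R}$ with $r'_j\to\infty$ yet $\prod_{j\le p}(r'_j/r_j)\to0$ (such $r'$, e.g.\ $r'_j=r_j^{1/2}$, exists since $r_j\to\infty$); and the embedding is topological because the canonical seminorms of $\Lambda^{\circ}_{[M]}(E)$ restrict on the image of $j$ to the norms $\norm{\cdot}_{\D^{M,\ell}_E}$, resp.\ $\norm{\cdot}_{\D^{M_r,1}_E}$, that define the topology of $\D^{[M]}_E$.

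Granting this, take $f\in\D^{\prime[M]}_{E^{\prime}}=(\D^{[M]}_E)^{\prime}_{b}$. Hahn--Banach gives a continuous extension $\tilde f$ on $\Lambda^{\circ}_{[M]}(E)$. As $\Lambda^{\circ}_{[M]}(E)$ is a reduced projective limit of the Banach spaces $\Lambda^{\circ}_{M,\ell}(E)$ (resp.\ $\Lambda^{\circ}_{M_r,1}(E)$) over a directed weight index, $\tilde f$ factors through a single such space, whose dual is the $\ell^{1}$-type space $\{(g_\alpha):\sum_\alpha\ell^{|\alpha|}M_\alpha\norm{g_\alpha}_{E^{\prime}}<\infty\}$ (resp.\ with $(M_r)_\alpha$) paired by $\langle(g_\alpha),(e_\alpha)\rangle=\sum_\alpha\langle g_\alpha,e_\alpha\rangle$; passing to the bornological inductive limit and using the standard interlacing of $\ell^{1}$- and $\ell^{\infty}$-type weighted spaces (Beurling), resp.\ a diagonalisation identifying $\bigcup_{r\in\mathfrak{R}}\ell^{1}((M_r)_\bullet;E^{\prime})$ with $\Lambda_{\{M\}}(E^{\prime})$ (Roumieu), one identifies $\tilde f$ with a sequence $(g_\alpha)\in\Lambda_{[M]}(E^{\prime})$. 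Since $\mathcal{S}^{[M]}_{[A]}$ is dense in $\D^{[M]}_E$ by Corollary~\ref{c:GSDenseDE}, testing against $\varphi\in\mathcal{S}^{[M]}_{[A]}$ yields $\langle f,\varphi\rangle=\sum_\alpha\langle g_\alpha,\varphi^{(\alpha)}\rangle=\sum_\alpha(-1)^{|\alpha|}\langle g^{(\alpha)}_\alpha,\varphi\rangle$; hence $f_\alpha:=(-1)^{|\alpha|}g_\alpha\in E^{\prime}$ satisfies $(f_\alpha)_\alpha\in\Lambda_{[M]}(E^{\prime})$ and $f=\sum_\alpha f^{(\alpha)}_\alpha$ in $\mathcal{S}^{\prime[M]}_{[A]}$, which is $(iii)$.

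I expect the main obstacle to be the Roumieu case of the duality step: one must invoke the projective description of Lemma~\ref{l:DEProjDescr}, check that the uncountable but directed projective limit $\Lambda^{\circ}_{\{M\}}(E)$ is reduced so that continuous functionals factor through one Banach step, and carry out the diagonalisation $\bigcup_{r\in\mathfrak{R}}\ell^{1}((M_r)_\bullet;E^{\prime})=\Lambda_{\{M\}}(E^{\prime})$ — arguments of the same flavour as, but slightly more delicate than, those already used for Lemma~\ref{l:DEProjDescr}. The Beurling case is routine, resting only on the classical duality $(c_{0}(w;G))^{\prime}=\ell^{1}(w^{-1};G^{\prime})$ for Banach-space-valued $c_{0}$-spaces and the behaviour of countable reduced projective limits under duality.
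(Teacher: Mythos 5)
Your proposal is correct and follows essentially the same route as the paper: $(i)\Leftrightarrow(ii)$ via Proposition \ref{p:dualDEinOC}, $(iii)\Rightarrow(i)$ via the estimate of Lemma \ref{l:SWelldefined} applied to the DTMIB $E^{\prime}$, and $(ii)\Rightarrow(iii)$ by Hahn--Banach through an isometric embedding of $\D^{[M]}_{E}$ into a weighted sequence space, invoking the projective description of Lemma \ref{l:DEProjDescr} in the Roumieu case. The only difference is technical rather than conceptual: you embed into $c_{0}$-type spaces so as to use the clean duality $(c_{0})^{\prime}=\ell^{1}$, whereas the paper embeds into the $\ell^{\infty}$-type spaces $\Lambda_{1/N,1/\ell}(E)$ and reads off the coefficients directly; your variant makes the final pairing step more transparent, and note that only the easy inclusion of the weighted $\ell^{1}$-type space into $\Lambda_{[M]}(E^{\prime})$ is needed, not the full identification you flag as a possible obstacle.
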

	
	\begin{proof}
		$(i) \Leftrightarrow (ii)$. Follows by Proposition \ref{p:dualDEinOC}.
	
		$(ii) \Rightarrow (iii)$. Take any $f \in \D^{\prime [M]}_{E^{\prime}}$, then by the Hahn-Banach theorem we may extend $f$ to an element of $(\D^{N, \ell}_{E})^{\prime}$ for $N = M$ and some $\ell > 0$ ($N = M_{r}$ for some $r \in \mathfrak{R}$ and $\ell = 1$ in view of Lemma \ref{l:DEProjDescr}). The mapping $j : \D^{N, \ell}_{E} \rightarrow \Lambda_{1/N, 1/\ell}(E)$ given by $j(\varphi) = ((-1)^{|\alpha|} \varphi^{(\alpha)})_{\alpha \in \N^{d}}$ is an isometry, so that $\ev{f}{j(\varphi)} = \ev{f}{\varphi}$ defines a continuous linear map on $j(\D^{N, \ell}_{E})$. Another application of the Hahn-Banach theorem allows us to extend $f$ to an element of $\Lambda_{[M]}(E^{\prime})$ (where in the Roumieu case we used \cite[Lemma 3.4]{K-Ultradistributions3}), after which the representation as in $(iii)$ follows by transposition. 
	
		$(iii) \Rightarrow (ii)$. Follows directly from the structure of $f$.
	\end{proof}
	
The space $\mathcal{S}^{[M]}_{[A]}$ is in general not dense in $\OC^{\prime}(\mathcal{S}^{[M]}_{[A]}, E^{\prime})$, so one may consider its closure separately. Interestingly enough, this turns out to be a convolutor space as well. We introduce the Banach space $\dot{E}^{\prime}$ as the closure of $\mathcal{S}^{[M]}_{[A]}$ in $E^{\prime}$. Then clearly $\dot{E}^{\prime}$ is TMIB of class $[M] - [A]$ satisfying \eqref{eq:GrowthCondModulation}. Of course, if $E^{\prime}$ is a TMIB, then $E^{\prime} = \dot{E}^{\prime}$. We now find the following characterization.

	\begin{theorem}
		\label{t:closureGSindualDE}
		Let $f \in \mathcal{S}^{\prime [M]}_{[A]}$. Then, the following statements are equivalent:
			\begin{itemize}
				\item[$(i)$] $f * \varphi \in \dot{E}^{\prime}$ for any $\varphi \in \mathcal{S}^{[M]}_{[A]}$;
				\item[$(ii)$] $f$ is contained in the closure of $\mathcal{S}^{[M]}_{[A]}$ in $\OC^{\prime}(\mathcal{S}^{[M]}_{[A]}, E^{\prime})$;
				\item[$(iii)$] there exists $(f_{\alpha})_{\alpha \in \N^{d}} \in \Lambda_{[M]}(\dot{E}^{\prime})$ such that $f = \sum_{\alpha \in \N^{d}} f^{(\alpha)}_{\alpha}$.
			\end{itemize}
	\end{theorem}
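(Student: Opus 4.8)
The plan is to reduce everything to Theorem~\ref{t:StructMapping} applied to the TMIB $\dot{E}^{\prime}$, together with the identification of the closure of $\mathcal{S}^{[M]}_{[A]}$ in $\OC^{\prime}(\mathcal{S}^{[M]}_{[A]}, E^{\prime})$ with $\OC^{\prime}(\mathcal{S}^{[M]}_{[A]}, \dot{E}^{\prime})$. Since $\dot{E}^{\prime}$ is a TMIB of class $[M] - [A]$ satisfying \eqref{eq:GrowthCondModulation}, the equivalence $(i) \Leftrightarrow (iii)$ is exactly Theorem~\ref{t:StructThm} (that is, Theorem~\ref{t:StructMapping}) applied to $\dot{E}^{\prime}$; here one uses that for $f \in \mathcal{S}^{\prime [M]}_{[A]}$ and $\varphi \in \mathcal{S}^{[M]}_{[A]}$ the convolution $f * \varphi$ does not depend on whether we regard $f$ as an element of $\dot{E}^{\prime}$, of $E^{\prime}$, or of $\mathcal{S}^{\prime [M]}_{[A]}$, because in all cases it is the Pettis integral $\int_{\R^{d}} \varphi(x) T_{x} f \, dx$ and Pettis integrals are preserved by the continuous inclusions $\dot{E}^{\prime} \hookrightarrow E^{\prime} \hookrightarrow \mathcal{S}^{\prime [M]}_{[A]}$. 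Thus the statement $f \in \OC^{\prime}(\mathcal{S}^{[M]}_{[A]}, \dot{E}^{\prime})$ is literally condition $(i)$, and the remaining work is to show that this is equivalent to $(ii)$.

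For $(ii) \Rightarrow (i)$ I would argue as follows: fix $\varphi \in \mathcal{S}^{[M]}_{[A]}$, and observe that $f \mapsto f * \varphi$ is a continuous linear map $\OC^{\prime}(\mathcal{S}^{[M]}_{[A]}, E^{\prime}) \to E^{\prime}$, since the topology of $\OC^{\prime}(\mathcal{S}^{[M]}_{[A]}, E^{\prime})$, induced from $L_{b}(\mathcal{S}^{[M]}_{[A]}, E^{\prime})$, is finer than that of pointwise convergence of the operators $*_{f}$. If $f$ lies in the closure of $\mathcal{S}^{[M]}_{[A]}$ in $\OC^{\prime}(\mathcal{S}^{[M]}_{[A]}, E^{\prime})$, then $f * \varphi$ lies in the closure in $E^{\prime}$ of the set $\{\psi * \varphi : \psi \in \mathcal{S}^{[M]}_{[A]}\}$; by Lemma~\ref{l:ConvolutionGS} each such $\psi * \varphi$ already belongs to $\mathcal{S}^{[M]}_{[A]}$, so $f * \varphi$ lies in the closure of $\mathcal{S}^{[M]}_{[A]}$ in $E^{\prime}$, which by definition is $\dot{E}^{\prime}$. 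Hence $f \in \OC^{\prime}(\mathcal{S}^{[M]}_{[A]}, \dot{E}^{\prime})$.

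For $(i) \Rightarrow (ii)$, since $\dot{E}^{\prime}$ is a TMIB, $\mathcal{S}^{[M]}_{[A]}$ is dense in $\OC^{\prime}(\mathcal{S}^{[M]}_{[A]}, \dot{E}^{\prime})$ by Corollary~\ref{c:denseinclusions}, so it suffices to prove that the natural inclusion $\OC^{\prime}(\mathcal{S}^{[M]}_{[A]}, \dot{E}^{\prime}) \hookrightarrow \OC^{\prime}(\mathcal{S}^{[M]}_{[A]}, E^{\prime})$ is continuous: a net in $\mathcal{S}^{[M]}_{[A]}$ converging to $f$ in the smaller space then converges to $f$ in the larger one, placing $f$ in the asserted closure. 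Because $\dot{E}^{\prime}$ carries the norm induced by $E^{\prime}$, a set $B$ bounded in $\OC^{\prime}(\mathcal{S}^{[M]}_{[A]}, \dot{E}^{\prime})$ — i.e.\ with $\sup_{f \in B,\, \varphi \in B_{0}} \norm{f * \varphi}_{\dot{E}^{\prime}} < \infty$ for every bounded $B_{0} \subset \mathcal{S}^{[M]}_{[A]}$ — is also bounded in $\OC^{\prime}(\mathcal{S}^{[M]}_{[A]}, E^{\prime})$, so the inclusion sends bounded sets to bounded sets; as $\OC^{\prime}(\mathcal{S}^{[M]}_{[A]}, \dot{E}^{\prime})$ is bornological, being a complete $(LB)$-space or a Fréchet space by Corollary~\ref{c:TopConvSp}, the inclusion is continuous. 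All the steps are soft; the only point demanding a little care is precisely this continuity of the inclusion together with the remark that the convolution $f*\varphi$ is unambiguous across $\dot{E}^{\prime}$, $E^{\prime}$ and $\mathcal{S}^{\prime [M]}_{[A]}$, both of which follow from the uniqueness of the Pettis-integral convolution.
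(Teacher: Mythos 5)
Your proof is correct, and in substance it runs on the same machinery as the paper's; the difference is one of packaging. For $(i)\Leftrightarrow(iii)$ both arguments simply apply Theorem \ref{t:StructThm} to the TMIB $\dot{E}^{\prime}$. For $(ii)\Rightarrow(i)$ the paper writes out the estimate $\norm{f*\varphi-\psi_{\tau}*\varphi}_{E^{\prime}}=\sup_{\norm{e}_{E}\leq 1}|\ev{f-\psi_{\tau}}{e*\check{\varphi}}|$ using Lemma \ref{l:ConvolutionEtoDE}, whereas you observe directly that point evaluation $T\mapsto T(\varphi)$ is continuous for the strong operator topology; these are the same fact, and your formulation is slightly slicker. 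The genuine divergence is in $(i)\Rightarrow(ii)$: the paper reruns the STFT approximation by hand (taking $V_{\psi}f\in C_{[M],d}(\dot{E}^{\prime})$, approximating by a net in $C_{[M]}(\mathcal{S}^{[M]}_{[A]})$ via Lemmas \ref{l:DensityCMSp1} and \ref{l:DensityCMSp2}, and noting the convergence persists in $C_{[M],d}(E^{\prime})$), while you black-box exactly that argument as Corollary \ref{c:denseinclusions} applied to the TMIB $\dot{E}^{\prime}$ and then invoke the continuity of the inclusion $\OC^{\prime}(\mathcal{S}^{[M]}_{[A]},\dot{E}^{\prime})\hookrightarrow\OC^{\prime}(\mathcal{S}^{[M]}_{[A]},E^{\prime})$. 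Your route is more modular and arguably cleaner; note only that the continuity of this inclusion is immediate without the bornologicity detour, since $\dot{E}^{\prime}$ carries the norm induced by $E^{\prime}$ and hence the strong seminorms on $L_{b}(\mathcal{S}^{[M]}_{[A]},\dot{E}^{\prime})$ are restrictions of those on $L_{b}(\mathcal{S}^{[M]}_{[A]},E^{\prime})$. What the paper's hands-on version buys is that it exhibits the approximating net explicitly as $(V^{*}_{\psi}\Phi_{\tau})_{\tau}$, which makes the mechanism visible but adds nothing logically.
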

	
	\begin{proof}
		$(i) \Rightarrow (ii)$. Take $\psi \in \mathcal{S}^{(M)}_{(A)}$ as in Lemma \ref{l:AdequateWindow}. By Proposition \ref{p:STFTConvSp} we have that $V_{\psi} f \in C_{[M], d}(\dot{E}^{\prime})$. By Lemma \ref{l:DensityCMSp1} and Lemma \ref{l:DensityCMSp2}, we see that $C_{[M]}(\mathcal{S}^{[M]}_{[A]})$ is dense in $C_{[M], d}(\dot{E}^{\prime})$, hence there is a net $(\Phi_{\tau})_{\tau}$ in $C_{[M]}(\mathcal{S}^{[M]}_{[A]})$ such that $\lim_{\tau} \Phi_{\tau} = V_{\psi} f$. Note that $C_{[M], d}(\dot{E}^{\prime})$ is a topological subspace of $C_{[M], d}(E^{\prime})$, so the same limit holds there. Then, in view of Proposition \ref{p:STFTGS}, Proposition \ref{p:STFTConvSp} and \eqref{eq:STFTReconstructTempUltra}, it follows that $\lim_{\tau} V^{*}_{\psi} \Phi_{\tau} = f$ in $\OC^{\prime}(\mathcal{S}^{[M]}_{[A]}, E^{\prime})$ where $(V^{*}_{\psi} \Phi_{\tau})_{\tau}$ is a net in $\mathcal{S}^{[M]}_{[A]}$.
		
		$(ii) \Rightarrow (i)$. Suppose that $f \in \OC^{\prime}(\mathcal{S}^{[M]}_{[A]}, E^{\prime})$ is the limit of some net $(\psi_{\tau})_{\tau}$ in $\mathcal{S}^{[M]}_{[A]}$. In particular $f \in \D^{\prime [M]}_{E^{\prime}}$ by Proposition \ref{p:dualDEinOC}.  For any $\varphi \in \mathcal{S}^{[M]}_{[A]}$, in view of Lemma \ref{l:ConvolutionEtoDE}, $\{ e * \check{\varphi} : e \in E, \norm{e}_{E} \leq 1 \}$ is a bounded subset of $\D^{[M]}_{E}$. Hence, it follows that
			\[ \lim_{\tau} \norm{f * \varphi - \psi_{\tau} * \varphi}_{E^{\prime}} = \lim_{\tau} \sup_{\norm{e}_{E} \leq 1} |\ev{f - \psi_{\tau}}{e * \check{\varphi}}| = 0 . \]
		As $(\psi_{\tau} * \varphi)_{\tau}$ is a net in $\mathcal{S}^{[M]}_{[A]}$ by Lemma \ref{l:ConvolutionGS}, we see that $f * \varphi \in \dot{E}^{\prime}$.
	
		$(i) \Leftrightarrow (iii)$. This follows directly from Theorem \ref{t:StructThm}.
	\end{proof}

We end this section by considering some concrete examples. Take a measurable function $\omega : \R^{d} \rightarrow (0, \infty)$ such that $\omega$ and $\omega^{-1}$ are locally bounded. For a given weight sequence $N$, we say that $\omega$ is \emph{$(N)$-admissible} (\emph{$\{N\}$-admissible}) if
	\[ \exists q > 0 ~ (\forall q > 0) ~ \exists C > 0 ~ \forall x, t \in \R^{d} : \omega(x + t) \leq C \omega(x) e^{\omega_{N}(q t)} . \]
For example, by \eqref{eq:TMIBnormcond} we have that for some $q > 0$ (for every $q > 0$) and for all $x, t \in \R^{d}$:
	\begin{multline*} 
		\omega_{E}(x + t) = \|T_{x + t}\|_{L(E)} = \|T_{x} T_{t}\|_{L(E)} \\ \leq \|T_{x}\|_{L(E)} \|T_{t}\|_{L(E)} = \omega_{E}(x) \omega_{E}(t) \leq C_{E, q} \omega_{E}(x) e^{\omega_{A}(q t)} .
	\end{multline*}
Hence, $\omega_{E}$ is $[A]$-admissible. Similarly, $\nu_{E}$ is $[M]$-admissible. Also remark that if $\omega$ is $[N]$-admissible, then in particular for some $q > 0$ (for every $q > 0$) there is a $C > 0$ such that for all $x \in \R^{d}$:
	\[ \frac{\omega(0)}{C} e^{-\omega_{N}(qx)} \leq \omega(x) \leq C \omega(0) e^{\omega_{N}(qx)} . \] 
Now, for any $p \in [1, \infty)$ we define $L^{p}_{\omega} = L^{p}_{\omega}(\R^{d})$ as the Banach space consisting of all measurable functions $\varphi$ on $\R^{d}$ such that
	\[ \norm{\varphi}_{L^{p}_{\omega}} := \left( \int_{\R^{d}} |\varphi(x) \omega(x)|^{p} dx \right)^{\frac{1}{p}} < \infty , \]
while $L^{\infty}_{\omega}$ is defined as the Banach space of all measurable functions $\varphi$ on $\R^{d}$ such that
	\[ \norm{\varphi}_{L^{\infty}_{\omega}} := \text{ess\,sup}_{x \in \R^{d}} |\varphi(x)| / \omega(x) < \infty . \]
Using \cite[Theorem 1.2]{D-N-V-NuclGSKerThm}, one can easily show that, for $p \in [1, \infty)$, if $\omega$ is $[A]$-admissible then $L^{p}_{\omega}$ is a TMIB of class $[M] - [A]$ satisfying \eqref{eq:GrowthCondModulation}. Also, for $p \in (1, \infty)$, we have that $(L^{p}_{\omega})^{\prime} = L^{q}_{\omega^{-1}}$ where $q \in (1, \infty)$ is such that $\frac{1}{p} + \frac{1}{q} = 1$, while the dual of $L^{1}_{\omega}$ is $L^{\infty}_{\omega}$. By Theorem \ref{t:StructThm} and Theorem \ref{t:StructThmDualDE}, we get the following structural theorem.	

	\begin{theorem}
		Let $p \in [1, \infty]$ and $\omega$ be an $[A]$-admissible weight function. Then, for any $f \in \mathcal{S}^{\prime [M]}_{[A]}$, 
			\[ f * \varphi \in L^{p}_{\omega} , \qquad \forall \varphi \in \mathcal{S}^{[M]}_{[A]} , \]
		if and only if there exists $(f_{\alpha})_{\alpha \in \N^{d}} \in \Lambda_{[M]}(L^{p}_{\omega})$ such that
			\[ f = \sum_{\alpha \in \N^{d}} f^{(\alpha)}_{\alpha} . \]
	\end{theorem}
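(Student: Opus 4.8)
The plan is to deduce this directly from Theorem \ref{t:StructThm} and Theorem \ref{t:StructThmDualDE}, splitting into the cases $p \in [1,\infty)$ and $p = \infty$, and relying on the observations recorded just before the statement: for $1 \le p < \infty$ the space $L^p_\omega$ is a TMIB of class $[M]-[A]$ satisfying \eqref{eq:GrowthCondModulation}, and $(L^1_\omega)^\prime = L^\infty_\omega$.

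First I would treat $p \in [1,\infty)$. Since $\omega$ is $[A]$-admissible, $L^p_\omega$ is a TMIB of class $[M]-[A]$; moreover modulations act isometrically on $L^p_\omega$, so $\nu_{L^p_\omega} \equiv 1$ is bounded and \eqref{eq:GrowthCondModulation} holds by Remark \ref{r:GrowthCondModulationSuffCond}$(i)$. Applying Theorem \ref{t:StructThm} with $E = L^p_\omega$ then gives exactly the asserted equivalence: condition \eqref{eq:StructThmConv} reads $f * \varphi \in L^p_\omega$ for all $\varphi \in \mathcal{S}^{[M]}_{[A]}$, while the existence of $f_\alpha \in L^p_\omega$ with $f = \sum_{\alpha} f^{(\alpha)}_\alpha$ satisfying the bound \eqref{eq:StructThmBound} (for some $\ell$, resp.\ for every $\ell$) is precisely the statement that $(f_\alpha)_{\alpha \in \N^d} \in \Lambda_{[M]}(L^p_\omega)$ with $f = \sum_{\alpha} f^{(\alpha)}_\alpha$.

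For $p = \infty$, the space $L^\infty_\omega$ is in general only a DTMIB, since $\mathcal{S}^{[M]}_{[A]}$ need not be dense in it, so Theorem \ref{t:StructThm} is unavailable. Instead I would use that $L^\infty_\omega = (L^1_\omega)^\prime$ and that $L^1_\omega$ is a TMIB of class $[M]-[A]$ satisfying \eqref{eq:GrowthCondModulation} by the previous case. Applying Theorem \ref{t:StructThmDualDE} with $E = L^1_\omega$ and $E^\prime = L^\infty_\omega$, the equivalence of its conditions $(i)$ and $(iii)$ is exactly the claim of the present theorem for $p = \infty$.

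There is no substantive obstacle here: all the content lies in Theorems \ref{t:StructThm} and \ref{t:StructThmDualDE}, and the only auxiliary facts needed --- that $L^p_\omega$ is a TMIB of class $[M]-[A]$ satisfying \eqref{eq:GrowthCondModulation} for $1 \le p < \infty$, and that $(L^1_\omega)^\prime = L^\infty_\omega$ --- have already been recorded above; they follow from the $L^1$-description \eqref{eq:GSasL1Sp} of the Gelfand--Shilov spaces via \cite[Theorem 1.2]{D-N-V-NuclGSKerThm} together with a routine duality computation. The only mildly delicate point is keeping track of where the weight $\omega$ is placed in the definition of $L^\infty_\omega$, so that the duality $(L^1_\omega)^\prime = L^\infty_\omega$ holds with respect to the pairing $\ev{g}{\varphi} = \int_{\R^d} g\varphi$.
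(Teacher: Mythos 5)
Your proposal is correct and follows essentially the same route as the paper: for $p \in [1,\infty)$ the paper likewise invokes Theorem \ref{t:StructThm} after noting that $L^{p}_{\omega}$ is a TMIB of class $[M]-[A]$ satisfying \eqref{eq:GrowthCondModulation}, and for $p=\infty$ it applies Theorem \ref{t:StructThmDualDE} via the duality $(L^{1}_{\omega})^{\prime}=L^{\infty}_{\omega}$. Your added justification that $\nu_{L^{p}_{\omega}}\equiv 1$ (so Remark \ref{r:GrowthCondModulationSuffCond}$(i)$ applies) and your remark on the placement of the weight in $L^{\infty}_{\omega}$ are exactly the details the paper leaves implicit.
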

	
We now further consider the space $\D^{\prime [M]}_{L^{\infty}_{\omega}}$, more commonly denoted by $\mathcal{B}^{\prime [M]}_{\omega}$. The DTMIB $L^{\infty}_{\omega}$ is not a TMIB, and $\mathcal{S}^{[M]}_{[A]}$ is not dense in $\mathcal{B}^{\prime [M]}_{\omega}$. Hence one may consider its closure, denoted by $\dot{\mathcal{B}}^{\prime [M]}_{\omega}$. Now, one sees that $\dot{L}^{\infty}_{\omega} = C_{0, \omega}$, where the latter is the space of all continuous functions $\varphi$ on $\R^{d}$ such that $\lim_{|x| \rightarrow \infty} \varphi(x) / \omega(x) = 0$. Consequently, $\dot{\mathcal{B}}^{\prime [M]}_{\omega}$ is exactly the space $\OC^{\prime}(\mathcal{S}^{[M]}_{[A]}, C_{0, \omega})$, and for this reason it is also known as the space of ultradistributions vanishing at infinity w.r.t. $\omega$. By Theorem \ref{t:closureGSindualDE} we find the ensuing structural result, first obtained in \cite{D-N-V-SpUltraDistVanishInf}.

	\begin{theorem}[{\cite[Theorem 1]{D-N-V-SpUltraDistVanishInf}}]
		Let $\omega$ be an $[A]$-admissible weight function. Then, for any $f \in \mathcal{S}^{\prime [M]}_{[A]}$, the following statements are equivalent:
			\begin{itemize}
				\item[$(i)$] $f * \varphi \in C_{0, \omega}$ for any $\varphi \in \mathcal{S}^{[M]}_{[A]}$;
				\item[$(ii)$] $f \in \dot{\mathcal{B}}^{\prime [M]}_{\omega}$;
				\item[$(iii)$] there exists $(f_{\alpha})_{\alpha \in \N^{d}} \in \Lambda_{[M]}(C_{0, \omega})$ such that $f = \sum_{\alpha \in \N^{d}} f^{(\alpha)}_{\alpha}$.
			\end{itemize}
	\end{theorem}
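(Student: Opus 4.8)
The plan is to deduce this final theorem as a direct corollary of the machinery already built for a general DTMIB, by verifying that $L^\infty_\omega$ is a DTMIB of class $[M]-[A]$ satisfying \eqref{eq:GrowthCondModulation} and that its closure $\dot L^\infty_\omega$ of $\mathcal S^{[M]}_{[A]}$ is precisely $C_{0,\omega}$. First I would recall from the preceding discussion that $\omega_{E}$ and $\nu_{E}$ are $[A]$- and $[M]$-admissible, and conversely that for an $[A]$-admissible weight $\omega$ the spaces $L^p_\omega$ ($1\le p<\infty$) are TMIB of class $[M]-[A]$ satisfying \eqref{eq:GrowthCondModulation}, with $(L^1_\omega)'=L^\infty_\omega$; hence $L^\infty_\omega=(L^1_\omega)'$ is a DTMIB of the required class and, by Remark \ref{r:dualiswTMIB} together with the fact that $L^1_\omega$ satisfies \eqref{eq:GrowthCondModulation}, it satisfies \eqref{eq:GrowthCondModulation} as well. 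The one genuinely analytic point is the identification $\dot L^\infty_\omega=C_{0,\omega}$: the inclusion $\subseteq$ follows because $\mathcal S^{[M]}_{[A]}\subseteq C_{0,\omega}$ (every $\varphi\in\mathcal S^{[M]}_{[A]}$ decays faster than any $e^{-\omega_A(qx)}$, while $\omega(x)^{-1}$ grows at most like $e^{\omega_A(qx)}$, so $\varphi(x)/\omega(x)\to 0$) and $C_{0,\omega}$ is closed in $L^\infty_\omega$; the inclusion $\supseteq$ follows by a standard mollification/truncation argument, approximating $g\in C_{0,\omega}$ uniformly with weight $\omega$ by functions $\chi_R\cdot(g*\varphi_n)$ with $\chi_R$ a smooth cutoff and $\varphi_n$ a $\mathcal S^{[M]}_{[A]}$-approximate identity, using the non-triviality of $\mathcal S^{(M)}_{(A)}$ and admissibility of $\omega$ to control the weighted sup-norm.

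Once these identifications are in place, the theorem is immediate: by definition $\dot{\mathcal B}^{\prime[M]}_\omega$ is the closure of $\mathcal S^{[M]}_{[A]}$ in $\mathcal B^{\prime[M]}_\omega=\D^{\prime[M]}_{L^\infty_\omega}$, which by Proposition \ref{p:dualDEinOC} equals $\OC^{\prime}(\mathcal S^{[M]}_{[A]},L^\infty_\omega)$. Applying Theorem \ref{t:closureGSindualDE} with $E^{\prime}=L^\infty_\omega$ (so that $\dot E^{\prime}=\dot L^\infty_\omega=C_{0,\omega}$), statements $(i)$, $(ii)$, $(iii)$ of that theorem translate verbatim into: $f*\varphi\in C_{0,\omega}$ for all $\varphi\in\mathcal S^{[M]}_{[A]}$; $f$ lies in the closure of $\mathcal S^{[M]}_{[A]}$ in $\OC^{\prime}(\mathcal S^{[M]}_{[A]},L^\infty_\omega)=\mathcal B^{\prime[M]}_\omega$, i.e.\ $f\in\dot{\mathcal B}^{\prime[M]}_\omega$; and $f=\sum_{\alpha\in\N^d}f^{(\alpha)}_\alpha$ with $(f_\alpha)_{\alpha\in\N^d}\in\Lambda_{[M]}(C_{0,\omega})$. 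This is exactly the asserted equivalence.

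The main obstacle is the density statement $\overline{\mathcal S^{[M]}_{[A]}}^{L^\infty_\omega}=C_{0,\omega}$, since everything else is bookkeeping that has already been set up. The subtlety is that $L^\infty_\omega$ is not separable and convolution with an approximate identity need not converge in the $L^\infty$-norm for a general bounded function, so one must genuinely use the vanishing-at-infinity condition: for $g\in C_{0,\omega}$ one first truncates to make $g/\omega$ compactly supported up to small weighted error (using $g(x)/\omega(x)\to 0$ and the $[A]$-admissibility bound $\omega(x+t)\le C\omega(x)e^{\omega_A(qt)}$ to keep the tails controlled), then mollifies the truncated piece, which now lies in a space where uniform approximation by $\mathcal S^{[M]}_{[A]}$-functions is routine. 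One should also double-check that the weighted cutoff functions $\chi_R$ can be chosen in $\mathcal S^{[M]}_{[A]}$ or at least that the final approximants land there — this is where non-triviality of the Gelfand-Shilov space and the closedness of $\mathcal S^{[M]}_{[A]}$ under multiplication and convolution (Lemma \ref{l:ConvolutionGS}) are used. I would present this density lemma as a short separate claim before invoking Theorem \ref{t:closureGSindualDE}.
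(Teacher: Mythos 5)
Your proposal is correct and follows exactly the route the paper takes: identify $L^{\infty}_{\omega}=(L^{1}_{\omega})'$ as a DTMIB satisfying \eqref{eq:GrowthCondModulation}, observe $\dot{L}^{\infty}_{\omega}=C_{0,\omega}$, and read off the three equivalences from Theorem \ref{t:closureGSindualDE} together with Proposition \ref{p:dualDEinOC}. The only difference is that you spell out the truncation-and-mollification proof of $\overline{\mathcal{S}^{[M]}_{[A]}}^{L^{\infty}_{\omega}}=C_{0,\omega}$, which the paper dispatches with ``one sees that''; your sketch of that step is sound.
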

	
We may also look at the frequency side. Let $\omega$ be $[M]$-admissible and we assume one of the following two conditions:
	\begin{itemize}
		\item[($\mathcal{F}$.I)] $\exists C, C', k >  0 : C (1 + |\xi|)^{-k} \leq \omega(\xi) \leq C' (1 + |\xi|)^{k}$;
		\item[($\mathcal{F}$.II)] $M$ satisfies $(M.2)$.
	\end{itemize}
For $p \in [1, \infty]$ we then define the Banach space $\mathcal{F}L^{p}_{\omega}$ as all those $f$ such that its Fourier transform $\widehat{f} \in L^{p}_{\omega}$, with the norm $\|f\|_{\mathcal{F}L^{p}_{\omega}} = \|\widehat{f}\|_{L^{p}_{\omega}}$, where $f$ is an element of $\mathcal{S}^{\prime}$ if ($\mathcal{F}$.I) holds, or of $\mathcal{S}^{\prime [M]}_{[A]}$ if ($\mathcal{F}$.II) holds (note that if $(M.2)$ holds the Fourier transform is an isomorphism between $\mathcal{S}^{[M]}_{[A]}$ and $\mathcal{S}^{[A]}_{[M]}$, cfr. \cite{C-C-K-CharGelfandShilovFourier}). Then, for $p \in [1, \infty)$, $\mathcal{F}L^{p}_{\omega}$ is a TMIB of class $[M] - [A]$ satisfying \eqref{eq:GrowthCondModulation}. Moreover, $\mathcal{F}L^{\infty}_{\omega}$ is the dual of $\mathcal{F}L^{1}_{\omega}$ so that it is a DTMIB. Once again by Theorem \ref{t:StructThm} and Theorem \ref{t:StructThmDualDE} we find the following result.

	\begin{theorem}
		Let $p \in [1, \infty]$ and $\omega$ be an $[M]$-admissible weight function such that either ($\mathcal{F}$.I) or ($\mathcal{F}$.II) holds. Then, for any $f \in \mathcal{S}^{\prime [M]}_{[A]}$, 
			\[ f * \varphi \in \mathcal{F}L^{p}_{\omega} , \qquad \forall \varphi \in \mathcal{S}^{[M]}_{[A]} , \]
		if and only if there exists $(f_{\alpha})_{\alpha \in \N^{d}} \in \Lambda_{[M]}(\mathcal{F}L^{p}_{\omega})$ such that
			\[ f = \sum_{\alpha \in \N^{d}} f^{(\alpha)}_{\alpha} . \]
	\end{theorem}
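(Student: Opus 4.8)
The plan is to reduce the statement to the general structural theorems already proved: Theorem~\ref{t:StructThm} (equivalently Theorem~\ref{t:StructMapping}) when $p\in[1,\infty)$, and Theorem~\ref{t:StructThmDualDE} when $p=\infty$. Indeed, the right-hand condition in the statement is verbatim the representation \eqref{eq:StructThmSum}--\eqref{eq:StructThmBound} of the convolutor space $\OC^{\prime}(\mathcal{S}^{[M]}_{[A]},\mathcal{F}L^{p}_{\omega})$, so the only point requiring proof is that $\mathcal{F}L^{p}_{\omega}$ is a TMIB, respectively (for $p=\infty$) that $\mathcal{F}L^{\infty}_{\omega}=(\mathcal{F}L^{1}_{\omega})^{\prime}$ is a DTMIB, of class $[M]-[A]$ satisfying \eqref{eq:GrowthCondModulation}. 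This was announced just before the statement, and essentially the whole proof consists of making that announcement precise.

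First I would note that $\mathcal{F}$ is a topological isomorphism of the relevant ambient spaces --- of $\mathcal{S}^{\prime}$ with itself under ($\mathcal{F}$.I), where $\omega$ is dominated by a polynomial so that $\mathcal{F}L^{p}_{\omega}\subseteq\mathcal{S}^{\prime}\subseteq\mathcal{S}^{\prime[M]}_{[A]}$, and of $\mathcal{S}^{\prime[M]}_{[A]}$ with $\mathcal{S}^{\prime[A]}_{[M]}$ under ($\mathcal{F}$.II) by \cite{C-C-K-CharGelfandShilovFourier} --- so that $\mathcal{F}L^{p}_{\omega}$ is a well-defined Banach space continuously contained in $\mathcal{S}^{\prime[M]}_{[A]}$ and $\mathcal{F}:\mathcal{F}L^{p}_{\omega}\to L^{p}_{\omega}$ is an isometry. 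Using $\mathcal{F}\circ T_{x}=M_{-x}\circ\mathcal{F}$ and $\mathcal{F}\circ M_{\xi}=T_{\xi}\circ\mathcal{F}$ one computes $\omega_{\mathcal{F}L^{p}_{\omega}}(x)=\|M_{-x}\|_{L(L^{p}_{\omega})}=1$ and $\nu_{\mathcal{F}L^{p}_{\omega}}(\xi)=\|T_{-\xi}\|_{L(L^{p}_{\omega})}$; since $\omega$ is $[M]$-admissible the latter is bounded by $Ce^{\omega_{M}(q\xi)}$ for some (for every) $q>0$, while the former trivially obeys \eqref{eq:TMIBnormcond} with respect to $A$ because $\omega_{A}\geq 0$. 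Together with the $L^{1}$-description of $\mathcal{S}^{[M]}_{[A]}$ from \cite[Theorem 1.2]{D-N-V-NuclGSKerThm} --- which yields both the continuous dense inclusion $\mathcal{S}^{[M]}_{[A]}\hookrightarrow\mathcal{F}L^{p}_{\omega}$ for $p<\infty$ and translation/modulation invariance --- this establishes conditions $(a)$, $(b)$, $(c)$ of the definition of a TMIB. Finally \eqref{eq:GrowthCondModulation} follows from Remark~\ref{r:GrowthCondModulationSuffCond}: part $(ii)$ applies under ($\mathcal{F}$.I) since then $\nu_{\mathcal{F}L^{p}_{\omega}}$ grows at most polynomially and $(M.2)'$ is in force, and part $(iii)$ applies under ($\mathcal{F}$.II) since then $(M.2)$ holds.

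With this verified, for $p\in[1,\infty)$ Theorem~\ref{t:StructThm} applied to $E=\mathcal{F}L^{p}_{\omega}$ gives the desired equivalence directly. For $p=\infty$ one checks via the isometry $\mathcal{F}$ and the duality $(L^{1}_{\omega})^{\prime}=L^{\infty}_{\omega}$ (with the bilinear pairing $\int\varphi\psi$) that $\mathcal{F}L^{\infty}_{\omega}=(\mathcal{F}L^{1}_{\omega})^{\prime}$, which is a DTMIB of class $[M]-[A]$ satisfying \eqref{eq:GrowthCondModulation} by the $p=1$ case, and then Theorem~\ref{t:StructThmDualDE} applied with $E=\mathcal{F}L^{1}_{\omega}$ and $E^{\prime}=\mathcal{F}L^{\infty}_{\omega}$ yields the equivalence of its items $(i)$ and $(iii)$, which is exactly the claimed statement. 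I expect the only genuine obstacle to be the verification of the continuous and dense inclusion $\mathcal{S}^{[M]}_{[A]}\hookrightarrow\mathcal{F}L^{p}_{\omega}$ together with \eqref{eq:GrowthCondModulation}; the two alternative hypotheses ($\mathcal{F}$.I) and ($\mathcal{F}$.II) are imposed precisely to make the Fourier transform interact correctly with the class $[M]-[A]$, and once one of them is granted the remainder is a formal application of the already-established machinery.
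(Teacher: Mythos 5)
Your proposal is correct and follows exactly the route the paper takes: the paper treats this theorem as an immediate consequence of Theorem \ref{t:StructThm} (for $p\in[1,\infty)$) and Theorem \ref{t:StructThmDualDE} (for $p=\infty$), once it is observed that $\mathcal{F}L^{p}_{\omega}$ is a TMIB, respectively $\mathcal{F}L^{\infty}_{\omega}=(\mathcal{F}L^{1}_{\omega})^{\prime}$ a DTMIB, of class $[M]-[A]$ satisfying \eqref{eq:GrowthCondModulation}. Your verification of conditions $(a)$--$(c)$ via $\mathcal{F}\circ T_{x}=M_{-x}\circ\mathcal{F}$ and $\mathcal{F}\circ M_{\xi}=T_{\xi}\circ\mathcal{F}$ merely makes explicit what the paper asserts without proof in the paragraph preceding the statement.
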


Analogously, by Theorem \ref{t:closureGSindualDE}, we find the following result for $\mathcal{F}C_{0, \omega}$, the Banach space of all $f \in \mathcal{F}L^{\infty}_{\omega}$ such that $\widehat{f} \in C_{0, \omega}$.

	\begin{theorem}
		Let $\omega$ be an $[M]$-admissible weight function such that either ($\mathcal{F}$.I) or ($\mathcal{F}$.II) holds. Then, for any $f \in \mathcal{S}^{\prime [M]}_{[A]}$, the following statements are equivalent:
			\begin{itemize}
				\item[$(i)$] $f * \varphi \in \mathcal{F}C_{0, \omega}$ for any $\varphi \in \mathcal{S}^{[M]}_{[A]}$;
				\item[$(ii)$] $f$ is contained in the closure of $\mathcal{S}^{[M]}_{[A]}$ in $\OC^{\prime}(\mathcal{S}^{[M]}_{[A]}, \mathcal{F}L^{\infty}_{\omega})$;
				\item[$(iii)$] there exists $(f_{\alpha})_{\alpha \in \N^{d}} \in \Lambda_{[M]}(\mathcal{F}C_{0, \omega})$ such that $f = \sum_{\alpha \in \N^{d}} f^{(\alpha)}_{\alpha}$.
			\end{itemize}
	\end{theorem}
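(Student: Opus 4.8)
The plan is to obtain this statement as a direct specialization of Theorem~\ref{t:closureGSindualDE} to the TMIB $E = \mathcal{F}L^{1}_{\omega}$. First I would invoke the discussion preceding the statement: under either ($\mathcal{F}$.I) or ($\mathcal{F}$.II), the space $\mathcal{F}L^{1}_{\omega}$ is a TMIB of class $[M]-[A]$ satisfying \eqref{eq:GrowthCondModulation}, and $\mathcal{F}L^{\infty}_{\omega} = (\mathcal{F}L^{1}_{\omega})^{\prime}$ is the associated DTMIB, so that $E^{\prime} = \mathcal{F}L^{\infty}_{\omega}$. By the very definition of the norms on $\mathcal{F}L^{p}_{\omega}$, the map $f \mapsto \widehat{f}$ is an isometric isomorphism $\mathcal{F}L^{p}_{\omega} \to L^{p}_{\omega}$ for every $p \in [1,\infty]$, and it restricts to an isometric isomorphism $\mathcal{F}C_{0,\omega} \to C_{0,\omega}$.

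The one substantive step is to identify the closure $\dot{E}^{\prime}$ of $\mathcal{S}^{[M]}_{[A]}$ in $\mathcal{F}L^{\infty}_{\omega}$ with $\mathcal{F}C_{0,\omega}$. Since $\mathcal{F}$ is an isometry and $C_{0,\omega}$ is closed in $L^{\infty}_{\omega}$, this reduces to showing that $\mathcal{F}(\mathcal{S}^{[M]}_{[A]})$ is a dense subspace of $C_{0,\omega}$. In case ($\mathcal{F}$.II) the Fourier transform maps $\mathcal{S}^{[M]}_{[A]}$ \emph{onto} $\mathcal{S}^{[A]}_{[M]}$; moreover $\omega$ being $[M]$-admissible, $L^{\infty}_{\omega}$ is a DTMIB of class $[A]-[M]$, and the argument given in the text for the identity $\dot{L}^{\infty}_{\omega} = C_{0,\omega}$, with the roles of $M$ and $A$ interchanged, yields that $\overline{\mathcal{S}^{[A]}_{[M]}}^{L^{\infty}_{\omega}} = C_{0,\omega}$ (this uses that $\mathcal{S}^{[A]}_{[M]}$ is non-trivial, which follows from the standing non-triviality hypothesis since the sufficient condition $M_{G,s} \subset M$, $M_{G,r} \subset A$, $r+s>1$, is symmetric in $M$ and $A$). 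In case ($\mathcal{F}$.I) the weights $\omega$ and $\omega^{-1}$ are polynomially bounded, so $\mathcal{S} \subseteq C_{0,\omega}$ with the Schwartz topology finer than the one induced by $L^{\infty}_{\omega}$, and $\mathcal{S}$ is dense in $C_{0,\omega}$; since $\mathcal{S}^{[M]}_{[A]}$ is dense in $\mathcal{S}$ and $\mathcal{F}$ is a topological automorphism of $\mathcal{S}$, the image $\mathcal{F}(\mathcal{S}^{[M]}_{[A]})$ is dense in $\mathcal{S}$, hence dense in $C_{0,\omega}$. In both cases $\mathcal{F}(\mathcal{S}^{[M]}_{[A]}) \subseteq C_{0,\omega}$, so $\dot{E}^{\prime} = \mathcal{F}C_{0,\omega}$.

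With this identification, the theorem is just a transcription of Theorem~\ref{t:closureGSindualDE}: its $(i)$ reads $f * \varphi \in \dot{E}^{\prime} = \mathcal{F}C_{0,\omega}$ for all $\varphi \in \mathcal{S}^{[M]}_{[A]}$; its $(ii)$ reads that $f$ lies in the closure of $\mathcal{S}^{[M]}_{[A]}$ in $\OC^{\prime}(\mathcal{S}^{[M]}_{[A]}, E^{\prime}) = \OC^{\prime}(\mathcal{S}^{[M]}_{[A]}, \mathcal{F}L^{\infty}_{\omega})$; and its $(iii)$ reads $f = \sum_{\alpha \in \N^{d}} f^{(\alpha)}_{\alpha}$ for some $(f_{\alpha})_{\alpha \in \N^{d}} \in \Lambda_{[M]}(\dot{E}^{\prime}) = \Lambda_{[M]}(\mathcal{F}C_{0,\omega})$. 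I expect the main obstacle to be precisely the density assertion $\overline{\mathcal{F}(\mathcal{S}^{[M]}_{[A]})}^{L^{\infty}_{\omega}} = C_{0,\omega}$ and, within it, the legitimacy of swapping $M$ and $A$ in the earlier argument in case ($\mathcal{F}$.II); the remaining content is a direct appeal to the machinery already developed.
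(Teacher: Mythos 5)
Your proposal is correct and follows the same route as the paper, which proves this theorem simply by invoking Theorem~\ref{t:closureGSindualDE} with $E = \mathcal{F}L^{1}_{\omega}$ (so $E^{\prime} = \mathcal{F}L^{\infty}_{\omega}$ and $\dot{E}^{\prime} = \mathcal{F}C_{0,\omega}$); you merely supply the details of the closure identification that the paper leaves implicit. One small remark: in case ($\mathcal{F}$.II) the non-triviality of $\mathcal{S}^{[A]}_{[M]}$ is most cleanly obtained from the Fourier isomorphism $\mathcal{S}^{[M]}_{[A]} \cong \mathcal{S}^{[A]}_{[M]}$ itself rather than from symmetry of the sufficient condition, since the paper's standing hypothesis is the non-triviality of $\mathcal{S}^{(M)}_{(A)}$ and not that sufficient condition.
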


\section{Extensions of convolution}
\label{sec:ExtensionConv}

In this final section we study several extensions of convolution arising naturally from the structure found in Theorem \ref{t:StructThm}. Throughout this section we assume $M$ and $A$ are weight sequences satisfying $(M.1)$ and $(M.2)'$, where $M$ additionally satisfies $(M.2)$, and $E$ is a TMIB of class $[M] - [A]$ (which trivially satisfies \eqref{eq:GrowthCondModulation}, see Remark \ref{r:GrowthCondModulationSuffCond}). 

We will often work with spaces associated to $L^{1}_{\omega_{E}}$. Note that $\omega_{E}$ is $[A]$-admissible, so that $L^{1}_{\omega_{E}}$ is a TMIB of class $[M] - [A]$ which satisfies \eqref{eq:GrowthCondModulation}, see Section \ref{sec:DTMIB}. In particular, we have that $\mathcal{S}^{[M]}_{[A]}$ is dense in $\D^{[M]}_{L^{1}_{\omega_{E}}}$ by Corollary \ref{c:GSDenseDE}, while Theorem \ref{t:StructThm} is valid for the elements in $\OC^{\prime}(\mathcal{S}^{[M]}_{[A]}, L^{1}_{\omega_{E}})$.

We first look at $\OC^{\prime}(\mathcal{S}^{[M]}_{[A]}, E)$ itself and show that its elements may convolve with those of $\D^{[M]}_{L^{1}_{\omega_{E}}}$ and moreover those convolutions are all contained in $\D^{[M]}_{E}$. To do this, we first observe the following continuous mapping.

	\begin{lemma}
		\label{l:ExtendConvStructSmoothCase}
		The mapping
			\[ \Lambda_{[M]}(E) \times \D^{[M]}_{L^{1}_{\omega_{E}}} \rightarrow \D^{[M]}_{E} : \quad ((f_{\alpha})_{\alpha \in \N^{d}}, \psi) \mapsto \sum_{\alpha \in \N^{d}} f_{\alpha} * \psi^{(\alpha)} , \]
		is well-defined and continuous.
	\end{lemma}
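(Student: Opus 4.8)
The plan is to reduce the statement to the module inequality $\norm{f*g}_{E}\le\norm{f}_{E}\norm{g}_{L^{1}_{\omega_{E}}}$ of Proposition \ref{p:ExtensionOfConvolutionToWTMIB} combined with the factorisation $M_{p+q}\le C_{0}H^{p+q}M_{p}M_{q}$ supplied by $(M.2)$; this is exactly the place where the standing hypothesis $(M.2)$ on $M$ in this section enters.

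First I would estimate a single summand together with all of its derivatives. Fix $(f_{\alpha})_{\alpha}\in\Lambda_{M,\ell}(E)$ and $\psi\in\D^{M,\ell'}_{L^{1}_{\omega_{E}}}$. Since $\partial^{\beta}(f_{\alpha}*\psi^{(\alpha)})=f_{\alpha}*\psi^{(\alpha+\beta)}$ and $\psi^{(\alpha+\beta)}\in L^{1}_{\omega_{E}}$, Proposition \ref{p:ExtensionOfConvolutionToWTMIB} gives, for all $\alpha,\beta\in\N^{d}$,
\[
	\norm{\partial^{\beta}(f_{\alpha}*\psi^{(\alpha)})}_{E}\le\norm{f_{\alpha}}_{E}\,\norm{\psi^{(\alpha+\beta)}}_{L^{1}_{\omega_{E}}}\le\frac{\norm{(f_{\alpha})_{\alpha}}_{\Lambda_{M,\ell}}\,(\ell')^{|\alpha|+|\beta|}M_{\alpha+\beta}}{\ell^{|\alpha|}M_{\alpha}}\,\norm{\psi}_{\D^{M,\ell'}_{L^{1}_{\omega_{E}}}}.
\]
Replacing $M_{\alpha+\beta}/M_{\alpha}$ by $C_{0}H^{|\alpha|+|\beta|}M_{\beta}$ via $(M.2)$, dividing by $L^{|\beta|}M_{\beta}$, and taking the supremum over $\beta$ shows that, as soon as $H\ell'\le L$,
\[
	\norm{f_{\alpha}*\psi^{(\alpha)}}_{\D^{M,L}_{E}}\le C_{0}\left(\frac{H\ell'}{\ell}\right)^{|\alpha|}\norm{(f_{\alpha})_{\alpha}}_{\Lambda_{M,\ell}}\,\norm{\psi}_{\D^{M,\ell'}_{L^{1}_{\omega_{E}}}}.
\]
If moreover $H\ell'\le\ell/2$, the geometric factor $2^{-|\alpha|}$ is summable over $\N^{d}$, so the series $\sum_{\alpha}f_{\alpha}*\psi^{(\alpha)}$ converges absolutely in the Banach space $\D^{M,L}_{E}$ and
\[
	\norm{\sum_{\alpha\in\N^{d}}f_{\alpha}*\psi^{(\alpha)}}_{\D^{M,L}_{E}}\le 2^{d}C_{0}\,\norm{(f_{\alpha})_{\alpha}}_{\Lambda_{M,\ell}}\,\norm{\psi}_{\D^{M,\ell'}_{L^{1}_{\omega_{E}}}}.
\]

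Next I would match the scales in the two cases. In the Beurling case $(f_{\alpha})_{\alpha}$ lies in some $\Lambda_{M,\ell}(E)$, $\psi$ lies in $\D^{M,\ell'}_{L^{1}_{\omega_{E}}}$ for every $\ell'>0$, and one must land in $\D^{M,L}_{E}$ for every $L>0$; given $L$, choose $\ell'=H^{-1}\min(L,\ell/2)$. In the Roumieu case $(f_{\alpha})_{\alpha}$ lies in every $\Lambda_{M,\ell}(E)$, $\psi$ in some $\D^{M,\ell'}_{L^{1}_{\omega_{E}}}$, and it suffices to land in some $\D^{M,L}_{E}$; take $\ell=2H\ell'$ and $L=H\ell'$. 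In either case the displayed bound applies, which proves the map is well-defined into $\D^{[M]}_{E}$ and, after restricting the inductive-limit variable to one of its Banach building blocks, it is a product-bounded — hence jointly continuous — bilinear map into a fixed Banach step $\D^{M,L}_{E}$.

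To obtain joint continuity on the whole product I would then invoke that, since one of the two factors ($\D^{(M)}_{L^{1}_{\omega_{E}}}$ in the Beurling case, $\Lambda_{\{M\}}(E)$ in the Roumieu case) is a Fréchet space, the product of the countable inductive limit with it carries the inductive-limit topology, so continuity on each step promotes to continuity on the product; composing with the continuous inclusion into $\D^{[M]}_{E}$ finishes the argument. (Alternatively, $\Lambda_{[M]}(E)$ and $\D^{[M]}_{L^{1}_{\omega_{E}}}$ are both barrelled — one a Fréchet space, the other a complete $(LB)$-space by Corollary \ref{c:TopDE} and the structure of $\Lambda_{[M]}(E)$ recalled above — so separate continuity yields hypocontinuity, which upgrades to continuity because one factor is metrizable.) The point needing attention is not deep: one must verify that the double series $\sum_{\alpha}f_{\alpha}*\psi^{(\alpha)}$, summed together with all of its derivatives, converges inside a single Banach step $\D^{M,L}_{E}$, and that the three parameters $\ell,\ell',L$ can be synchronised consistently both in the Beurling ($\varinjlim\times\varprojlim\to\varprojlim$) and in the Roumieu ($\varprojlim\times\varinjlim\to\varinjlim$) regime; the surplus factor $2^{-|\alpha|}$ pulled out of $(M.2)$ is exactly what makes this work.
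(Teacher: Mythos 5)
Your proof is correct and follows essentially the same route as the paper: both arguments combine the module inequality \eqref{eq:convolutionineqE} with $(M.2)$ to extract a summable factor $2^{-|\alpha|}$, land the series in a single Banach step $\D^{M,L}_{E}$, and synchronise the three scales $\ell,\ell',L$ in the Beurling and Roumieu regimes (the paper's choice $\Lambda_{M,2\ell}(E)\times\D^{M,\ell/H}_{L^{1}_{\omega_{E}}}\to\D^{M,\ell}_{E}$ is exactly your $H\ell'=L=\ell/2$ normalisation). The only difference is presentational — you estimate each summand in the $\D^{M,L}_{E}$-norm before summing over $\alpha$, whereas the paper sums over $\alpha$ for each fixed derivative $\gamma$ — and your closing discussion of how bilinear continuity is obtained is merely more explicit than the paper's ``the continuity now follows directly.''
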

	
	\begin{proof}
		For any $\ell > 0$, suppose $(f_{\alpha})_{\alpha \in \N^{d}} \in \Lambda_{M, 2\ell}(E)$ and $\psi \in \D^{M, \ell / H}_{L^{1}_{\omega_{E}}}$, then, by \eqref{eq:convolutionineqE}, we have for $\gamma \in \N^{d}$,
			\begin{align*}
				\| \partial^{\gamma} \sum_{\alpha \in \N^{d}} f_{\alpha} * \psi^{(\alpha)} \|_{E} &\leq \sum_{\alpha \in \N^{d}} \| f_{\alpha} * \psi^{(\alpha + \gamma)} \|_{E} \leq \sum_{\alpha \in \N^{d}} \| f_{\alpha} \|_{E} \| \psi^{(\alpha + \gamma)} \|_{L^{1}_{\omega_{E}}} \\
				&\leq \|f\|_{\Lambda_{M, 2 \ell}(E)} \|\psi\|_{\D^{M, \ell / H}_{L^{1}_{\omega_{E}}}} \sum_{\alpha \in \N^{d}} \frac{(\ell / H)^{|\alpha| + |\gamma|} M_{\alpha + \gamma}}{(2 \ell)^{|\alpha|} M_{\alpha}} \\
				&\leq 2^{d} C_{0} \ell^{|\gamma|} M_{\gamma} \|f\|_{\Lambda_{M, 2 \ell}(E)} \|\psi\|_{\D^{M, \ell / H}_{L^{1}_{\omega_{E}}}} ,
			\end{align*}
		so that $\sum_{\alpha \in \N^{d}} f_{\alpha} * \psi^{(\alpha)} \in \D^{M, \ell}_{E}$ and the continuity now follows directly. 
	\end{proof}
	
Take any $f \in \OC^{\prime}(\mathcal{S}^{[M]}_{[A]}, E)$ and let $f = \sum_{\alpha \in \N^{d}} f^{(\alpha)}_{\alpha}$ for some $(f_{\alpha})_{\alpha \in \N^{d}} \in \Lambda_{[M]}(E)$. Using Lemma \ref{l:ExtendConvStructSmoothCase}, we may define the convolution of $f$ with any $\psi \in \D^{[M]}_{L^{1}_{\omega_{E}}}$ as
	\[ f * \psi := \sum_{\alpha \in \N^{d}} f_{\alpha} * \psi^{(\alpha)} \in \D^{[M]}_{E} . \]
To see that this mapping is well-defined, one has to verify whether it is invariant under the kernel of the structure of $\OC^{\prime}(\mathcal{S}^{[M]}_{[A]}, E)$. If this is the case, we also note that this definition uniquely extends the convolution \eqref{eq:GSConv} of Gelfand-Shilov spaces due to the density of $\mathcal{S}^{[M]}_{[A]}$ in $\OC^{\prime}(\mathcal{S}^{[M]}_{[A]}, E)$ (Corollary \ref{c:denseinclusions}) and $\D^{[M]}_{L^{1}_{\omega_{E}}}$ (Corollary \ref{c:GSDenseDE}). Thus, we find our first extension of convolution.
	
	\begin{theorem}
		\label{t:ExtensionConvSmoothCase}
		The convolution mapping
			\[ * : \OC^{\prime}(\mathcal{S}^{[M]}_{[A]}, E) \times \D^{[M]}_{L^{1}_{\omega_{E}}} \rightarrow \D^{[M]}_{E} \]
		is well-defined and continuous. 
	\end{theorem}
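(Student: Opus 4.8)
The plan is to realize the claimed convolution as the map induced by the continuous bilinear map of Lemma \ref{l:ExtendConvStructSmoothCase} after passing to the quotient by the kernel of the structure map $S$ from \eqref{eq:Smap}. Write $B : \Lambda_{[M]}(E) \times \D^{[M]}_{L^{1}_{\omega_{E}}} \to \D^{[M]}_{E}$ for $((f_{\alpha})_{\alpha}, \psi) \mapsto \sum_{\alpha} f_{\alpha} * \psi^{(\alpha)}$. Since $S$ is surjective (Theorem \ref{t:StructMapping}) and linear in its first slot, it suffices to check that $B$ vanishes on $(\ker S) \times \D^{[M]}_{L^{1}_{\omega_{E}}}$: then $B = \overline{B} \circ (S \times \id)$ for a unique bilinear $\overline{B}$, and this $\overline{B}$ is precisely $(f, \psi) \mapsto f * \psi$, which is thereby well defined and independent of the chosen representation $f = \sum_{\alpha} f_{\alpha}^{(\alpha)}$.

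For the well-definedness I would fix $(h_{\alpha})_{\alpha} \in \ker S$, so $\sum_{\alpha} h_{\alpha}^{(\alpha)} = 0$ in $\mathcal{S}^{\prime [M]}_{[A]}$. Because $B((h_{\alpha})_{\alpha}, \cdot)$ is continuous and $\mathcal{S}^{[M]}_{[A]}$ is dense in $\D^{[M]}_{L^{1}_{\omega_{E}}}$ (Corollary \ref{c:GSDenseDE}), it is enough to treat $\psi \in \mathcal{S}^{[M]}_{[A]}$. For such $\psi$ the series $u := \sum_{\alpha} h_{\alpha} * \psi^{(\alpha)}$ converges absolutely in $E$ (the terms are geometrically small, exactly as in the proof of Lemma \ref{l:ExtendConvStructSmoothCase}), and for any $\varphi \in \mathcal{S}^{[M]}_{[A]}$ I would compute, using that $\,\cdot * \varphi$ is continuous on $E$, the Banach-module associativity of Proposition \ref{p:ExtensionOfConvolutionToWTMIB}, the elementary identity $\psi^{(\alpha)} * \varphi = (\psi * \varphi)^{(\alpha)}$, and the fact that $\chi := \psi * \varphi \in \mathcal{S}^{[M]}_{[A]}$ by Lemma \ref{l:ConvolutionGS},
\[ u * \varphi = \sum_{\alpha} h_{\alpha} * \big(\psi^{(\alpha)} * \varphi\big) = \sum_{\alpha} h_{\alpha} * \chi^{(\alpha)} = \sum_{\alpha} h_{\alpha}^{(\alpha)} * \chi = \Big(\sum_{\alpha} h_{\alpha}^{(\alpha)}\Big) * \chi = 0 , \]
where the equalities $h_{\alpha}^{(\alpha)} * \chi = h_{\alpha} * \chi^{(\alpha)}$ and the interchange of $\sum_{\alpha}$ with $\,\cdot * \chi$ are justified in $\mathcal{S}^{\prime [M]}_{[A]}$ (the $\mathcal{S}^{\prime [M]}_{[A]}$-convolution extends the $E$-valued one by the remark after Proposition \ref{p:ExtensionOfConvolutionToWTMIB}, and $\,\cdot * \chi$ is continuous on $\mathcal{S}^{\prime [M]}_{[A]}$). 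Thus $u * \varphi = 0$ for every $\varphi \in \mathcal{S}^{[M]}_{[A]}$; testing against an approximate identity $\psi_{n} \in \mathcal{S}^{[M]}_{[A]}$ as in the proof of Corollary \ref{c:DEdenseinE}, for which $u * \psi_{n} \to u$ in $E$, then forces $u = 0$ in $\D^{[M]}_{E}$.

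For the continuity I would argue that $S$ is a continuous linear surjection between two complete $(LB)$-spaces in the Beurling case and between two Fréchet spaces in the Roumieu case (Corollary \ref{c:TopConvSp} together with the stated structure of $\Lambda_{[M]}(E)$), hence open by De Wilde's, respectively the classical, open mapping theorem. Consequently $S \times \id$ is an open continuous linear surjection, so a topological quotient map, and since $\overline{B} \circ (S \times \id) = B$ is continuous, so is $\overline{B}$; by construction it takes values in $\D^{[M]}_{E}$. Finally, on $\mathcal{S}^{[M]}_{[A]} \times \mathcal{S}^{[M]}_{[A]}$ the map $\overline{B}$ reduces to the Gelfand--Shilov convolution \eqref{eq:GSConv}, so in view of the density statements in Corollaries \ref{c:denseinclusions} and \ref{c:GSDenseDE} it is the unique continuous extension of the latter.

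I expect the well-definedness --- the invariance under $\ker S$ --- to be the main obstacle: the chain of identities above only closes once $\psi$, and hence $\psi * \varphi$, is a genuine element of $\mathcal{S}^{[M]}_{[A]}$, which is what dictates the preliminary density reduction, and it hinges on the exact compatibility of the three convolutions in play (the $L^{1}_{\omega_{E}}$-module action on $E$, its restriction $E \times \mathcal{S}^{[M]}_{[A]} \to E$, and the $\mathcal{S}^{\prime [M]}_{[A]} \times \mathcal{S}^{[M]}_{[A]} \to \mathcal{S}^{\prime [M]}_{[A]}$ convolution). Once this is secured, the continuity is a formal consequence of the open mapping theorem.
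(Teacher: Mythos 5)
Your proposal is correct and follows essentially the same route as the paper: factor the bilinear map of Lemma \ref{l:ExtendConvStructSmoothCase} through the quotient $\OC^{\prime}(\mathcal{S}^{[M]}_{[A]}, E) \cong \Lambda_{[M]}(E)/\ker S$, and verify vanishing on the kernel by reducing via density (Corollary \ref{c:GSDenseDE}) to $\psi \in \mathcal{S}^{[M]}_{[A]}$ and then testing against test functions. The only cosmetic difference is that the paper closes the kernel computation by directly pairing $\langle \sum_{\alpha} f_{\alpha} * \psi^{(\alpha)}, \varphi\rangle = \langle \sum_{\alpha} f_{\alpha}^{(\alpha)}, \varphi * \check{\psi}\rangle = 0$, whereas you pass through $u * \varphi = 0$ and an approximate identity, and you spell out the open-mapping-theorem step for the continuity that the paper leaves implicit.
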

	
	\begin{proof}
		In view of Lemma \ref{l:ExtendConvStructSmoothCase}, as $\OC^{\prime}(\mathcal{S}^{[M]}_{[A]}, E) \cong \Lambda_{[M]}(E) / \ker S$, where $S$ is the map in \eqref{eq:Smap}, to show the result we must verify that for any $(f_{\alpha})_{\alpha \in \N^{d}} \in \ker S$ and $\psi \in \D^{[M]}_{L^{1}_{\omega_{E}}}$ we have $\sum_{\alpha \in \N^{d}} f_{\alpha} * \psi^{(\alpha)} = 0$. Indeed, for such a fixed $(f_{\alpha})_{\alpha \in \N^{d}}$ we have that the mapping $\psi \mapsto \sum_{\alpha \in \N^{d}} f_{\alpha} * \psi^{(\alpha)}$ is continuous. Take any $\psi, \varphi \in \mathcal{S}^{[M]}_{[A]}$, then,
			\[ \ev{\sum_{\alpha \in \N^{d}} f_{\alpha} * \psi^{(\alpha)}}{\varphi} = \ev{\sum_{\alpha \in \N^{d}} f^{(\alpha)}_{\alpha}}{\varphi * \check{\psi}} = 0 . \]
		As $\mathcal{S}^{[M]}_{[A]}$ is dense in $\D^{[M]}_{L^{1}_{\omega_{E}}}$ by Corollary \ref{c:GSDenseDE}, it follows that $\sum_{\alpha \in \N^{d}} f_{\alpha} * \psi^{(\alpha)} = 0$ for all $\psi \in \D^{[M]}_{L^{1}_{\omega_{E}}}$. 
	\end{proof}
	
For any $f \in \OC^{\prime}(\mathcal{S}^{[M]}_{[A]}, E^{\prime})$ and $g \in \OC^{\prime}(\mathcal{S}^{[M]}_{[A]}, E)$, Theorem \ref{t:ExtensionConvSmoothCase} and Proposition \ref{p:dualDEinOC} show that we may define the convolution $f * g$ as follows,
	\[ \ev{f * g}{\varphi} := \ev{f}{\check{g} * \varphi} , \qquad \varphi \in \D^{[M]}_{L^{1}_{\omega_{E}}} . \]
Hence $f * g \in \mathcal{B}^{\prime [M]}_{\omega_{E}}$, and in the special case where $g \in \mathcal{S}^{[M]}_{[A]}$ it coincides with the convolution as in \eqref{eq:ConvTempUltraDist}. In summary, we find the ensuing second extension of convolution.
	
	\begin{theorem}
		\label{t:ExtensionConvSmoothCaseTransp}
		The convolution mapping
			\[ * : \OC^{\prime}(\mathcal{S}^{[M]}_{[A]}, E^{\prime}) \times \OC^{\prime}(\mathcal{S}^{[M]}_{[A]}, E) \rightarrow \mathcal{B}^{\prime [M]}_{\omega_{E}} \]
		is well-defined and hypocontinuous. 
	\end{theorem}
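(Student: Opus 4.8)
The claim is that the bilinear map
\[
* : \OC^{\prime}(\mathcal{S}^{[M]}_{[A]}, E^{\prime}) \times \OC^{\prime}(\mathcal{S}^{[M]}_{[A]}, E) \rightarrow \mathcal{B}^{\prime [M]}_{\omega_{E}}, \qquad \ev{f * g}{\varphi} := \ev{f}{\check{g} * \varphi},
\]
is well-defined and hypocontinuous, where the pairing on the right uses the convolution $\check{g} * \varphi \in \D^{[M]}_{E}$ coming from Theorem \ref{t:ExtensionConvSmoothCase} (after noting $\check{g} \in \OC^{\prime}(\mathcal{S}^{[M]}_{[A]}, E)$ iff $g$ is, since reflection is an isomorphism of the convolutor space). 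First I would check that the right-hand side genuinely defines an element of $\mathcal{B}^{\prime [M]}_{\omega_{E}} = \D^{\prime [M]}_{L^{\infty}_{\omega_{E}}}$: by Proposition \ref{p:dualDEinOC} and Theorem \ref{t:StructThmDualDE} applied with the TMIB $L^{1}_{\omega_{E}}$, we have $\mathcal{B}^{\prime [M]}_{\omega_{E}} = \D^{\prime [M]}_{(L^{1}_{\omega_{E}})^{\prime}} = (\D^{[M]}_{L^{1}_{\omega_{E}}})^{\prime}_{b}$, so it suffices to show $\varphi \mapsto \ev{f}{\check{g} * \varphi}$ is a continuous linear functional on $\D^{[M]}_{L^{1}_{\omega_{E}}}$. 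This is immediate by composing two continuous maps: $\varphi \mapsto \check{g} * \varphi$ is continuous $\D^{[M]}_{L^{1}_{\omega_{E}}} \rightarrow \D^{[M]}_{E}$ by Theorem \ref{t:ExtensionConvSmoothCase}, and $f \in \OC^{\prime}(\mathcal{S}^{[M]}_{[A]}, E^{\prime}) = \D^{\prime[M]}_{E^{\prime}} = (\D^{[M]}_{E})^{\prime}_{b}$ by Proposition \ref{p:dualDEinOC} and Proposition \ref{p:dualConvSp} (the dual of $\OC^{\prime}(\mathcal{S}^{[M]}_{[A]},E)$ is $\D^{[M]}_{E^{\prime}}$, and by reflexivity-type duality $\D^{[M]}_{E^{\prime}} = (\OC^{\prime}(\mathcal{S}^{[M]}_{[A]},E))'_b$ pairs with $\OC^{\prime}$). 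More directly: $\D^{[M]}_{E^{\prime}} = \OC^{\prime}(\mathcal{S}^{[M]}_{[A]}, E^{\prime})$ is the strong dual of $\D^{[M]}_{E}$, so $f$ is a continuous functional there.

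\textbf{Consistency and the key identity.} Next I would verify that the definition is consistent — namely that when $g \in \mathcal{S}^{[M]}_{[A]}$, the pairing $\ev{f}{\check{g}*\varphi}$ reproduces the convolution \eqref{eq:ConvTempUltraDist}. For $g, \varphi \in \mathcal{S}^{[M]}_{[A]}$ this is associativity of convolution, $\check{g} * \varphi = \check{g} * \varphi$ in $\mathcal{S}^{[M]}_{[A]}$, combined with $\ev{f * g}{\varphi} = \ev{f}{\varphi * \check{g}}$ from the definition of convolution of a tempered ultradistribution with a test function; the new definition uses the enlarged class $\varphi \in \D^{[M]}_{L^{1}_{\omega_{E}}}$, but restricting to $\varphi \in \mathcal{S}^{[M]}_{[A]} \subseteq \D^{[M]}_{L^{1}_{\omega_{E}}}$ (dense inclusion by Corollary \ref{c:GSDenseDE}) shows agreement. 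The essential algebraic step is checking $\ev{f}{\check{g}*\varphi}$ is symmetric in the appropriate sense, i.e. that the map is indeed bilinear — linearity in each slot is clear from linearity of $g \mapsto \check{g}*\varphi$ (Theorem \ref{t:ExtensionConvSmoothCase} is bilinear) and of $f \mapsto \ev{f}{\cdot}$.

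\textbf{Hypocontinuity.} For hypocontinuity I would use that both factor spaces $\OC^{\prime}(\mathcal{S}^{[M]}_{[A]}, E^{\prime})$ and $\OC^{\prime}(\mathcal{S}^{[M]}_{[A]}, E)$ are barrelled — the former is a complete $(LB)$-space or Fréchet space by Corollary \ref{c:TopConvSp} (and in the dual-TMIB case $\D^{\prime[M]}_{E^{\prime}}$ is bornological hence barrelled by Corollary \ref{c:TopDE}), the latter by Corollary \ref{c:TopConvSp}. Hence it suffices to prove separate continuity. Fixing $g$, the map $f \mapsto \ev{f}{\check{g}*\varphi}$ (for $\varphi$ ranging over a bounded set $B \subseteq \D^{[M]}_{L^{1}_{\omega_{E}}}$, defining the seminorm on $\mathcal{B}^{\prime[M]}_{\omega_{E}}$) is continuous in $f$ because $\{\check{g} * \varphi : \varphi \in B\}$ is bounded in $\D^{[M]}_{E}$ by Theorem \ref{t:ExtensionConvSmoothCase}, and $f$ acts continuously on $\D^{[M]}_{E}$; fixing $f$, the map $g \mapsto \ev{f}{\check{g}*\varphi}$ is continuous because $g \mapsto \check{g}*\varphi$ is continuous $\OC^{\prime}(\mathcal{S}^{[M]}_{[A]},E) \rightarrow \D^{[M]}_{E}$ (Theorem \ref{t:ExtensionConvSmoothCase}, plus continuity of reflection). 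Uniformity over $\varphi \in B$ in both cases follows from the continuity estimates in Theorem \ref{t:ExtensionConvSmoothCase} being in terms of the $\D^{[M]}_{L^{1}_{\omega_{E}}}$-seminorms of $\varphi$ directly. The main obstacle, such as it is, is purely bookkeeping: one must make sure the several identifications — $\mathcal{B}^{\prime[M]}_{\omega_{E}}$ as the strong dual of $\D^{[M]}_{L^{1}_{\omega_{E}}}$, and $\OC^{\prime}(\mathcal{S}^{[M]}_{[A]}, E^{\prime})$ as the strong dual of $\D^{[M]}_{E}$ — are being applied in their correct form, and that the boundedness of the image families needed for hypocontinuity really is supplied by Theorem \ref{t:ExtensionConvSmoothCase} (it is, since that theorem asserts continuity of a bilinear map, which sends products of bounded sets to bounded sets). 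No genuinely new estimate is required; everything reduces to composing the continuous maps already established.
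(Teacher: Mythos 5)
Your proposal matches the paper's treatment: the paper defines $\ev{f*g}{\varphi} := \ev{f}{\check{g}*\varphi}$ for $\varphi \in \D^{[M]}_{L^{1}_{\omega_{E}}}$ and obtains well-definedness from Theorem \ref{t:ExtensionConvSmoothCase} together with the identification $\OC^{\prime}(\mathcal{S}^{[M]}_{[A]}, E^{\prime}) = \D^{\prime [M]}_{E^{\prime}} = (\D^{[M]}_{E})^{\prime}_{b}$ of Proposition \ref{p:dualDEinOC}, exactly as you do, with $\mathcal{B}^{\prime [M]}_{\omega_{E}}$ read as the strong dual of $\D^{[M]}_{L^{1}_{\omega_{E}}}$. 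The paper states the theorem without further argument, so your barrelledness-plus-separate-continuity justification of hypocontinuity is, if anything, more explicit than the original.
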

	
For our third and final extension of convolution, we consider the following natural extension of the convolution \eqref{eq:ConvWTMIB} to the corresponding sequence spaces.

	\begin{lemma}
		\label{l:ExtendConvStructUltraDistCase}
		The mapping 
			\[ \Lambda_{[M]}(E) \times \Lambda_{[M]}(L^{1}_{\omega_{E}}) \rightarrow \Lambda_{[M]}(E) : \quad ((f_{\alpha})_{\alpha \in \N^{d}}, (g_{\beta})_{\beta \in \N^{d}}) \mapsto (\sum_{\alpha + \beta = \gamma} f_{\alpha} * g_{\beta})_{\gamma \in \N^{d}} \]
		is well-defined and continuous.
	\end{lemma}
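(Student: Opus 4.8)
The plan is to reduce the whole statement to a single norm estimate on the Banach building blocks $\Lambda_{M,\ell}$ and then pass to the inductive/projective limit. Fix $\ell>0$ and take $(f_{\alpha})_{\alpha\in\N^{d}}\in\Lambda_{M,\ell}(E)$ and $(g_{\beta})_{\beta\in\N^{d}}\in\Lambda_{M,\ell}(L^{1}_{\omega_{E}})$. For each fixed $\gamma\in\N^{d}$ the sum $\sum_{\alpha+\beta=\gamma}f_{\alpha}*g_{\beta}$ is finite, and the triangle inequality together with the module estimate \eqref{eq:convolutionineqE} gives, using $\ell^{|\alpha|}\ell^{|\beta|}=\ell^{|\gamma|}$,
\[ \Bigl\| \sum_{\alpha+\beta=\gamma} f_{\alpha}*g_{\beta} \Bigr\|_{E} \leq \sum_{\alpha+\beta=\gamma} \norm{f_{\alpha}}_{E}\norm{g_{\beta}}_{L^{1}_{\omega_{E}}} \leq \norm{(f_{\alpha})_{\alpha}}_{\Lambda_{M,\ell}(E)}\norm{(g_{\beta})_{\beta}}_{\Lambda_{M,\ell}(L^{1}_{\omega_{E}})} \sum_{\alpha+\beta=\gamma} \frac{1}{\ell^{|\gamma|}M_{\alpha}M_{\beta}} . \]

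The key point, and the one place where the extra hypothesis $(M.2)$ of this section is genuinely needed, is the estimate of the remaining sum. From $(M.2)$ we have $M_{\gamma}=M_{\alpha+\beta}\leq C_{0}H^{|\gamma|}M_{\alpha}M_{\beta}$ whenever $\alpha+\beta=\gamma$, hence $1/(M_{\alpha}M_{\beta})\leq C_{0}H^{|\gamma|}/M_{\gamma}$, while the number of decompositions $\gamma=\alpha+\beta$ in $\N^{d}$ is $\prod_{i=1}^{d}(\gamma_{i}+1)\leq 2^{|\gamma|}$. Therefore
\[ \sum_{\alpha+\beta=\gamma} \frac{1}{\ell^{|\gamma|}M_{\alpha}M_{\beta}} \leq C_{0}\frac{(2H)^{|\gamma|}}{\ell^{|\gamma|}M_{\gamma}} , \]
so that
\[ \left(\frac{\ell}{2H}\right)^{|\gamma|} M_{\gamma} \Bigl\| \sum_{\alpha+\beta=\gamma} f_{\alpha}*g_{\beta} \Bigr\|_{E} \leq C_{0}\,\norm{(f_{\alpha})_{\alpha}}_{\Lambda_{M,\ell}(E)}\,\norm{(g_{\beta})_{\beta}}_{\Lambda_{M,\ell}(L^{1}_{\omega_{E}})} . \]
Taking the supremum over $\gamma$ shows that the bilinear mapping in the statement restricts to a continuous bilinear map $\Lambda_{M,\ell}(E)\times\Lambda_{M,\ell}(L^{1}_{\omega_{E}})\rightarrow\Lambda_{M,\ell/(2H)}(E)$ with operator bound $C_{0}$.

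It remains to pass to the limit, exactly as in the preceding two lemmas. In the Roumieu case $\Lambda_{\{M\}}(\cdot)=\varprojlim_{\ell\to\infty}\Lambda_{M,\ell}(\cdot)$ is a Fréchet space whose defining seminorms are the $\norm{\cdot}_{\Lambda_{M,\ell}}$; given a target index $\ell$, applying the displayed estimate with $\ell$ replaced by $2H\ell$ bounds the $\Lambda_{M,\ell}$-seminorm of the image by the product of the $\Lambda_{M,2H\ell}$-seminorms of the two inputs, which yields both well-definedness and joint continuity. In the Beurling case $\Lambda_{(M)}(\cdot)=\varinjlim_{\ell\to0^{+}}\Lambda_{M,\ell}(\cdot)$: two given elements lie in a common step $\Lambda_{M,\ell}$, and their image lies in $\Lambda_{M,\ell/(2H)}(E)$, which embeds into $\Lambda_{(M)}(E)$, so the map is well-defined; for continuity one notes that the map is continuous on each Banach step and that finite products commute with countable locally convex inductive limits, so that $\Lambda_{(M)}(E)\times\Lambda_{(M)}(L^{1}_{\omega_{E}})$ carries the inductive limit topology of the products of the steps, as in the Beurling case of Lemma \ref{l:ExtendConvStructSmoothCase}.

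I expect the only real content to be the growth estimate of the second paragraph: $(M.2)$ is essential there, since without it the $\approx 2^{|\gamma|}$ terms $1/(M_{\alpha}M_{\beta})$ in the sum over $\alpha+\beta=\gamma$ need not be controlled by a geometric factor times $1/M_{\gamma}$ — which is precisely why this section strengthens the standing hypotheses. The limit arguments are routine bookkeeping parallel to Lemmas \ref{l:SWelldefined} and \ref{l:ExtendConvStructSmoothCase}.
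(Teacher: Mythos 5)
Your proof is correct and follows essentially the same route as the paper: the module estimate \eqref{eq:convolutionineqE}, then $(M.2)$ together with the count $\prod_i(\gamma_i+1)\leq 2^{|\gamma|}$ of decompositions $\gamma=\alpha+\beta$ to absorb the sum into a geometric loss in $\ell$, followed by the standard passage to the projective/inductive limits. The only difference is cosmetic parametrization (you map $\Lambda_{M,\ell}\times\Lambda_{M,\ell}\to\Lambda_{M,\ell/(2H)}$ where the paper maps $\Lambda_{M,2\ell H}\times\Lambda_{M,2\ell H}\to\Lambda_{M,\ell}$).
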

	
	\begin{proof}
		For any $\ell > 0$, let $(f_{\alpha})_{\alpha \in \N^{d}} \in \Lambda_{M,  2 \ell H}(E)$ and $(g_{\beta})_{\beta \in \N^{d}} \in \Lambda_{M, 2 \ell H}(L^{1}_{\omega_{E}})$, then, by \eqref{eq:convolutionineqE}, we have
			\begin{align*}
				\| \sum_{\alpha + \beta = \gamma} f_{\alpha} * g_{\beta} \|_{E} &\leq \sum_{\alpha + \beta = \gamma} \|f_{\alpha}\|_{E} \|g_{\beta}\|_{L^{1}_{\omega_{E}}} \\
				&\leq \sum_{\alpha + \beta = \gamma} \frac{\|(f_{\alpha})_{\alpha \in \N^{d}}\|_{\Lambda_{M, 2 \ell H}(E)}}{(2 \ell H)^{|\alpha|} M_{\alpha}} \frac{\|(g_{\beta})_{\beta \in \N^{d}}\|_{\Lambda_{M, 2 \ell H}(L^{1}_{\omega_{E}})}}{(2 \ell H)^{|\beta|} M_{\beta}} \\
				&\leq C_{0} \|(f_{\alpha})_{\alpha \in \N^{d}}\|_{\Lambda_{M, 2 \ell H}(E)} \|(g_{\beta})_{\beta \in \N^{d}}\|_{\Lambda_{M, 2 \ell H}(L^{1}_{\omega_{E}})} / (\ell^{|\gamma|} M_{\gamma}) ,
			\end{align*}
		hence $(\sum_{\alpha + \beta = \gamma} f_{\alpha} * g_{\beta})_{\gamma \in \N^{d}} \in \Lambda_{M, \ell}(E)$ and the continuity follows. 
	\end{proof}
	
For $f \in \OC^{\prime}(\mathcal{S}^{[M]}_{[A]}, E)$ with structure $f = \sum_{\alpha \in \N^{d}} f^{(\alpha)}_{\alpha}$, $(f_{\alpha})_{\alpha \in \N^{d}} \in \Lambda_{[M]}(E)$, and $g \in \OC^{\prime}(\mathcal{S}^{[M]}_{[A]}, L^{1}_{\omega_{E}})$ with structure $g = \sum_{\beta \in \N^{d}} g^{(\beta)}_{\beta}$, $(g_{\beta})_{\beta \in \N^{d}} \in \Lambda_{[M]}(L^{1}_{\omega_{E}})$, we can then apply Lemma \ref{l:ExtendConvStructUltraDistCase} and Theorem \ref{t:StructThm} to define the convolution $f * g$ as
	\[ f * g := \sum_{\gamma \in \N^{d}} \partial^{\gamma} \left( \sum_{\alpha + \beta = \gamma} f_{\alpha} * g_{\beta} \right) \in \OC^{\prime}(\mathcal{S}^{[M]}_{[A]}, E) . \]
Again, in order for this definition to be well-defined we have to verify that it is invariant under the kernel of the structure. Then, this definition will be the unique extension of the convolution $* : E \times L^{1}_{\omega_{E}} \rightarrow E$, so also of \eqref{eq:GSConv}. Our last extension of convolution may thus be written as follows.

	\begin{theorem}
		\label{t:ExtendConvUltraDistCase}
		The convolution mapping
			\[ * : \OC^{\prime}(\mathcal{S}^{[M]}_{[A]}, E) \times \OC^{\prime}(\mathcal{S}^{[M]}_{[A]}, L^{1}_{\omega_{E}}) \rightarrow \OC^{\prime}(\mathcal{S}^{[M]}_{[A]}, E) \]
		is well-defined and continuous. 
	\end{theorem}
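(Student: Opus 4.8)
The plan is to realise the convolution as a composition and then read off both claims from the structure map. Write $S_{E}:\Lambda_{[M]}(E)\to\OC^{\prime}(\mathcal{S}^{[M]}_{[A]},E)$ and $S_{L^{1}_{\omega_{E}}}:\Lambda_{[M]}(L^{1}_{\omega_{E}})\to\OC^{\prime}(\mathcal{S}^{[M]}_{[A]},L^{1}_{\omega_{E}})$ for the maps of \eqref{eq:Smap} (recall that $L^{1}_{\omega_{E}}$ is a TMIB of class $[M]-[A]$ satisfying \eqref{eq:GrowthCondModulation}, since $\omega_{E}$ is $[A]$-admissible, see Section \ref{sec:DTMIB}, so Theorem \ref{t:StructThm} applies to it as well). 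Both maps are continuous by Lemma \ref{l:SWelldefined} and surjective by Theorem \ref{t:StructThm}; since their domains are webbed and their codomains ultrabornological by Corollary \ref{c:TopConvSp}, De Wilde's open mapping theorem shows they are topological quotient maps, hence so is their product. Lemma \ref{l:ExtendConvStructUltraDistCase} provides a continuous bilinear map $\mathrm{CP}:\Lambda_{[M]}(E)\times\Lambda_{[M]}(L^{1}_{\omega_{E}})\to\Lambda_{[M]}(E)$, and I define $f*g:=S_{E}(\mathrm{CP}((f_{\alpha})_{\alpha\in\N^{d}},(g_{\beta})_{\beta\in\N^{d}}))$ for any representations $f=S_{E}((f_{\alpha})_{\alpha\in\N^{d}})$, $g=S_{L^{1}_{\omega_{E}}}((g_{\beta})_{\beta\in\N^{d}})$. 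Once I show that $S_{E}\circ\mathrm{CP}$ is constant on the fibres of $S_{E}\times S_{L^{1}_{\omega_{E}}}$, i.e. only depends on $f$ and $g$, the universal property of the product quotient map gives at once that the induced bilinear map $\OC^{\prime}(\mathcal{S}^{[M]}_{[A]},E)\times\OC^{\prime}(\mathcal{S}^{[M]}_{[A]},L^{1}_{\omega_{E}})\to\OC^{\prime}(\mathcal{S}^{[M]}_{[A]},E)$ is well-defined and continuous.

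Thus everything reduces to representation-independence, which I would obtain by convolving with a test function and invoking Theorem \ref{t:ExtensionConvSmoothCase} (a direct pairing is delicate, as $L^{1}_{\omega_{E}}$ is not contained in a weighted $L^{1}$-space on which translates of test functions stay integrable). Fix $\varphi\in\mathcal{S}^{[M]}_{[A]}$, put $h=S_{E}(\mathrm{CP}((f_{\alpha})_{\alpha},(g_{\beta})_{\beta}))=\sum_{\gamma}\partial^{\gamma}(\sum_{\alpha+\beta=\gamma}f_{\alpha}*g_{\beta})$, and note $\mathcal{S}^{[M]}_{[A]}\hookrightarrow\D^{[M]}_{L^{1}_{\omega_{E}}}$. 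Applying Theorem \ref{t:ExtensionConvSmoothCase} to $h$ with this representation and then reindexing the absolutely convergent double series (convergence coming from \eqref{eq:convolutionineqE} together with \eqref{eq:M1product} and $(M.2)$, just as in Lemma \ref{l:ExtendConvStructUltraDistCase}),
\[ h*\varphi=\sum_{\gamma}\Big(\sum_{\alpha+\beta=\gamma}f_{\alpha}*g_{\beta}\Big)*\varphi^{(\gamma)}=\sum_{\alpha,\beta}(f_{\alpha}*g_{\beta})*\varphi^{(\alpha+\beta)}. \]
Since $L^{1}_{\omega_{E}}$ is a Banach convolution algebra (submultiplicativity of $\omega_{E}$) and the convolutions $*:E\times L^{1}_{\omega_{E}}\to E$ and $*:L^{1}_{\omega_{E}}\times L^{1}_{\omega_{E}}\to L^{1}_{\omega_{E}}$ of Proposition \ref{p:ExtensionOfConvolutionToWTMIB} are associative — which follows by testing the defining Pettis integrals against $E^{\prime}$ and applying scalar Fubini, the relevant double integral being absolutely convergent by \eqref{eq:convolutionineqE} — each term equals $f_{\alpha}*(g_{\beta}*\varphi^{(\beta)})^{(\alpha)}$. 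Summing over $\beta$ first (differentiation being continuous on $\D^{[M]}_{L^{1}_{\omega_{E}}}$), so that $g*\varphi:=\sum_{\beta}g_{\beta}*\varphi^{(\beta)}\in\D^{[M]}_{L^{1}_{\omega_{E}}}$ by the $L^{1}_{\omega_{E}}$-version of Theorem \ref{t:ExtensionConvSmoothCase}, and then summing over $\alpha$ via Theorem \ref{t:ExtensionConvSmoothCase} applied to $f$, I arrive at
\[ h*\varphi=\sum_{\alpha}f_{\alpha}*(g*\varphi)^{(\alpha)}=f*(g*\varphi)\in\D^{[M]}_{E}\subseteq E. \]
The right-hand side depends only on $f$ and $g$; hence if $h^{\prime}$ arises from other representations then $(h-h^{\prime})*\varphi=0$, so $\ev{h-h^{\prime}}{\psi*\check{\varphi}}=0$ for all $\psi,\varphi\in\mathcal{S}^{[M]}_{[A]}$, and since $\mathrm{span}(\mathcal{S}^{[M]}_{[A]}*\mathcal{S}^{[M]}_{[A]})$ is dense in $\mathcal{S}^{[M]}_{[A]}$ (an approximate-identity argument as in the proof of Corollary \ref{c:DEdenseinE}), $h=h^{\prime}$. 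That $f*g\in\OC^{\prime}(\mathcal{S}^{[M]}_{[A]},E)$ is automatic from $h=S_{E}(\cdot)$, and the last display also shows $f*g$ extends the Gelfand--Shilov convolution \eqref{eq:GSConv}, using the density statements of Corollary \ref{c:denseinclusions} and Corollary \ref{c:GSDenseDE}.

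The main obstacle I expect is precisely the manipulation $h*\varphi=f*(g*\varphi)$: one must justify the three associativity relabellings among the Pettis-integral convolutions on $E$, $L^{1}_{\omega_{E}}$ and $\mathcal{S}^{[M]}_{[A]}$, the interchange of the infinite sums with convolution and with differentiation, and the repeated use of Theorem \ref{t:ExtensionConvSmoothCase} with the correct space playing the role of ``$E$''. Each step is elementary but rests on absolute convergence supplied by \eqref{eq:convolutionineqE} and the weight-sequence bounds \eqref{eq:M1product}, \eqref{eq:M2'}, just as in Lemmas \ref{l:SWelldefined} and \ref{l:ExtendConvStructUltraDistCase}.
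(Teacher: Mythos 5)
Your proposal is correct, and its skeleton coincides with the paper's: both realise the convolution through the continuous bilinear map of Lemma \ref{l:ExtendConvStructUltraDistCase} and the quotient description $\OC^{\prime}(\mathcal{S}^{[M]}_{[A]}, E) \cong \Lambda_{[M]}(E)/\ker S$ furnished by Theorem \ref{t:StructThm} and De Wilde's open mapping theorem, so that everything reduces to invariance under the kernels. Where you genuinely diverge is in how that invariance is proved. The paper fixes $(f_{\alpha})_{\alpha} \in \ker S$, observes that the induced map $A$ on $\Lambda_{[M]}(L^{1}_{\omega_{E}})$ is continuous, verifies $A = 0$ on the finitely supported sequences with entries in $\mathcal{S}^{[M]}_{[A]}$ by a direct pairing computation against $\varphi * \check{\psi}_{\beta}^{(\beta)}$, and concludes by density of those sequences; it then repeats the argument symmetrically in the second variable. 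You instead establish the associativity identity $h * \varphi = f * (g * \varphi)$ for every test function $\varphi$, thereby outsourcing representation-independence entirely to the already-proved Theorem \ref{t:ExtensionConvSmoothCase} (applied once with $E$ and once with $L^{1}_{\omega_{E}}$ in the role of the TMIB). This handles both variables in one stroke and yields, as a by-product, the compatibility of the new convolution with the smooth-case extension — a fact the paper only asserts informally. The cost is the heavier bookkeeping you yourself flag: the rearrangement of the absolutely convergent double series (which does need $(M.2)$, available here) and the associativity of the three Pettis-integral convolutions, both of which go through exactly as you indicate via \eqref{eq:convolutionineqE}, \eqref{eq:M1product} and scalar Fubini after pairing with $E^{\prime}$. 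Two small remarks: the concluding implication ``$(h - h^{\prime}) * \varphi = 0$ for all $\varphi \in \mathcal{S}^{[M]}_{[A]}$ forces $h = h^{\prime}$'' can be obtained more directly from $V_{\psi}(h - h^{\prime}) \equiv 0$ and the reconstruction formula \eqref{eq:STFTReconstructTempUltra}, sparing you the density of $\operatorname{span}(\mathcal{S}^{[M]}_{[A]} * \mathcal{S}^{[M]}_{[A]})$; and your quotient-map derivation of joint continuity is exactly what the paper's statement requires but leaves implicit, so spelling it out is a welcome addition rather than a detour.
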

	
	\begin{proof}
		Suppose that $(f_{\alpha})_{\alpha \in \N^{d}}$ is in the kernel of $S : \Lambda_{[M]}(E) \rightarrow \OC^{\prime}(\mathcal{S}^{[M]}_{[A]}, E)$. The function $A : (g_{\beta})_{\beta \in \N^{d}} \mapsto \sum_{\gamma \in \N^{d}} \partial^{\gamma} (\sum_{\alpha + \beta = \gamma} f_{\alpha} * g_{\beta})$ is a continuous linear map $\Lambda_{[M]}(L^{1}_{\omega_{E}}) \rightarrow \OC^{\prime}(\mathcal{S}^{[M]}_{[A]}, E)$. Take any $(\psi_{\beta})_{\beta \in \N^{d}} \in \Lambda_{[M]}(E)$ such that $\psi_{\beta} \in \mathcal{S}^{[M]}_{[A]}$ for every $\beta \in \N^{d}$ while $\psi_{\beta} \neq 0$ for a finite amount of $\beta$. For arbitrary $\varphi \in \mathcal{S}^{[M]}_{[A]}$ we define $\psi = \sum_{\beta \in \N^{d}} \varphi * \check{\psi}_{\beta}^{(\beta)}$, then $\psi \in \mathcal{S}^{[M]}_{[A]}$. Now,
			\begin{multline*} 
				\ev{\sum_{\gamma \in \N^{d}} \partial^{\gamma} \sum_{\alpha + \beta = \gamma} f_{\alpha} * \psi_{\beta}}{\varphi}
				= \sum_{\gamma \in \N^{d}} \sum_{\alpha + \beta = \gamma} (-1)^{|\alpha| + |\beta|} \ev{f_{\alpha}}{\varphi^{(\alpha + \beta)} * \check{\psi}_{\beta}} \\
				= \sum_{\gamma \in \N^{d}} \sum_{\alpha + \beta = \gamma} \ev{f^{(\alpha)}_{\alpha}}{\varphi * \check{\psi}_{\beta}^{(\beta)}} 
				= \sum_{\alpha \in \N^{d}} \ev{f_{\alpha}^{(\alpha)}}{\psi} = 0 .
			\end{multline*}
		As the subspace of all such $(\psi_{\beta})_{\beta \in \N^{d}}$ is dense in $\Lambda_{[M]}(L^{1}_{\omega_{E}})$, the map $A$ is identically zero. Similarly, one shows that for any $(g_{\beta})_{\beta \in \N^{d}}$ in the kernel of $S : \Lambda_{[M]}(L^{1}_{\omega_{E}}) \rightarrow \OC^{\prime}(\mathcal{S}^{[M]}_{[A]}, E)$, the linear map $(f_{\alpha})_{\alpha \in \N^{d}} \mapsto \sum_{\gamma \in \N^{d}} \partial^{\gamma} (\sum_{\alpha + \beta = \gamma} f_{\alpha} * g_{\beta})$ from $\Lambda_{[M]}(E)$ into $\OC^{\prime}(\mathcal{S}^{[M]}_{[A]}, E)$ is identically zero. This completes the proof.
	\end{proof}

\end{document}